\documentclass[12pt,a4paper,reqno]{amsart}

\usepackage{thmtools}
\usepackage{thm-restate}

\usepackage{hyperref}

\usepackage{cleveref}


\usepackage{calc,verbatim,enumitem,tikz,url,hyperref,mathrsfs,cite}

\usepackage{float}

\bibliographystyle{amsplain} 

\usepackage{pgfplots}

\pgfplotsset{
     standard/.style={
        axis x line=middle,
        axis y line=middle,
        every axis x label/.style={at={(current axis.right of origin)},anchor=north west},
        every axis y label/.style={at={(current axis.above origin)},anchor=north east}
    }
}

\pgfplotsset{width=13cm,compat=1.13}

\usepgfplotslibrary{fillbetween}

\usepackage{amsmath, amssymb, latexsym, amsthm, amsbsy}

\usepackage{graphics}

\usepackage{graphicx,relsize}

\usepackage[mathscr]{eucal}

\usepackage{bbm}

\clubpenalty=9999
\widowpenalty=9999

\def\NORMAL{\scalebox{0.8}{\textrm{NORMAL}}}
\def\STAR{\scalebox{0.8}{\textrm{STAR}}}
\def\HUB{\scalebox{0.8}{\textrm{HUB}}}
\def\CLIQUE{\scalebox{0.8}{\textrm{CLIQUE}}}
\def\DEV{\scalebox{0.9}{\textrm{DEV}}}

\usepackage[a4paper,top=3cm,bottom=3cm,inner=3cm,outer=3cm,includehead,includefoot]{geometry}

\setlength{\parskip}{4pt}

\linespread{1.1}

\newcommand{\ssq}{\subseteq} 
\newcommand{\subgp}[1]{\langle{{\hash}1}\rangle}

\def\F{\mathcal{F}}

\def\LL{\mathcal{L}}

\def\S{\mathcal{S}}
\def\T{\mathcal{T}}

\def\X{\mathbb{X}}

\def\ourmax{\mathrm{max}}

\newcommand{\Ex}[1]{\mathbb{E}\left[#1\right]}

\newcommand{\pr}[1]{\mathbb{P}\left(#1\right)}
\newcommand{\expb}[1]{\exp\left(#1\right)}

\newcommand{\fl}[1]{\ensuremath{\lfloor #1 \rfloor}}
\newcommand{\norm}[1]{\left\lVert#1\right\rVert}

\newcommand{\eq}[1]{\begin{equation}\label{eq:#1}}
\newcommand{\eqe}{\end{equation}}
\newcommand{\eqr}[1]{\eqref{eq:#1}}

\numberwithin{equation}{section}

\def\Var{\textup{Var}}

\def\RR{\mathbb{R}}
\def\ZZ{\mathbb{Z}}

\def\rdpn{r(\delta, p,n)}
\def\rdn{r(\delta_n,p,n)}

\def\owedge{\mathsmaller{\bigwedge}}

\newtheorem{theorem}{Theorem}[section]
\newtheorem{lem}[theorem]{Lemma}

\newtheorem{cor}[theorem]{Corollary}
\newtheorem{question}{Question}

\newtheorem{prop}[theorem]{Proposition}
\theoremstyle{definition}\newtheorem{definition}[theorem]{Definition}
\theoremstyle{definition}\newtheorem{remark}[theorem]{Remark}

\def\le{\leqslant}
\def\ge{\geqslant}

\renewcommand{\leq}{\leqslant}
\renewcommand{\geq}{\geqslant}

\usepackage{soul}

\setlength{\marginparwidth}{80pt}

\definecolor{sgreen}{rgb}{0.3, 0.9, 0.3} 
\definecolor{dgreen}{rgb}{0.2, 0.8, 0.2} 
\definecolor{dred}{rgb}{1, 0.2, 0.2}

\definecolor{lblue}{rgb}{0.6, 0.6, 1} 
\definecolor{lgr}{rgb}{0.8, 0.8, 0.8} 
\definecolor{purp}{rgb}{0.9, 0, 0.9} 
\definecolor{dy}{rgb}{0.9, 0.8, 0.2}

\begin{document}

\title[Moderate deviations of triangle counts]{Moderate deviations of triangle counts in sparse Erd\H os-R\' enyi random graphs $G(n,m)$ and $G(n,p)$}

\author{Jos\'e D. Alvarado, Leonardo Gon\c{c}alves de Oliveira and Simon Griffiths}

\thanks{Jos\'e received postdoctoral grants from the Brazilian funding agencies CNPq (Proc. 153903/2018-0), FAPERJ (Proc. E-26/202.011/2020) and FAPESP (Proc. 2020/10796-0) under the supervision of Simon at PUC-Rio and Guilherme Mota at USP, Leonardo was supported by a PhD grant from the CAPES funding agency and Simon received research support from CNPq (Proc. 307521/2019-2) and FAPERJ (Proc. E-26/202.713/2018 and Proc. E-26/201.194/2022).}

\address{Faculty of Mathematics and Physics, University of Ljubljana, Jadranska ulica 19, 1000 Ljubljana, Slovenia}\email{jose.alvarado@fmf.uni-lj.si}

\address{Departamento de Matem\'atica, PUC-Rio, Rua Marqu\^{e}s de S\~{a}o Vicente 225, G\'avea, 22451-900 Rio de Janeiro, Brazil}

\address{Departamento de Matem\'atica, PUC-Rio, Rua Marqu\^{e}s de S\~{a}o Vicente 225, G\'avea, 22451-900 Rio de Janeiro, Brazil}\email{simon@puc-rio.br}

\begin{abstract}
We consider the question of determining the probability of triangle count deviations in the Erd\H os-R\' enyi random graphs $G(n,m)$ and $G(n,p)$ with densities larger than $n^{-1/2}(\log{n})^{1/2}$.  In particular, we determine the log probability $\log\pr{N_{\triangle}(G)\, >\, (1+\delta)p^3n^3}$ up to a constant factor across essentially the entire range of possible deviations, in both the $G(n,m)$ and $G(n,p)$ model.  For the $G(n,p)$ model we also prove a stronger result, up to a $(1+o(1))$ factor, in the non-localised regime.  We also obtain some results for the lower tail and for counts of cherries (paths of length $2$).
\end{abstract}

\maketitle

\section{Introduction}

The problem of triangle count deviations in the random graph $G(n,p)$ has been extensively studied in recent decades.  We give an overview of the literature in Section~\ref{sec:mainp}.  The majority of results focus on a particular range of deviations, such as large deviations (of the order of the mean), whereas we shall prove results which hold across essentially the whole range of possible deviations.


One may also consider these problems in the $G(n,m)$ random graph model, in which the random graph $G\sim G(n,m)$ is selected uniformly from all graphs with $n$ vertices and $m$ edges.  In conceptual terms, the advantage of working in the $G(n,m)$ model is that we have removed one potential cause of deviations -- the variation in the number of edges.  

There are also practical reasons to prefer the $G(n,m)$ model -- we may generate $G(n,m)$ by revealing one edge at a time, for a fixed number of steps $m$, and this is particularly well suited to martingale techniques.  Furthermore, the results proved in $G(n,m)$ may be easily transferred to results in $G(n,p)$.

Let us begin then with a discussion of triangle count deviations in $G(n,m)$.  After we have stated our results in this setting we return in Section~\ref{sec:mainp} to the $G(n,p)$ setting.

\subsection{Our results for triangle counts in $G(n,m)$}\label{sec:mainm}

We use the notation $N:=\binom{n}{2}$ and set $t:= m/N$.  Note that $t$ is the proportion of edges which are present in $G_m\sim G(n,m)$.  We use the falling factorial notation $(n)_k=n(n-1)\dots (n-k+1)$.  

We remark that our approach is similar to that Goldschmidt, Scott and the third author~\cite{GGS}.  However, we stress that we introduce new ideas and techniques which are needed to extend the results proved in~\cite{GGS} to a much wider range of densities.

We let $N_{\triangle}(G_m)$ denote the number of isomorphic copies of the triangle in $G_m$.  For example $N_{\triangle}(K_4)=24$.  We set
\[
L_{\triangle}(m)\, :=\, \Ex{N_{\triangle}(G_m)}\, =\, \frac{(m)_3(n)_3}{(N)_3}\, ,
\]
and so 
\[
D_{\triangle}(G_m) \, :=\, N_{\triangle}(G_m)\, -\, L_{\triangle}(m)
\]
is the deviation of this triangle count from its mean.

Our main result in the $G(n,m)$ model will be to determine the rate associated with deviations up to a constant multiplicative factor across essentially the whole range of deviations, for all densities $t\gg n^{-1/2}(\log{n})^{1/2}$.  That is, given a sequence $a=a_n$ with $t^{3/2}n^{3/2}\sqrt{\log{n}}\, \ll \,a_n\, \ll\, t^{3/2}n^3$, we find a sequence $b= b_n$ such that $\pr{D_{\triangle}(G_m)>a}=\exp(-\Theta(b))$.  

In fact, we find that it is more intuitive to state these results in terms of the deviation $a$ which has probability $\exp(-b)$.  It will be clear that this is equivalent.  Let $\DEV_{\triangle}(b,t)$ denote the triangle count deviation which has probability only $e^{-b}$.  That is, $\DEV_{\triangle}(b,t)$ is the minimal value of $a$ such that
\[
\pr{N_{\triangle}(G_m)\, >\, \Ex{N_{\triangle}(G_m)}\, +\, a}\, \le \, e^{-b}\, .
\]

It is said that a picture is worth a thousand words.  So let us present a figure first, and then state our result more formally.  In particular the figure will highlight the four different regimes which correspond to different potential ``causes'' of the triangle count deviation.

We may parameterise the deviation $a$ as $\delta t^3n^3$, which is practically $\delta$ times the mean.  The figure considers densities $t$ of the form $t=n^{\gamma}$, and $\delta$ of the form $\delta=n^{\theta}$.  For each $\gamma\in (-1/2,0)$ we obtain results for deviations between the order of magnitude of the standard deviation (order $t^{3/2}n^{3/2}$) and the order of magnitude of the largest possible deviation (order $t^{3/2}n^3$).  This explains the interval of values of $\theta \in (-3/2-3\gamma/2,-3\gamma/2)$.

\begin{figure}[H]
\centering

\begin{tikzpicture}



\begin{axis}[xmin=0,ymin=0,xmax=136,ymax=33,axis lines=middle, 
standard,  axis line style={->},
xlabel=$\theta$,
ylabel=$\gamma$,
                minor xtick={0.1,5.6,11.2,16.8,22.4,28,33.6,39.2,44.8,50.4,56,61.6,67.2,72.8,78.4,84,89.6,95.2,100.8,106.4,112,117.6,123.2,126},
                tick style={line width=1pt},
          xtick={0.1,28,56,84,112,126},
          xticklabels={$\hspace{1mm} -1.5$, $-1$, $-0.5$, $0$, $0.5$, $0.75$ },
          minor ytick={2.8,8.4,14,19.6,25.2},
     ytick={0.1,5.6,11.2,16.8,22.4,28},
    yticklabels={$\vspace{-2mm}-0.5$,$-0.4$,$-0.3$,$-0.2$,$-0.1$,$0$}]
    
    \addplot[name path=LHS,thick] coordinates {(0,28) (0,0) (42,0)};
    \addplot[name path=toleft,thick] coordinates {(0,28) (42,0)};

\addplot[name path=left,thick] coordinates {(0,28) (42,0) (63,0)};
\addplot[name path=N,thick] coordinates {(0,28) (28,28) (60,4) (63,0)};
\addplot[name path=NC,thick] coordinates {(60,4) (63,0)};
\addplot[name path=star,thick] coordinates {(28,28) (84,0) (60,4)};
\addplot[name path=C1,thick] coordinates {(60,4) (84,0) (63,0)};
\addplot[name path=H,thick] coordinates {(84,0) (28,28) (83.5,28)};
\addplot[name path=C2,thick] coordinates {(84.5,28) (126,0) (84.5,0)};
\addplot[name path=LDL] coordinates {(83.5,0) (83.5,28) (84.5,28) };
\addplot[name path=LDR] coordinates {(83.5,0) (84.5,0) (84.5,28)};



\addplot[dgreen] fill between[of=left and N];
\addplot[lblue] fill between[of= N and star];
\addplot[dred] fill between[of= NC and C1];
\addplot[blue] fill between[of= H and LDL];
\addplot[dred] fill between[of= LDR and C2];
\addplot[lgr] fill between[of= LHS and toleft];
\addplot[purp]  fill between[of= LDL and LDR];

\end{axis}

\end{tikzpicture}

\caption{In this figure, grey represents very small deviations, and the purple line ($\theta=0$) corresponds to the traditional large deviation results.  Each of the other colours represents a different ``most likely cause'' of the corresponding deviation.   In the green region: Good luck without a structural cause.  In the light blue region: A star.  In the dark blue region: A hub.  In the red regions: A clique.} \label{fig:ta}
\end{figure}
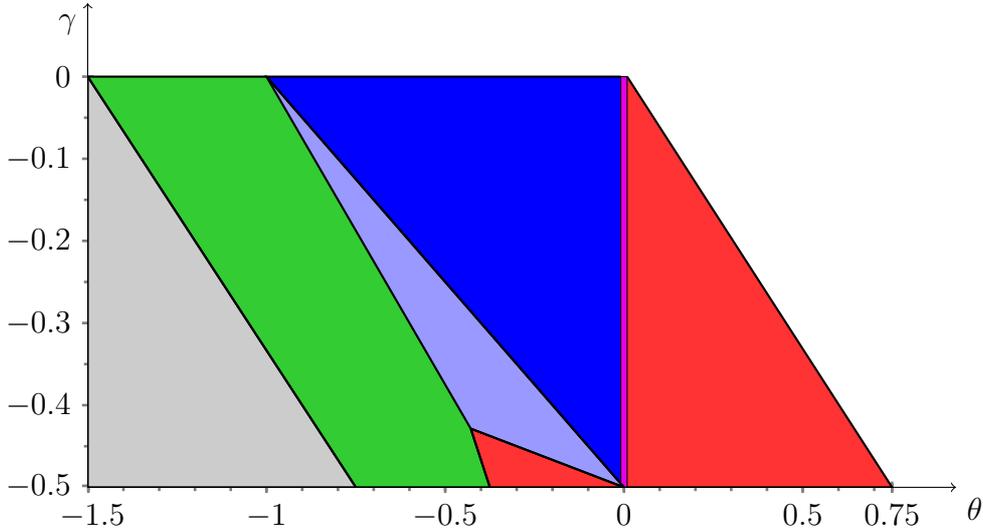

Let us now discuss each of the four possible causes of deviations.  Here and throughout let $\ell:=\log(1/t)$, the natural logarithm of $1/t$.  

\textbf{Pure good luck (normal):} The proof is based on a martingale related to the creation of triangles during the process (of adding edges).  It is very likely that the sum of the conditional variances of the increments of the martingale is of the order of magnitude $t^3n^3$.  It will follow (using Freedman's inequalities) that a deviation of order
\[
\NORMAL(b,t)\, :=\, b^{1/2}t^{3/2}n^{3/2}
\]
has probability $e^{-b}$.  This behaviour is an extension of the central limit theorem, and the behaviour of deviations of the order of the standard deviation.  For this reason we call it the ``normal'' regime.

\textbf{A star:} Observe that a vertex with particularly large degree, $d$, is likely to be contained in $\Theta(d^2 t)$ triangles.  And so a star with degree $d\gg tn$ ``causes'' $\Theta(d^2 t)$ triangles, and occurs with probability approximately $t^d=e^{-d\ell}$.  Clearly this only applies for degrees $d<n$.  We see that this may cause a deviation of order 
\[
\STAR(b,t)\, := \, \frac{b^2t}{\ell^2}\, 1_{b\le n\ell} 
\]
with probability $e^{-b}$.

\textbf{A hub:} The hub is the natural extension of the star once it has reached full degree.  Observe that $k$ vertices with degree of order $n$ will be involved in $\Theta(ktn^2)$ triangles, and this occurs with probability approximately $t^{kn}=e^{-kn\ell}$.  We see that this may cause a deviation of order
\[
\HUB(b,t)\, :=\, \frac{btn}{\ell}\, 1_{b\ge n\ell} 
\]
with probability $e^{-b}$.

\textbf{A clique:} A clique of $k$ vertices creates $\Theta(k^3)$ triangles, and occurs with probability approximately $t^{k^2}=e^{-k^2\ell}$.  We see that this may cause a deviation of order
\[
\CLIQUE(b,t)\, :=\, \frac{b^{3/2}}{\ell^{3/2}}
\]
with probability $e^{-b}$.

Let $M(b,t)$ be defined to be the maximum of these four,
\eq{Mdef}
M(b,t)\, =\, \max\{\NORMAL(b,t),\STAR(b,t),\HUB(b,t),\CLIQUE(b,t)\}\, .
\eqe


Our main theorem for the $G(n,m)$ model states that the deviation $\DEV_{\triangle}(b,t)$ which has probability $e^{-b}$ is given by $M(b,t)$, up to a multiplicative constant, across a large range of $t$ and $b$.  Note we restrict $t$ to be at most $1/2$, even though the results certainly hold in all cases where the density is bounded away from $1$, see~ \cite{GGS}.

\begin{theorem}\label{thm:mainm} There exist absolute constants $c,C$ such that the following holds.
For all $Cn^{-1/2}(\log{n})^{1/2}\le t\le 1/2$ and $3\log{n}\le b\le tn^2\ell$ we have
\[
cM(b,t)\, \le\, \DEV_{\triangle}(b,t)\, \le \, CM(b,t)\, .
\]
\end{theorem}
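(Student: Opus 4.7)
The plan is to prove matching upper and lower bounds on the tail $\pr{D_\triangle(G_m)>a}$, which by monotonicity in $a$ translate into the claimed bounds on $\DEV_\triangle(b,t)$. The lower bound is obtained by exhibiting, for each of the four regimes, an explicit structural event that forces a triangle excess of the prescribed order and occurs with probability at least $e^{-b}$; the upper bound combines a Doob edge-exposure martingale with Freedman's inequality (to capture the $\NORMAL$ contribution) and union bounds over structural causes (for $\STAR$, $\HUB$, $\CLIQUE$).

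For the lower bound I would proceed regime by regime. For $\NORMAL$ I would use the Doob martingale $X_i=\Ex{N_\triangle(G_m)\mid G_i}$, whose quadratic variation is typically $\Theta(t^3n^3)$, together with a Paley--Zygmund / quantitative martingale CLT argument, yielding a deviation of order $b^{1/2}t^{3/2}n^{3/2}$ with probability at least $e^{-b}$. For $\STAR$, fixing $d\asymp b/\ell$, a prescribed vertex has degree at least $d$ with probability $\approx(etn/d)^d=e^{-\Theta(d\ell)}=e^{-\Theta(b)}$, forcing an excess of $\Theta(d^2 t)=\Theta(b^2t/\ell^2)$ triangles. For $\HUB$, requiring $k\asymp b/(n\ell)$ prescribed vertices to each be adjacent to a constant fraction of the vertex set costs $e^{-\Theta(kn\ell)}=e^{-\Theta(b)}$ and yields $\Theta(ktn^2)$ extra triangles. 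For $\CLIQUE$, planting a $k$-clique with $k\asymp(b/\ell)^{1/2}$ costs $t^{\binom{k}{2}}=e^{-\Theta(b)}$ and creates $\Theta(k^3)$ triangles. A $G(n,m)$-specific subtlety is that each structural event removes edges from elsewhere; since each structure is of sub-linear size, one checks that this does not cancel the intended mean shift.

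For the upper bound I would again work with $X_i=\Ex{N_\triangle(G_m)\mid G_i}$, splitting $\pr{D_\triangle>a}\leq \pr{D_\triangle>a,\,\G}+\pr{\G^c}$, where $\G$ is the good event that every vertex has degree $O(tn)$, every pair has codegree $O(t^2 n)$, and no small subset is unusually dense. On $\G$ the quadratic variation $V$ is $O(t^3n^3)$ and all increments are $O(t^2n)$, so Freedman's inequality controls the first term and yields the $\NORMAL$ contribution. To bound $\pr{\G^c}$ I would stratify by which bad feature occurs --- a single vertex of degree at least $d$, a family of $k$ such vertices, or a dense subset of $k$ vertices --- and apply direct estimates $\pr{\deg(v)\geq d}\leq e^{-\Omega(d\ell)}$ and $\pr{e(G[S])\geq r}\leq\binom{\binom{|S|}{2}}{r}t^r$. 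Optimising the thresholds to match the target deviation $a$ produces exactly the $\STAR$, $\HUB$, and $\CLIQUE$ terms; the maximum of the four contributions dominates, giving $M(b,t)$.

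The main obstacle, and the point where new ideas are required beyond~\cite{GGS}, is the very sparse end $t$ close to $n^{-1/2}(\log n)^{1/2}$. There the martingale increments are comparable to the target deviation $a$, so a naive application of Freedman loses the $\NORMAL$ bound, and the structural bad events no longer decouple cleanly from the bulk of the process. The remedy I anticipate is to phase the edge exposure, re-centre the martingale at the start of each phase, and control the contribution of each phase separately by absorbing the tail of its quadratic variation into a carefully chosen structural event whose probability is in turn bounded by the relevant term of $M(b,t)$.
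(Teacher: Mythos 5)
Your high-level architecture --- a martingale plus Freedman for the normal regime, union bounds over structural causes for the others, explicit plantings for the lower bound --- resembles the paper's, but the mechanics of the upper bound are where the real work is, and there your plan has a concrete gap. You propose to intersect with a good event $\G$ (degrees $O(tn)$, codegrees $O(t^2n)$, no dense small set) and apply Freedman's inequality on $\G$. First, intersecting with $\G$ does not make Lemma~\ref{lem:F} applicable: Freedman requires an almost-sure bound $R$ on increments, so one needs a genuine truncation (as in~\eqr{D'def}) or a stopping-time modification, together with a separate argument for the truncated-away contribution. Second, and more fundamentally, bounding the maximum codegree by $O(t^2n)$ does not bound the quadratic variation by $O(t^3n^3)$. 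The conditional variance of an increment is governed by $\sum_{uw}D_{uw}(G_{i-1})^2/n^2$, and a worst-case per-pair bound only gives $\sum_{uw}D_{uw}^2=O(t^4n^4)$, which is far too large when $t\gg n^{-1}$. What is actually required is a second-moment estimate of the form $\sum_{uw}D_{uw}(G_i)^2\le C\max\{bt^2n^2,b^2\}$ with failure probability $e^{-b}$ --- this is Corollary~\ref{cor:codegs}, and proving it is the crux of the sparse case: a scale-by-scale (dyadic) decomposition of codegree deviations, separating pairs whose anomalous codegree is ``explained'' by a high degree from those that are not, and bounding stars, matchings, and unions of stars in the resulting auxiliary graphs (Section~\ref{sec:codegs}, Proposition~\ref{prop:codegs}). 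Relatedly, a fixed good event with $b$-independent thresholds cannot satisfy $\pr{\G^c}\le e^{-b}$ beyond the very bottom of the range, so the thresholds must scale with $b$; and the truncated-away part $N^{*}_{\triangle}(G_m)$ cannot be dispatched by a static union bound --- for codegrees only slightly above the truncation level, one needs a coupling-type argument bounding how many such pairs the process actually selects (Lemma~\ref{lem:bad}).

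On the lower bound, the explicit $\STAR$, $\HUB$, $\CLIQUE$ plantings match Section~\ref{sec:otherLBs}, but the normal regime is more delicate than ``Paley--Zygmund / quantitative martingale CLT.'' The converse Freedman inequality (Lemma~\ref{lem:CF}) again needs an almost-sure increment bound, yet truncating destroys the martingale property (one only gets a supermartingale). The paper re-centres to a genuine martingale $D''_{\triangle}$, shows it is close to $D_{\triangle}$ (Lemma~\ref{lem:DD''}), kills rare bad increments via an extra indicator (the $X^{*}_i$ process in the proof of Proposition~\ref{prop:D''}), and supplies a matching variance \emph{lower} bound on increments (Lemma~\ref{lem:LBvar}). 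None of these steps appears in your outline, and the ``phase and re-centre'' idea you gesture at is not the ingredient that makes the sparse end work; the decisive new input is the control of $\sum_{uw}D_{uw}^2$.
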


\begin{remark} The behaviour of deviations is likely to be different in sparse random graphs, with $t\ll n^{-1/2}$, as in this case most edges are not in any triangles.  This is the so called Poisson regime.  See~\cite{HMS} for large deviation results in this regime and~\cite{GHN} for results in the even sparser regime with constant average degree.  It seems likely that our result should extend down to $t=Cn^{-1/2}$.  The complication is that our results are based on the behaviour of codegrees.  As the expected codegree is around $t^2n$ it becomes more difficult to prove concentration results when $t^2n$ is only a constant.
\end{remark}

Here is another figure that may help us visualise the result.  Here we have set $t=n^{\gamma}$ and $b=n^{\eta}$.

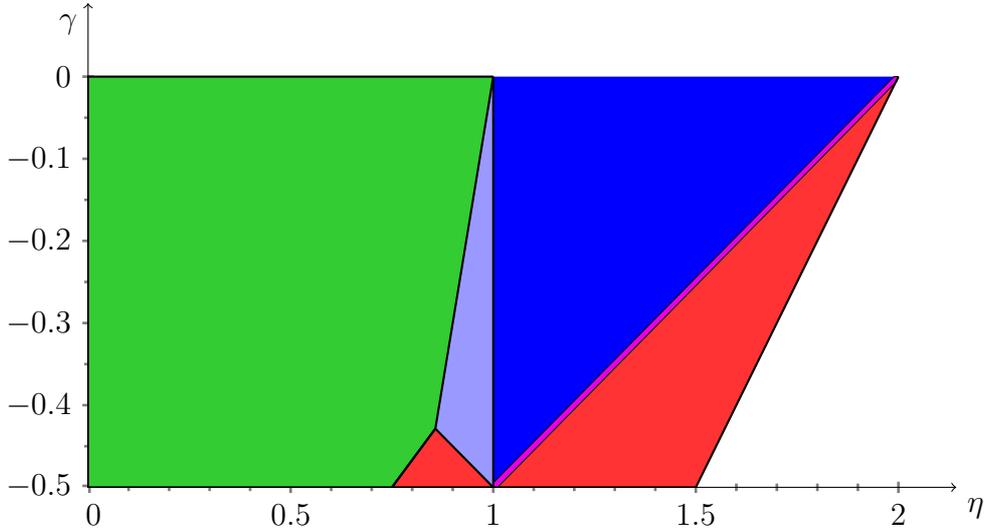
\begin{figure}[H]
\centering

\begin{tikzpicture}



\begin{axis}[xmin=0,ymin=0,xmax=60,ymax=33,axis lines=middle, 
standard,  axis line style={->},
xlabel=$\eta$,
ylabel=$\gamma$,
                minor xtick={2.8,8.4,19.6,25.2,5.6,11.2,16.8,22.4,30.8,33.6,36.4,39.2,44.8,47.6,50.4,53.2},
                tick style={line width=1pt},
          xtick={0.1,14,28,42,56},
          xticklabels={$\hspace{1mm} 0$, $0.5$, $1$, $1.5$, $2$},
          minor ytick={2.8,8.4,14,19.6,25.2},
     ytick={0.1,5.6,11.2,16.8,22.4,28},
    yticklabels={$\vspace{-2mm}-0.5$,$-0.4$,$-0.3$,$-0.2$,$-0.1$,$0$}]
    
    \addplot[name path=LHS,thick] coordinates {(28,28) (0,28) (0,0) (21,0)};
\addplot[name path=N,thick] coordinates {(28,28) (24,4) (21,0)};
\addplot[name path=NC,thick] coordinates {(24,4) (21,0)};
\addplot[name path=star,thick] coordinates {(28,28) (28,0)};
\addplot[name path=C1,thick] coordinates {(24,4) (28,0) (21,0)};

\addplot[name path=C2,thick] coordinates {(56,28) (42,0) (28,0)};

\addplot[name path=HCl] coordinates {(28,0) (28,0.3) (55.7,28) (56, 28)};
\addplot[name path=HCr] coordinates {(56, 28) (55.9,27.7) (28.3,0) (28,0)};



\addplot[dgreen] fill between[of=LHS and N];
\addplot[lblue] fill between[of= N and star];
\addplot[dred] fill between[of= NC and C1];
\addplot[blue] fill between[of= HCl and star];
\addplot[dred] fill between[of= HCr and C2];
\addplot[purp] fill between[of= HCl and HCr];

\end{axis}

\end{tikzpicture}

\caption{As in the previous figure, the purple line corresponds to the traditional large deviation results.  Each of the other colours represents a different ``most likely cause'' of the corresponding deviation.   In the green region: Good luck without a structural cause.  In the light blue region: A star.  In the dark blue region: A hub.  In the red regions: A clique.} \label{fig:tb}
\end{figure}

Some readers may prefer a version of the result which expresses the probability as a function of the deviation, so we state this as an immediate corollary of Theorem~\ref{thm:mainm}.  Let us define
\[
r(t,a)\, :=\, \min\left\{\frac{a^2}{t^3n^3}\, ,\, \frac{a^{1/2}\ell}{t^{1/2}}\, +\, \frac{a\ell}{tn}\, ,\, a^{2/3}\ell\right\}\, .
\]

\begin{cor}\label{cor:mainm} There exist absolute constants $c,C$ such that the following holds.  For all $Cn^{-1/2}(\log{n})^{1/2}\le t\le 1/2$ and $Ct^{3/2}n^{3/2}\sqrt{\log{n}}\le a\le ct^{3/2}n^3$, we have
\[
\exp(-Cr(t,a))\, \le\, \pr{D_{\triangle}(G_m)\, >\, a}\, \le\, \exp(-cr(t,a))\, .
\]
\end{cor}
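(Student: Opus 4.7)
The plan is to deduce the corollary from Theorem~\ref{thm:mainm} by inverting the map $b\mapsto M(b,t)$. On the range of $b$ on which each term of $M(b,t)$ is nonzero, it is a pure power of $b$: the exponents for $\NORMAL$, $\HUB$, $\CLIQUE$, $\STAR$ are respectively $1/2$, $1$, $3/2$, $2$. The corresponding inverses (solving $F(b)=a$ for $b$) are $a^{2}/(t^{3}n^{3})$, $a\ell/(tn)$, $a^{2/3}\ell$ and $a^{1/2}\ell/t^{1/2}$. First I would note that $\STAR$ is supported on $b\le n\ell$ (producing values up to $tn^{2}$) while $\HUB$ is supported on $b\ge n\ell$ (producing values from $tn^{2}$ upwards), so that on each regime the inverse of $\max\{\STAR,\HUB\}$ equals whichever one is active, and this is comparable up to absolute constants to the sum $a^{1/2}\ell/t^{1/2}+a\ell/(tn)$ appearing in $r(t,a)$. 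Consequently $r(t,a)$ equals, up to absolute constants, the smallest $b$ for which $M(b,t)\ge a$.

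For the upper bound, I would fix $a$ in the stated range and set $b=c\cdot r(t,a)$ for a small absolute constant $c$. By the homogeneity above, replacing $r(t,a)$ by $c\cdot r(t,a)$ rescales each term of $M(\cdot,t)$ by a factor $c^{\alpha}$ with $\alpha\in\{1/2,1,3/2,2\}$; hence for $c$ small enough, $M(c\cdot r(t,a),t)\le a/C_{0}$, where $C_{0}$ is the constant appearing in Theorem~\ref{thm:mainm}. The theorem then yields
\[
\pr{D_{\triangle}(G_{m})>a}\,\le\,\pr{D_{\triangle}(G_{m})>C_{0}M(b,t)}\,\le\,e^{-b}\,=\,\exp(-c\cdot r(t,a)).
\]
The matching lower bound is obtained symmetrically: set $b=C\cdot r(t,a)$ for a large absolute constant $C$; the same scaling gives $M(C\cdot r(t,a),t)\ge a/c_{0}$, and Theorem~\ref{thm:mainm} provides $\pr{D_{\triangle}(G_{m})>a}\ge e^{-b}=\exp(-C\cdot r(t,a))$.

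The one item that needs checking is that the chosen $b$'s lie in the admissible range $3\log n\le b\le tn^{2}\ell$ of Theorem~\ref{thm:mainm}. The hypothesis $a\ge Ct^{3/2}n^{3/2}\sqrt{\log n}$ precisely forces $a^{2}/(t^{3}n^{3})\ge C^{2}\log n$, which (taking $C$ large) gives $c\cdot r(t,a)\ge 3\log n$. The hypothesis $a\le ct^{3/2}n^{3}$ gives $a^{2/3}\ell\le c^{2/3}tn^{2}\ell$, hence $r(t,a)\le tn^{2}\ell$, and after absorbing constants the same holds for $C\cdot r(t,a)$. There is no real obstacle here; the argument is a bookkeeping inversion of Theorem~\ref{thm:mainm}, and all the genuine work is contained in that theorem.
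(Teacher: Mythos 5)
Your proof is correct and follows the approach the paper itself takes, namely treating the corollary as an immediate consequence of Theorem~\ref{thm:mainm} by inverting the map $b\mapsto M(b,t)$ (the paper does not write out a proof and simply calls it an ``immediate corollary''). Your inversion of each term is right, and your observation that the star and hub regimes are supported on complementary ranges of $b$ — so that $\max\{\STAR,\HUB\}$ is a continuous increasing function whose inverse is comparable to the sum $a^{1/2}\ell/t^{1/2}+a\ell/(tn)$ — is the key bookkeeping point.

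Two small things are glossed over but do not affect correctness. First, the line ``replacing $r(t,a)$ by $c\cdot r(t,a)$ rescales each term of $M(\cdot,t)$ by a factor $c^{\alpha}$'' is literally true only for the pure-power terms; for $\STAR$ and $\HUB$ the indicator functions make this fail pointwise, but $\max\{\STAR,\HUB\}$ is still increasing and satisfies $\max\{\STAR,\HUB\}(cb)\le c\,\max\{\STAR,\HUB\}(b)$ (indeed better than $c^{1/2}$), so the conclusion you want still holds; it is worth saying this explicitly. Second, the range check ``$a\ge Ct^{3/2}n^{3/2}\sqrt{\log n}$ forces $a^2/(t^3n^3)\ge C^2\log n$, hence $c\cdot r(t,a)\ge 3\log n$'' implicitly assumes the first term of $r(t,a)$ is the minimizer; since $r$ is a minimum, you must also verify that the other two terms are $\gtrsim\log n$ at the lower endpoint of the $a$-range. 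They are (using $t\ge Cn^{-1/2}(\log n)^{1/2}$ one gets the star/hub term $\gtrsim n^{5/8}$ and the clique term $\gtrsim n^{1/2}$, both far larger than $\log n$), so no real problem, but the sentence as written is incomplete.
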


We also mention that our methods give some results for the lower tail.  When discussing the lower tail it is more difficult to talk about ``causes'' of deviations.  Although some progress has been made in this direction by Neeman, Radin and Sadun~\cite{NRS}. In the dense setting, with $t$ a constant, they show that the lower tail is closer related to the smallest eigenvalue of the adjacency matrix.  Our methods cannot really deal with this type of structure and so our lower tail result is restricted to the normal regime.

\begin{theorem}\label{thm:mainlower} There exist absolute constants $c,C$ such that the following holds.  For all $Cn^{-1/2}(\log{n})^{1/2}\le t\le 1/2$ and $Ct^{3/2}n^{3/2}\sqrt{\log{n}}\le a\le ct^{3/2}n^2$, we have
\[
\exp\left(\frac{-Ca^2}{t^3n^3}\right)\, \le\, \pr{D_{\triangle}(G_m)\, <\, -a}\, \le\, \exp\left(\frac{-ca^2}{t^3n^3}\right)\, .
\]
\end{theorem}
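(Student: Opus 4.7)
The plan is to split the argument into the upper half (concentration) and the lower half (anti-concentration) of the probability bound. The concentration half mirrors the normal regime of Theorem~\ref{thm:mainm}, while the anti-concentration half is the genuinely new work.

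\emph{Upper bound on $\pr{D_{\triangle}(G_m) < -a}$.} I would use the edge-exposure Doob martingale $M_i := \Ex{N_{\triangle}(G_m) \mid \F_i}$, so that $M_m - M_0 = D_{\triangle}(G_m)$ with increments $\Delta_i := M_i - M_{i-1}$. Exactly as in the normal-regime analysis for the upper tail, codegree concentration --- which requires precisely the density hypothesis $t \gg n^{-1/2}(\log n)^{1/2}$ --- provides a pointwise bound on $|\Delta_i|$ on a high-probability ``good'' event, together with a sum-of-conditional-variances bound $\sum_i \Var(\Delta_i \mid \F_{i-1}) \le V = O(t^3 n^3)$ on the same event. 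A Bernstein/Freedman-type inequality, applied in the lower-tail direction (which is symmetric to the upper), then yields $\pr{D_{\triangle} < -a,\ \text{good event}} \le \exp(-ca^2/(t^3 n^3))$, with the restriction $a \le c t^{3/2} n^2$ guaranteeing that we stay inside the Gaussian regime. The complementary bad event is arranged to have probability much smaller than the target bound.

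\emph{Lower bound on $\pr{D_{\triangle}(G_m) < -a}$.} This is the main obstacle: concentration inequalities have no automatic reverse, so one has to certify that a deficit of size $a$ really is achieved with the claimed frequency. My plan is a moderate-deviation lower bound for the same martingale. First, reveal a constant fraction of the edges to obtain a graph $H$ on which codegrees and degrees lie within constant factors of $t^2 n$ and $tn$; such $H$ appear with probability $1 - o(1)$. Conditional on $H$, the remaining martingale has sum of conditional variances $V' \asymp t^3 n^3$ with conditional probability $1 - o(1)$. The key additional input, going beyond the upper-tail analysis, is an \emph{increment-level anti-concentration} statement: for a positive proportion of steps $i$, the conditional distribution of $\Delta_i$ places mass bounded away from $0$ on intervals of length $\Theta(tn)$ in both directions away from its conditional mean. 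This is driven by the spread of codegrees of random non-edges in $G_{i-1}$, and again relies on the density hypothesis. Feeding this into a standard moderate-deviation lower bound for sums of bounded martingale differences --- obtained either by exponential tilting with parameter $\lambda = a/V'$, or by Gaussian coupling via a Grama--Haeusler / Sakhanenko-type invariance principle --- yields $\pr{D_{\triangle} < -a \mid H} \ge \exp(-C a^2/(t^3 n^3))$, valid as long as $a/\sqrt{V'} \le c n^{1/2}$. The hypothesis $a \le c t^{3/2} n^2$ is exactly this condition, which also explains the upper limit on $a$ in the statement.

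The technical heart of the lower bound lies in establishing the increment-level anti-concentration on a single ``good'' event that simultaneously controls increment sizes, the sum of conditional variances, and the spread of each increment, all using codegree estimates for $G_{i-1}$. Once this is in place, the remainder is standard moderate-deviation machinery, and the Gaussian range $a \le c t^{3/2} n^2$ emerges naturally as the scale on which the CLT-style approximation to the martingale remains sharp --- beyond it, one would have to enter the star/hub/clique regimes, which do not contribute to the lower tail in the same way and are therefore outside the scope of this theorem.
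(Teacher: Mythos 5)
Your plan follows essentially the same route the paper takes implicitly: the same martingale decomposition, codegree control of increments and quadratic variation, Freedman for the concentration half, and a martingale moderate-deviation lower bound for the anti-concentration half. A few comparisons. For the concentration half, note that no truncation is needed on this side: $X_\triangle(G_i) = 6D_{e_i}(G_{i-1}) - \Ex{6D_{e_i}(G_{i-1}) \mid G_{i-1}}$ and codegrees are nonnegative, so negative increments of $D_\triangle$ are automatically $O(t^2 n)$ on a high-probability event, and Freedman applies to $-D_\triangle(G_m)$ directly. For the anti-concentration half, the paper uses the converse Freedman inequality (Lemma~\ref{lem:CF}) rather than exponential tilting or Grama--Haeusler; converse Freedman is stated purely in terms of a variance lower bound and an increment ceiling, so it is the lighter tool here. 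The ``increment-level anti-concentration'' you identify as the technical heart is exactly the content of Lemma~\ref{lem:LBvar}, which shows $\Ex{(\X''_i)^2 \mid G_{i-1}} \ge c_1 t^2 n$ except with probability $\exp(-c_1 n)$ by exhibiting $\Omega(n^2)$ pairs of non-edges whose codegrees differ by $\Omega(tn^{1/2})$; the spread you should quote is therefore $\Theta(tn^{1/2})$ (the codegree fluctuation scale), not $\Theta(tn)$. You correctly trace the restriction $a \le c t^{3/2} n^2$ to $b = a^2/t^3n^3 \le c^2 n$, which keeps the $\exp(-\Omega(n))$ failure probability of the variance bound negligible relative to $\exp(-b)$. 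The step you gloss over is how to obtain a deterministic two-sided increment ceiling $|X_i| \le R$, which every one of your proposed lower-bound tools requires: conditioning on a prefix $H$ does not bound increments at later steps, and simply restricting to a good event at each step destroys the martingale property. The paper's resolution is to truncate, repair the resulting supermartingale into a genuine martingale $D''_\triangle$ (Lemma~\ref{lem:DD''}), and then pass to $X^*_i := \X''_i 1_{(E^+_{i-1})^c}$, which zeroes the increment on a $G_{i-1}$-measurable rare event and thereby keeps both the martingale property and the ceiling; some such device is indispensable in your sketch as well.
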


\subsection{Results for triangle counts in $G(n,p)$}\label{sec:mainp}

Understanding the behaviour of the upper tail for the number of triangles in the Erd\H{o}s-R\'enyi random graph $G(n,p)$ has been a major focus in probabilistic combinatorics in recent decades.    We use the following notation to denote the asymptotic rate associated with a deviation of size $a=\delta p^3 (n)_3$ above the mean:
\[
\rdpn\, :=\, -\log\pr{N_{\triangle}(G_p)\, >\, (1+\delta)p^3(n)_{3}}\, ,
\]
where $G_p\sim G(n,p)$.   This asymptotic rate was considered by Vu~\cite{Vu} and Janson and Ruci\'nski~\cite{JR} who gave a number of methods for bounding $\rdpn$, in the early 2000s. A little later Kim and Vu\cite{KVtri} and Janson, Oleszkiewicz and Ruci\'nski~\cite{JOR} independently arrived at the bounds
\[
c(\delta) p^2n^2\, \le\, \rdpn\, \le\, C(\delta) p^2n^2\log(1/p) \, 
\]
for some constants $c(\delta),C(\delta)$.  In other words, the value of $\rdpn$ was determined up to a logarithmic error factor.  Around 2010, the problem of finding the correct order of magnitude of $\rdpn$ was resolved independently by Chatterjee~\cite{chatterjee} and De Marco and Kahn~\cite{DK}.  They discovered that
\[
c(\delta) p^2n^2\log(1/p)\, \le\, \rdpn\, \le\, C(\delta) p^2n^2\log(1/p) \, .
\]
Attention then turned to the precise asymptotic behaviour of $\rdpn$.

In the case of constant $p\in (0,1)$, Chatterjee and Varadhan~\cite{CV} showed that finding $\rdpn$ reduces to finding the solution of a variational problem for graphons \footnote{See Lubetzky and Zhao~\cite{LZ} and the survey of Chatterjee~\cite{ChatS} for more on the dense case.}.  Chatterjee and Dembo~\cite{CD} extended the framework of~\cite{CV} to the sparse case, provided $p\to 0$ relatively slowly, specifically $p\ge n^{-1/42}\log{n}$.  By solving the variational problem, Lubetzky and Zhao~\cite{LZsparse} obtained
\eq{LZsay}
\rdpn\, =\, (1+o(1)) \min\left\{\frac{\delta^{2/3}}{2},\frac{\delta}{3}\right\} p^2n^2\log(1/p)
\eqe
whenever $n^{-1/42}\log{n}\le p\ll1$.  With subsequent improvements by Eldan~\cite{eldan}, Cook and Dembo~\cite{cookdembo} and Augeri~\cite{augeri} we now know that this framework may be applied, in this case, whenever $p\gg n^{-1/2}(\log{n})^2$.  This shows that \eqr{LZsay} holds for all $n^{-1/2}(\log{n})^2\ll p\ll 1$.

Recently, Harel, Mousset and Samotij~\cite{HMS} essentially answered completely the large deviation upper tail problem for triangles in $G(n,p)$.  Their approach combines ideas from the earlier combinatorial approaches with a new concept of entropic stability which they introduce.  In particular, they confirm that~\eqr{LZsay} holds whenever $p\gg n^{-1/2}$, prove that
\[
\rdpn\, =\, (1+o(1)) \frac{\delta^{2/3}}{2} p^2n^2\log(1/p)
\]
whenever $n^{-1}\ll p\ll n^{-1/2}$ and find the asymptotic value of $\rdpn$ if the remaining case that $p^2 n\to c\in \mathbb{R}$.  The expression in this final case is more complex.  The $\delta/3$ is minimal in~\eqr{LZsay} when the ``easiest'' way to ``add'' $\delta p^2(n)_{3}$ triangles is to add a hub like structure and the $\delta^{2/3}/2$ is minimal when the ``easiest'' way is to add an appropriately sized clique.  In the intermediate case $p^2n\to c$ a hybrid construction is required, see~\cite{HMS} for more details.   The remaining ``missing case'' of very sparse random graphs, with constant average degree, was completed recently by Ganguly, Hiesmayr and Nam~\cite{GHN}.

There is also an extensive literature based around the question of central limit theorems for subgraph counts. Since Ruci\'nski~\cite{Ruc} established the central limit theorem for subgraph counts many articles giving sequentially stronger bounds on convergence have appeared~\cite{BKR,KRT,RR,Rol}.  Furthermore, Janson~\cite{Jan} (building on the earlier articles, Janson~\cite{JanRSA} and Janson and Nowicki~\cite{JN}) proved a functional central limit theorem, and that subgraph counts in $G(n,m)$ are also asymptotically normally distributed.

There has been far less investigation of the probability of moderately large deviations, between the orders of magnitude of the standard deviation and the mean. D\"oring and Eichelsbacher~\cite{DE} proved that
\eq{asy}
\rdpn\, =\, \frac{-\delta_n^2 p n^2}{36 (1-p)}\, +\, o(\delta_n^2 pn^2)\, ,
\eqe
whenever $p^{-1/2}n^{-1}\ll \delta_n \ll p^7$ (see also~\cite{DE2}).  While in the dense case $p\in (0,1)$, F\'eray, M\'eliot and Nikeghbali~\cite{FGN} found the asymptotics of the deviation probability (this is much stronger than knowing the asymptotics of the rate): 
\begin{align}\label{eq:smalldelta}
\pr{N_{\triangle}(G_p)\, >\, (1+\delta)p^3(n)_{3}}\, &  = \phantom{\Bigg{|}}\\ \phantom{\Bigg{|}} (1+o(1))\sqrt{\frac{9(1-p)}{\pi p}} &\exp\left(- \frac{\delta_n^2 pn^2}{36(1-p)}\, +\, \frac{\big(7-8p\big) \delta_n^3 pn^2}{324(1-p)^2}\, -\, \log (n\delta_n)  \right)\nonumber
\end{align}
whenever $n^{-1}\ll \delta_n \ll n^{-1/2}$.

We can see that these results taken together only cover a small part of the space of possible deviations.  Indeed, consider the cases $p=n^{\gamma}$ and $\delta=n^{\theta}$ with $\gamma\in (-1/2,0)$ and $\theta\in (-1,0)$.

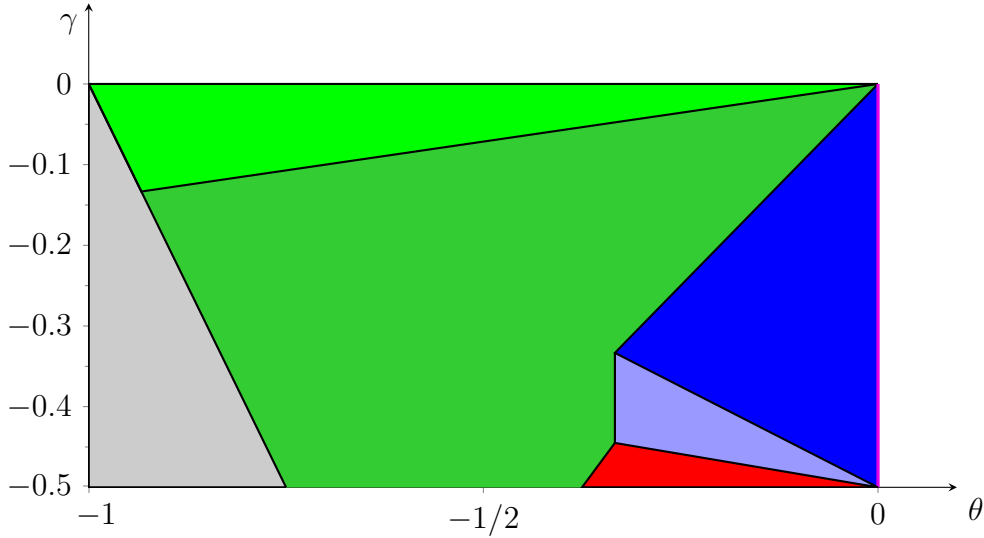
\begin{figure}[H]
\centering

\begin{tikzpicture}

\begin{axis}[xmin=0,ymin=0,xmax=33,ymax=36,axis lines=middle,
    axis line style={->},
   standard,
    xlabel=$\theta$,
    ylabel=$\gamma$,
    xtick={0.01,15,30},
    xticklabels={$\hspace{2mm} -1$,$-1/2$,$0$},
   minor ytick={3,9,15,21,27},
     ytick={0.1,6,12,18,24,30},
    yticklabels={$\vspace{-2mm}-0.5$,$-0.4$,$-0.3$,$-0.2$,$-0.1$,$0$}]
    
\addplot[name path=H,thick] coordinates {(30,0) (20,10) (30,30)};
\addplot[name path=known,thick] coordinates {(0,30) (2,22) (30,30)};
\addplot[name path=left,thick] coordinates {(0,30) (0,0) (7.5,0)};
\addplot[name path=sd,thick] coordinates {(0,30) (7.5,0)};  
\addplot[name path=top,thick] coordinates {(30,30) (0,30)};
\addplot[name path=right, very thick, color=purp] coordinates {(30,0) (30,30)};
\addplot[name path=S,thick] coordinates {(30,0) (20,3.3) (20,10)};
\addplot[name path=C,thick] coordinates {(30,0) (18.75,0) (20,3.3) };



\addplot[dgreen] fill between[of=sd and known];
\addplot[dgreen] fill between[of=sd and H];
\addplot[dgreen] fill between[of=sd and C];

\addplot[lgr] fill between[of=left and sd];
\addplot[green] fill between[of=known and top];
\addplot[blue] fill between[of=H and right];
\addplot[lblue] fill between[of=H and S];
\addplot[red] fill between[of=C and S];

\end{axis}

\end{tikzpicture}

\caption{{\small The result of D\"oring and Eichelsbacher~\cite{DE} applies in the light green region.  We extend the same bound to the entire ``normal'' regime, shown here in green, see Theorem~\ref{thm:mainp}.  In the three coloured regions on the right, localisation (described below) occurs.  The three colours correspond to the corresponding type of localisation, light blue for a star, dark blue for a hub and red for a clique.  We remark that the grey region corresponds to deviations smaller than the order of the standard deviation, which have probability $\frac{1}{2}+o(1)$ by the central limit theorem.  (The traditional large deviation results lie in the purple line along the right hand side.)}} \label{fig:p}
\end{figure}

We say that localisation occurs if the most likely way to achieve the deviation involves a small object being present in the random graph, such as a star, hub or clique.  In the non-localised region the deviation is more likely to be caused by there being extra edges in the random graph (but with no specific structure).  This concept is similar to that of replica symmetry and symmetry breaking considered by Lubetzky and Zhao~\cite{LZ} in the dense case.  Since we are working in the sparse case it seems more natural to talk in terms of localisation.  Harel, Mousset and Samotij~\cite{HMS} show in the case of fixed $\delta>0$ that their concept of entropic stability implies localisation.  It seems quite likely that some adaptation of their methods could give the correct asymptotic rate across the regimes of localisation, as the same authors have done recently for the equivalent results for arithmetic progressions~\cite{HMS2}.

One of our main results states that the asymptotic rate~\eqr{asy} given by D\"oring and Eichelsbacher~\cite{DE} in the light green region holds across the whole non-localised region.

\begin{theorem}\label{thm:mainp} Let $n^{-1/2}(\log{n})^{1/2}\ll p\ll 1$ and let $\delta_n$ be a sequence satisfying 
\[
p^{-1/2}n^{-1}\, \ll\, \delta_n\, \ll \, p^{3/4}(\log{n})^{3/4}\, ,\, n^{-1/3}(\log{n})^{2/3}\, +\, p\log(1/p)\, .
\]
Then
\[
\rdn\, =\, (1+o(1))\frac{\delta_n^2 pn^2}{36}\, .
\]
\end{theorem}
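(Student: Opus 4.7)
The plan is to reduce Theorem~\ref{thm:mainp} to Corollary~\ref{cor:mainm} by conditioning on the number of edges $M\sim\mathrm{Binomial}(N,p)$ of $G_p\sim G(n,p)$. Writing $m=pN+s$ and Taylor-expanding the rational function $L_{\triangle}(m)=(m)_3(n)_3/(N)_3$ in $s$ gives
\[
L_{\triangle}(m)\, -\, p^3(n)_3\, =\, (1+o(1))\, 6p^2 n s
\]
uniformly for $|s|=o(pN)$. Thus $M$ on its own produces a triangle-count deviation that is linear in $s$, and the target $\delta_n p^3(n)_3$ is attained on this mean curve at $s=s^*:=(1+o(1))\delta_n pn^2/6$. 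Since $\delta_n\gg p^{-1/2}n^{-1}$ places $s^*\gg\sqrt{\Var(M)}$ while $\delta_n\to 0$ keeps $s^*=o(pN)$, the Gaussian (Cram\'er) moderate-deviation asymptotics of the binomial yield
\[
-\log\pr{M>pN+s^*}\, =\, (1+o(1))\,\delta_n^2 pn^2/36,
\]
which already matches the target rate.

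For the lower bound, fix a small $\eps>0$ and restrict to $\{M\ge pN+(1+\eps)s^*\}$, on which $L_{\triangle}(M)\ge(1+(1+\eps/2)\delta_n)p^3(n)_3$. It suffices that, conditional on such $M=m$, the event $N_{\triangle}(G_m)\ge L_{\triangle}(m)-\eta_n\delta_n p^3 n^3$ holds with probability $1-o(1)$ for some $\eta_n\to 0$. As $\Var(N_{\triangle}(G_m))=O(p^3n^3)$ (from the martingale underlying Theorem~\ref{thm:mainm}, or from Janson's CLT), Chebyshev delivers this whenever $\eta_n^2\delta_n^2 p^3n^3\to\infty$; and this is possible because $\delta_n^2 p^3n^3\gg p^2n\gg\log n$ under our hypothesis on $p$. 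Multiplying the two probabilities gives a lower bound of $\exp(-(1+O(\eps))\delta_n^2 pn^2/36)$.

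For the upper bound I partition on $M\gtrless pN+(1-\eps)s^*$. The high-edge part contributes $\exp(-(1-O(\eps))\delta_n^2 pn^2/36)$ by the same binomial estimate. On the low-edge part, $L_{\triangle}(M)\le p^3(n)_3+(1-\eps/2)\delta_n p^3n^3$, so $\{N_{\triangle}(G_p)>(1+\delta_n)p^3(n)_3\}$ forces $D_{\triangle}(G_M)\ge(\eps/3)\delta_n p^3n^3$. Corollary~\ref{cor:mainm} applied conditionally, with $t=M/N\sim p$ and $a=(\eps/3)\delta_n p^3n^3$, bounds the conditional probability by $\exp(-c\, r(t,a))$. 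One then verifies that each of the three terms of $r(t,a)$ strictly dominates $\delta_n^2 pn^2$ under the stated hypotheses, and this is where those hypotheses enter: the normal term $a^2/(t^3n^3)\asymp\delta_n^2 p^3n^3$ wins by the factor $p^2n\gg 1$ (from $p\gg n^{-1/2}(\log n)^{1/2}$); the star/hub sum $a^{1/2}\ell/t^{1/2}+a\ell/(tn)\asymp\delta_n^{1/2}pn^{3/2}\ell+\delta_n p^2n^2\ell$ wins by $\delta_n\ll n^{-1/3}(\log n)^{2/3}+p\log(1/p)$ (the two summands matching the star and hub bounds respectively); and the clique term $a^{2/3}\ell\asymp\delta_n^{2/3}p^2n^2\ell$ wins by $\delta_n\ll p^{3/4}(\log n)^{3/4}$. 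Thus the low-edge contribution is $\exp(-\omega(\delta_n^2 pn^2))$ and is absorbed, and letting $\eps\to 0$ after $n\to\infty$ in both bounds yields the theorem.

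The main obstacle is pinning down the exact constant $1/36$: it is the product of the linear-response coefficient $6p^2n$ of $L_{\triangle}$ in the edge count and the asymptotic binomial variance $pn^2$, and both must be tracked to $(1+o(1))$ accuracy throughout the whole $\delta_n$-range. I also need to confirm that the hypothesis $\delta_n\gg p^{-1/2}n^{-1}$ simultaneously (i) places $s^*$ in the Cram\'er regime for $M$ and (ii) meets the lower-size requirement $a\ge Ct^{3/2}n^{3/2}\sqrt{\log n}$ of Corollary~\ref{cor:mainm}; both reduce to $p\gg n^{-1/2}(\log n)^{1/2}$, explaining the lower edge of the density range. The rest of the argument is bookkeeping, and the verification above explains precisely why the hypothesised upper bounds on $\delta_n$ take the form stated.
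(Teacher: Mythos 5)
Your proposal is correct and follows essentially the same route as the paper's proof via Proposition~\ref{prop:mainp}: condition on the edge count $M\sim\mathrm{Bin}(N,p)$, locate the critical edge count at which the mean curve $L_\triangle(\cdot)$ alone reaches the target (the paper's $m_*=pN(1+\delta_n)^{1/3}$, your $pN+s^*$), use the moderate-deviation asymptotics of the binomial tail (the paper invokes Bahadur's Theorem~\ref{thm:bah}) to produce the constant $1/36$, and invoke the $G(n,m)$ deviation bound to show the contribution from edge counts below the critical window is of negligible probability. The only cosmetic differences are that the paper uses a cutoff $m_- = m_* - 2p^{-2}n^{-1}\tilde M(\delta_n,p)$ (yielding a clean quantitative error term) rather than your $\eps$-slack followed by $\eps\to 0$, and it argues the conditional probability at $m_*$ is $\tfrac12+o(1)$ directly via the CLT rather than via Chebyshev; your verification of which piece of $r(t,a)$ dominates is precisely the paper's verification of $\tilde M(\delta_n,p)\ll\delta_n p^3n^3$ in disguise.
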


\begin{remark} See Proposition~\ref{prop:mainp} for more detail about the true order of magnitude of the error term.\end{remark}

Our techniques are not so well suited to the localised region.  However, we are able to attain the order of magnitude of $\rdpn$ throughout this region.

\begin{theorem}\label{thm:localp} 
Let $n^{-1/2}(\log{n})^{1/2}\ll p\ll 1$ and let $\delta_n$ be a sequence satisfying 
\[
p^{3/4}(\log{n})^{3/4}\, ,\, n^{-1/3}(\log{n})^{2/3}\, +\, p\log(1/p)\, \le\, \delta_n\, \le 1\, .
\]
Then
\[
\rdn\, =\, \Theta(1) \min\{\delta_n^{2/3}p^2n^2\log{n}, \delta_n^{1/2}pn^{3/2}\log{n}\, +\, \delta_n p^2n^2 \log(1/p)\}\,  .
\]
\end{theorem}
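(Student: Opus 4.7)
The plan is to prove matching $O(1)$ upper and lower bounds on $r(\delta_n,p,n)$. The upper bound comes from explicit planted structures in $G(n,p)$, while the lower bound reduces to the $G(n,m)$ estimates of Corollary~\ref{cor:mainm} via decomposition on the edge count.

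For the upper bound on $r$ (i.e.\ a lower bound on the probability), I would plant three types of structures corresponding to the three candidate costs. (i) A clique on a fixed set of $k = \lceil C_1 \delta_n^{1/3}pn\rceil$ vertices, contributing $\binom{k}{3} = \Omega(\delta_n p^3(n)_3)$ triangles with probability $p^{\binom{k}{2}} = \exp(-\Theta(\delta_n^{2/3} p^2 n^2 \log(1/p)))$. (ii) In the range $\delta_n \le c(p^2n)^{-1}$, a single star vertex of degree $d = \lceil C_2\delta_n^{1/2} pn^{3/2}\rceil$, which creates $\Theta(d^2 p) = \Theta(\delta_n p^3 n^3)$ triangles with probability $\exp(-\Theta(d\log(d/(pn)))) = \exp(-\Theta(\delta_n^{1/2}pn^{3/2}\log n))$. (iii) In the complementary range $\delta_n\ge c(p^2n)^{-1}$, a set of $k = \lceil C_3\delta_n p^2 n\rceil$ disjoint full-degree hubs, creating $\Theta(\delta_n p^3 n^3)$ triangles at cost $\exp(-\Theta(\delta_n p^2 n^2\log(1/p)))$. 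In each case a Chebyshev bound on the triangle count of the remaining graph --- whose standard deviation is $O(p^{3/2}n^{3/2}) = o(\delta_n p^3n^3)$ throughout the stated range of $\delta_n$ --- shows that the planted structure forces $N_\triangle > (1+\delta_n)p^3(n)_3$ with conditional probability at least $1/2$. Taking the minimum across the three constructions, and noting that $\delta_n^{1/2}pn^{3/2}\log n + \delta_n p^2 n^2\log(1/p)$ is $\Theta$-equivalent to the cost of the applicable star-or-hub construction in each regime, this yields $r \le C\min\{\CLIQUE, \STAR+\HUB\}$.

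For the lower bound on $r$ (i.e.\ an upper bound on the probability), I would decompose $G(n,p)$ as a mixture over $m = |E(G)|$ and apply Corollary~\ref{cor:mainm} to the conditional probabilities. Writing $m = (1+\alpha)p\binom{n}{2}$, the binomial factor is $\exp(-\Theta(\alpha^2 pn^2))$ for $|\alpha| = O(1)$, and the $G(n,m)$ conditional probability is at most $\exp(-c\,r(t_m, a_m))$ with $t_m = (1+\alpha)p$ and $a_m \approx (\delta_n - 3\alpha)p^3(n)_3$. Hence the total probability is bounded by $\exp(-\min_\alpha\{c\alpha^2pn^2 + c\,r(t_m, a_m)\})$, up to polynomial factors absorbed into the constants. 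The three conditions on $\delta_n$ in the theorem are exactly the statements that the pure edge-fluctuation cost $\delta_n^2 pn^2$ (the cost attained at $\alpha\sim\delta_n$) exceeds each of $\CLIQUE$, $\STAR$ and $\HUB$ respectively. Consequently the saddle point $\alpha^*$ satisfies $|\alpha^*|\ll \delta_n$, so $t_m \sim p$ and $a_m \sim \delta_n p^3(n)_3$, and the minimum value equals $\Theta(r(p,\delta_n p^3(n)_3)) = \Theta(\min\{\CLIQUE, \STAR+\HUB\})$; the NORMAL term of Corollary~\ref{cor:mainm} is excluded precisely by the stated conditions (since the $G(n,m)$ NORMAL is even larger than its $G(n,p)$ counterpart).

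The main technical obstacle is the uniform control of the minimisation over $\alpha$. The three regimes of $r(t,a)$ have distinct growth rates in $a$ (roughly $a^{1/2}$, $a$ and $a^{2/3}$), and the marginal saving in the structural rate under a small increase of $|\alpha|$ must be compared against the quadratic edge cost $\alpha^2 pn^2$; each of the stated conditions on $\delta_n$ is precisely the inequality that makes this trade-off favour a small $\alpha^*$. A further subtlety is that for $|\alpha|$ of order $1$ the parameter $t_m$ is a different constant multiple of $p$ and Corollary~\ref{cor:mainm} must be applied with $\ell = \log(1/t_m)$ in place of $\log(1/p)$; the resulting multiplicative differences are absorbed into the final constants, and the tail $|\alpha|>1/2$ is handled separately by observing that the binomial cost already exceeds any of the candidate rates there.
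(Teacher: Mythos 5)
Your plan is essentially correct and covers both directions, but the lower bound on $\rdn$ is handled differently (and somewhat more elaborately) than in the paper, and a couple of intermediate assertions need amending.

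For the upper bound on $\rdn$ (i.e.\ the lower bound on the probability) your planted constructions match the paper exactly: a clique on $\Theta(\delta_n^{1/3}pn)$ vertices, a star of degree $\Theta(\delta_n^{1/2}pn^{3/2})$, and a hub of $\Theta(\delta_n p^2n)$ full-degree vertices. One correction: the standard deviation of $N_\triangle(G_p)$ is $\Theta(p^{5/2}n^2)$ when $p\gg n^{-1/2}$ (the dominant variance term comes from pairs of triangles sharing one edge), not $O(p^{3/2}n^{3/2})$; the latter is the $G(n,m)$ figure. Fortunately $p^{5/2}n^2 = o(\delta_n p^3n^3)$ still holds throughout the stated range, so your Chebyshev conclusion stands. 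The paper instead uses the Markov/conditional-expectation device from Section~\ref{sec:otherLBs} and is informal at this point; either argument suffices.

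For the lower bound on $\rdn$ (upper bound on the probability), the paper does not perform a full optimisation over $m$. It simply picks the single threshold $m_0 := pN + \delta_n p n^2/10$, bounds $\pr{e(G_p)>m_0}\le \exp(-\delta_n^2pn^2/200)$ by Chernoff, and then by monotonicity of the triangle count in $m$ reduces the remaining contribution to $\pr{N_\triangle(G_{m_0})\ge(1+\delta_n)p^3(n)_3}$, which is a positive $\Omega(\delta_n p^3n^3)$ deviation in $G(n,m_0)$ and is handled by Theorem~\ref{thm:mainm}. The three hypotheses on $\delta_n$ are then used exactly once, to verify that $\delta_n^2pn^2$ dominates the relevant structural rate. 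Your mixture-over-$m$ argument reaches the same conclusion, but be careful with the intermediate claim that the optimal $\alpha^*$ satisfies $|\alpha^*|\ll \delta_n$: in the clique regime one finds $\alpha^*$ from $\alpha(\delta_n-3\alpha)^{1/3}\asymp p\ell$, and near the boundary $\delta_n\asymp p^{3/4}(\log n)^{3/4}$ this gives $\alpha^* = \Theta(\delta_n)$, not $o(\delta_n)$. What actually saves the argument is that the minimum of $\alpha^2 pn^2 + r(t_m,a_m)$ over $\alpha\in[0,\delta_n/3]$ is within a constant factor of its value at $\alpha=0$: if $3\alpha^*\le \delta_n/2$ then $a_m\ge \delta_n p^3(n)_3/2$ and one is done directly, while if $3\alpha^*>\delta_n/2$ the binomial cost alone is $\Omega(\delta_n^2pn^2)$ which dominates by hypothesis. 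Your statement that the NORMAL term of Corollary~\ref{cor:mainm} ``is even larger than its $G(n,p)$ counterpart'' also needs qualification: at $\alpha=0$ the $G(n,m)$ normal rate $\delta_n^2p^3n^3$ is indeed larger than $\delta_n^2pn^2$, but after minimising over $\alpha$ the total normal cost drops back to $\Theta(\delta_n^2pn^2)$ (attained near $\alpha=\delta_n/3$); what excludes this regime is not the comparison you stated but the hypotheses on $\delta_n$ directly. In short, the fixed-threshold route avoids all of this bookkeeping and is the cleaner way to present the lower bound.
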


\subsection{Our results for cherry counts}

The corresponding problem for cherries (paths of length two) is somewhat easier.  And we would not be surprised if these results on cherries could be obtained by other methods.  However, it is very useful to state and prove these results for cherries as it provides a more elementary application of our method.  We hope this will help the reader understand the essential idea of the method in a simpler setting.
 
Let $N_{\owedge}(G)$ be the number of isomorphic copies of the path of length two in the graph $G$ (this is simply double the count without multiplicity).

Again we will see that the results break into various regimes.  This time there are only three.  The corresponding functions for cherry count deviations are:
\begin{align*}
\NORMAL_{\owedge}(b,t) \, & :=\, b^{1/2}tn^{3/2}\, ,\\
\STAR_{\owedge}(b,t)\, &:=\, \dfrac{b^2}{\ell^2} 1_{b < n\ell} \, ,
\end{align*}
and 
\[
\HUB_{\owedge}(b,t)\, :=\, \dfrac{bn}{\ell}1_{b \geq n \ell}\, . 
\]

Let 
\[
M_{\owedge}(b,t) = \max\{\NORMAL_{\owedge}(b,t), \STAR_{\owedge}(b,t), \HUB_{\owedge}(b,t)\}.
\]
and define $\DEV_{\owedge}(b,t)$ as the minimal value of $a$ such that 
\[
\pr{N_{\owedge}(G_m) > \Ex{N_{\owedge}(G_m)} + a}\,  \leq\, e^{-b}\, .
\]
That is, $\DEV_{\owedge}(b,t)$ describes the size of deviation which has probability $e^{-b}$.

Our main theorem concerning the cherry count deviation determines the order of magnitude of $\DEV_{\owedge}(b,t)$ across (essentially) the entire range of the parameters $b$ and $t$.

\begin{restatable}{theorem}{cherries}\label{thm:cherries}
There exist absolute constants $c, C$ such that the following holds. Suppose that $2n^{-1}\log{n}\le t \le 1/2$ and that $3 \log n \leq b \leq tn^2 \ell$.  Then
\[
cM_{\owedge}(b,t) \leq \DEV_{\owedge}(b,t) \leq CM_{\owedge}(b,t)\, .
\]
\end{restatable}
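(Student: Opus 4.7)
The key identity $N_{\owedge}(G) = \sum_v d(v)(d(v)-1)$ reduces the theorem to concentration of $\sum_v d(v)^2$ in $G(n,m)$, and this identity shapes both halves of the argument. For the lower bound, the plan is to exhibit in each regime an event of probability at least $e^{-Cb}$ that forces an excess of at least $cM_{\owedge}(b,t)$: in the normal regime, using $\Var(N_{\owedge}(G_m))=\Theta(t^2n^3)$ combined with a Paley--Zygmund argument (or Janson's CLT for subgraph counts in $G(n,m)$); in the star regime ($b<n\ell$), by fixing a vertex $v$ and conditioning on $d(v)\geq D:=\lceil b/\ell\rceil$ (an event of probability $\geq \binom{n-1}{D}t^D(1-t)^{n-1-D}\geq e^{-Cb}$), so that a typical-value estimate for $\sum_{w\neq v}d(w)^2$ gives cherry excess $D^2-O(Dtn)=\Omega(b^2/\ell^2)$ because $D\gg tn$ throughout the range where star strictly dominates normal; and in the hub regime ($b\geq n\ell$), by the analogous construction with $k:=\lceil b/(n\ell)\rceil$ vertices each of degree $\geq n/2$, yielding probability $\geq e^{-Cb}$ and excess $\Omega(kn^2)=\Omega(bn/\ell)$.

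For the upper bound, I work along the edge-revelation process $G_0\subset G_1\subset\dots\subset G_m$. Writing $\Delta_j:=N_{\owedge}(G_j)-N_{\owedge}(G_{j-1})=2(d_{j-1}(u_j)+d_{j-1}(v_j))$, a direct computation gives
\[
\Ex{\Delta_j\mid\mathcal{F}_{j-1}}\,=\,\frac{4(j-1)(n-2)-2N_{\owedge}(G_{j-1})}{N-j+1},
\]
so that setting $Z_j:=N_{\owedge}(G_j)-\Ex{N_{\owedge}(G_j)}$ and $Y_j:=\sum_{k\leq j}(\Delta_k-\Ex{\Delta_k\mid\mathcal{F}_{k-1}})$ (a martingale) produces the linear identity $Z_m=Y_m-\sum_j 2Z_{j-1}/(N-j+1)$. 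Since $\sum_j 2/(N-j+1)\leq 2\log 2$ for $t\leq 1/2$, a short iteration gives $\max_j|Z_j|\leq C\max_j|Y_j|$. Next, fix $H:=C_0(tn+b/\ell)$ and let $\mathcal{B}:=\{\max_{v,j}d_j(v)>H\}$; a union bound combined with hypergeometric Chernoff tails gives $\pr{\mathcal{B}}\leq\tfrac12 e^{-b}$ (using $b\geq 3\log n$). Stopping $Y_j$ at the first violation of $\max d\leq H$, each increment is bounded by $4H$ and on the typical part of $\mathcal{B}^c$ the sum of conditional variances is $V=\sum_j 8\sigma^2(G_{j-1})=O(t^2n^3)$. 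Freedman's inequality then gives
\[
\pr{\max_j|Y_j|>a,\,\mathcal{B}^c}\,\leq\,\exp\bigl(-c\min\{a^2/V,\,a/H\}\bigr),
\]
which handles the normal regime immediately via $a^2/V\geq b$ when $a=b^{1/2}tn^{3/2}$. For the star and hub regimes, where the $a/H$-branch is weaker than needed by a factor of order $\ell$, I decompose $\mathcal{B}$ dyadically across degree scales $D_k=2^k tn$ and bound the probability of each slice $\{\max d\in[D_k,2D_k)\}$ separately using a scale-adapted Freedman with increment bound $O(D_k)$ and slice-specific variance $V_k$.

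\emph{Main obstacle.} The central technical step is controlling the sum of conditional variances on $\mathcal{B}^c$ (and on each dyadic slice) down to $O(t^2n^3)$: a single outlier vertex of degree $\sim H\gg tn$ contributes $\Omega(H^2/n)$ per step to the degree-sequence variance $\sigma^2$, and without further care this inflates $V$ by a factor of $H^2/(tn^2)$ and destroys the star-regime bound. Resolving this requires either a sharper truncation that controls $\sum_v d(v)^2$ uniformly along the process (beyond just $\max d$), or, more elegantly, a bootstrapping of the theorem's conclusion from smaller values of $b$ to supply the missing concentration at each intermediate scale; verifying that these two ingredients combine consistently across the three regimes, with Freedman's $a^2/V$ branch handling normal and the dyadic slicing handling star/hub, is the main work.
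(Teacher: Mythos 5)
Your upper-bound skeleton is essentially the paper's: the martingale $Y_j$ you construct, after summation by parts using the recurrence $Z_j=(1-\tfrac{2}{N-j+1})Z_{j-1}+(Y_j-Y_{j-1})$, recovers exactly the weights $\tfrac{(N-m)_2}{(N-i)_2}$ that the paper uses in its martingale representation of $D_\owedge(G_m)$, and the truncate-then-Freedman strategy with a separate account of the large increments is the right plan. But the ``main obstacle'' you flag at the end is a genuine and substantial gap, not a verification step. Stopping at the first violation of $\max_v d(v)\le H$ does not control the quadratic variation $V=\sum_j\Ex{(Y_j-Y_{j-1})^2\mid\mathcal F_{j-1}}$: each step contributes $\Theta(n^{-1}\sum_v D_v(G_{j-1})^2)$, and a linear number of vertices with degree deviation slightly above $\Theta(t^{1/2}n^{1/2})$ (well below $H$) already inflates $V$. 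What is actually needed, and what the paper proves (Lemma~\ref{lem:sd2} and Proposition~\ref{prop:sumsquare}), is exponential-tail concentration of $\sum_v D_v(G_i)^2$ \emph{uniformly in $i$}, achieved by an encoding of the degree deviation profile into a test vector $\sigma\in\{0,\pm 2^j\}^{V}$ so that the quadratic quantity is linearised to $\sum_v\sigma_v D_v(G_i)$, followed by a McDiarmid/Freedman-type estimate (Proposition~\ref{cor:F}) and a Gaussian-sum union bound over $\sigma$ (Proposition~\ref{prop:integral}). Your dyadic slicing of the truncation level is then replaced by a dyadic decomposition of $\kappa(b,t)$ and $\kappa^+(b,t)$ inside that single lemma, and the star/hub parts of the upper bound follow directly from the $\kappa^+$ bound without any Freedman step at all. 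So the gap you identified is precisely the paper's hardest lemma, and neither of the two resolutions you sketch (a ``sharper truncation'' or ``bootstrapping'') is carried out.

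For the lower bound, your star and hub constructions match the paper. But your proposed argument in the normal regime does not work across the claimed range: Paley--Zygmund and Janson's CLT both give only $\Theta(1)$ probabilities for $\Theta(\sigma)$-scale deviations, hence only handle $b=O(1)$, whereas the normal regime extends to $b$ as large as $t^{2/3}n\ell^{4/3}$, which can be polynomial in $n$. To get probability $\ge e^{-b}$ for such $b$ you need a genuine tail lower bound; the paper uses Freedman's converse inequality (Lemma~\ref{lem:CF}) applied to a ``repaired'' martingale $D''_\owedge$ (re-centred after truncation so that the increments are bona fide martingale increments), together with a matching lower bound on the quadratic variation. This is a different and strictly stronger ingredient than a second-moment or CLT argument.
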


We stress that this result is given as a secondary result to introduce methods which will be used in the upper bound part of Theorem~\ref{thm:mainm}.  For this reason we only give a sketch proof of the lower bound, see Section~\ref{sec:cherrieslower}.

\subsection{An informal discussion of our approach}

As we have mentioned, the results for the $G(n,p)$ setting will follow from results proved for the $G(n,m)$ model.  In this subsection we give a quick intuitive idea of the approach to bounding triangle count deviations in $G(n,m)$.

Naturally, the approach has some differences based on the regime in question.  For definiteness, let us discuss the normal regime, i.e., the regime in which $\NORMAL(b,t)=b^{1/2}t^{3/2}n^{3/2}$ is the maximum of the four, and so $M(b,t)=\NORMAL(b,t)$.  In this context our objective is to prove that for some constants $c<C$, we have
\[
\pr{D_{\triangle}(G_m)\, \ge \, cb^{1/2}t^{3/2}n^{3/2}}\, \ge\, \exp(-b)
\]
and
\[
\pr{D_{\triangle}(G_m)\, \ge \, Cb^{1/2}t^{3/2}n^{3/2}}\, \le\, \exp(-b)\, .
\]

As the standard deviation $\sigma$ of $N_{\triangle}(G_m)$ is of order $t^{3/2}n^{3/2}$, this corresponds to an (approximate) extension of central limit theorem.  We recall that more precise bounds were proved in~\cite{GGS}, but these results seem to be difficult to extend to sparser random graphs.

In common with~\cite{GGS} our approach is based on a martingale expression for the deviation $D_{\triangle}(G_m)$ and then using Freedman's martingale inequalities (which we state in the next section) to give both upper and lower bounds on the probability of deviations.  This expression (discussed in more detail in Section~\ref{sec:Mart})
\[
D_{\triangle}(G_m)\, =\, \sum_{i=1}^{m} \left[3\frac{(N-m)_{2}(m-i)}{(N-i)_3}\, X_{\owedge}(G_i)\, +\, \frac{(N-m)_3}{(N-i)_3}\, X_{\triangle}(G_i)\right]\, ,
\]
expresses the triangle count deviation in terms of increments which are related to the created of cherries ($X_{\owedge}(G_i)$) and triangles ($X_{\triangle}(G_i)$) in the $i$th step of the process.

The challenges associated with this approach include:
\begin{itemize}
\item understanding the variance, or more precisely, the likely quadratic variation, of the process, and
\item controling the effect of large increments on the course of the martingale.
\end{itemize}

The term $X_{\owedge}(G_i)$ may cause a large increment if the new edge $e_i=uw$ is such that one of the degrees $d(u)$ or $d(v)$ is far from the average.  The term $X_{\triangle}(G_i)$ may cause a large increment if the new edge $e_i=uw$ is such that the codegree $d(u,w)$ is far from its expected value. 

We therefore require a good understanding of degrees, codegrees, and their deviations.  We use truncations and the so-called divide and conquer martingale approach.  Informally, this means that we use our understanding of degrees and codegrees to control the effect of large increments, while using the martingale approach to control the remaining increments.

Unfortunately, as we shall see, this approach becomes quite technical.  We hope at least that this informal discussion puts the rest of the proof in context.

\subsection{Overview of the article}

In Section~\ref{sec:Aux} we introduce necessary background and inequalities that will be used throughout the article.  We then turn to the properties of degrees in $G(n,m)$, we prove a number of bounds in Section~\ref{sec:degs}.  In Section~\ref{sec:cherries}, we prove (the upper bound part of) Theorem~\ref{thm:cherries} about cherries.

We then begin to prepare ourselves for the results on triangles.  We begin by studying codegrees in $G(n,m)$ in Section~\ref{sec:codegs}.  We then prove the required upper bounds on deviations in Section~\ref{sec:UBtri}, and lower bounds in Section~\ref{sec:LBtri}, which together prove Theorem~\ref{thm:mainm}.  As the lower bound results for cherries are similar, we discuss them in Section~\ref{sec:cherrieslower}.

We then show how the results may be transferred to $G(n,p)$ in Section~\ref{sec:pworld}.  We give some concluding remarks in Section~\ref{sec:ConcRem}.  The article also includes an appendix where we include a few of the proofs which we consider to be less directly relevant.

\section{Important auxiliary results and inequalities}\label{sec:Aux}

In this section we give some important auxiliary results and inequalities which we shall use throughout the article.

We first remark that we write $N$ for $\binom{n}{2}$, and set $t:=m/N$, which is the density of the random graph $G_m\sim G(n,m)$.  We will sometimes consider the Erd\H{o}s-R\'enyi random graph process for generating $G_m$.  The process $G_0,G_1,\dots ,G_m$ may be generated from the empty graph $G_0$ (with $n$ vertices) by adding a uniformly random edge at each step, chosen from amongst the edges not already present.  Equivalently, one may consider $\{e_1,\dots ,e_N\}$, a uniformly random permutation of the edges of $K_n$, and define the sequence of graphs $G_0,\dots ,G_m$ by setting $E(G_i)=\{e_1,\dots,e_i\}$.  In this context we denote by $s$ the density $s:=i/N$ of the graph $G_i$, and we note that $G_i\sim G(n,i)$.

When it is convenient, and not likely to confuse, we ignore the problem of rounding to the nearest integer, and floor signs, and so on.

We now begin stating the required auxiliary results.  

\subsection{Chernoff inequalities and related inequalities} We shall state Chernoff inequalities for binomial or hypergeometric random variables, see Theorems 2.1 and 2.10 of~\cite{JLR}. Note that the last two inequalities follow from~\eqr{h1}.

\begin{lem}\label{lem:Chern} Let $X$ be a binomial or hypergeometric random variable and let $\mu=\Ex{X}$. Then, for all $a>0$, we have
\eq{h1}
\pr{X\, \ge\, \mu +a}\,  \le\, \exp\left(\frac{-a^2}{2\mu+2a/3}\right)
\eqe
and
\eq{h2}
\pr{X\, \le\, \mu -a}\, \le\, \exp\left(\frac{-a^2}{2\mu}\right)\, .
\eqe
For $\theta\ge e$ we have
\eq{h3}
\pr{X\, \ge\, \theta\mu}\, \le\, \exp\left(- \theta \mu \big(\log(\theta)-1\big)\right)\, .
\eqe
Consequently, for $j\ge 3$ and any $\nu\ge \mu$ we have
\eq{h4}
\pr{X\, \ge\, 2^j \nu}\, \le\, \exp\left(-j2^{j-2}\nu \right)\, .
\eqe
\end{lem}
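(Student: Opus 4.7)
The plan is to treat~\eqr{h1} and~\eqr{h2} as known, and to derive~\eqr{h3} and~\eqr{h4} from the standard exponential moment machinery.  The setting covers both the binomial and hypergeometric cases; in the hypergeometric case I would invoke Hoeffding's classical convex-order comparison with the corresponding binomial, which is exactly the device used in the proof of Theorem~2.10 of~\cite{JLR} and which means any Chernoff bound proved via a convex exponential function transfers automatically.  With this in mind,~\eqr{h1} and~\eqr{h2} are the standard Bernstein-form bounds and I would simply quote them from Theorems~2.1 and~2.10 of~\cite{JLR}.

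For~\eqr{h3} I would run the MGF bound with parameter $\lambda = \log \theta > 0$.  Writing $X = \sum_i X_i$ as a sum of indicators and using $\Ex{e^{\lambda X}} \le \exp(\mu(e^\lambda - 1))$ (by independence in the binomial case, or by the convex-order argument in the hypergeometric case), Markov's inequality gives
\[
\pr{X \ge \theta \mu} \, \le\, \exp\bigl(-\lambda \theta \mu + \mu(e^\lambda - 1)\bigr) \, =\, \exp\bigl(-\mu(\theta \log \theta - \theta + 1)\bigr).
\]
Since $\theta \log \theta - \theta + 1 \ge \theta(\log \theta - 1)$, this immediately yields~\eqr{h3}.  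It is worth noting that~\eqr{h1} alone is not strong enough to recover~\eqr{h3} for large $\theta$ (since plugging $a=(\theta-1)\mu$ into~\eqr{h1} produces an exponent growing only linearly in $\theta$, whereas~\eqr{h3} grows like $\theta \log \theta$), so the short MGF derivation above, rather than a direct appeal to~\eqr{h1}, seems unavoidable.

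Finally,~\eqr{h4} is a one-line consequence of~\eqr{h3}.  I would apply~\eqr{h3} with $\theta := 2^j \nu / \mu$, which satisfies $\theta \ge 2^j \ge 8 > e$ since $\nu \ge \mu$ and $j \ge 3$.  Then $\theta \mu = 2^j \nu$ and $\log \theta \ge j \log 2$, so
\[
\theta \mu (\log \theta - 1) \, \ge\, 2^j \nu \, (j \log 2 - 1) \, \ge\, 2^j \nu \cdot \tfrac{j}{4} \, =\, j \, 2^{j-2} \, \nu,
\]
where the last step uses $j(\log 2 - 1/4) \ge 1$ for $j \ge 3$.

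The whole statement is bookkeeping on top of standard tail inequalities, so I do not foresee any real obstacle.  The only delicate point is the numerical check that $j \log 2 - 1 \ge j/4$ at $j=3$, which holds with modest slack ($\log 2 - 1/4 \approx 0.44$, so $3 \cdot 0.44 > 1$), and the one-line verification that the extra ``$+1$'' discarded in going from $\theta \log \theta - \theta + 1$ to $\theta(\log \theta - 1)$ is harmless.
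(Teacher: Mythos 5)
Your proof is correct, and your observation at the end is an important one that deserves to be flagged. The paper does not actually prove Lemma~\ref{lem:Chern}; it cites Theorems~2.1 and~2.10 of~\cite{JLR} for \eqref{eq:h1}--\eqref{eq:h2} and then asserts in the surrounding text that ``the last two inequalities follow from~\eqref{eq:h1}.'' As you correctly point out, this assertion cannot be right for~\eqref{eq:h3}: substituting $a=(\theta-1)\mu$ into~\eqref{eq:h1} gives an exponent of order $\theta\mu$ for large $\theta$, while~\eqref{eq:h3} demands order $\theta\mu\log\theta$, so~\eqref{eq:h1} alone is too weak. In fact the paper's claim also fails for~\eqref{eq:h4}: with $\mu=\nu$ the exponent from~\eqref{eq:h1} is $3(2^j-1)^2\nu/(2^{j+1}+4)$, which tends to $6\cdot 2^j\nu$ as $j\to\infty$ and so drops below the required $j2^{j-2}\nu$ already at $j=6$. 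The correct source for both~\eqref{eq:h3} and~\eqref{eq:h4} is the $\phi$-form Chernoff bound $\pr{X\ge(1+x)\mu}\le\exp(-\mu\phi(x))$ with $\phi(x)=(1+x)\log(1+x)-x$, which is also part of Theorem~2.1 of~\cite{JLR} and transfers to the hypergeometric case via Theorem~2.10 exactly as you say. Your MGF computation with $\lambda=\log\theta$ rederives precisely this bound, and your numerical checks ($\theta\log\theta-\theta+1\ge\theta(\log\theta-1)$ and $j\log 2-1\ge j/4$ for $j\ge 3$) are both valid with room to spare. So your route is not merely an alternative — it is the route that actually works, and the paper's offhand remark would need to be corrected to cite the $\phi$-form bound (or your MGF argument) rather than~\eqref{eq:h1}.
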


It is sometimes also useful to consider random variables which are sums of binomial or hypergeometric random variables.  If $X$ is a sum of two such random variables then it follows from Lemma~\ref{lem:Chern} that:
\eq{h5}
\pr{|X-\mu|\, \ge\, a}\,  \le\, 4\exp\left(\frac{-a^2}{8\mu+2a}\right)
\eqe
and, for $j\ge 4$, and any $\nu\ge \mu$
\eq{h6}
\pr{X\, \ge\, 2^j \nu}\, \le\, 2\exp\left(-j2^{j-3}\nu \right)\, .
\eqe

In particular, this applies when we count edges between sets with multiplicity.  Given a graph $G$ we may define
\[
e(U,W)\, :=\, \sum_{u\in U}\sum_{w\in W}1_{uw\in E(G)}\, .
\]
Note that setting $X=e(U,W)$, and taking $G$ with distribution $G(n,m)$ or $G(n,p)$ then $X$ is a sum of two binomial or two hypergeometric random variables, and so inequalities~\eqr{h5} and~\eqr{h6} hold.

\subsection{Martingale deviation inequalities}

We begin with the Hoeffding-Azuma inequality~\cite{Hoeff,Azuma}. This inequality states that for a supermartingale $(S_i)_{i=0}^{m}$ with increments $(X_i)_{i=1}^{m}$ we have
\eq{HA}
\pr{S_m-S_0\, >\, a}\, \le \, \exp\left(\frac{-a^2}{2\sum_{i=1}^{m}c_i^2}\right)\, .
\eqe
for all $a>0$, where $c_i:=\|X_i\|_{\infty}$.

When increments are typically much smaller than their supremum the following inequality of Freedman gives better bounds.

\begin{lem}[Freedman's inequality]\label{lem:F} Let $(S_i)_{i=0}^{m}$ be a supermartingale with increments $(X_i)_{i=1}^{m}$ with respect to a filtration $(\F_i)_{i=0}^{m}$, let $R\in \mathbb{R}$ be such that $\max_i |X_i|\le R$ almost surely, and for each $j\ge 1$ let 
\[
V(j):=\sum_{i=1}^{j}\,\, \Ex{\,  X_i^2\, \big|\, \F_{i-1}}\, .
\]  
Then, for every $\alpha,\beta >0$, we have
\[
\mathbb{P}\big(S_j-S_0\, \ge\,  \alpha\quad \text{and}\quad V(j)\le \beta \quad \text{for some } j\big)\, \le\, \exp\left(\frac{-\alpha^2}{2(\beta+R\alpha)}\right)\, .
\]
In particular, 
\[
\mathbb{P}\big(S_j-S_0\, \ge\,  \alpha\big)\, \le\, \exp\left(\frac{-\alpha^2}{2(\beta+R\alpha)}\right)\, +\, \pr{V(j)>\beta} .
\]
\end{lem}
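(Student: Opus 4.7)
The plan is to follow the classical exponential-supermartingale method of Freedman. First I would construct a nonnegative supermartingale $M_i := \exp\!\big(\lambda(S_i-S_0) - g(\lambda) V_i\big)$ for an appropriate $\lambda > 0$ and function $g$, where $V_i := \sum_{j \le i} \Ex{X_j^2 \mid \F_{j-1}}$. Then, letting $\tau$ be the first $i$ at which both $S_i - S_0 \ge \alpha$ and $V_i \le \beta$ hold (with $\tau = \infty$ otherwise), Doob's optional-stopping theorem for nonnegative supermartingales combined with Markov's inequality will give $\pr{\tau < \infty} \le \exp(-\lambda\alpha + g(\lambda)\beta)$, and the stated bound will follow by choosing $\lambda$ optimally.

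The supermartingale construction rests on a pointwise comparison. Setting $g(\lambda) := (e^{\lambda R} - 1 - \lambda R)/R^2$, the function $h(x) := (e^{\lambda x} - 1 - \lambda x)/x^2$ admits the power series expansion $\lambda^2 \sum_{k \ge 0} (\lambda x)^k/(k+2)!$, whose coefficients are all nonnegative, and is therefore nondecreasing on all of $\mathbb{R}$. Hence for $|x| \le R$,
\[ e^{\lambda x} \, \le \, 1 + \lambda x + g(\lambda) x^2. \]
Applied conditionally to $X_i$, and combined with the supermartingale inequality $\Ex{X_i \mid \F_{i-1}} \le 0$ and the elementary bound $1+y \le e^y$, this gives
\[ \Ex{e^{\lambda X_i} \mid \F_{i-1}} \, \le \, 1 + g(\lambda)\Ex{X_i^2 \mid \F_{i-1}} \, \le \, \exp\!\left( g(\lambda)\Ex{X_i^2 \mid \F_{i-1}}\right). \]
Multiplying these bounds telescopically shows that $(M_i)$ is a supermartingale with $M_0 = 1$.

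Next, the event that there exists $m$ with $S_m-S_0 \ge \alpha$ and $V(m) \le \beta$ coincides with $\{\tau < \infty\}$. Since $V_i$ is $\F_{i-1}$-measurable and $S_i$ is $\F_i$-measurable, $\tau$ is a stopping time; and on $\{\tau < \infty\}$, by construction $M_\tau \ge \exp(\lambda\alpha - g(\lambda)\beta)$. Applying optional stopping to $M_{\tau \wedge n}$ and letting $n \to \infty$ via Fatou yields $\Ex{M_\tau \mathbf{1}_{\tau<\infty}} \le 1$, whence
\[ \pr{\tau < \infty} \, \le \, \exp(-\lambda\alpha + g(\lambda)\beta). \]

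Finally I would optimise in $\lambda$. The estimate $e^u - 1 - u \le u^2/(2(1-u/3))$ on $[0,3)$ gives $g(\lambda) \le \lambda^2/(2(1 - \lambda R/3))$, and plugging in $\lambda := \alpha/(\beta + R\alpha/3)$ makes $1 - \lambda R/3$ collapse to $\beta/(\beta + R\alpha/3)$; routine algebra then produces exponent $-\alpha^2/(2(\beta + R\alpha/3))$, which is at most $-\alpha^2/(2(\beta + R\alpha))$ as required. The unconditional form follows at once by writing $\pr{S_m - S_0 \ge \alpha} \le \pr{\tau < \infty} + \pr{V(m) > \beta}$. The only mildly delicate point, which I view as the main thing to keep tidy, is the two-sided nature of the pointwise bound (since the $X_i$ can have either sign): one has to verify monotonicity of $h$ on all of $\mathbb{R}$ rather than only on $(0,\infty)$, and keep careful track of the sign in $\Ex{X_i \mid \F_{i-1}} \le 0$. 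The rest is standard bookkeeping.
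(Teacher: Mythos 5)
The paper states Lemma~\ref{lem:F} without proof---it is Freedman's classical inequality, cited from his paper---so there is no proof of the authors' to compare against; your argument follows the standard exponential-supermartingale route and is essentially sound.

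There is, however, one genuine gap, and it sits in the very step you singled out as the main thing to keep tidy. From the power series $h(x)=\lambda^2\sum_{k\ge 0}(\lambda x)^k/(k+2)!$ you conclude that, because the coefficients are nonnegative, $h$ is ``therefore nondecreasing on all of $\mathbb{R}$.'' That implication is false: nonnegative Taylor coefficients only force monotonicity on $[0,\infty)$ (the function $1+x^2$ is a counterexample on $(-\infty,0)$). The conclusion you need is nevertheless true, but it requires a separate argument for $x<0$. The cleanest route is the integral representation
\[
\frac{e^{y}-1-y}{y^{2}}\;=\;\int_{0}^{1}(1-s)\,e^{sy}\,ds,
\]
which is manifestly nondecreasing in $y$ on all of $\mathbb{R}$ since $e^{sy}$ is nondecreasing in $y$ for every $s\ge 0$; alternatively, differentiate and check $\big((y-2)e^{y}+y+2\big)/y^{3}\ge 0$ by a second-order expansion about $0$. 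With that repair, the remainder of your proof---the supermartingale property of $M_i$, the measurability of $V_i$ in $\F_{i-1}$ (hence that $\tau$ is a stopping time), the optional stopping with Fatou, the bound $e^{u}-1-u\le u^{2}/\big(2(1-u/3)\big)$, and the choice $\lambda=\alpha/(\beta+R\alpha/3)$---all checks out and delivers the stated exponent.
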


The following converse result shows that the above bound is often close to best possible. Let $\tau_\alpha$ be the least $j$ such that $S_j\ge S_0+\alpha$ (and infinity if this never occurs), and set
\[
T_{\alpha}\, :=\, V(\tau_{\alpha})\, ,
\]
where we interpret $V(\infty)$ as $\infty$.  The above inequality states that
\[
\pr{T_{\alpha}\le \beta}\, \le\, \exp\left(\frac{-\alpha^2}{2(\beta+R\alpha)}\right)\, .
\]
Freedman's converse inequality~\cite{F} is as follows.

\begin{lem}[Converse Freedman inequality]\label{lem:CF}
Let $(S_i)_{i=0}^{m}$ be a martingale with increments $(X_i)_{i=1}^{m}$ with respect to a filtration $(\F_i)_{i=0}^{m}$, let $R$ be such that $\max_i|X_i|\le R$ almost surely, and let $T_{\alpha}$ be as defined above.
Then, for every $\alpha,\beta >0$, we have
\[
\pr{T_{\alpha}\le \beta}\, \ge\, \frac{1}{2} \exp\left(\frac{-\alpha^2(1+4\delta)}{2\beta}\right),
\]
where $\delta>0$ is such that $\beta/\alpha\ge 9R\delta^{-2}$ and $\alpha^2/\beta \ge 16\delta^{-2}\log(64\delta^{-2})$.
\end{lem}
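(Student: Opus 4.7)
The plan is to prove Lemma~\ref{lem:CF} by an exponential change-of-measure (Esscher tilt) argument. Fix $\lambda = \alpha/\beta$ and let $\tau = m_\alpha \wedge m$. Consider the exponential martingale
\[
M_i\, =\, \prod_{j=1}^{i\wedge \tau} \frac{e^{\lambda X_j}}{\Ex{e^{\lambda X_j}\mid\F_{j-1}}}\, ,
\]
a non-negative, mean-one $(\F_i)$-martingale (well defined thanks to $|X_j|\le R$). Define a tilted probability measure $\mathbb{Q}$ on $\F_\tau$ by $d\mathbb{Q}/d\mathbb{P} = M_\tau$; under $\mathbb{Q}$, each increment $X_j$ picks up a positive conditional mean of order $\lambda \sigma_j^2$ (with $\sigma_j^2 := \Ex{X_j^2\mid\F_{j-1}}$), so $S_i-S_0$ drifts upwards toward level $\alpha$, and it becomes likely that $m_\alpha$ occurs while the accumulated quadratic variation is at most $\beta$.

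The first step is to show that $\mathbb{Q}(T_\alpha\le\beta)\ge 3/4$. The process $S_i - S_0 - \sum_{j\le i\wedge \tau} \mathbb{E}^{\mathbb{Q}}[X_j\mid\F_{j-1}]$ is a $\mathbb{Q}$-martingale with bounded increments and quadratic variation comparable to $V(i) = \sum_{j\le i}\sigma_j^2$. Applying the direct form of Freedman's inequality (Lemma~\ref{lem:F}) in the $\mathbb{Q}$-world, the probability that this centred process deviates downwards by more than $\alpha$ before $V$ reaches $\beta$ is at most $\exp(-c\alpha^2/\beta)$, which is $\le 1/4$ thanks to the hypothesis $\alpha^2/\beta \ge 16\delta^{-2}\log(64\delta^{-2})$.

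The second step is to bound the Radon--Nikodym derivative pointwise on the good event. The bounds $|X_j|\le R$ and $\Ex{X_j\mid\F_{j-1}}=0$, combined with the elementary inequality $e^x \ge 1+x+\tfrac{x^2}{2}(1-|x|/3)$, give
\[
\log\Ex{e^{\lambda X_j}\mid\F_{j-1}}\, \ge\, \frac{\lambda^2\sigma_j^2}{2}\bigl(1-C\lambda R\bigr)
\]
for some absolute constant $C$. Summing over $j\le m_\alpha$ and using $S_{m_\alpha}-S_0\le \alpha + R$ yields, on $\{T_\alpha\le\beta\}$,
\[
\log M_\tau\, \le\, \lambda(\alpha+R)\, -\, \frac{\lambda^2 T_\alpha}{2}(1-C\lambda R)\, \le\, \frac{\alpha^2 (1+4\delta)}{2\beta}\, ,
\]
where the last step uses $\lambda = \alpha/\beta$ and the hypothesis $\beta/\alpha\ge 9R\delta^{-2}$ to force $\lambda R = O(\delta^2)$, so every error term fits inside the stated $(1+4\delta)$ factor. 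Combining with the change-of-measure identity
\[
\pr{T_\alpha\le\beta}\, =\, \mathbb{E}^{\mathbb{Q}}\bigl[M_\tau^{-1} 1_{\{T_\alpha\le\beta\}}\bigr]\, \ge\, \exp\!\left(-\frac{\alpha^2(1+4\delta)}{2\beta}\right)\, \mathbb{Q}(T_\alpha\le\beta)
\]
and the $3/4$ bound finishes the proof.

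The main obstacle is the second step: what we need is a pointwise \emph{lower} bound on the conditional log-mgf (in order to upper-bound $M_\tau$), whereas the usual concentration inequalities go the other way, upper-bounding $\log\Ex{e^{\lambda X}}$. Obtaining a tight lower bound requires exploiting the boundedness $|X_j|\le R$ through the third-order Taylor term, which is precisely why the condition $\beta/\alpha\ge 9R\delta^{-2}$ is needed: the correction $C\lambda R$ must be of order $\delta^2$ so that, after multiplying by $\lambda^2 T_\alpha/2$, it contributes only an additive $O(\delta\cdot\alpha^2/\beta)$ to the exponent. The parallel condition $\alpha^2/\beta\ge 16\delta^{-2}\log(64\delta^{-2})$ plays the analogous role for the $\mathbb{Q}$-concentration argument in the first step.
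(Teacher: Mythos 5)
Your overall strategy — an exponential tilt $d\mathbb{Q}/d\mathbb{P}=M_\tau$ with $\lambda=\alpha/\beta$, showing the good event has $\mathbb{Q}$-probability bounded below, and bounding the Radon--Nikodym derivative pointwise on it — is exactly the right framework, and it is essentially how Freedman proves the result in~\cite{F} (the paper itself does not prove Lemma~\ref{lem:CF}; it cites Freedman directly). However, there is a genuine gap in your second step.

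The inequality chain
\[
\log M_\tau\, \le\, \lambda(\alpha+R)\, -\, \frac{\lambda^2 T_\alpha}{2}(1-C\lambda R)\, \le\, \frac{\alpha^2 (1+4\delta)}{2\beta}
\]
does \emph{not} hold uniformly on the event $\{T_\alpha\le\beta\}$. The middle expression is \emph{decreasing} in $T_\alpha$; it is maximised over $T_\alpha\in[0,\beta]$ at $T_\alpha=0$, where it equals $\lambda(\alpha+R)=\alpha^2/\beta+\alpha R/\beta$. Since $\alpha^2/\beta$ is roughly \emph{twice} your target $\alpha^2(1+4\delta)/(2\beta)$, the second inequality fails whenever $T_\alpha$ is significantly less than $\beta$. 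Nothing in your first step rules this out: $\mathbb{Q}(T_\alpha\le\beta)\ge 3/4$ says the quadratic variation at the hitting time is not too large, but permits it to be arbitrarily small, and it is precisely those outcomes that break the pointwise bound on $M_\tau^{-1}$.

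The fix — which is what Freedman actually does — is to restrict the change-of-measure identity to the more delicate event $G_\epsilon:=\{(1-\epsilon)\beta\le T_\alpha\le\beta\}$ for a suitable $\epsilon$ of order $\delta$. Then on $G_\epsilon$ one has $\lambda^2 T_\alpha/2\ge(1-\epsilon)\alpha^2/(2\beta)$, so your upper bound on $\log M_\tau$ becomes $\frac{\alpha^2}{2\beta}\big(1+\epsilon+O(\lambda R)\big)+O(\alpha R/\beta)$, which does fit into $(1+4\delta)$ using $\beta/\alpha\ge 9R\delta^{-2}$. But then your first step must be strengthened to $\mathbb{Q}(G_\epsilon)\ge 1/2$, which requires an extra piece of work: you must show that under $\mathbb{Q}$ it is unlikely that $m_\alpha$ is reached with quadratic variation below $(1-\epsilon)\beta$. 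This amounts to bounding the probability that the centred martingale (under $\mathbb{Q}$) deviates upwards by more than roughly $\epsilon\alpha$ while its quadratic variation is at most $\beta$; that probability is at most $\exp(-c\epsilon^2\alpha^2/\beta)$, and it is here that the hypothesis $\alpha^2/\beta\ge 16\delta^{-2}\log(64\delta^{-2})$ is needed (with $\epsilon\asymp\delta$), rather than in the step you attributed it to. Without this refinement the argument does not close, so as written the proposal has a real hole, albeit one that your framework can accommodate.
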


We shall also prove a corollary of Freedman's inequality applied to the $G(n,m)$ setting.  It is in the spirit of McDiarmid's inequality~\cite{McD}, which considers deviation probabilities for functions which satisfy a ``Lipschitz'' property.  However, we work with a weaker ``Lipschitz'' property where the analogue of the ``Lipschitz constant'' depends on the edges in question.

Let $\mathcal{G}_{n,m}$ be the family of graphs with $n$ vertices and $m$ edges.  We may consider an edit distance between graphs $G,G'\in \mathcal{G}_{n,m}$.  We consider two graphs $G,G'$ to be adjacent (distance $1$) if $E(G')=E(G)\setminus \{e\}\cup \{e'\}$ for some pair $e,e'\in E(K_n)$.  Given a function $\psi:E(K_n)\to \mathbb{R}^{+}$, we say that a function $f:\mathcal{G}_{n,m}\to \mathbb{R}$ is \emph{$\psi$-Lipschitz} if for every adjacent pair of graphs $G,G'\in \mathcal{G}_{n,m}$ we have
\[
\big|\, f(G)\, -\, f(G')\, \big|\, \le\, \psi(e)\, +\, \psi(e')\, ,
\]
where $E(G)\bigtriangleup E(G')=\{e, e'\}$.

\begin{prop}\label{cor:F}  Let $G_m\sim G(n,m)$ with $t=m/N$.  Given $\psi:E(K_n)\to \mathbb{R}^+$ and a $\psi$-Lipschitz function $f:\mathcal{G}_{n,m}\to \mathbb{R}$, we have
\[
\pr{f(G_m)\, -\,\Ex{f(G_m)}\, \ge \, a}\, \le\, \exp\left(\frac{-a^2}{24t\|\psi\|^2\, +\, 6a\psi_{\ourmax}}\right)
\]
for all $a\ge 0$, where $\|\psi\|^2:=\sum_{e\in E(K_n)}\psi(e)^2$ and $\psi_{\ourmax}:=\max_{e}\psi(e)$.

Furthermore, the same bound holds for $\pr{f(G_m)\, -\,\Ex{f(G_m)}\, \le \, -a}$.
\end{prop}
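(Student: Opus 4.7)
The plan is to construct a Doob martingale from the random edge-ordering that generates $G_m$, and then apply Freedman's inequality (Lemma~\ref{lem:F}). Let $\sigma$ be a uniformly random permutation of $E(K_n)$, so that $G_m=\{\sigma(1),\ldots,\sigma(m)\}$ has distribution $G(n,m)$, and write $\F_i$ for the $\sigma$-algebra generated by $\sigma(1),\ldots,\sigma(i)$. Set $S_i:=\Ex{f(G_m)\mid\F_i}$; this is a martingale from $S_0=\Ex{f(G_m)}$ to $S_m=f(G_m)$ with increments $X_i:=S_i-S_{i-1}$. The task reduces to bounding the pointwise increments and the predictable quadratic variation $V(m)=\sum_{i=1}^m\Ex{X_i^2\mid\F_{i-1}}$.

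The key tool is a swap coupling. Writing $E_{i-1}:=\{\sigma(1),\ldots,\sigma(i-1)\}$, for any two candidates $e,e'\notin E_{i-1}$ for the value of $\sigma(i)$, I would couple the two continuations of the permutation (given $\F_{i-1}$ together with $\sigma(i)=e$, or $\sigma(i)=e'$) by swapping the positions of $e$ and $e'$ throughout. A quick case analysis shows that the resulting graphs $G_m^{(e)}$ and $G_m^{(e')}$ are \emph{identical} when $e'$ lands at a position $\le m$ in the first copy (which has conditional probability $(m-i)/(N-i)$), and otherwise differ exactly by the swap $e\leftrightarrow e'$. Writing $S_i(e):=\Ex{f(G_m)\mid\F_{i-1},\sigma(i)=e}$, the $\psi$-Lipschitz hypothesis applied inside the coupled expectation then yields
\[
|S_i(e)-S_i(e')|\,\le\,\psi(e)+\psi(e'),\qquad (S_i(e)-S_i(e'))^2\,\le\,\frac{N-m}{N-i}\big(\psi(e)+\psi(e')\big)^2,
\]
the second being Jensen's inequality $|\Ex{Z}|^2\le\Ex{Z^2}$. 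The first display already gives $|X_i|\le 2\psi_{\ourmax}$. For the conditional variance I would use the identity $\Var(Y)=\tfrac12\Ex{(Y-Y')^2}$ applied to $Y=S_i(\sigma(i))$ given $\F_{i-1}$, together with $(\psi(e)+\psi(e'))^2\le 2(\psi(e)^2+\psi(e')^2)$, to obtain
\[
\Ex{X_i^2\mid\F_{i-1}}\,\le\,\frac{2(N-m)}{(N-i+1)(N-i)}\,\|\psi\|^2.
\]

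Summing over $i$ and using the telescoping identity $\sum_{i=1}^m\tfrac{1}{(N-i+1)(N-i)}=\tfrac{1}{N-m}-\tfrac{1}{N}=\tfrac{m}{N(N-m)}$ produces the almost sure bound $V(m)\le 2t\|\psi\|^2$. Applying Freedman's inequality with $R=2\psi_{\ourmax}$ and $\beta=2t\|\psi\|^2$ then delivers the claimed upper tail (in fact with constants somewhat smaller than the stated $24$ and $6$, leaving comfortable slack). The lower-tail bound follows by the same argument applied to $-f$, which is $\psi$-Lipschitz with the same $\psi$. The main obstacle will be verifying the swap coupling carefully --- in particular the two-case dichotomy that determines when $G_m^{(e)}=G_m^{(e')}$ and when they differ by a single swap --- after which the variance estimate and the appeal to Freedman are essentially mechanical.
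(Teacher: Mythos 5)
Your proof is correct and follows the same overall route as the paper's: both construct the Doob martingale $S_i:=\Ex{f(G_m)\mid \mathcal{F}_i}$ along the edge-revelation filtration and then apply Freedman's inequality (Lemma~\ref{lem:F}). The technical core --- bounding the increments and the predictable quadratic variation --- is handled differently, however. The paper writes $Z_i$ and $Z_{i-1}$ explicitly as averages over sequences of future edges, introduces a dummy edge $g$ to equalise the number of summands, and exhibits a bijection $\phi$ pairing each continuation with one that differs by at most one swap; it then applies the Lipschitz bound $\psi(e_i)+\psi(g)$ uniformly to every pair, including the pairs that actually produce identical graphs. Your swap coupling is morally the same pairing, but you track precisely the event on which the swap changes $G_m$, namely when $e'$ falls outside the first $m$ positions, which has conditional probability $(N-m)/(N-i)$, and you push that factor through Jensen into the second moment. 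This gives the sharper per-step bound
\[
\Ex{X_i^2\mid\mathcal{F}_{i-1}}\,\le\,\frac{2(N-m)}{(N-i)(N-i+1)}\,\|\psi\|^2,
\]
which telescopes exactly to $V(m)\le 2t\|\psi\|^2$, as opposed to the paper's $12t\|\psi\|^2$; similarly you obtain $R=2\psi_{\ourmax}$ in place of $R=3\psi_{\ourmax}$. The net effect is a cleaner proof with constants $4$ and $4$ in place of $24$ and $6$ (which of course still implies the stated bound). A further small tidiness of your approach is that the telescoping identity holds for all $m\le N$, so you do not need the paper's preliminary reduction to $m\le N/2$ via complementation. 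The lower-tail bound by applying the argument to $-f$ is handled identically in both.
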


\begin{remark} We have not attempt to optimise the constants.\end{remark}

\begin{remark} We have stated this result for $G(n,m)$ as this is the context in which we are working.  It is clear that the result does not actually depend on the graph structure at all.  In particular, if $f:[N]^{(m)}\to \mathbb{R}$ then one may obtain the same result for the selection of a uniformly random $m$-element subset $G\subseteq [N]$.
\end{remark}

We now present the proof of Proposition~\ref{cor:F}.  The proof is based on an application of Freedman's inequality, and will rely on a bound on the increments in terms of the function $\psi$.

\begin{proof}  We do not lose any generality in assuming $m\le N/2$.  Indeed, if $m>N/2$, then simply observe that $f$ is also a function of the complementary graph $G_m^c\sim G_{N-m}$.  The ``Furthermore'' statement follows by applying the inequality to $-f$.

We prove the result by considering the martingale
\[
Z_i\, :=\, \Ex{f(G_m)\, |\, G_i}\, \qquad i=0,\dots,m
\]
where $G_0,\dots ,G_m$ is the Erd\H os-R\'enyi random graph process.  Equivalently, let $e_1,\dots, e_N$ be a uniform random ordering of the edges of $K_n$ and for each $i$ define $G_i$ to be the graph with edges $\{e_1,\dots ,e_i\}$.   We require the following bound on the increments of this martingale
\eq{needinc}
\big|Z_i\, -\, Z_{i-1}\big|\, \le\, \psi(e_i)\, +\, 2\bar{\psi} \, ,
\eqe
where $\bar{\psi}=N^{-1}\sum_{e\in E(K_n)}\psi(e)$.

Let us show that the proposition follows if we assume~\eqr{needinc}.  In this case we may immediately bound the absolute value of increments,
\[
\big|Z_i\, -\, Z_{i-1}\big|\, \le\, \psi(e_i)\, +\, 2\bar{\psi}\, \le\, 3\psi_{\ourmax}\, ,
\]
and we obtain the following bound
\begin{align*}
\Ex{\big(Z_i\, -\, Z_{i-1}\big)^2\, \middle| \, G_{i-1}}\, &\le\, \Ex{\left(\psi(e_i)+\frac{2}{N}\sum_{e\in E(K_n)}\psi(e)\right)^2\, \middle| \,G_{i-1}}\phantom{\bigg|}\\
& \le \, 2 \Ex{\psi(e_i)^2|G_{i-1}}\, +\, \frac{8}{N^2} \left(\sum_{e\in E(K_n)}\psi(e)\right)^2\phantom{\bigg|}\\
& \le\, \frac{4}{N}\sum_{e\in E(K_n)} \psi(e)^2\, +\, \frac{8}{N}\sum_{e\in E(K_n)} \psi(e)^2\phantom{\bigg|}\\
& = \, \frac{12}{N}\|\psi\|_2^2\,\, .\phantom{\bigg|}
\end{align*}
Summing the above bound over $i\le m$ we obtain
\[
\sum_{i=1}^{m} \Ex{\big(Z_i\, -\, Z_{i-1}\big)^2\, \mid\, G_{i-1}}\, \le\, \frac{12m}{N}\|\psi\|_2^2 \, = \, 12t\|\psi\|_2^2\, .
\]
We now apply Freedman's inequality (Lemma~\ref{lem:F}) with $\alpha=a$, $\beta=12t\|\psi\|_2^2$ and $R=3\psi_{\ourmax}$, which yields
\[
\pr{f(G_m)\, -\,\Ex{f(G_m)}\, \ge \, a}\,   =\, \pr{Z_m-Z_0\, \ge\, a}\, \le\, \exp\left(\frac{-a^2}{24t\|\psi\|_2^2\, +\, 6a\psi_{\ourmax}}\right)\, ,
\]
as required.

All that remains is to justify~\eqr{needinc}, i.e., we must prove that for every possible sequence $(e_1,\dots,e_i)$ we have that $|Z_i\, -\, Z_{i-1}|\, \le\, \psi(e_i)\, +\, 2\bar{\psi}$.  The conditional expectations $Z_i$ and $Z_{i-1}$ may be expressed explicitly as sums over the choices of the edges up to $e_m$,
\[
Z_i\, =\, \frac{1}{(N-i)_{m-i}}\sum_{e_{i+1},\dots, e_m} f(G_i\cup\{e_{i+1},\dots ,e_m\}) 
\]
and
\[
Z_{i-1}\, =\, \frac{1}{(N-i+1)_{m-i+1}}\sum_{e'_i,e'_{i+1},\dots, e'_m} f(G_{i-1}\cup\{e'_i,e'_{i+1},\dots ,e'_m\}) \, .
\]
Note that the sums are over sequences of distinct edges of $K_n$ disjoint from those already selected.  We would like to pair up the terms in such a way that the graphs $E(G_i)\cup\{e_{i+1},\dots ,e_m\}$ and $E(G_{i-1})\cup\{e'_i,e'_{i+1},\dots ,e'_m\}$ are either equal or adjacent.  The obvious problem is that the sums do not even have the same number of terms.  We introduce a dummy edge $g$ in the first sum, which may be any edge of $E(K_n)\setminus \{e_1,\dots ,e_{i-1}\}$.  We have
\[
Z_i\, =\, \frac{1}{(N-i+1)_{m-i+1}}\sum_{e_{i+1},\dots, e_m; g} f(G_i\cup\{e_{i+1},\dots ,e_m\}) \, .
\]
We may now pair up terms of the two summations on a one-to-one basis.  Let $\S$ be the sequences of edges allowed in the above summation and $\T$ the sequences allowed in the summation for $Z_{i-1}$.  We define a bijection $\phi:\S \to \T$ as follows: 
\[
\phi(e_{i+1},\dots, e_m; g)\, :=\, 
\begin{cases} 
(e_i,e_{i+1},\dots ,e_m)&\text{if } g=e_i\\
(e_{i+1},\dots, e_{j-1},e_i,e_{j+1},\dots ,e_m)\qquad &\text{if } g\in \{e_{i+1},\dots ,e_m\}\\
(g,e_{i+1},\dots ,e_m)&\text{if } g\not\in \{e_i,\dots ,e_m\}\, .
\end{cases}
\]
Note that in all cases the graphs $E(G_{i-1})\cup\{e_i,\dots ,e_m\}$ and $E(G_{i-1})\cup \phi(e_{i+1},\dots ,e_m;g)$ are either equal or adjacent (with symmetric difference $\{e_i,g\}$).  Now, by the triangle inequality,
\begin{align*}
\big|&Z_i\, -\, Z_{i-1}\big|\phantom{\Big|}\\ & \le\, \frac{1}{(N-i+1)_{m-i+1}}\sum_{e_{i+1},\dots,e_m;g} \big| f(G_{i-1}\cup\{e_i,\dots ,e_m\})
\, -\, f(G_{i-1}\cup \phi(e_{i+1},\dots ,e_m;g))\big|\\
& \le \, \frac{1}{(N-i+1)_{m-i+1}}\sum_{e_{i+1},\dots,e_m;g} \psi(e_i)+\psi(g)\\
& \le\, \psi(e_i)\, +\, \frac{1}{N-i+1}\sum_{g\in E(K_n)}\psi(g)\\
& \le \, \psi(e_i)\, +\, 2\bar{\psi}.
\end{align*}
This completes the proof.
\end{proof}

Let us also mention a fairly trivial bound, which is simply a consequence of the triangle inequality and union bound:
\eq{triun}
\pr{X+Y\ge \alpha+\beta}\, \le\, \pr{X\ge \alpha}+\pr{Y\ge \beta}
\eqe
for any  non-negative random variables $X,Y$ and $\alpha,\beta\in \RR$.

\subsection{The martingale representations of the deviations of cherries and triangles}\label{sec:Mart}

In this subsection we recall the martingale expressions for the subgraph count deviations given in~\cite{GGS}.  We first state these results in general and then consider the specific cases, cherries and triangles, which interest us.

We begin by introducing some notation.  For graphs $H$ and $G$ let $N_H(G)$ denote the number of isomorphic copies of $H$ in $G$.  That is, $N_H(G)$ is the number of injective functions from $V(H)$ to $V(G)$ which preserve the edges of $H$.  This counts with multiplicity, so that, for example, $N_{K_3}(K_4)=24$.


If $H$ has $v(H)$ vertices and $e(H)$ edges, then the expected value of $N_H(G_m)$ is 
\[
L_H(m) \, :=\, \Ex{N_{H}(G_m)}\, =\, \frac{(n)_{v(H)}(m)_{e(H)}}{(N)_{e(H)}}\, .
\]
Thus the deviation of $N_{H}(G_m)$ from its mean is
\[
D_H(G_m)\, :=\, N_{H}(G_m)\, -\, L_{H}(m)\, .
\]
We will focus in particular on studying $D_{\owedge}(G_m)$, the deviation for cherries, and $D_{\triangle}(G_m)$, the deviation for triangles.

Let $G_0,\dots G_m$ be a realisation of the Erd\H{o}s-R\'enyi random graph process.  In a slight abuse of notation, when we write $G_m$ we may assume that $G_m$ also includes the information of the order in which the edges were added.

As in~\cite{GGS}, for each fixed graph $F$ (although we only really care about the cases that $F$ is a cherry or a triangle) we may define
\[
A_F(G_m)\, :=\, N_F(G_m)\, -\, N_F(G_{m-1})
\]
to be the number of copies of $F$ created with the addition of the $m$th edge.  We may also define a ``centralised'' version.  Let
\[
X_{F}(G_m)\, :=\, A_F(G_m)\, -\, \Ex{A_F(G_m)\, \middle|\,G_{m-1}}\, =\, N_{F}(G_m)\, -\, \Ex{N_{F}(G_m)\, \middle|\, G_{m-1}}\, .
\]
The general martingale representation for $D_H(G_m)$ given in~\cite{GGS} is
\[
D_{H}(G_m) \, =\, \sum_{i=1}^{m}\, \sum_{F\ssq E(H)}\frac{(N-m)_{e(F)}(m-i)_{e(H)-e(F)}}{(N-i)_{e(H)}}\, X_F(G_i)\, ,
\]
where we sum over all $2^{e(H)}$ subgraphs $F$ with $V(F)=V(H)$ and $E(F)\ssq E(H)$.  Since $X_F(G_i)$ is deterministically $0$ if $F$ has $0$ edges or $1$ edge, we obtain
\[
D_{\owedge}(G_m)\, =\, \sum_{i=1}^{m} \frac{(N-m)_{2}}{(N-i)_2}\, X_{\owedge}(G_i)
\]
and
\[
D_{\triangle}(G_m)\, =\, \sum_{i=1}^{m} \left[3\frac{(N-m)_{2}(m-i)}{(N-i)_3}\, X_{\owedge}(G_i)\, +\, \frac{(N-m)_3}{(N-i)_3}\, X_{\triangle}(G_i)\right]\, .
\]
We say these are martingale representations as the random variables $X_F(G_i)$ behave as martingale increments, in the sense that $\Ex{X_{F}(G_i)\middle|G_{i-1}}=0$.

\subsection{A bound on a certain summation}

The following elementary inequality will be helpful for some estimates.

\begin{prop}
\label{prop:integral}
Let $d\in \mathbb{N}$, let $r\ge d^{1/2}$ and let $\beta\ge 1$. Then, 
\[
  \sum_{x \in \ZZ^d: \norm{x}\ge r} \expb{-\beta \norm{x}^2} \, \leq\, (8\pi)^{d/2} e^{-\beta (r-d^{1/2})^2/2}\, .
\]
\end{prop}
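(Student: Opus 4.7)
The plan is to reduce everything to a one-dimensional Gaussian sum. First I would observe that the bound requested is actually weaker than the corresponding bound with $r^2$ in place of $(r-d^{1/2})^2$: since $r\geq d^{1/2}$ gives $0\leq r-d^{1/2}\leq r$, we have $(r-d^{1/2})^2\leq r^2$, and hence $\expb{-\beta(r-d^{1/2})^2/2}\geq \expb{-\beta r^2/2}$. Thus it suffices to prove the cleaner statement
\[
\sum_{x\in\ZZ^d:\norm{x}\geq r} \expb{-\beta\norm{x}^2}\, \leq\, (8\pi)^{d/2}\, \expb{-\beta r^2/2}.
\]

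The key move is the trivial splitting $\beta\norm{x}^2 = \tfrac12\beta\norm{x}^2 + \tfrac12\beta\norm{x}^2 \geq \tfrac12\beta r^2 + \tfrac12\beta\norm{x}^2$, valid whenever $\norm{x}\geq r$. After exponentiating and then relaxing the constraint $\norm{x}\geq r$ on the remaining factor, this gives
\[
\sum_{x\in\ZZ^d:\norm{x}\geq r} \expb{-\beta\norm{x}^2}\, \leq\, \expb{-\beta r^2/2}\sum_{x\in\ZZ^d} \expb{-\beta\norm{x}^2/2}\, =\, \expb{-\beta r^2/2}\Bigl(\sum_{k\in\ZZ} \expb{-\beta k^2/2}\Bigr)^{\!d},
\]
where the final equality is separability of the Gaussian. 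A standard integral comparison (the summand $\expb{-\beta k^2/2}$ is decreasing in $|k|$) then gives $\sum_{k\in\ZZ}\expb{-\beta k^2/2}\leq 1 + 2\int_{0}^{\infty} \expb{-\beta x^2/2}\,dx = 1 + \sqrt{2\pi/\beta}\leq 1+\sqrt{2\pi}$, the last step using the hypothesis $\beta\geq 1$.

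All that remains is to check that $(1+\sqrt{2\pi})^2\leq 8\pi$, equivalently $1+2\sqrt{2\pi}\leq 6\pi$, which holds with a very comfortable margin (the left side is under $7$, the right side is about $18.8$). There is essentially no genuine obstacle here: once one recognises that the $-d^{1/2}$ shift in the exponent is pure slack, the argument is roughly three lines and the constant $(8\pi)^{d/2}$ comes out with plenty of room to spare. The statement is presumably phrased with $(r-d^{1/2})^2$ because that is the form that drops most cleanly into a later Gaussian-tail estimate at the point of application.
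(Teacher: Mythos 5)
Your proof is correct, and it takes a genuinely different and more elementary route than the paper's. The paper proves this via a cube-packing comparison lemma: it associates to each lattice point $x$ (with nonzero coordinates) the unit cube $C_x$ lying between $x$ and the point $x_-$ shifted one unit toward the origin in each coordinate, notes that $F(x)\le F(u)$ for $u\in C_x$ when $F$ is radially non-increasing, and bounds the lattice sum by the integral of $F$ over $A(r-d^{1/2})$ — the $d^{1/2}$ shift being exactly the diameter of a unit cube. The factor $(8\pi)^{d/2}=2^d(2\pi)^{d/2}$ then arises as $2^d$ (from summing over the $2^d$ patterns of zero coordinates to restore the non-zero hypothesis) times the Gaussian integral bound $(2\pi)^{d/2}$. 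You instead observe that, under the hypothesis $r\ge d^{1/2}$, the statement with $(r-d^{1/2})^2$ is weaker than the one with $r^2$, and prove the latter directly: the $\tfrac12+\tfrac12$ exponent split, separability of the Gaussian, and a one-dimensional integral comparison give $\sum_{k\in\ZZ}e^{-\beta k^2/2}\le 1+\sqrt{2\pi/\beta}\le 1+\sqrt{2\pi}\le 2\sqrt{2\pi}=(8\pi)^{1/2}$, and the bound follows by raising to the $d$th power. Your version is shorter, needs no auxiliary lemma, and actually yields the strictly stronger estimate with $e^{-\beta r^2/2}$ on the right. The only thing lost is that the paper's lemma (Proposition~\ref{prop:analy-tool} and Corollary~\ref{cor:analy-tool0}) is stated for a general radially non-increasing $F$, whereas your calculation is specific to the Gaussian; but since the Gaussian is the only case used, this is no real loss. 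Both approaches do use the same $\tfrac12+\tfrac12$ trick at the integral/series estimation stage, so the underlying analytic idea is shared — the divergence is in whether one carries it out on the lattice directly (you) or on the continuum after a covering argument (the paper).
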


For completeness a proof is given in Appendix~\ref{Ap:A}.

\section{Degrees in $G(n,m)$}\label{sec:degs}

Many papers have studied degree sequences in random graphs and a great deal is known about the typical behaviour of the degrees. Bollob\'as~\cite{Bbook} gives an overview which includes the results of Bollob\'as himself~\cite{B81,B82} and mentions other approaches, such as that of McKay and Wormald~\cite{MW}. See also the recent article of Liebenau and Wormald~\cite{LW} and the references therein.

Since we require results not only about the likely behaviour of degrees, but also the behaviour further into the tail (events with smaller probability), we state in this section the results we shall need.  Most of the proofs are fairly standard and so are given in the appendix, see Appendix~\ref{Ap:B}.

After the results about degrees, we include a short subsection showing what we may deduce about the increments $X_{\owedge}(G_i)$.

We shall work in $G_m\sim G(n,m)$, although we remark that similar results could be easily deduced in $G(n,p)$.  Let $d_u(G)$ be the degree of a vertex $u$ in a graph $G$ and let
\[
D_u(G_m) \, :=\, d_u(G_m) \, -\, \dfrac{2m}{n}
\]
be the deviation of the degree of $u$ (from its mean) in the graph $G_m \sim G(n,m)$.  

The expected degree of each vertex is $2m/n\, =\, t(n-1)\approx tn$.  We shall state bounds related to the number of vertices of large degree (at least $32tn$) and even larger degree (at least $2^jtn$ for each $j\ge 5$).  It is also helpful to state a bound on the maximum degree $\Delta(G_m)$ of $G_m$.  These results are fairly standard.  We also state bounds related to the sum of squares of degree deviations $\sum_{u}D_{u}(G_m)^2$.  

We now introduce some notation in order to state the bounds.  We set $\ell_b:=\log(b/etn)$.   Let $V_j$ denote the set of vertices in $G_m\sim G(n,m)$ with degree at least $2^jtn$.

\begin{restatable}{lem}{largedegs}\label{lem:largedegs} Suppose $t\ge 2n^{-1} \log{n}$ and let $b \geq 4tn$.  Then, except with probability at most $\exp(-b)$, we have
\begin{enumerate}
\item[(i)] $\Delta(G_m)\, \le\, 2b/\ell_b\phantom{\bigg|}$, and
\item[(ii)] $\displaystyle |V_j|\, \le\, \frac{b}{tnj2^{j-6}}\, \, $ for all $j\ge 5$.
\end{enumerate}
\end{restatable}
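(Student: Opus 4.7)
Both parts follow from the Chernoff inequalities in Lemma~\ref{lem:Chern} combined with union bounds. The non-independence of degrees in $G(n,m)$ is handled by passing from the event ``many vertices have large degree'' to the (hypergeometric) count of edges incident to a given set.

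\textit{Proof of (i).} The degree $d_u(G_m)$ is hypergeometric with mean $\mu:=t(n-1)\le tn$. Apply~\eqref{eq:h3} with $\theta:=2b/(\mu\ell_b)$; the hypothesis $b\ge 4tn$ forces $\ell_b\ge \log(4/e)$ and $\theta\ge e$. Since $\log(b/\mu)\ge \ell_b+1$, one has $\log\theta-1\ge \log 2+\ell_b-\log\ell_b$, and the elementary estimate $(\log 2+\ell-\log\ell)/\ell\ge 1-\tfrac{1}{2e}>\tfrac{4}{5}$ (valid for $\ell>0$) gives $\pr{d_u\ge 2b/\ell_b}\le \exp(-8b/5)$. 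A union bound over the $n$ vertices, combined with $b\ge 8\log n$ (which follows from $b\ge 4tn$ and $tn\ge 2\log n$), bounds the total failure probability by $\tfrac12 e^{-b}$.

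\textit{Proof of (ii).} Fix $j\ge 5$ and let $K:=\lceil b/(tn\,j\,2^{j-6})\rceil$; one may assume $K\le n$, as otherwise the conclusion is vacuous. For a fixed $K$-subset $S\subseteq V$, write $W_S$ for the number of edges of $G_m$ meeting $S$. Then $W_S$ is hypergeometric with mean at most $tKn$, and the identity
\[
\sum_{u\in S}d_u(G_m)\, =\, 2\,e(S)+e(S,V\setminus S)\, \le\, 2W_S
\]
shows that $S\subseteq V_j$ forces $W_S\ge K\,2^{j-1}tn$. Inequality~\eqref{eq:h4} applied with $\nu=tKn$ and $i=j-1\ge 4$ then yields $\pr{S\subseteq V_j}\le \exp(-(j-1)\,2^{j-3}\,tKn)\le \exp(-8(j-1)b/j)$. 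Taking a union bound over the $\binom{n}{K}\le \exp(K\log(en/K))$ choices of $S$, and using $K\log(en/K)\le 2K\log n\le b/(j\,2^{j-7})$ (which follows from $2\log n\le tn$ and the definition of $K$), the entropy contribution is at most $2b/5$ for $j\ge 5$, giving
\[
\pr{|V_j|\ge K}\, \le\, \exp(-6b).
\]
Since $V_j\ne\emptyset$ requires $2^jtn\le n$, only $O(\log n)$ values of $j$ are relevant, and summing over them (using $b\ge \log\log n$) produces a total failure probability at most $\tfrac12 e^{-b}$.

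Adding the contributions from (i) and (ii) yields the required bound $e^{-b}$. The main technical subtlety is in (ii): the degrees are not independent, so a product bound is unavailable. The trade $\sum_{u\in S}d_u\le 2W_S$ replaces the joint degree event by a single hypergeometric tail, and the hypothesis $tn\ge 2\log n$ is precisely what is needed to absorb the $\binom{n}{K}$ entropy factor into the Chernoff bound on that tail. The small case $K=1$ (which occurs when $b<tn\,j\,2^{j-6}$) is handled directly by applying~\eqref{eq:h4} to a single degree and union-bounding over vertices; the large factor $j\,2^{j-2}$ in the exponent then easily dominates $b+\log n$.
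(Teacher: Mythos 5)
Your proof is correct and follows essentially the same route as the paper's: Chernoff/hypergeometric tail bounds from Lemma~\ref{lem:Chern} plus union bounds, with part (ii) reduced to a deviation of an edge-count for a candidate set of high-degree vertices. A few points worth comparing. For (i) the paper bounds $\pr{d_u\ge a}$ directly by $\binom{n}{a}t^a$ (a union over $a$-sets of neighbours), while you invoke~\eqref{eq:h3}; both give $\exp(-\Theta(b))$ per vertex, so these are equivalent in substance. For (ii) the paper argues that a set $A$ of $a$ vertices with degree sum at least $2^jatn$ forces either $e(G_m[A])\ge 2^{j-2}atn$ or $e(G_m[A,V\setminus A])\ge 2^{j-1}atn$, and applies~\eqref{eq:h4} to each of these two hypergeometric variables; your variant replaces the two-variable split with the single hypergeometric $W_S$ counting edges meeting $S$, via the inequality $\sum_{u\in S}d_u\le 2W_S$. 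This is slightly cleaner and gives a marginally sharper exponent, and you also write out the $\binom{n}{K}$ union bound explicitly, which the paper leaves implicit (though the numbers there do work out). One small arithmetic slip: for $j=5$ you have $b/(j\,2^{j-7}) = 4b/5$, not $2b/5$, so the entropy term is bounded by $4b/5$ and the resulting exponent is roughly $-28b/5$ rather than $-6b$; this has no effect on the conclusion, since $28/5 > 2$ with plenty of room to spare against the $O(\log n)$ union over $j$. Finally, your explicit treatment of the case $K=1$ (forcing $V_j=\emptyset$) is a reasonable addition; the paper handles the integrality issue instead by rounding $b$ up, which is equivalent.
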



With respect to the sum of squares of degree deviations we see three different types of behaviours at different points in the tail.  Let $\kappa(b,t)$ be the function defined by
\[
\kappa(b,t)\, :=\, \begin{cases} tn^2\qquad & 1 \le b<t^{1/2}n\ell\\
 b^2/\ell^2 &t^{1/2}n\ell \le b<n\ell\\
bn/\ell &n\ell \le b\le tn^2\ell \, .
\end{cases}
\]
The three ranges correspond to three different ``causes'' of a large sum of squares of degree deviations.  The first value corresponds to the contribution of order $n$ vertices with degree deviation given by the standard deviation, which is of order $t^{1/2}n^{1/2}$.  The second and third values are related to degree deviations caused by stars and hubs respectively.
We also define
\[
\kappa^{+}(b,t)\, :=\, \begin{cases} b^2/\ell_b^2 \qquad & 32tn\le b< n\ell\\
bn/\ell &n\ell \le b\le tn^2\ell \, .
\end{cases}
\]
We may now state our bounds on the sum of squares of deviations.

\begin{restatable}{prop}{sumsquare}\label{prop:sumsquare} There exists an absolute constant $C>0$ such that the following holds.  Suppose that $t\ge 2n^{-1} \log{n}$ and that $b\ge 32tn$ .  Except with probability at most $\exp(-b)$, we have
\[
\sum_{u}D_{u}(G_i)^2\, \le \, C\kappa(b,t)
\]
for all steps $i\le m$, and
\[
\sum_{u\, :\, d_u(G_m) \ge 32tn}d_{u}(G_m)^2\, \le \, C\kappa^{+}(b,t)\, .
\]
\end{restatable}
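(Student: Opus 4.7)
My overall plan is to reduce both inequalities to deterministic consequences of the high-probability event $\E$ furnished by Lemma~\ref{lem:largedegs}, which asserts that $\Delta(G_m) \le 2b/\ell_b$ and $|V_j| \le b/(tnj2^{j-6})$ for every $j \ge 5$, and holds with probability at least $1-e^{-b}$. Throughout what follows I would work on $\E$, absorbing the complementary probability into the target $e^{-b}$.

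For the second inequality, a dyadic decomposition by the sets $V_j$ suffices. Since any vertex with $d_u(G_m)\in [2^j tn,\,2^{j+1}tn)$ lies in $V_j\setminus V_{j+1}$,
\[
\sum_{u:\,d_u(G_m)\ge 32tn} d_u(G_m)^2 \; \le \; \sum_{j=5}^{J} |V_j|\cdot (2^{j+1} tn)^2 \; \le\; 256\sum_{j=5}^{J} \frac{b\,tn\,2^j}{j},
\]
where $J$ is the largest index with $V_j\neq\emptyset$. The series is geometric and so dominated by its last term; combining $\Delta(G_m)\le 2b/\ell_b$ with the trivial $\Delta\le n$ gives $2^J tn \le \min\{2b/\ell_b,\,n\}$. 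For $32tn \le b<n\ell$ one checks $J\asymp \ell_b$, yielding a bound of order $b^2/\ell_b^2$; for $b\ge n\ell$ one has $J\asymp \ell$, yielding a bound of order $bn/\ell$. Each matches $\kappa^+(b,t)$.

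For the first inequality I would split the sum at $d_u(G_i)=32tn$. The heavy piece $\sum_{u:d_u(G_i)\ge 32tn} D_u(G_i)^2$ is bounded by the degree-squared sum of the previous paragraph applied to $G_i$; I would check that the proof of Lemma~\ref{lem:largedegs} is already a uniform-in-$i$ statement, or otherwise take a union bound over $i$ in that lemma. The light piece $\sum_{u:d_u(G_i)<32tn} D_u(G_i)^2$ has $|D_u(G_i)|\le 31tn$ per contributing term, and a dyadic organisation of the deviation scales $|D_u|\in[2^k(tn)^{1/2},\,2^{k+1}(tn)^{1/2})$ together with the Chernoff bound~\eqref{eq:h1} and a union bound over vertices and scales $k$ produces $\sum_{u:d_u(G_i)<32tn} D_u(G_i)^2 = O(tn^2+b)$ with failure probability at most $e^{-2b}$. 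Since $tn^2\le \kappa(b,t)$ in every regime, this suffices; a union bound over $i\le m$ then promotes the statement to be uniform in $i$, which is affordable because $b\ge 32tn\ge 64\log n$ by the standing hypothesis on $t$.

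The step I expect to be the main obstacle is making the light-piece estimate sharp enough to land inside $\kappa(b,t)$, together with careful bookkeeping of the Chernoff tails across the deviation scales so that the overall failure probability is genuinely bounded by $e^{-b}$ rather than a polylogarithmic fraction of it; calibrating the threshold $32tn$ and the form of $\ell_b$ against this accounting is where the constant $C$ is actually produced. The uniformity in $i$ is by comparison a mild technicality, since $\sum_u D_u(G_i)^2$ changes by only $O(tn)$ per edge-addition step, so a maximal inequality could replace the union bound if the latter proved too crude.
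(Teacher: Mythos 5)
Your treatment of the heavy piece (the second inequality, and the $d_u(G_i)\ge 32tn$ part of the first) is essentially the paper's: a dyadic partition via the sets $V_j$ from Lemma~\ref{lem:largedegs}, a geometric sum dominated by its last term, and the monotonicity $D_u(G_i)^2 \le d_u(G_m)^2$ for vertices with $d_u(G_i)\ge 32tn$ to get uniformity in $i$ for free.

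The light piece is where there is a genuine gap, and it is not merely a bookkeeping issue to be tidied up. You propose to control $\sum_{u:d_u(G_i)<32tn} D_u(G_i)^2$ by a dyadic decomposition of deviation scales together with ``the Chernoff bound~\eqref{eq:h1} and a union bound over vertices and scales $k$.'' A union bound over individual vertices controls only $\max_u |D_u(G_i)|$, not how \emph{many} vertices sit at each scale, and it is the per-scale counts that determine the sum of squares. To control the counts by Chernoff one would need the indicators $\mathbf 1_{\{|D_u(G_i)|\ge 2^k(tn)^{1/2}\}}$ to be independent, but in $G(n,m)$ the degrees are coupled (they always sum to exactly $2m$, and any two vertices share a potential edge), so no off-the-shelf Chernoff bound applies to $N_k = \#\{u : |D_u(G_i)|\ge 2^k(tn)^{1/2}\}$. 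The order-of-magnitude claim $O(tn^2 + b)$ also does not match the paper's bound (Lemma~\ref{lem:sd2} gives $O(btn)$ for $b \ge n$, which exceeds $tn^2+b$ when $n \ll b \ll tn^2$), which suggests the heuristic is not tracking the correct regime.

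The paper's route around this obstruction is the content of Lemma~\ref{lem:sd2} and is the real nontrivial core of the proposition: it linearises the quadratic event by observing that $\sum_u D_u(G_i)^2$ being large forces the existence of a dyadic-valued vector $\sigma$ with $\sum_u \sigma_u D_u(G_i) \ge 16\|\sigma\|^2 t^{1/2}n^{1/2}$ and $\|\sigma\|^2 \ge 16b$; each such linear event has the form required by the $\psi$-Lipschitz Freedman-type Proposition~\ref{cor:F}, which supplies the tail $\exp(-\|\sigma\|^2)$; and the union over all $\sigma$ is then controlled by the Gaussian lattice-sum estimate of Proposition~\ref{prop:integral}. You would need some mechanism of this kind — a martingale or exchangeable-pairs argument operating on a linear statistic — to make the light-piece bound go through; the per-vertex Chernoff route cannot.
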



The following lemma, which bounds the contribution of vertices whose degree is not too large, covers the most interesting part of the proof of Proposition~\ref{prop:sumsquare}.  For this reason we give the proof of Lemma~\ref{lem:sd2} here, while the rest of the proof of Proposition~\ref{prop:sumsquare} is given in Appendix~\ref{Ap:B}.

\begin{lem}\label{lem:sd2} There is an absolute constant $C>0$ such that the following holds.  Suppose that $t\ge 2n^{-1} \log{n}$ and that $b \geq n$.  Except with probability at most $\exp(-b)$, we have
\[
\sum_{u:d_u(G_i) \leq 32tn} D_u(G_i)^2\, \le\, Cbtn
\]
for all steps $i\le m$.
\end{lem}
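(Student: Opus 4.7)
The plan is to reduce the bound on the truncated sum of squared degree deviations to a bound on the cherry count deviation $D_{\owedge}(G_i)$, and then to apply Freedman's inequality with a stopping-time bootstrap to control the conditional variance of its martingale representation.

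First I would use the identity $\sum_u d_u(G)^2 = 2|E(G)| + N_{\owedge}(G)$, which yields
\[
\sum_u D_u(G_i)^2 \;=\; N_{\owedge}(G_i) + 2i - 4i^2/n \;=\; \Ex\!\Bigl[\sum_u D_u(G_i)^2\Bigr] + D_{\owedge}(G_i),
\]
with $\Ex[\sum_u D_u(G_i)^2] = O(tn^2) \leq O(btn)$ for $b \geq n$. Since the truncated sum is at most the full sum, and on the event $E := \{\Delta(G_j) \leq 32tn$ for all $j \leq m\}$ the two coincide, the lemma reduces to showing $|D_{\owedge}(G_j)| \leq C'btn$ for all $j \leq m$ on $E$, while $E^c$ is controlled by Lemma~\ref{lem:largedegs}.

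Next, using the martingale representation $D_{\owedge}(G_i) = \sum_{j \leq i} c_{ij} X_{\owedge}(G_j)$ from Section~\ref{sec:Mart} (with $|c_{ij}| \leq 1$), on $E$ the increments are bounded by $R = O(tn)$. A direct computation of the conditional second moment of $A_{\owedge}(G_j) = 2(d_u + d_v)$ over the uniformly random new edge (a non-edge of $G_{j-1}$) yields the variance estimate $\Var(X_{\owedge}(G_j) \mid G_{j-1}) \leq C\sum_u D_u(G_{j-1})^2 / n$. Since this expresses the pointwise variance in terms of the very quantity we wish to bound, I would introduce the stopping time $\sigma := \inf\{j : \sum_u D_u(G_j)^2 > 2C'btn\}$ and apply Freedman's inequality (Lemma~\ref{lem:F}) to the stopped martingale; the total conditional variance then satisfies $V(\sigma \wedge m) = O(C' b t^2 n^2)$, and Freedman (which gives a simultaneous bound over $i$) yields $|D_{\owedge}(G_{i \wedge \sigma})| \leq (C' - 1)btn$ for all $i \leq m$ except on a set of probability $e^{-b}$, provided $C'$ is chosen large enough and $b \geq n$. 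The bootstrap then closes the argument: on $\{\sigma \leq m\} \cap E$ the identity from the previous paragraph forces $|D_{\owedge}(G_\sigma)| \geq (C' - 1)btn$, so this event has probability at most $e^{-b}$; hence $\sum_u D_u(G_j)^2 \leq 2C'btn$ for all $j \leq m$ on $E$ except with probability $e^{-b}$, which combined with $\pr{E^c} \leq e^{-b}$ from Lemma~\ref{lem:largedegs} (valid in the admissible range of $b$) yields the claimed bound on the truncated sum.

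The main obstacle is the careful setup of the bootstrap: the conditional variance bound depends on the quantity being controlled, so rigorously pairing the stopping-time argument with Freedman's inequality (and ensuring the constants close) requires care. An additional subtlety is the parameter regime $b \gtrsim tn\ell$ where the event $E^c$ cannot be controlled directly by Lemma~\ref{lem:largedegs}; this range must be handled by a separate decomposition in the spirit of the star and hub structures considered in Proposition~\ref{prop:sumsquare}.
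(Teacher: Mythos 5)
Your approach differs fundamentally from the paper's. The paper handles the truncated sum $\sum_{d_u(G_i)\le 32tn} D_u(G_i)^2$ directly via a dyadic linearization: it encodes the renormalised degree deviations by a vector $\sigma$ with entries $\pm 2^j$ bounded by $\sqrt{tn}$, rewrites ``the truncated sum is large'' as ``the linear functional $f_\sigma(G_i) = \sum_u \sigma_u D_u(G_i)$ is large for some $\sigma$'', bounds each such event by the Lipschitz--Freedman corollary (Proposition~\ref{cor:F}), and union-bounds over $\sigma$ using Proposition~\ref{prop:integral}. The truncation is built into the device: $\psi_{\ourmax}\le 2\sqrt{tn}$ controls the linear term in the Freedman exponent. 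You, by contrast, pass to the \emph{full} sum $\sum_u D_u(G_i)^2$, rewrite it in terms of the cherry-count deviation $D_{\owedge}(G_i)$, and run a Freedman bootstrap on that martingale, working on the event $E$ that $\Delta(G_j)\le 32tn$ throughout the process.

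This route has a genuine gap, and it is not confined to a corner of the parameter space. You need $\pr{E^c}\le e^{-b}$. But $\pr{E^c}=\pr{\Delta(G_m)>32tn}$ is of order $\exp(-\Theta(tn))$ and cannot be made smaller by tuning the parameter in Lemma~\ref{lem:largedegs}: once $b'=\Theta(tn)$, the conclusion $\Delta\le 2b'/\ell_{b'}$ already exceeds $32tn$. Since the lemma assumes $b\ge n$ and $t\le 1/2$, one always has $b\ge n>tn$, and for $t=o(1)$ --- the sparse regime that is the point of the paper --- one has $b\gg tn$ for every admissible $b$, so $\pr{E^c}\gg e^{-b}$ throughout. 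Thus the ``additional subtlety'' you flag at the end is in fact the whole case; the Freedman bootstrap on $D_{\owedge}$ is never the operative argument once $t=o(1)$. The event $E$ is also doing essential work \emph{inside} the bootstrap: the stopping time $\sigma$ alone only gives $|D_u(G_{j-1})|\le\sqrt{2C'btn}$ before $\sigma$, hence $R=O(\sqrt{btn})\gg tn$ when $b\gg tn$, and the Freedman exponent then saturates at $O(\alpha/R)=O(\sqrt{btn})\ll b$. Finally, the proposed patch --- ``a separate decomposition in the spirit of Proposition~\ref{prop:sumsquare}'' --- is circular as stated, since Proposition~\ref{prop:sumsquare} is itself proved \emph{from} Lemma~\ref{lem:sd2}. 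The paper's $\sigma$-linearization is precisely the device that lets one bound the truncated quantity without detouring through the full sum, and I do not see how to recover it from the cherry-count route without essentially reinventing it.
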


We wish to use a martingale approach to bound the probability that the sum ${\sum_{u:d_u(G_i) \leq 32tn} D_u(G_i)^2}$ is large.  One thing which could complicate the analysis is the non-linearity of this expression.  This following trick allows us to ``linearise''.  If the sum of squares of degree deviations is large then we may encode this using a vector $\sigma$ which mimics the degree deviations, so that $\sum_{u\in V(G_m)}\sigma_u D_{u}(G_i)$ is also large.  As this function is linear, and with certain Lipschitz properties, we may bound the probability it is large using Proposition~\ref{cor:F}.  This idea, combined with a union bound, gives the desired result.

The idea of $\sigma$ is to encode the (renormalised) degree deviation of each vertex up to a factor of two.  So that the reader may think of $\sigma$ as a dyadic rounding of the renormalised degree deviation .  Values that are too small to matter are set to $0$.

\begin{proof}[Proof of Lemma~\ref{lem:sd2}]
Given a vector $\sigma\in \ZZ^{V(G)}$ with entries which are all either $0$ or $\pm 2^j$ for some $j=0,\dots ,\lfloor \frac{1}{2}\log_2{tn}\rfloor$, we define, for each $i\le m$, the event $E_{\sigma,i}$ that
\eq{Esdef}
\sum_{u\in V(G_i)}\sigma_u D_{u}(G_i)\, \ge\, 16\norm{\sigma}^2 t^{1/2}n^{1/2}\, .
\eqe
Let $S$ be the set of such vectors $\sigma$ with $\norm{\sigma}^2\ge 16b$.  We make two claims:

\textbf{Claim 1:} The event that $\sum_{u:d_u(G_i) \leq 32tn} D_u(G_i)^2 > 2^{20}btn$ is contained in the union $\bigcup_{\sigma\in S} E_{\sigma,i}$.

\textbf{Claim 2:} $\pr{E_{\sigma,i}}\, \le \, \exp(-\norm{\sigma}^2)$ for all $\sigma \in S$ and $i\le m$.

Let us observe that the lemma will follow from these two claims.  By Claim 1, and a union bound, and then Claim 2, the probability that $\sum_{u:d_u(G_i) \leq 32tn} D_u(G_i)^2 > 2^{20}btn$ occurs for some $i\le m$, is at most $\sum_{i=1}^{m}\sum_{\sigma\in S}\pr{E_{\sigma,i}}\le m\sum_{\sigma\in S}\exp(-\norm{\sigma}^2)$.  Now, by Corollary~\ref{prop:integral}, with $r=4b^{1/2}$, we have
\begin{align*}
m\sum_{\sigma\in S}\exp(-\norm{\sigma}^2) &\le \, n^2(8\pi)^{n/2} \exp\left(\frac{-(4b^{1/2}-n^{1/2})^2}{2}\right)\\
& \le \, n^2\exp(2n-9b/2)\\
&\le\, \exp(-b)\, ,
\end{align*}
as required.

\textbf{Proof of Claim I:}  If the event $\sum_{u:d_u(G_i) \leq 32tn} D_u(G_i)^2 > 2^{20}btn$ occurs for the graph $G_i$ then we shall prove that $E_{\sigma}$ occurs for the choice of $\sigma$ given below.  Let $U$ denote the set of vertices with $|D_u(G_i)|\in [32t^{1/2}n^{1/2},32tn]$.  The contribution of vertices with deviation less than $32t^{1/2}n^{1/2}$ is at most $2^{10}tn^2\le 2^{10}btn$ and so
\eq{evimp}
\sum_{u\in U} D_u(G_i)^2 \, >\, 2^{19}btn\, .
\eqe
Define $\sigma$ by $\sigma_u=0$ if $u\not\in U$ and for $j\ge 0$, we set
\[
\sigma_u\, :=\, \begin{cases} 
2^j\qquad & 2^{j+5}t^{1/2}n^{1/2} \le D_u(G_i)< \min\{2^{j+6}t^{1/2}n^{1/2}, 32tn\}\\
-2^j\qquad & 2^{j+5}t^{1/2}n^{1/2} \le -D_u(G_i)< \min\{2^{j+6}t^{1/2}n^{1/2}, 32tn\}\, .
\end{cases}
\]
It follows from the definition of $\sigma$ that 
\[
\frac{D_u(G_i)^{2}}{2^{12}tn}\, \le \sigma_u^2\, \le\,  \frac{D_u(G_i)^{2}}{2^{10}tn}\, 
\]
for all $u\in U$.  And so, by~\eqr{evimp}, we have that
\[
16b\, \le\, 2^7b\,\le\,  \|\sigma\|^2\, \le \, \frac{1}{2^{10}tn} \sum_{u\in U}D_u(G_i)^{2}\, .
\]
In particular, the lower bound confirms that $\sigma\in S$.
We also observe that $\sigma_u D_{u}(G_i)\ge D_{u}(G_i)^2/2^6t^{1/2}n^{1/2}$ for all $u\in U$, and so
\begin{align*}
\sum_{u\in V(G_i)}\sigma_u D_{u}(G_i)\, & \ge\, \frac{1}{2^6t^{1/2}n^{1/2}}\sum_{u\in U}D_{u}(G_i)^2\\
&\ge\, 16t^{1/2}n^{1/2}\|\sigma\|^2\, .
\end{align*}
This shows that $E_{\sigma}$ occurs, completing the proof of Claim 1.

\textbf{Proof of Claim 2:}  Let us fix $\sigma\in S$.  Set
\[
f_{\sigma}(G_i)\, :=\, \sum_{u\in V(G_i)}\sigma_u D_{u}(G_i)\, .
\]
So that the event $E_{\sigma}$ is the event that the function $f_{\sigma}(G_i)$ is at least $16t^{1/2}n^{1/2}\norm{\sigma}^2 $.  We use the inequality from Proposition~\ref{cor:F} to bound this probability.  In the language of that proposition our function $f_{\sigma}$ is $\psi$-Lipschitz for the function $\psi(uw)=|\sigma_u|+|\sigma_w|$.  Note that $\Ex{f_{\sigma}(G_i)}=0$ and that $\psi$ satisfies $\norm{\psi}^2\le 2n\norm{\sigma}^2$ and $\psi_{\ourmax}\le 2\sigma_{\ourmax}\le 2t^{1/2}n^{1/2}$.  Applying Proposition~\ref{cor:F} we obtain
\begin{align*}
\pr{E_{\sigma}}\, &= \, \pr{f_{\sigma}(G_i)\, \ge \,16\norm{\sigma}^2 t^{1/2}n^{1/2}}\phantom{\bigg|}\\
& \le\,  \exp\left(\frac{-256\norm{\sigma}^4 tn}{48tn\norm{\sigma}^2\, +\, 192tn\norm{\sigma}^2}\right)\\
& \le\, \exp(-\norm{\sigma}^2)\, . \phantom{\bigg|}
\end{align*}
This completes the proof.
\end{proof}

\subsection{The increments $X_{\wedge}(G_i)$}

We now deduce the following bound on increments from our results on the sum of squares of degree deviations.

\begin{lem}\label{lem:varsch} There exists an absolute constant $C$ such that the following holds.  Suppose that $2n^{-1}\log{n}\le t\le 1/2$, that $b\ge 32tn$ and that $i\le m$.  Then, except with probability at most $\exp(-b)$, we have
\[
\Ex{X_{\owedge}(G_i)^2\middle|G_{i-1}}\,  \le\, \frac{C\kappa(b,t)}{n}\, .
\]
\end{lem}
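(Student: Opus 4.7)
The plan is to compute $X_{\owedge}(G_i)$ in explicit closed form in terms of the degrees of the endpoints of the new edge, bound its conditional second moment by a universal multiple of $n^{-1}\sum_v D_v(G_{i-1})^2$ via a direct variance computation, and then invoke Proposition~\ref{prop:sumsquare}.

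First I would observe that if the edge added at step $i$ is $e_i=uw$, each of the $d_u(G_{i-1})$ neighbours of $u$ and each of the $d_w(G_{i-1})$ neighbours of $w$ extends the new edge into two ordered cherries, giving $A_{\owedge}(G_i)=2(d_u(G_{i-1})+d_w(G_{i-1}))$. Writing $d_v(G_{i-1})=2(i-1)/n+D_v(G_{i-1})$ and discarding the $G_{i-1}$-measurable constant, this yields
\[
X_{\owedge}(G_i)\, =\, 2(D_u+D_w)\, -\, 2\,\Ex{D_u+D_w\,\middle|\,G_{i-1}},
\]
where the remaining randomness is the uniform choice of $uw$ among the non-edges of $G_{i-1}$.

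Next I would apply the identity $\Var(Y)\le \Ex{Y^2}$ with $Y=D_u+D_w$, extend the sum over non-edges to a sum over all of $E(K_n)$, and use $(D_u+D_w)^2\le 2(D_u^2+D_w^2)$. This gives
\[
\Ex{X_{\owedge}(G_i)^2\,\middle|\,G_{i-1}}\, \le\, \frac{8(n-1)}{N-i+1}\sum_{v} D_v(G_{i-1})^2\, \le\, \frac{C_0}{n}\sum_{v} D_v(G_{i-1})^2,
\]
the final step using $N-i+1\ge N(1-t)\ge N/2$ together with $N=\binom{n}{2}\asymp n^2$. To conclude, Proposition~\ref{prop:sumsquare} asserts that, except on an event of probability at most $e^{-b}$, one has $\sum_v D_v(G_{i-1})^2\le C\kappa(b,t)$; substituting produces the desired bound.

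I do not anticipate a serious obstacle. The argument is essentially a direct reduction to the degree-deviation estimate already in hand: the expression for $A_{\owedge}$ in terms of the endpoint degrees is linear, so the conditional variance collapses immediately to the averaged second moment of $D_u+D_w$ over a random non-edge, which is $O(n^{-1})\sum_v D_v^2$. The only thing worth double-checking is that the crude bound $\Var(Y)\le \Ex{Y^2}$ is not wasteful here, but since $\sum_v D_v(G_{i-1})=0$ one sees that $|\Ex{Y\mid G_{i-1}}|=O(n^{-2}\sum_v D_v^2)$, which is negligible compared to the second-moment bound above. All the work has really already been done in proving the sum-of-squares bound of Section~\ref{sec:degs}.
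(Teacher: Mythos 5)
Your argument is correct and follows essentially the same route as the paper's proof: both identify $A_{\owedge}(G_i)=2(d_u(G_{i-1})+d_w(G_{i-1}))$, reduce the conditional variance to a uniformly averaged sum of squared degree deviations over non-edges (the paper does this via $\Var(X)\le\Ex{(X-c)^2}$ with $c=8(i-1)/n$, you via centering first and then $\Var(Y)\le\Ex{Y^2}$, which is the same manipulation), extend the sum to all of $K_n$, and invoke Proposition~\ref{prop:sumsquare}. The only cosmetic difference is the order in which the centering and the variance inequality are applied.
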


\begin{proof} Recall that $X_{\owedge}(G_i)$ is defined by $X_{\owedge}(G_i):=A_{\owedge}(G_i)-\Ex{A_{\owedge}(G_i)\middle|G_{i-1}}$, which is the difference between $A_{\owedge}(G_i)$, the number of (isomorphic copies of) paths of length $2$ created with the addition of the $i$th edge, and its expected value given $G_{i-1}$.  It follows that 
\[
\Ex{X_{\owedge}(G_i)^2\middle|G_{i-1}}\, =\, \Var(X_{\owedge}(G_i) | G_{i-1})\, =\, \Var(A_{\owedge}(G_i) |G_{i-1})\, .
\]
We now use that for any constant $c$ we have $\Var(X)\le \Ex{(X-c)^2}$, so that
\[
\Ex{X_{\owedge}(G_i)^2\middle|G_{i-1}}\, \le\, \Ex{\left(A_{\owedge}(G_i)\, -\, \frac{8(i-1)}{n}\right)^2\, \middle|\, G_{i-1}}\, .
\]
By counting the number of isomorphic copies of cherries created, we see that $A_{\owedge}(G_i)=2d_u(G_{i-1})+2d_w(G_{i-1})$ where $uw$ is the $i$th edge included in $G_i$.  Now, since $uw$ is a uniformly selected pair from $E(K_n)\setminus E(G_{i-1})$ we have
\begin{align*}
\mathbb{E}[X_{\owedge}&(G_i)^2|G_{i-1}]\,  \le\, \frac{1}{N-i+1}\,\sum_{uw\not\in E(G_{i-1})} \left(2d_u(G_{i-1})\, +\, 2d_w(G_{i-1}) \,-\, \frac{8(i-1)}{n}\right)^2\\
& \le\,  \frac{8}{N-i+1}\,\sum_{uw\not\in E(G_{i-1})} \left(d_u(G_{i-1})\, -\, \frac{2(i-1)}{n}\right)^2\, +\, \left(d_w(G_{i-1}) \,-\, \frac{2(i-1)}{n}\right)^2\\
& \le\, \frac{32}{n^2}\, \sum_{uw\not\in E(G_{i-1})} D_{u}(G_{i-1})^2\, +\, D_w(G_{i-1})^2\\
& \le\, \frac{32}{n}\, \sum_{u} D_{u}(G_{i-1})^2\, .
\end{align*}
The required inequality now follows immediately from Proposition~\ref{prop:sumsquare}.
\end{proof}

\section{Upper bound on cherry count deviation in $G(n,m)$}\label{sec:cherries}

In this section we prove the upper bound part of Theorem~\ref{thm:cherries}.  This corresponds to Proposition~\ref{prop:cherries}, stated below.  We recall that 
\[
M_{\owedge}(b,t) := \max\{\NORMAL_{\owedge}(b,t), \STAR_{\owedge}(b,t), \HUB_{\owedge}(b,t)\}
\]
where $\NORMAL_{\owedge}(b,t) := b^{1/2}tn^{3/2}, \STAR_{\owedge}(b,t):= b^21_{b < n\ell}/\ell^2$ and
$\HUB(b,t)\, :=\, bn1_{b \geq n \ell}/\ell\,$.  
The following statement corresponds to the upper bound part of Theorem~\ref{thm:cherries}.

\begin{prop}\label{prop:cherries} There exists an absolute constant $C$ such that the following holds. For all $2n^{-1}\log{n}\le t\le 1/2$ and $3 \log n \leq b \leq tn^2 \ell$ we have
\[
\pr{D_{\owedge}(G_m)\, \ge\, CM_{\owedge}(b,t)}\, \le\, \exp(-b)\, .
\]
\end{prop}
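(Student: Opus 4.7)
I plan to apply Freedman's martingale inequality (Lemma~\ref{lem:F}) to the representation
\[
D_{\owedge}(G_m) \;=\; \sum_{i=1}^m c_i\, X_{\owedge}(G_i), \qquad c_i := \frac{(N-m)_2}{(N-i)_2} \in [0,1],
\]
from Section~\ref{sec:Mart}. The two inputs needed are a high-probability bound $\beta$ on the predictable quadratic variation $V(m) = \sum_i c_i^2\,\mathbb{E}[X_{\owedge}(G_i)^2 \mid \F_{i-1}]$ and an almost-sure bound $R$ on $|c_iX_{\owedge}(G_i)|$. For the first, Lemma~\ref{lem:varsch} supplies $\mathbb{E}[X_{\owedge}(G_i)^2 \mid \F_{i-1}] \le C\kappa(b,t)/n$ simultaneously for all $i \le m$ outside an event of probability $\le e^{-b}$, and summing yields $V(m) \le C'tn\,\kappa(b,t) =: \beta$. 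For the second, $|X_{\owedge}(G_i)| \le 4\Delta(G_{i-1})$ motivates truncation: replacing $X_{\owedge}(G_i)$ by $Y_i := X_{\owedge}(G_i)\mathbf{1}\{\Delta(G_{i-1}) \le T\}$ (still a martingale difference, since $\{\Delta(G_{i-1}) \le T\}$ is $\F_{i-1}$-measurable) gives $|c_iY_i| \le 4T$ deterministically, and the two martingales agree on $\{\Delta(G_m) \le T\}$, which Lemma~\ref{lem:largedegs}(i) controls with probability $\ge 1-e^{-b}$ for $T = 2b/\ell_b$.

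Setting $\alpha := CM_{\owedge}(b,t)$, Freedman reduces the target bound $\le e^{-b}$ to the two conditions $\alpha^2 \ge 4b\beta$ and $\alpha \ge 8bT$. The variance condition is a straightforward algebraic check: $\alpha^2/\beta = \Theta(b)$ whenever $\alpha = b^{1/2}tn^{3/2}$ and $\kappa = tn^2$ (NORMAL), or whenever $\alpha = \kappa$ (the ``upper'' STAR with $b \ge t^{1/2}n\ell$, and HUB). In fact, for $b \ge t^{1/2}n\ell$ we may bypass Freedman altogether using the identity
\[
D_{\owedge}(G_m) \;=\; \sum_v D_v(G_m)^2 \;-\; n\,\mathrm{Var}(d_v) \;\le\; \sum_v D_v(G_m)^2,
\]
which combined with Proposition~\ref{prop:sumsquare} yields $D_{\owedge}(G_m) \le C\kappa(b,t) = CM_{\owedge}(b,t)$ outside an event of probability $\le e^{-b}$: this handles HUB and the upper part of STAR in one stroke. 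In the pure NORMAL subregime $b \le t^{2/3}n\ell^{4/3}$, a direct Chernoff tail on a single vertex degree gives $T = O(tn)$, which is small enough to ensure $\alpha \ge 8bT$ as well.

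The main obstacle is the intermediate STAR subregime $t^{2/3}n\ell^{4/3} \le b < t^{1/2}n\ell$ (nonempty precisely when $t\ell^2 \le 1$), where $M_{\owedge} = b^2/\ell^2$ is strictly smaller than $\kappa = tn^2$: the $\sum_v D_v^2$ reduction overshoots by an $\ell$-dependent factor, while the Freedman large-increment condition forces $T \lesssim b/\ell^2$, strictly smaller than the threshold $2b/\ell_b$ controlled by Lemma~\ref{lem:largedegs}(i). To close this gap I would follow the divide-and-conquer strategy alluded to in the informal discussion, splitting $\sum c_i X_{\owedge}(G_i)$ into the contribution from steps whose new edge touches only moderately-sized degrees (handled by Freedman with the smaller $T$) and the contribution from steps incident to a high-degree vertex; the latter sum is controlled by Lemma~\ref{lem:largedegs}(ii), which bounds the number of vertices of each high-degree tier, together with the trivial $|X_{\owedge}(G_i)| \le 4\Delta$ bound. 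Calibrating the split threshold and summing the resulting error terms uniformly across this subregime is the principal technical burden of the proof.
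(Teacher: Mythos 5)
Your outline correctly identifies all the right tools (Freedman's inequality, the variance bound from Lemma~\ref{lem:varsch}, the degree bounds of Section~\ref{sec:degs}, and the key algebraic input $M_{\owedge}(b,t)\ge btn$), and your ``bypass Freedman via $\sum_v D_v(G_m)^2$'' observation in the hub and upper star regimes is a clean, correct alternative to the paper's unified argument. But the proposal has a genuine gap, and in one place an outright error, that prevent it from closing.

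The error: in the ``pure NORMAL subregime $b \le t^{2/3}n\ell^{4/3}$'' you claim a direct Chernoff bound gives $T=O(tn)$. This is only valid when $b = O(tn)$; for small $t$ the quantity $t^{2/3}n\ell^{4/3}$ vastly exceeds $tn$ (the ratio is $t^{-1/3}\ell^{4/3}$), and in that subregime $\pr{\Delta(G_m) > 32 tn}$ is of order $e^{-\Theta(tn)}$, which is much larger than $e^{-b}$. So the truncation at the event $\{\Delta(G_{i-1})\le O(tn)\}$ agrees with the untruncated martingale only on an event of probability far below $1-e^{-b}$, and the argument breaks exactly where you thought it was fine. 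The larger gap is the one you flag yourself: the intermediate star subregime. Here your truncation threshold $T=2b/\ell_b$ gives an $R\alpha$ term too large for Freedman, shrinking $T$ to $\sim b/\ell^2$ makes the truncation event no longer $(1-e^{-b})$-likely, and the ``calibrate and split'' fix is left as a to-do.

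The insight you are missing, and which the paper uses, is that you should truncate the increment $X_{\owedge}(G_i)$ directly at a fixed level $\Theta(tn)$ (concretely $128tn$), \emph{independent of $b$}, rather than truncating on an $\F_{i-1}$-measurable maximum-degree event. This still gives a supermartingale (only positive overshoots are removed), Freedman applies with $R=128tn$, and the $R\alpha$ term is always acceptable because of~\eqr{M1}: $\alpha\ge C_1 btn = \Theta(bR)$. The discarded large-increment part
\[
N^*_{\owedge}(G_m)\;=\;\sum_{i=1}^m \frac{(N-m)_2}{(N-i)_2}\,X_{\owedge}(G_i)\mathbf{1}_{\{X_{\owedge}(G_i)>128tn\}}
\]
is then bounded \emph{deterministically}: any such increment must come from a step whose new edge touches a vertex of degree $>32tn$, and a double-count gives $N^*_{\owedge}(G_m)\le 4\sum_v d_v(G_m)^2\mathbf{1}_{\{d_v(G_m)>32tn\}}$. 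This restricted sum of squared degrees is exactly what the \emph{second} conclusion of Proposition~\ref{prop:sumsquare} controls (you only invoked the first), and the resulting bound $\kappa^+(b,t)\le 9M_{\owedge}(b,t)$ from~\eqr{M3} finishes. With this decomposition there is no regime-splitting and nothing to calibrate to $b$; in particular the intermediate star subregime that you flagged as the ``principal technical burden'' disappears.
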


The proof is a relatively straightforward divide and conquer martingale argument.

Before giving the proof it is useful to make some observations about $M_{\owedge}(b,t)$.  It is easily checked that, for each choice of $t$, it is the normal term, $\NORMAL_{\owedge}(b,t)$, which is largest when $b\le t^{2/3}n\ell^{4/3}$, the star term, $\STAR_{\owedge}(b,t)$, is largest if $t^{2/3}n\ell^{4/3}<b\le n\ell$ and the hub term, $\HUB(b,t)\, :=\, bn1_{b \geq n \ell}/\ell$, is largest if $b>n\ell$.  One may also easily verify that
\eq{M1}
M_{\owedge}(b,t)\, \ge\, btn
\eqe
for all $b\ge 1$, and
\eq{M2}
M_{\owedge}(b,t)^2\, \ge\, btn\kappa(b,t)
\eqe
for all $b\ge 1$, where $\kappa(b,t)$ is as defined in Section~\ref{sec:degs}.  To see~\eqr{M2}, note that for $1\le b\le t^{1/2} n \ell$ we have $M_{\owedge}(b,t)^2\ge \NORMAL_{\owedge}(b,t)^2=bt^2 n^3=btn\kappa(b,t)$, and for $b \geq t^{1/2} n \ell$ we have $M_{\owedge}(b,t)=\kappa(b,t)$ and so~\eqr{M2} follows from~\eqr{M1}.  

It will also be useful to note that the inequality
\eq{M3}
M_{\owedge}(b,t)\, \ge\, \frac{1}{9}\kappa^{+}(b,t)
\eqe
holds for all $b\ge 32tn$.  We consider three ranges.  For $32tn\le b\le t^{2/3}n\ell_b^{4/3}$ this follows from the fact that
\[
M_{\owedge}(b,t)\,\ge\, \NORMAL_{\owedge}(b,t)\, =\, b^{1/2}tn^{3/2}\, \ge\, \frac{b^2}{\ell_b^2}\, =\,\kappa^{+}(b,t)
\]
in this interval.  It is easily observed that for $b\ge 32tn$ we have $\ell_b^{4/3}\ge e$, and so, for $b>\max\{32tn,t^{2/3}n\ell_b^{4/3}\}$ we have that $\ell_b\ge \ell/3$.  And so in the range $\max\{32tn,t^{2/3}n\ell_b^{4/3}\}\le b\le n\ell$ we have 
\[
M_{\owedge}(b,t)\,\ge\, \STAR_{\owedge}(b,t)\, =\, b^2/\ell^2\, \ge\, \frac{b^2}{9\ell_b^2}\, =\,\frac{1}{9}\kappa^{+}(b,t)
\]
in this interval.  Finally, for $b>n\ell$ we have $M_{\owedge}(b,t)=\HUB_{\owedge}(b,t)=\kappa^{+}(b,t)$.

We remark that, for the star and hub regimes, it would actually be possible to deduce these results more directly from Proposition~\ref{prop:sumsquare} by expressing the cherry count in terms of the sum of squares of degree deviations.  However, we prefer to present a unified proof which works across the regimes.  One motivation for doing so is that the proof gives a good overview of proof techniques we shall use again later in the triangle case.

We recall the martingale expression for $D_{\owedge}(G_m)$ is given by 
\[
D_{\owedge}(G_m) = \sum_{i=1}^m \dfrac{(N-m)_2}{(N-i)_2} X_{\owedge}(G_i)\, .
\]
To avoid any particular increment having too large an effect we consider truncating the increments.  Let
\[
X'_{\owedge}(G_i)\, :=\,  X_{\owedge}(G_i) 1_{X_{\owedge}(G_i) \le 128tn}
\]
and 
\[
Z_{\owedge}(G_i) := X_{\owedge}(G_i) 1_{X_{\owedge}(G_i) > 128tn}\, .
\]
It follows that 
\[
D_{\owedge}(G_m)\, =\, D'_{\owedge}(G_m)\, +\, N^{*}_\owedge(G_m)
\]
where 
\eq{Dprimew}
D'_{\owedge}(G_m)\, :=\, \sum_{i=1}^{m} \dfrac{(N-m)_2}{(N-i)_2} X'_{\owedge}(G_i)
\eqe
and
\eq{Nstarw}
N^{*}_{\owedge}(G_m)\, :=\, \sum_{i=1}^{m} \dfrac{(N-m)_2}{(N-i)_2} Z_{\owedge}(G_i)\, .
\eqe
And so, to prove Proposition~\ref{prop:cherries} it suffices to prove that there exist absolute constants $C_1$ and $C_2$ such that 
\eq{ch1}
\pr{D'_{\owedge}(G_m)\, \ge \, C_1M_{\owedge}(b,t)}\, \le\, \exp(-2b)
\eqe
and
\eq{ch2}
\pr{N^{*}_{\owedge}(G_m)\, \ge\, C_2M_{\owedge}(b,t)}\, \le\, \exp(-2b)
\eqe
We may then take $C=C_1+C_2$ and the required bound follows by the triangle inequality and union bound, as in~\eqr{triun}.

\subsection{Bounding the probability that $D'_{\wedge}(G_m)\, \ge \, C_1M_{\wedge}(b,t)$} \label{sec:boundingDprimecherries}

We shall use Freedman's inequality, Lemma~\ref{lem:F}, to prove~\eqr{ch1}.

As $X'_{\owedge}(G_i)\le X_{\owedge}(G_i)$, the sequence of partial sums of the summation in~\eqr{Dprimew} form a supermartingale, with final value $D'_{\owedge}(G_m)$.    We first investigate the behaviour of the increments $Y_i^{\prime}$,  defined by
\[
Y'_i := \big((N-m)_2/(N-i)_2\big)X_{\owedge}^{\prime}(G_i)\, .
\] 
We observe immediately that, since the coefficient is at most $1$ we have the deterministic upper bound $|Y'_i| \leq 128tn$.

We may also bound the quadratic variation of the process.  Note that $(Y_i^{\prime})^2 \leq (X_{\owedge}^{\prime}(G_i))^2 \leq X_{\owedge}(G_i)^2$. Thus, Lemma~\ref{lem:varsch} applied with ``$b$'' being $\max\{4b,32tn\}$ gives a constant $C_3$ such that, for each $i\le m$, there is probability at most $\exp(-4b)$ that
\[
\Ex{(Y_i^{\prime})^2|G_{i-1}} > C_3 n^{-1} \kappa(b,t) \, .
\]
And so, except with probability at most $\exp(-3b)$, we have 
\[
\sum_{i=1}^m \Ex{(Y_i^{\prime})^2|G_{i-1}} \leq C_3 \kappa(b,t) tn\, .
\]
Set $\beta=C_3 \kappa(b,t) tn$.  We now apply Freedman's inequality, Lemma~\ref{lem:F}.  We obtain that
\[
 \pr{D_{\owedge}^{\prime}(G_m) \geq C_1 M_{\owedge}(b,t)}\, \leq\, \expb{\dfrac{- C_1^2 M_{\owedge} (b,t)^2}{2 \beta + 256 C_1 M_{\owedge}(b,t) tn}} \, +\, \exp(- 3b)\, .
 \]
It now follows immediately from~\eqr{M1} and~\eqr{M2} that the first term is at most $\exp(-3b)$, provided $C_1$ is taken sufficiently large.  As $2\exp(-3b)\le \exp(-2b)$, this completes the proof.

\subsection{Bounding the probability that $N^{*}_{\wedge}(G_m)\, \ge \, C_2M_{\wedge}(b,t)$} \label{sec:boundingNstarcherries}

To prove~\eqr{ch2}, we will bound $N^{*}_{\owedge}(G_m)$ using our results on the behaviour of degrees proved in Section~\ref{sec:degs}.  We begin by relating $N^{*}_{\owedge}(G_m)$ to the degree sequence.

It follows from the definition of $X_{\owedge}(G_i)$ that
\[
X_{\owedge}(G_i)\, \leq\, A_{\owedge}(G_i)\, \le\, 2 \sum_{v \in e_i} d_v(G_{i-1})\, .
\]
In particular, if $X_{\owedge}(G_i)>128tn$ then the vertex of larger degree has degree at least $32tn$.  And so
\[
N^{*}_{\owedge}(G_m) \, =\,  \sum_{i=1}^m X_{\owedge}(G_i) 1_{X_{\owedge}(G_i) > 128tn}\, \leq\, 4\sum_{i=1}^m \sum_{v \in e_i} d_v(G_{m})1_{d_v(G_{m}) > 32tn}\, .
\]
Note that we used that $d_v(G_{i-1}) \leq d_v(G_m)$ in the last inequality.  Now, by double counting we get
\begin{align*}
    N^{*}_{\owedge}(G_m)\, &\leq \, 4\sum_{v \in V} d_v(G_{m})1_{d_v(G_{m}) > 32tn} \sum_{e \in E(G_m)} 1_{v \in e} \\
    &= 4\sum_{v \in V} d_v(G_{m})^2 1_{d_v(G_{m}) > 32tn}\, .
\end{align*}
The quantity on the right hand side is automatically $0$ if $\Delta(G_m)\le 32tn$.  In particular, if $b\le 16tn$ then, by applying Lemma~\ref{lem:largedegs} with ``b'' being $32tn$, we obtain
\[
\pr{\Delta(G_m)>32tn}\, \le\, \exp(-32tn)\, \le\, \exp(-2b)\, .
\]
And so, in this case there is probability at most $\exp(-2b)$ that $N^{*}_{\owedge}(G_m)$ is non-zero, and~ \eqr{ch2} follows immediately.

For $b\ge 16tn$, we apply Proposition~\ref{prop:sumsquare} to obtain that, except with probability at most $\exp(-2b)$, we have 
\[
N^{*}_{\owedge}(G_m)\, \le \, C_2 \kappa^{+}(b,t)\, \le\, 9C_2 M_{\owedge}(b,t)\,  ,
\]
where the final inequality uses~\eqr{M3}.  This completes the proof of~\eqr{ch2}.

\section{Codegrees in $G(n,m)$}\label{sec:codegs}

Our proof for cherries in the previous section was reliant on some control of the degree sequence.   The quadratic variation of the random variables $X_{\triangle}(G_i)$ is a function of the sequence of codegrees.   And so, to extend the approach to the triangle count we also need to prove bounds related to the behaviour of the sequence of codegrees.  This turns out to be a much bigger challenge as we need to engage with the structure of the set of pairs with large codegrees.

After we have proved these results, we state, in Section~\ref{sec:Xtriangle}, what we can deduce about the quadratic variation of the associated increments $X_{\triangle}(G_i)$ from these bounds on the behaviour of codegrees.

For each pair of distinct vertices $u,w\in V(G_m)$ (equivalently, for each $uw\in E(K_n)$) we write $d_{uw}(G_m)$ for their codegree, i.e., $d_{uw}(G_m)=|N_{G_m}(u)\cap N_{G_m}(w)|$, the number of common neighbours of $u$ and $w$ in the graph $G_m$.  Let us also define
\[
D_{uw}(G_m)\, =\, d_{uw}(G_m)\, -\, \frac{m(m-1)(n-2)}{N(N-1)}\, ,
\]
which is the deviation of the codegree from its expected value in $G_m\sim G(n,m)$.  We shall consider two ranges of codegree deviations
\begin{itemize}
\item[(i)] from $\Theta(tn^{1/2})$ to $\Theta(t^2n)$, and
\item[(ii)] larger than $\Theta(t^2n)$.
\end{itemize}
The first range considers deviations from the order of the codegree standard deviation to the order of the mean, while the second range considers genuinely large codegree deviations.  

One observation is that a vertex $u$ with large degree is likely to have large codegree with almost all vertices.  For this reason we focus in this section on pairs $uw$ which have large codegree which \emph{cannot} be explained directly in terms of the degree of one of its vertices.  Corresponding to (i) and (ii) above,
\begin{itemize}
\item[(i)] For $k\in K_1:=\{10,\dots ,\lceil \log_{2}(tn^{1/2})+10\rceil \}$, we define
\[
F_k(G)\, :=\, \{uw\, :\, |D_{uw}(G)|\in [2^{k}tn^{1/2},2^{k+1}tn^{1/2})\, ,\, |D_u(G)|,|D_w(G)|\le 2^{k-5}n^{1/2}\}
\]
and set $f_k(G):=|F_{k}(G)|$.
\item[(ii)] For $k\in K_2:=\{10,\dots, \lfloor 2\log_2(1/t)\rfloor\}$, we define
\[
H_{k}(G)\, :=\, \{uw\, :\, d_{uw}(G)\in [2^{k}t^2n,2^{k+1}t^2n)\, ,\, d_u(G),d_w(G)\le 2^{k-5}tn\}
\]
and set $h_k(G):=|H_k(G)|$.
\end{itemize}
Note that the intervals $K_1$ and $K_2$ have been chosen so that all dyadic intervals of possible codegrees from $2^{10}tn^{1/2}$ up to $n$ are covered.  The alert reader will have noticed that we use the codegree $d_{uw}(G)$ rather than the codegree deviation $D_{uw}(G)$ in the definition of $H_k(G)$.  We do so because the codegree is easier to discuss, and in applications we will use that $|D_{uw}(G_i)|\le d_{uw}(G_i)$, whenever $d_{uw}(G_i)\ge t^2n$.


We shall prove the following bounds.

\begin{prop}\label{prop:codegs} There exists an absolute constant $C$ such that the following holds.  Suppose that $t\ge Cn^{-1/2}(\log{n})^{1/2}$ and that $b\ge n$.  Then, except with probability at most $\exp(-b)$, we have 
\[
f_k(G_i)\, \le \, \frac{Cb^2}{2^{4k}}\phantom{\Bigg|}  \qquad\qquad \text{for all } k\in K_1\,\text{and } i\le m\, .
\]
For $b\ge 3\log{n}$, except with probability at most $\exp(-b)$, we have 
\[
h_k(G_i)\, \le\, \frac{Cb^2}{k^2 2^{2k}t^4n^2}\phantom{\Bigg|} \qquad\qquad \text{for all } k\in K_2\,\text{and } i\le m\, .
\]
\end{prop}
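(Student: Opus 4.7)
The plan is to generalise the dyadic linearisation technique of Lemma~\ref{lem:sd2} from degree deviations to codegree deviations, with the role of the integer vector $\sigma\in\mathbb{Z}^{V(G)}$ played by a sign vector $\epsilon\in\{0,\pm 1\}^{E(K_n)}$.  Fix $k\in K_1$ and a step $i\le m$.  If $f_k(G_i)\ge F$, set $\epsilon_{uw}=\mathrm{sgn}(D_{uw}(G_i))$ on some $F$-element subset of $F_k(G_i)$ and $\epsilon_{uw}=0$ otherwise; then
\[
g_\epsilon(G_i)\, :=\, \sum_{uw}\epsilon_{uw}D_{uw}(G_i)\, \ge\, 2^k tn^{1/2}\, F.
\]
So it suffices to prove that, for the right choice of $F=\Theta(b^2/2^{4k})$, the event $g_\epsilon(G_i)\ge 2^k tn^{1/2}F$ has small enough probability to absorb a union bound over sign patterns $\epsilon$, steps $i\le m$, and scales $k$.

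The central step is to control the $\psi$-Lipschitz constant of $g_\epsilon$ so that Proposition~\ref{cor:F} gives a useful bound.  A direct calculation shows that if we swap an edge $xy$ in $G$ then $g_\epsilon$ changes by at most $\sum_{w\in N(y)}|\epsilon_{xw}|+\sum_{w\in N(x)}|\epsilon_{yw}|\le d_x^\epsilon+d_y^\epsilon$, where $d_x^\epsilon:=\sum_w|\epsilon_{xw}|$ is the degree of $x$ in $\mathrm{supp}(\epsilon)$.  Hence we may take $\psi(xy)=d_x^\epsilon+d_y^\epsilon$; writing $D:=\max_x d_x^\epsilon$ this gives $\|\psi\|_2^2\le 4nDF$ and $\psi_{\ourmax}\le 2D$, and Proposition~\ref{cor:F} yields
\[
\pr{g_\epsilon(G_i)\, \ge\, 2^k tn^{1/2}F}\, \le\, \exp\left(\frac{-2^{2k}tF}{D(96+12\cdot 2^k/n^{1/2})}\right).
\]

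To make this useful we restrict the union bound to sign patterns $\epsilon$ whose support has bounded vertex-degree $D=O(tn^{3/2}/2^k)$.  This restriction is justified by the degree constraint built into the definition of $F_k$: if $uw\in F_k(G_i)$ then $d_u,d_w$ are within $2^{k-5}n^{1/2}$ of $tn$, so (conditioning on $\Delta(G_i)\le 2tn$, which holds by Lemma~\ref{lem:largedegs} outside a set of probability $\le e^{-b}$) we have $\sum_w d_{uw}\le d_u\Delta=O(t^2n^2)$, and since each pair in $F_k$ contributes at least $2^k tn^{1/2}$ we obtain at most $O(tn^{3/2}/2^k)$ pairs of $F_k(G_i)$ containing any fixed vertex.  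Plugging this bound on $D$ into the exponent and taking $F=Cb^2/2^{4k}$, the resulting probability beats the union bound $\binom{N}{F}2^F\le(6N/F)^F$ (and the factors of $m\le n^2$ for $i$, and $|K_1|=O(\log n)$ for $k$) for a sufficiently large absolute constant $C$.  This yields the first inequality of the proposition.  The bound on $h_k$ proceeds analogously, working with $\sum\epsilon_{uw}d_{uw}(G_i)$ in place of $\sum\epsilon_{uw}D_{uw}(G_i)$ and with codegree threshold $2^k t^2 n$ rather than $2^k tn^{1/2}$; the sharper $k^{-2}$ factor in the bound comes from the stronger gain in the union bound for genuinely large codegrees.

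\textbf{Main obstacle.} The delicate point is the joint control of the Lipschitz bound and the support structure.  Unlike in Lemma~\ref{lem:sd2}, where swapping one edge changes $\sum_u\sigma_u D_u$ by an amount controlled purely by the vector $\sigma$, the change in $g_\epsilon$ involves the neighbourhoods $N(x),N(y)$ in the current graph.  To get a deterministic $\psi$ one is forced to pay a worst-case factor, and without some a priori bound on $D$ the resulting inequality is too weak to beat the union bound over $\epsilon$.  Using the $F_k$-structure (in the form of the degree restriction) to control $D$ is what makes the argument go through, and matching the constants through the four parameters $(b,k,F,D)$ so that the exponent dominates the entropy $F\log(N/F)$ across the entire range $K_1$ (respectively $K_2$) is the main calculational challenge.
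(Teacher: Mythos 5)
Your proposal takes a genuinely different route from the paper. The paper proves Proposition~\ref{prop:codegs} by conditioning on neighbourhoods $\Gamma(u)$ and applying hypergeometric Chernoff bounds to stars and matchings in the auxiliary graphs $F_k(G)$, $H_{k,j}(G)$ (Lemmas~\ref{lem:starmatch}, \ref{lem:starmatch2}), together with a handshaking argument; and, in the regime $2^k\lesssim\sqrt{\log n}$ where that union bound is too weak, via the covering Lemma~\ref{lem:multi} and the ``union of $r=2^{2k}$ stars'' Lemma~\ref{lem:starmatch3}.  You instead propose to extend the dyadic linearisation of Lemma~\ref{lem:sd2} to codegrees via $g_\epsilon=\sum\epsilon_{uw}D_{uw}$ and Proposition~\ref{cor:F}.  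This is an attractive idea, but it has a genuine gap, which I think is not the one you flag.

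The real obstacle is that the $\psi$-Lipschitz bound of Proposition~\ref{cor:F} loses a factor of $t$ in the exponent.  Your $\psi(xy)=d_x^\epsilon+d_y^\epsilon$ is a worst-case bound on the increment: for almost every edge-swap the actual change in $\Ex{g_\epsilon(G_m)\mid G_i}$ is zero, since it requires the new endpoint to land in the neighbourhood of a matched partner, which has probability $\approx t$.  Consequently the quantity $24t\|\psi\|_2^2$ overestimates the true quadratic variation by roughly $1/t$.  Concretely, for $\epsilon$ supported on a matching of size $F$ one has $\|\psi\|_2^2=O(nF)$ and $\psi_{\max}=2$, so your bound reads $\exp(-\Theta(2^{2k}tF))$; whereas the true variance of $g_\epsilon$ is $\Theta(Ft^2n)$, so the Gaussian heuristic (and the paper's conditional Chernoff argument) give $\exp(-\Theta(2^{2k}F))$.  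With the more general degree bound $D=O(tn^{3/2}/2^k)$ the exponent from Proposition~\ref{cor:F} is $\Theta(2^{3k}F/n^{3/2})$, and for $F=\Theta(b^2/2^{4k})$ this must dominate the entropy $\Theta(F\log(N/F))=\Theta(kF)$ plus $b$.  That inequality fails for most of $K_1$ as soon as $n$ is large, no matter how large the absolute constant is chosen: for example at $k=10$ and $b=n$ you would need $2^{30}/n^{3/2}\gtrsim 1$.  Raising $C$ only changes $F$ by a constant factor and does not repair this.  (The $t$-loss affects the $h_k$ part in the same way.)  The paper avoids this loss entirely because, after conditioning on $\Gamma(u)$, the quantity $e(W,\Gamma_0)$ is a sum of hypergeometric random variables and the Chernoff bound captures its actual variance; nothing like the worst-case Lipschitz constant ever appears.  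Additionally, even if the $t$ factor could be recovered, your argument would still have no substitute for the Lemma~\ref{lem:multi}/Lemma~\ref{lem:starmatch3} mechanism that the paper explicitly needs when $2^k\lesssim\sqrt{\log n}$, and the $h_k$ part of the paper's proof moreover relies on the partition into $H_{k,0},H_{k,j}$ together with the bounds on $|V_j|$, none of which is reflected in the ``analogously'' at the end of your sketch.
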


For the application to controlling the triangle count, and, in particular, the quadratic variation of the associated process, we will use the following corollary.

\begin{cor}\label{cor:codegs}
There exists an absolute constant $C$ such that the following holds.  Suppose that $t\ge Cn^{-1/2}(\log{n})^{1/2}$ and that $b\ge n$.  Then, except with probability at most $\exp(-b)$, we have
\[
\sum_{uw}D_{uw}(G_i)^2\, \le \, C\max\{bt^2n^2,b^2\}
\]
for all $i\le m$.
\end{cor}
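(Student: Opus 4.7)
The strategy is to decompose $\sum_{uw} D_{uw}(G_i)^2$ dyadically by the size of $|D_{uw}|$ and to treat separately the ``structural'' pairs counted by $F_k$ and $H_k$ in Proposition~\ref{prop:codegs} from the ``exception'' pairs, which are those where one endpoint has large degree or degree deviation and whose contribution I shall control via Proposition~\ref{prop:sumsquare}. We work in the intersection of the events that the conclusions of Propositions~\ref{prop:codegs} and~\ref{prop:sumsquare} hold (with some fixed multiple of $b$ as the parameter), which has probability at least $1 - O(\exp(-b))$ by a union bound.

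First, the pairs with $|D_{uw}(G_i)| \le 2^{10} tn^{1/2}$ contribute at most $\binom{n}{2} \cdot 2^{20} t^2 n = O(t^2 n^3)$, which is $O(bt^2 n^2)$ since $b \ge n$. For each $k \in K_1$, I split the pairs with $|D_{uw}| \in [2^{k} tn^{1/2}, 2^{k+1} tn^{1/2})$ into those lying in $F_k$ and the exception pairs, where at least one endpoint $u$ satisfies $|D_u| > 2^{k-5} n^{1/2}$. For the former, I combine the two available upper bounds $f_k \le Cb^2/2^{4k}$ and $f_k \le n^2/2$. Since each such pair contributes $D_{uw}^2 \le 2^{2k+2} t^2 n$, the total $F_k$-contribution is at most $\min\{4Cb^2 t^2 n/2^{2k},\, 2^{2k+1} t^2 n^3\}$. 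This is a $\min\{A/x, Bx\}$ quantity in $x = 2^{2k}$ whose maximum over $k$ equals $\sqrt{AB} = O(bt^2 n^2)$, and summing the resulting geometric series over $k \ge 10$ preserves this order. For the exception pairs, the inequality $|D_u| > 2^{k-5} n^{1/2}$ forces $D_{uw}^2 < 2^{12} t^2 D_u^2$; summing over $w$ (at most $n$ of them) and then over $u$ yields $O(t^2 n \sum_u D_u^2) \le O(t^2 n \, \kappa(b, t))$ by Proposition~\ref{prop:sumsquare}, and a short case analysis over the three regimes of $\kappa$ confirms that this is $O(\max\{bt^2 n^2, b^2\})$.

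The treatment of $k \in K_2$ is analogous. For pairs in $H_k$, I combine $h_k \le Cb^2/(k^2 2^{2k} t^4 n^2)$ with $D_{uw}^2 \le O(4^k t^4 n^2)$ to obtain a per-$k$ contribution of $O(b^2/k^2)$, which sums over $k \ge 10$ to $O(b^2)$. For exception pairs with $d_u > 2^{k-5} tn$, the bound $D_{uw}^2 \le O(t^2 d_u^2)$ combined with summing over $w$ and then $u$ gives $O(t^2 n \sum_{u : d_u \ge 32 tn} d_u^2)$, which by the second part of Proposition~\ref{prop:sumsquare} is $O(t^2 n \, \kappa^+(b, t))$; another brief case analysis over the two regimes of $\kappa^+$ absorbs this into the target.

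The main obstacle will be the crossover estimate for the $F_k$ contribution: neither the bound from Proposition~\ref{prop:codegs} nor the trivial $f_k \le n^2/2$ is adequate on its own across the entire range of $k$, and one must take the pointwise minimum and exploit the fact that the maximum of a $\min\{A/x,Bx\}$ across a geometric progression in $x$ is of order $\sqrt{AB}$ in order to recover $O(bt^2 n^2)$. Once this is set up, the remainder is a careful but routine book-keeping exercise balancing the dyadic sums against the regimes of $\kappa$ and $\kappa^+$.
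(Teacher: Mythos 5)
Your proof is correct and takes essentially the same approach as the paper's. The paper's bookkeeping differs only cosmetically — it collects all pairs with $|D_{uw}(G_i)| \le 64t\max\{|D_u(G_i)|, |D_w(G_i)|\}$ into a single ``degree-explained'' class (rather than handling exceptions per dyadic scale $k$) and splits the $F_k$ sum at the explicit crossover $2^{2k}\approx b/n$ (rather than taking the pointwise minimum and summing the tent function) — but the ingredients (Propositions~\ref{prop:codegs} and~\ref{prop:sumsquare}, the dyadic decomposition, and the $t^2n\kappa$, $t^2n\kappa^+$ bounds) and the resulting estimate are the same as yours.
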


\begin{proof}  We may fix $i\le m$ and prove the result holds with failure probability at most $4\exp(-2b)$.  The corollary then follows by a union bound as $4m\exp(-2b)\le 4n^2\exp(-2b)\le \exp(-b)$.  

We will consider four types of pairs which contribute to the sum $\sum_{uw}D_{uw}(G_i)^2$.   

First, pairs with $|D_{uw}(G_i)|\le 2^{10}tn^{1/2}$ make total contribution at most $2^{20}t^2n^3\le 2^{20}bt^2n^2$, so these may be safely ignored.

Second, the contribution of pairs with $|D_{uw}(G_i)|\le 64t\max\{|D_{u}(G_i)|,|D_w(G_i)\}$, may be controlled by our bound on the sum of squares of degree deviations.  Indeed, by Proposition~\ref{prop:sumsquare}, except with probability at most $\exp(-32b)$, the total contribution of these terms is at most
\[
2^{12}t^2n\sum_{u}D_u(G_i)^2\, \le\, C_1t^2n\kappa(32b,t)\, \le\, \frac{C'_1bt^2n^2}{\ell}
\]
for some constants $C_1$ and $C'_1$.

All remaining pairs appear in $\bigcup_{k\in K_1} F_k(G_i)$ or $\bigcup_{k\in K_2}H_k(G_i)$.  Except with probability at most $\exp(-2b)$, we have
\begin{align*}
f_k(G_i)\, &\le \, \frac{C_2 b^2}{2^{4k}}\phantom{\Bigg|}   &\text{for all } &k\in K_1 ,\, \text{and}\\
h_k(G_i)\, &\le\, \frac{C_2 b^2}{k^2 2^{2k}t^4n^2}\phantom{\Bigg|} \qquad\qquad &\text{for all } &k\in K_2\, ,
\end{align*}
for some constant $C_2$.  We will assume these bounds in what follows.

Let us first consider pairs in $\bigcup_{k\in K_1} F_k(G_i)$.  If $uw\in F_k(G_i)$ then $|D_{uw}(G_i)|\le  2^{k+1}tn^{1/2}$ and so we may bound the total contribution of pairs in $F_{k}(G_i)$ by $2^{2k+2}t^2 n f_k(G_i)$.

We consider two cases.  If $2^{2k}\le b/n$ then we use the trivial bound $f_k(G_i)\le n^2$, so that the total contribution of pairs in $F_{k}(G_i)$ is at most
\[
2^{2k+2}t^2nf_k(G_i)\,\le \, 2^{2k+2}t^2n^3\, .
\]
Summing these contributions gives at most $8bt^2n^2$.  For $k$ with $2^{2k}> b/n$ we use the above bound on $f_k(G_i)$, which gives us that the total contribution of pairs in $F_{k}(G_i)$ is at most 
\[
2^{2k+2}t^2nf_k(G_i)\,\le \, \frac{4C_2 b^2t^2n}{2^{2k}}\, .
\]
Summing these contributions gives at most $8C_2bt^2n^2$.  And so
\[
\sum_{k\in K_1}\sum_{uw\in F_k(G_i)}D_{uw}(G_i)^2\, \le\, 8(C_2+1)bt^2n^2\, .
\]

We now consider pairs in $\bigcup_{k\in K_2} H_k(G_i)$.  If $uw\in H_k(G_i)$ then $D_{uw}(G_i)\le d_{uw}(G_i)\le 2^{k+1}t^2n$ and so we may bound the total contribution of pairs in $H_{k}(G_i)$ by $2^{2k+2}t^4n^2 h_k(G_i)$.  Summing these contributions, over $k\in K_2$, gives at most
\[
\sum_{k\in K_2} 2^{2k+2}t^4n^2 h_k(G_i)\, \le\, \sum_{k\ge 10} \frac{4C_2 b^2}{k^2}\, \le\, C_2 b^2\, .
\]

Summing all the contributions of all types gives that the summation is at most 
\[
\big(2^{10}+C_1+9C_2+8\big)\, \max\{bt^2n^2,b^2\}\, ,
\]
as required.
\end{proof}

In the remainder of the section we prove Proposition~\ref{prop:codegs}.  The proof is somewhat different for $f_k(G_i)$ and $h_k(G_i)$, so we dedicate a subsection to each.  In fact, it is better to start with $h_k(G_i)$.

\subsection{Controlling $h_k(G_i)$}\label{sec:h}

We shall now prove the part of Proposition~\ref{prop:codegs} related to $h_k(G_i)$.  


%



We remark that the bound is trivial if $k2^k\le b/t^2n^2$, as $h_k(G)\le n^2$.  So let us fix $i\le m$ and $k\in K_2$ such that $k2^k> b/t^2n^2$. 

We may think of $H_k(G)$ as an auxiliary graph associated with the graph $G$.  Rather than work directly with $H_k(G)$ it is useful to partition $H_k(G)$ depending on the degrees of $u,w$ in the pair $uw$. 
We define
\[
H_{k,0}(G)\, :=\, \{uw\, :\, d_{uw}(G)\in [2^{k}t^2n,2^{k+1}t^2n)\, ,\, \max\{d_u(G),d_w(G)\}\le 2^{k/2}tn\}\phantom{\bigg|}
\]
and set $h_{k,0}(G):=|H_{k,0}(G)|$.  And for $k/2<j\le \min\{k-6,\log_2(1/t)\}$, we define
\[
H_{k,j}(G)\, :=\, \{uw\, :\, d_{uw}(G)\in [2^{k}t^2n,2^{k+1}t^2n)\, ,\, \max\{d_u(G),d_w(G)\} \in [2^jtn, 2^{j+1}tn)\}\phantom{\bigg|}
\]
and set $h_{k,j}(G):=|H_{k,j}(G)|$.  

Let $J=\{0\}\cup\{j:k/2<j\le \min\{k-6,\log_2(1/t)\}\}$ be the set of indices.  As every pair in $H_k(G)$ occurs in some $H_{k,j}(G)$ with $j\in J$, we have
\eq{hbound}
h_{k}(G)\, =\, \sum_{j\in J}h_{k,j}(G)\, .
\eqe
The following lemma bounds the size of certain structures within the auxiliary graphs $H_{k,0}(G)$ and $H_{k,j}(G)$.  In fact, the lemma will be stated in the context of a graph $G\sim G(n,p)$ for $p\in (0,t)$.  This makes the proof slightly smoother as we avoid some issues related to conditioning.

\begin{lem}\label{lem:starmatch} Suppose that $t\ge n^{-1/2}(\log{n})^{1/2}$, that $p\in (0,t)$, that $b\ge 3\log{n}$.  Let $10\le k\le 2\log_2(1/t)$ and $k/2<j\le \min\{k-6,\log_2(1/t)\}$ and let $G\sim G(n,p)$.  With probability at least $1-\exp(-b)$:
\begin{enumerate}
\item[(i)] $H_{k,0}(G)$ contains no star with degree at least $32b/k2^k t^2n$,
\item[(ii)] $H_{k,j}(G)$ contains no star with degree at least $32b/(k-j)2^k t^2n$,
\item[(iii)] $H_{k,0}(G)$ contains no matching with at least $32b/k2^k t^2n$ edges.
\end{enumerate}
\end{lem}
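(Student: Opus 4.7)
The plan is to prove each of (i), (ii), (iii) via a three-step template: a Chernoff tail bound on individual codegrees, followed by an (often conditional) independence argument across the structure in question, and finally a union bound over configurations. Throughout, the hypotheses $t^2 n \gg \log n$ and $k \ge 10$ will guarantee that the configuration-counting factor is dominated by the per-configuration Chernoff gain.

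For the single-pair estimate, I would observe that, conditional on $N(v) = S$ with $|S|\le D$, the codegree $d_{vw}(G)$ is stochastically dominated by $\mathrm{Bin}(D, p)$ of mean at most $Dp$. By~\eqref{eq:h3}, for $\Lambda \ge e Dp$,
\[
\pr{d_{vw}(G) \ge \Lambda \,\big|\, N(v) = S}\, \le\, \exp\!\left(-\Lambda\log\!\frac{\Lambda}{eDp}\right).
\]
Specialising with $(D,\Lambda) = (2^{k/2}tn, 2^k t^2 n)$ in case (i) gives a per-pair bound $\exp(-c\, k\, 2^k t^2 n)$; with $(D,\Lambda) = (2^{j+1}tn, 2^k t^2 n)$ in case (ii) it gives $\exp(-c(k-j)\, 2^k t^2 n)$; and unconditionally (using that $d_{vw}(G)\sim \mathrm{Bin}(n-2, p^2)$ in $G(n,p)$, of mean at most $t^2 n$) one gets $\exp(-c\, k\, 2^k t^2 n)$, each for an absolute constant $c>0$.

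For the star cases (i) and (ii), the key observation is that once $N(v)$ is fixed, the codegrees $d_{vw_1}(G), \dots, d_{vw_s}(G)$ depend on the pairwise disjoint edge sets $\{w_i x : x \in N(v)\}$ (disjoint because the leaves $w_i$ are distinct), and are therefore conditionally independent given $N(v)$. The conditional joint probability that all $s$ codegrees exceed $2^k t^2 n$ is thus at most $\exp(-csX\, 2^k t^2 n)$, with $X = k$ or $X = k-j$. A union bound over the centre $v$ and the $s$-subset of leaves gives $n \binom{n}{s} \exp(-csX\, 2^k t^2 n) \le \exp((s+1)\log n - csX\, 2^k t^2 n)$. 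Taking $s$ equal to the stated threshold, so that $csX\, 2^k t^2 n$ is a large multiple of $b$, the contribution $s\log n$ is swamped because $X\, 2^k t^2 n \gg \log n$ ($X\ge 6$ in case (ii), and $t^2 n \ge C^2 \log n$), and $\log n \le b/3$ by hypothesis.

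For the matching case (iii), the codegrees $d_{v_iw_i}(G)$ are not unconditionally independent, as they share edges lying inside the matched vertex set $V_{\mathrm{mat}} = \bigcup_i\{v_i,w_i\}$. I would resolve this by writing $d_{v_iw_i}(G) = d_{v_iw_i}^{\mathrm{in}} + d_{v_iw_i}^{\mathrm{out}}$, splitting according to whether the common neighbour lies in $V_{\mathrm{mat}}$ or not. The outside parts depend on pairwise disjoint edge sets (joining $v_i$ or $w_i$ to vertices outside $V_{\mathrm{mat}}$, grouped by matched endpoint), hence are genuinely independent $\mathrm{Bin}(n-2r, p^2)$ variables. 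The inside contributions are deterministically bounded by $2r-2$; when $r$ is small enough that $2r-2 \le 2^{k-1}t^2 n$, the event $\{d_{v_iw_i}(G)\ge 2^k t^2 n\;\forall i\}$ forces every outside part to exceed $2^{k-1}t^2 n$, and by independence and Chernoff this has probability at most $\exp(-c'rk\, 2^k t^2 n)$. Union-bounding over the at most $n^{2r}$ matchings of size $r$ and choosing $r$ equal to the stated threshold closes the main case; the complementary regime (where the inside contribution would already dominate, forcing $r$ close to $n/2$) is handled separately by a crude counting bound. The principal technical obstacle is precisely this dependency management for matchings; for the stars the conditioning on $N(v)$ restores independence cleanly.
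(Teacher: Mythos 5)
Your high-level template (per-pair Chernoff, independence, union bound over configurations) matches the paper's, but the decompositions differ at two points, one cosmetic and one with a genuine gap.

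For the star cases (i) and (ii), your route — conditioning on $N(v)$, viewing the $s$ codegrees as conditionally independent, and multiplying per-pair bounds from~\eqref{eq:h3} — is morally the same as the paper's, which instead sums the codegrees into the single quantity $e(W,\Gamma_0)$ and applies the ``sum of two binomials'' inequality~\eqref{eq:h6} once. Both are valid. One small wrinkle in your version: the edge sets $\{w_ix: x\in N(v)\}$ are \emph{not} pairwise disjoint in general, since the edge $w_iw_j$ is shared whenever both $w_i,w_j\in N(v)$, so the codegrees are not literally conditionally independent; the shared contribution is at most $O(s^2)$ and is easily absorbed, but you should say so (the paper avoids the issue automatically because $e(W,\Gamma_0)$ counts with multiplicity).

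For the matching case (iii), your inside/outside decomposition is a genuinely different and appealing idea, and your analysis is correct in the regime where $2r-2\le 2^{k-1}t^2n$, i.e., where the inside part cannot absorb half the codegree threshold. The gap is the complementary regime, which you acknowledge but describe inaccurately and do not actually handle. It is not true that $2r-2>2^{k-1}t^2n$ ``forces $r$ close to $n/2$'': with $r=32b/k2^kt^2n$ and, say, $t=n^{-1/2}(\log n)^{1/2}$ and $k=10$, the complementary regime is $b\gtrsim (\log n)^2$, giving $r$ as small as $\Theta((\log n))$ while $n/2$ is polynomially larger, and this sub-case persists throughout $b$ up to the useful upper bound $tn^2\ell$. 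In that regime neither the bound on the inside part $2r-2$ nor the degree bound $2^{k/2}tn$ forces the outside part to carry a constant fraction of the codegree, so the stated ``crude counting bound'' is not obviously available and is not supplied. The paper's argument sidesteps the decomposition entirely by conditioning on $\Gamma(u_1),\dots,\Gamma(u_f)$ simultaneously and Chernoff-bounding the total $\sum_g|\Gamma(w_g)\cap\Gamma_g|$ via~\eqref{eq:h6}. You would either need to adopt that route, or give an actual argument in the complementary regime; as written, this step is a gap.

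A minor omission: the lemma's thresholds like $32b/k2^kt^2n$ are treated as integers; the paper explicitly notes at the start that one may assume integrality by slightly increasing $b$, and your write-up should include such a remark.
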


\begin{proof} A quick observation is that we may assume the above values, $32b/k2^k t^2n$ for example, are integer, as for non-integers the result follows by increasing $b$ until $32b/k2^k t^2n$ reaches the next integer.

We begin with the proof of (i).  For the duration of the proof of (i) let us write $j$ for $k/2$.  We first describe the event that $H_{k,0}(G)$ contains a large star as a union of certain events.  The event depends on
\begin{itemize}
\item a vertex $u\in V$, 
\item a subset $W\subseteq V\setminus \{u\}$ with $|W|=32 b/k2^k t^2n$, and
\item a subset $\Gamma \subseteq V\setminus\{u\}$ with $|\Gamma |\le 2^{j}tn$.
\end{itemize}
For each choice of the above, we set $F(u,W,\Gamma)$ to be the event that $\Gamma$ is the neighbourhood of $u$ in $G\sim G(n,p)$, and that
\[
e(W,\Gamma)\, \ge\, 2^{k}t^2n|W|\, .
\]
Observe that the event described in (i) may only fail if $F(u,W,\Gamma)$ occurs for some trio $u,W,\Gamma$.  Simply take $u$ to be the centre of the star, $\Gamma$ to be its neighbourhood, and $W$ be $32 b/k2^k t^2n$ neighbours of $u$ in $H_{k,0}(G)$.  

For a fixed choice of $W$ and $\Gamma$ we have that $\Ex{e(W,\Gamma)}\le 2^{j}t^2n|W|=:\nu$.  The event $F(u,W,\Gamma)$ implies that $e(W,\Gamma)\ge 2^{k-j}\nu$.  And so, by~\eqr{h6}, we have that
\begin{align*}
\pr{F(u,W,\Gamma)|N(u)=\Gamma}\, &\le\, 2\exp\left(-(k-j)2^{k-j-3}\nu\right)\phantom{\Big|}\\ 
&=\, 2\exp(-(k-j)2^{k-3}t^2n|W|)\phantom{\Big|}\\
&\le \, 2\exp(-2b)\, .\phantom{\Big|}
\end{align*}
For the last line, we used the definition of $|W|$ and the fact that $j=k/2$.  We will now complete the proof with a union bound.  Note that there are $n$ choices of the vertex $u$ and at most 
\[
n^{|W|}\,=\, \exp\left(\frac{32b\log{n}}{k2^k t^2n}\right)\,\le\, \exp\left(\frac{32 b}{k2^{k}}\right)\,\le\, \exp(b/2)
\]
choices of $W$ (we used $k\ge 10$).  And so
\begin{align*}
\pr{\bigcup_{u,W,\Gamma}F(u,W,\Gamma)}\, &\le\, \sum_{u,W,\Gamma}\pr{N(u)=\Gamma}\pr{F(u,W,\Gamma)|N(u)=\Gamma}\\
&\le \, 2\exp(-2b) \sum_{u,W}\sum_{\Gamma}\pr{N(u)=\Gamma}\phantom{\Big|}\\
&\le \, 2n\exp(-3b/2)\phantom{\Big|}\\
&\le\, \exp(-b)\, ,\phantom{\Big|}
\end{align*}
as required (we used $b\ge 3\log{n}$).

The bound for (ii) works in exactly the same way.  The only difference is that $j$ is now taken in the interval $(k/2, \min\{k-6,\log_2(1/t)\}]$, and the $(k-j)$ term cancels with the equivalent term in $|W|$.

For (iii), the argument is similar, though a little more involved as we now need to consider a set of disjoint pairs (a matching) $u_1w_1,\dots u_{f}w_{f}$, where $f:=32b/k2^k t^2n$.  Let us write $W$ for $\{w_1,\dots ,w_f\}$.  We also need to consider a sequence of sets $\Gamma_1,\dots ,\Gamma_f$ rather than a single set $\Gamma$.  Let $F(u_1,\dots ,u_f,w_1,\dots ,w_f,\Gamma_1,\dots ,\Gamma_f)$ be the event that $N(u_g)=\Gamma_g$ for all $g=1,\dots f$, and
\[
\sum_{g=1}^{f}|N(w_g)\cap \Gamma_g|\, \ge\, 2^{k}t^2nf\, .
\]
It is clear that the event that $h_{k,0}(G)\ge f$ is contained in the union of the events $F(u_1,\dots ,u_f,w_1,\dots ,w_f,\Gamma_1,\dots ,\Gamma_f)$, where the union is taken over matchings $u_1w_1,\dots ,u_f w_f$ and sets $\Gamma_1,\dots ,\Gamma_f$ each with cardinality at most $2^{k/2}tn$.  

We abbreviate $F(u_1,\dots ,u_f,w_1,\dots ,w_f,\Gamma_1,\dots ,\Gamma_f)$ to $F(\mathbf{u},\mathbf{w},\mathbf{\Gamma})$ and write $N(\mathbf{u})=\mathbf{\Gamma}$ for the event that $N(u_g)=\Gamma_g$ for all $g=1,\dots ,f$.

For each such choice, $\sum_{g=1}^{f}|N(w_g)\cap \Gamma_g|$ is a sum of indicator functions representing whether certain edges are included in $G$.  As edges may occur twice, it might not necessarily be binomial, but can be written as a sum of two binomials.  And so, just as in the ``star'' case above, by~\eqr{h6} we have
\[
\pr{F(\mathbf{u},\mathbf{w},\mathbf{\Gamma})|N(\mathbf{u})=\mathbf{\Gamma}}\, \le\, 2\exp(-2b)\, .
\]
The union bound argument is also similar to that given in (i).  There are at most 
\[
n^{2f}\, \le\, \exp\left(\frac{32b\log{n}}{k2^k t^2n}\right)\, \le\,  \exp(b/2)
\]
choices of the matching (we used $t^2n\ge \log{n}$ and $k\ge 10$).  And so
\begin{align*}
\pr{\bigcup_{\mathbf{u},\mathbf{w},\mathbf{\Gamma}}F(\mathbf{u},\mathbf{w},\mathbf{\Gamma})}\, &\le\, \sum_{\mathbf{u},\mathbf{w},\mathbf{\Gamma}}\pr{N(\mathbf{u})=\mathbf{\Gamma}}\pr{F(\mathbf{u},\mathbf{w},\mathbf{\Gamma})|N(\mathbf{u})=\mathbf{\Gamma}}\phantom{\Big|}\\
&\le \, 2\exp(-2b)\sum_{\mathbf{u},\mathbf{w}}\sum_{\mathbf{\Gamma}}\pr{N(\mathbf{u})=\mathbf{\Gamma}}\phantom{\Big|}\\
&\le\, 2\exp(-3b/2)\phantom{\Big|}\\
&\le\, \exp(-b)\, ,\phantom{\Big|}
\end{align*}
as required.
\end{proof}

It is now quite straightforward to deduce the part of Proposition~\ref{prop:codegs} related to $h_k(G_i)$.

We first bound $h_{k,0}(G_i)$.  By applying Lemma~\ref{lem:starmatch} with value of ``$b$'' being $2b$, and with $p=s=i/N$, we have that there is a constant $C_1$ such that, in $G\sim G(n,s)$, there is probability at most $\exp(-2b)$ that $H_{k,0}(G)$ contains a star or matching with $C_1b/k2^k t^2n$ edges.  There is probability at least $n^{-2}$ that $Bin(N,s)=i$ and so this event has probability at most $n^2\exp(-2b)$ in $G_i$.  And so, except with probability at most $n^2\exp(-2b)$ the graph $H_{k,0}(G_i)$ contains no star or matching of size $C_1b/k2^k t^2n$.  In this case, degrees in $H_{k,0}(G_i)$ are bounded by $C_1b/k2^k t^2n$ and at most $2C_1b/k2^k t^2n$ vertices have positive degree, and so it follows from the handshaking lemma that
\[
h_{k,0}(G_i)\, \le\, \frac{C_1^2 b^2}{k^2 2^{2k}t^4n^2}\, .
\]

We now bound $h_{k,j}(G_i)$, for $k/2<j\le \min\{k-6,\log_2(1/t)\}$.  Arguing as above, and using part (ii) of Lemma~\ref{lem:starmatch}, except with probability at most $n^2\exp(-2b)$, we have that $H_{k,j}(G_i)$ contains no star of size $2C_1b/(k-j)2^k t^2n$.  Also, every edge of $H_{k,j}(G_i)$ is incident to $V_j$, and, except with probability at most $\exp(-2b)$, we have $|V_j|\le 2^7 b/tnj2^{j}$, by Lemma~\ref{lem:largedegs}.  (This assumes $b\ge 2tn$, but if $b<2tn$ one could instead use the first part of Lemma~\ref{lem:largedegs} to obtain $\Delta(G_i)\le 4tn$, and so, in particular, $V_j=\emptyset$ for all $j\ge 5$.)  It follows that, except with probability at most $(n^2+1)\exp(-2b)$, we have 
\[
h_{k,j}(G_i)\, \le\, |V_j|\, \Delta(H_{k,j}(G_i))\, \le\, \left(\frac{2^7b}{tnj2^{j}}\right)\left(\frac{2C_1b}{(k-j)2^k t^2n}\right)\, =\, \frac{2^8 C_1 b^2}{j(k-j)2^{k+j}t^{3}n^2}\, .
\]

Now, using these bounds and~\eqr{hbound}, except with probability at most $n^2\log_2(1/t)\exp(-2b)\le \exp(-b)$, we have
\begin{align*}
h_{k}(G)\, &=\, \sum_{j\in J}h_{k,j}(G)\\
&\le\, \frac{2C_1^2 b^2}{k^2 2^{2k}t^4n^2}\, +\, \sum_{j=\lceil k/2\rceil}^{\min\{k-6,\log_2(1/t)\}}\frac{2^8 C_1 b^2}{j(k-j)2^{k+j}t^{3}n^2}\, .
\end{align*}
The final summation decays at least as fast as a geometric sequence with ratio $3/4$, and so is at most $4$ times its first term.  So, except with probability at most $\exp(-b)$, we have,
\[
h_{k}(G)\, \le\,  \frac{2C_1^2 b^2}{k^2 2^{2k}t^4n^2}\, +\, \frac{2^{10} C_1 b^2}{\lfloor k/2\rfloor \lceil k/2\rceil 2^{3k/2}t^{3}n^2}\, .
\]
As the range of $k$ goes up to $2\log_2(1/t)$, we have $t^{-1}\ge 2^{k/2}$, and so we have the bound
\[
h_{k}(G)\, \le\,  \frac{2^{13}C_1^2 b^2}{k^2 2^{2k}t^4n^2}\, ,
\]
as required.

\subsection{Controlling $f_k(G_i)$}\label{sec:f}

We shall now prove the part of Proposition~\ref{prop:codegs} related to $f_k(G_i)$.  

The approach we used to control $h_{k,0}(G_i)$ in the previous subsection may also be applied here.  That approach, bounding the size of stars and matchings in the relevant graph ($H_{k,0}(G)$ in that case, and $F_{k}(G)$ now) will give the required result for almost all values of $k\in K_1$.  

However, when $k$ is small (when $2^k\le 2^8\sqrt{\log{n}}$) the approach does not work.  The problem is that the union bound argument breaks down when considering a large matching, as there are $\exp(\Theta(f\log{n}))$ ways to choose a matching with $f$ edges, and, as we shall see below, the probability bound for a fixed matching is $\exp(-\Theta(2^{2k}f))$.  For this reason we give a slightly different argument for $k\in K_1^{-} :=\{k\in K_1:2^k\le 2^8\sqrt{\log{n}} \}$.  

We first give the argument for $k$ such that $2^k\ge 2^8\sqrt{\log{n}}$.  As in Section~\ref{sec:h} we shall bound the size of stars and matchings.  Again we state the result in $G(n,p)$.

\begin{lem}\label{lem:starmatch2} There exists an absolute constant $C$ such that the following holds.  Suppose that $t\ge Cn^{-1/2}(\log{n})^{1/2}$, that $p\in (0,t)$, and that $b\ge 4tn$.  Let $k\in K_1$ be such that $2^k\ge 2^8\sqrt{\log{n}}$ and let $G\sim G(n,p)$.  With probability at least $1-\exp(-b)$:
\begin{enumerate}
\item[(i)] $F_{k}(G)$ contains no star with degree $Cb/2^{2k}$,
\item[(ii)] $F_{k}(G)$ contains no matching with $Cb/2^{2k}$ edges.
\end{enumerate}
\end{lem}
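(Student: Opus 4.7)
The plan is to adapt the proof of Lemma~\ref{lem:starmatch}: condition on the neighbourhoods of certain vertices, bound the probability of the relevant codegree deviations via Chernoff, and finish with a union bound. The crucial difference from the $H_k$ case is that the codegree deviations for pairs in $F_k$ sit at the Gaussian scale $2^k tn^{1/2}$ (roughly $2^k$ standard deviations), so the per-pair Chernoff estimate will be $\exp(-c\min\{2^{2k},\,2^k tn^{1/2}\})$; the hypothesis $2^k \geq 2^8\sqrt{\log n}$ is exactly what ensures this bound comfortably exceeds the logarithmic cost of the union bounds.

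For (i), let $u$ be the centre of a potential star of size $f := Cb/2^{2k}$ with leaves $W=\{w_1,\ldots,w_f\}$, and condition on $\Gamma(u) = \Gamma_0$. The constraint $|D_u|\leq 2^{k-5}n^{1/2}$ forces $|\Gamma_0| = pn + O(2^k n^{1/2})$, so the conditional mean $p|\Gamma_0|$ differs from the unconditional mean $p^2(n-2)$ by at most $\tfrac{1}{2}\cdot 2^k tn^{1/2}$. Membership $uw_g \in F_k$ therefore requires $\big| |\Gamma(w_g)\cap \Gamma_0| - p|\Gamma_0|\big| \geq \tfrac{1}{2}\cdot 2^k tn^{1/2}$, which by~\eqref{eq:h5} has probability at most $\exp(-c\min\{2^{2k},\,2^k tn^{1/2}\})$. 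Since the variables $|\Gamma(w_g)\cap \Gamma_0|$ depend on pairwise disjoint sets of edges, they are conditionally independent, so the probability that every $uw_g$ lies in $F_k$ is at most $\exp(-cf\min\{2^{2k},\,2^k tn^{1/2}\})$. Summing against $\pr{\Gamma(u)=\Gamma_0}$ eliminates the conditioning, and a union bound over $u$ and $W$ contributes at most $n\binom{n}{f} \leq \exp((f+1)\log n)$. As $2^{2k}\geq 2^{16}\log n$ and $2^k tn^{1/2}\geq 2^8 C\log n$ (the latter from $t\geq Cn^{-1/2}(\log n)^{1/2}$), choosing $C$ large enough, together with $b\geq 4tn\geq 4\log n$, brings the probability down to $\exp(-b)$.

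For (ii), apply the same idea to a matching $\{u_1w_1,\ldots,u_fw_f\}$ with $f = Cb/2^{2k}$, and condition on $(\Gamma(u_g))_{g\leq f}$. Since the $w_g$ are distinct, the Binomials $|\Gamma(w_g)\cap \Gamma(u_g)|$ depend on pairwise disjoint sets of edges, apart from $O(f)$ edges inside the matched vertex set; these can be absorbed by enlarging the conditioning to include all edges on $\{u_1,w_1,\ldots,u_f,w_f\}$. Treating them as independent, the per-pair Chernoff bound gives a joint probability of at most $\exp(-cf\min\{2^{2k},\,2^k tn^{1/2}\})$. The union bound now introduces $n^{2f}$ matchings, so we need the Chernoff exponent to dominate $2f\log n$; this is where the hypothesis really earns its keep, as $2^{2k}\geq 2^{16}\log n$ provides precisely the surplus required. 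The main obstacle here, and the reason the argument cannot be pushed across the whole of $K_1$, is exactly this budgetary issue: for $k\in K_1^{-}$ the bound $\exp(-c\,2^{2k})$ is no longer enough to offset $n^{2f}$ matchings, which is why the subsequent argument for $K_1^{-}$ will have to proceed along entirely different lines.
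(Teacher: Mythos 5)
Your plan diverges from the paper in one essential respect: you bound each codegree deviation $\big||\Gamma(w_g)\cap\Gamma_0|-p|\Gamma_0|\big|$ separately and then multiply the per-pair probabilities, appealing to conditional independence. The paper instead aggregates: it introduces the event that $\big|e(W,\Gamma_0)-p^2n|W|\big|\ge 2^k tn^{1/2}|W|$, picks $W$ among same-sign deviating neighbours so the deviations add rather than cancel, and applies inequality~\eqref{eq:h5} to the single random variable $e(W,\Gamma_0)$, which is a sum of two binomials precisely because edges may appear twice.

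The gap in your version is the conditional-independence claim itself. Conditionally on $\Gamma(u)=\Gamma_0$, the variables $|\Gamma(w_g)\cap\Gamma_0|$ do \emph{not} depend on pairwise disjoint edge sets: if $w_g,w_{g'}\in W\cap\Gamma_0$, the single edge $w_gw_{g'}$ contributes to both counts. (Take $\Gamma_0=W=\{w_1,w_2\}$ for the extreme case: the two counts are then identical.) Since the per-pair events are upward-closed in the edge indicators, FKG actually suggests positive correlation, so the product bound could fail rather than be merely suboptimal. You notice this wrinkle in the matching case and propose conditioning away the internal edges of the matched vertex set, but you do not do so in the star case; and more seriously, in either case the revealed contribution to each $|\Gamma(w_g)\cap\Gamma_g|$ can be as large as $O(f)$, so one needs $f\lesssim 2^k tn^{1/2}$ for the residual deviation to still be of order $2^k tn^{1/2}$. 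With $f=Cb/2^{2k}$ this is $b\lesssim 2^{3k}tn^{1/2}$, which is strictly tighter than the trivial constraint $f\le n$, i.e.\ $b\lesssim n2^{2k}$, whenever $t\ll 1$; for example with $t$ polynomially small and $2^k$ near $2^8\sqrt{\log n}$ one has $2^{3k}tn^{1/2}\ll n2^{2k}$ by a polynomial margin. So there is a genuinely non-trivial part of the $b$-range in which your fix does not apply. The paper's aggregation sidesteps all of this: $e(W,\Gamma_0)$ is a single sum of (possibly doubly-counted) edge indicators, \eqref{eq:h5} is built to handle exactly that dependence structure, and the only price is the elementary observation that at least half of the star's leaves deviate in the same direction, which restores additivity of the deviations. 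If you want to keep a per-pair framing, you would need to argue separately about the sum of the shared contributions, at which point you have effectively re-derived the aggregated argument. Your diagnosis of why $k\in K_1^-$ needs a separate treatment is correct, but that is a second, independent issue from the one above.
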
 

\begin{proof} We begin with the proof of (i).  We include $C$ in our argument and fix its value later.

We first describe the event that $F_{k}(G)$ contains a large star as a union of certain events.  The event depends on
\begin{itemize}
\item a vertex $u\in V$, 
\item a subset $W\subseteq V\setminus \{u\}$ with $|W|=Cb/2^{2k+1}$, and
\item a subset $\Gamma\subseteq V\setminus\{u\}$ with $\big||\Gamma|-pn\big|\le 2^{k-4}n^{1/2}$.
\end{itemize}
For each choice of the above, we set $E(u,W,\Gamma)$ to be the event that $\Gamma$ is the neighbourhood of $u$ in $G\sim G(n,p)$, and that
\[
\big|e(W,\Gamma)-p^2n|W| \big|\, \ge\, 2^{k}tn^{1/2}|W|\, .
\]
In fact, if (i) fails then $E(u,W,\Gamma)$ occurs for some trio $u,W,\Gamma$.  Simply take $u$ to be the centre of the star, $\Gamma$ to be its neighbourhood in $G$, and $W$ be $Cb/2^{2k+1}$ neighbours of $u$ in $F_k(G)$, chosen so that $D_{uw}(G)$ has the same sign for all $w\in W$.

For a fixed choice of $W$ and $\Gamma$ let $\mu=\Ex{e(W,\Gamma)}=p|W||\Gamma|$, and note that, by the condition on $|\Gamma|$ we have 
\[
\big|\mu\, -\, p^2n|W|\big|\, \le\, 2^{k-4}pn^{1/2}|W|\, \le\, 2^{k-4}tn^{1/2}|W|\, .
\]
And so, the event $E(u,W,\Gamma)$ implies that  
\[
\big|e(W,\Gamma)-\mu\big|\, \ge\, 2^{k-1}tn^{1/2}|W|\, .
\]
We now bound the probability of $E(u,W,\Gamma)$ using~\eqr{h5}, we may of course use that $\mu\le 2p^2n|W|\le 2t^2n|W|$, and that for $k\in K_1$, we have $2^ktn^{1/2}\le 2^{10}t^2n$.  So that
\begin{align*}
\pr{E(u,W,\Gamma)|N(u)=\Gamma}\, &\le\, \exp\left(\frac{-2^{2k-2}t^2n|W|^2}{4t^2n|W|\, +\, 2^{k}tn^{1/2}|W|}\right)\\
&\le\, \exp\big(-2^{2k-13}|W|\big)\, .\phantom{\Big|}
\end{align*}
The union bound argument is similar to that in Section~\ref{sec:h}.  There are at most $n^{|W|+1}=\exp((|W|+1)\log{n})\le\exp(2|W|\log{n})$ ways to choose the pair $u,W$.  So that
\begin{align*}
\pr{\bigcup_{u,W,\Gamma}E(u,W,\Gamma)}\, &\le\, \sum_{u,W,\Gamma}\pr{N(u)=\Gamma}\pr{E(u,W,\Gamma)\, |\, N(u)=\Gamma}\phantom{\Big|}\\
&\le \, \exp\big(-2^{2k-13}|W|\big) \sum_{u,W}\sum_{\Gamma}\pr{N(u)=\Gamma}\phantom{\Big|}\\
&\le \, \exp\big((2\log{n}-2^{2k-13})|W|\big)\phantom{\Big|}\\
&\le\, \exp\big(-2^{2k-14}|W|\big)\phantom{\Big|}\\
&\le\, \exp\big(-Cb/2^{14}\big)\, .\phantom{\Big|}
\end{align*}
Note that we used the bound, $2^k\ge 2^8\sqrt{\log{n}}$, for the penultimate inequality.  This proves the required bound provided $C\ge 2^{14}$.

The matching argument follows by a similar argument.  Just as we saw in the proof of Lemma~\ref{lem:starmatch}.
\end{proof}

For the remaining cases, with $2^{k}\le 2^8\sqrt{\log{n}}$, we use a slightly different lemma.  Instead of considering stars and matchings we now consider a union of disjoint stars.  

In the remaining cases, the graph $F_{k}(G_i)$ is potentially very dense.  The upper bound we must prove on $f_k(G_i)$ is $Cb^2/2^{4k}$, which, provided $C\ge 2^{16}$, is at least $b^2/\log{n}\ge n^2/\log{n}$.  The following lemma states that in reasonably dense graphs we may find a small number of stars which cover a relatively large number of vertices.

\begin{lem}\label{lem:multi} Let $G$ be a graph on $n$ vertices with at least $n^2/r^2$ edges.  Then there exist $r$ vertices $v_1,\dots, v_r$ of $G$ 
such that
\[
\left|\bigcup_{i=1}^{r}N(v_i)\right|\, \ge\, \frac{n}{r}\, .
\]
\end{lem}

\begin{remark} While this lemma is incredibly elementary, we didn't immediately find a reference for it.  If it hasn't already been studied, it could be of some interest to find ``best possible'' results along the same lines.  This is discussed further in the concluding remarks.
\end{remark}

\begin{proof}Clearly the result is trivial if some vertex has degree at least $n/r$ so we may assume all degrees are less than $n/r$.

Consider the digraph obtained from $G$ by replacing each edge by two oriented edges (one in each direction).  It clearly suffices to find $v_1,\dots, v_r$ such that the union of the out-neighbourhoods in $D$ of these vertices has cardinality at least $n/r$.  We note that all in and out degrees in $D$ are at most $n/r$ and that $e(D)=2e(G)\ge 2n^2/r^2$. 

We may find $v_1,\dots ,v_r$ greedily.   Let $v_1$ be a vertex of maximum out-degree and let us write $d_1$ for this degree and let $S_1:=N^{+}(v_1)$ be the set of out-neighbours of $v_1$.  We now remove from the digraph all edges into $S_1$.  In the remaining digraph we find a vertex of maximum out-degree $v_2$ with out-degree $d_2$, set $S_2=S_1\cup N^{+}(v_2)$, and remove any other edges into $S_2$.  We continue.

At step $i$, we may assume the current set $S_i$ has cardinality at most $n/r$ (else we are already done) and so the total number of removed edges so far is at most $n^2/r^2$.  It follows that at least $n^2/r^2$ edges remain and so $d_i\ge n/r^2$.

It is clear this process terminates in at most $r$ steps, as $|S_i|=d_1+\dots +d_i\ge in/r^2$.
\end{proof}

The required result bounding $f_k(G_i)$ in the remaining range, $k\in K_1^-$, will follow from the above statement and the following bound on unions of stars in $F_k(G)$.

\begin{lem}\label{lem:starmatch3} There exists an absolute constant $C$ such that the following holds.  Suppose that $t\ge Cn^{-1/2}(\log{n})^{1/2}$, that $p\in (0,t)$, and that $b\ge n$.  Let $k\in K_1$ be such that $2^k\le 2^8\sqrt{\log{n}}$ and let $G\sim G(n,p)$.  With probability at least $1-\exp(-b)$ we have that any union of $r:=2^{2k}$ stars in 
$F_{k}(G)$ contains less than $Cb/2^{2k}$ vertices.
\end{lem}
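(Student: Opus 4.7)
The approach mirrors the proofs of Lemmas~\ref{lem:starmatch} and~\ref{lem:starmatch2}, but at the crucial union-bound step we replace the enumeration over target sets $W$ of size $X_0 := Cb/2^{2k}$ (which would cost a prohibitive $n^{|W|}$ factor in this small-$k$ regime) with a direct Chernoff-type concentration bound on the size of the coverage.

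Fix $r=2^{2k}$ centres $v_1,\ldots,v_r\in V$ and condition on the neighbourhoods $\Gamma_i:=\Gamma(v_i)$, restricting to those with $\bigl||\Gamma_i|-pn\bigr|\le 2^{k-5}n^{1/2}$ (otherwise no pair $v_iw$ can lie in $F_k(G)$). For each other vertex $w$, the event $v_iw\in F_k(G)$ forces $|d_{v_iw}(G)-p|\Gamma_i||\ge 2^{k-1}tn^{1/2}$, and conditionally on $\Gamma_i$ the codegree $d_{v_iw}(G)$ is essentially a sum of $|\Gamma_i|$ independent $\mathrm{Bernoulli}(p)$ variables. By Bernstein's inequality~\eqref{eq:h5}, and using $t\ge Cn^{-1/2}\sqrt{\log n}$ (with $C$ large) together with $2^k\le 2^8\sqrt{\log n}$ to keep $2^{k-1}tn^{1/2}\lesssim p|\Gamma_i|$, this event has probability at most $\exp(-c\,2^{2k})$ for an absolute $c>0$. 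Union-bounding over $i\le r$ yields
\[
q \;:=\; \pr{w\text{ covered}\mid(\Gamma_i)} \;\le\; r\cdot 2\exp(-c\,2^{2k}) \;\le\; \exp(-c'\,2^{2k}),
\]
with $c'=c-O(k/2^{2k})$, valid for $k\ge 10$.

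The indicators $Z_w:=\mathbf{1}\{w\text{ covered}\}$ are (essentially) independent across $w$, since $Z_w$ depends only on the edges from $w$ to $\bigcup_i\Gamma_i$ and these edge sets are disjoint for distinct $w$. Hence the coverage $X=\sum_w Z_w$ is a sum of independent Bernoullis with mean $\mu\le nq$, and since $X_0$ hugely exceeds $\mu$ (because $q$ is doubly exponentially small in $2^{2k}$), the sparse Chernoff upper tail gives
\[
\pr{X\ge X_0\mid(\Gamma_i)} \;\le\; \Bigl(\tfrac{e\mu}{X_0}\Bigr)^{X_0} \;\le\; \exp\!\bigl(-\tfrac{c'}{2}\,2^{2k}X_0\bigr) \;=\; \exp(-c'Cb/2).
\]
Taking a union bound over the $n^r$ choices of centres yields $\exp(2^{2k}\log n - c'Cb/2)$, and since $2^{2k}\log n\le 2^{16}(\log n)^2 \ll b$ for $b\ge n$ (using the hypothesis $2^k\le 2^8\sqrt{\log n}$), this is at most $\exp(-b)$ for a sufficiently large absolute constant $C$.

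The principal technical obstacle is the mild correlation among the $Z_w$'s arising from the shared edge $ww'$ whenever both $w,w'$ lie in $\bigcup_i\Gamma_i$. This is handled by an additional conditioning on $G\bigl[W\cap\bigcup_i\Gamma_i\bigr]$, which decouples the indicators at the cost of a small perturbation to the conditional mean $p|\Gamma_i|$ of each $d_{v_iw}$. One checks that this perturbation is at most $2^{k-2}tn^{1/2}$ for any realisation of the conditioning (using the degree constraint $|\Gamma_i|\le pn+O(2^{k-5}n^{1/2})$ and a crude bound on $|\Gamma_i\cap W|$), so the Bernstein estimate retains its force with $c$ replaced by some absolute $c/4$, and the remainder of the argument goes through unchanged.
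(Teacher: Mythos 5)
Your plan correctly identifies the obstruction to the naive union bound in this small-$k$ regime, and the idea of replacing the enumeration over target sets $W$ by a Chernoff bound on the coverage count is appealing. However, the decoupling step contains a quantitative error that is fatal to the argument.

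The problem is the claim that the perturbation to the conditional mean of $d_{v_iw}$ introduced by conditioning on $G\bigl[W\cap\bigcup_i\Gamma_i\bigr]$ is at most $2^{k-2}tn^{1/2}$. For $w\in W\cap\bigcup_i\Gamma_i$, the conditioning fixes all edges from $w$ to $\Gamma_i\cap W$, so the conditional mean of $d_{v_iw}$ shifts by $\bigl|\,|\Gamma(w)\cap\Gamma_i\cap W|-p|\Gamma_i\cap W|\,\bigr|$, which in the worst case over realisations of the conditioning is of order $|\Gamma_i\cap W|\le\min\{|W|,|\Gamma_i|\}$. Here $|W|=X_0=Cb/2^{2k}\ge Cn/2^{2k}\ge Cn/(2^{16}\log n)$ while $|\Gamma_i|\approx pn$; on the other hand, in this range of $k$ the threshold $2^k tn^{1/2}$ is at most $2^8\sqrt{\log n}\cdot tn^{1/2}\le 2^7\sqrt{\log n}\,n^{1/2}$. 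Both $n/\log n$ and $pn$ can exceed $\sqrt{\log n}\,n^{1/2}$ by a polynomial factor, so the perturbation can dominate not only the codegree-deviation threshold but even the unconditional mean $p|\Gamma_i|$ itself. The degree constraint $\bigl||\Gamma_i|-pn\bigr|\le 2^{k-5}n^{1/2}$ gives no control of $|\Gamma_i\cap W|$, and there is no ``crude bound'' that rescues the claim. Consequently the conditional Bernstein estimate $\pr{\tilde Z_w=1\mid\cdot}\le\exp(-c\,2^{2k}/4)$ does not hold for the vertices in $W\cap\bigcup_i\Gamma_i$, and the sparse Chernoff step collapses. (Replacing the conditioning by McDiarmid or a vertex-exposure martingale over $\bigcup_i\Gamma_i$ does not rescue the argument either: there are $\Theta(|\bigcup_i\Gamma_i|^2)$ edges, each changing the coverage by at most $2$, which gives a bound polynomially weaker than what is needed.) The paper sidesteps this entirely by aggregating at the \emph{edge} level: it controls $e(\mathbf{W},\mathbf{\Gamma})$, which is a sum of edge indicators of multiplicity at most two, so~\eqref{eq:h5} applies without any decoupling conditioning; and it shows that the union bound over target sets $\mathbf{W}$, which you set out to avoid, in fact costs only $\binom{n}{|\mathbf{W}|}r^{|\mathbf{W}|}\le\exp\bigl((\log n)^2+4k|\mathbf{W}|\bigr)$ rather than $n^{|\mathbf{W}|}$, because the targets are an \emph{unordered} set to be assigned to $r$ stars.
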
 

Let us see that the bound $f_k(G_i)\le Cb^2/2^{4k}$ follows from these two lemmas.  By transferring the result of Lemma~\ref{lem:starmatch3} to $G_i\sim G(n,i)$, as we have done previously (using $n^2\exp(-2b)\le\exp(-b)$), except with probability at most $\exp(-b)$ we have that any union of $r:=2^{2k}$ stars in 
$F_{k}(G_i)$ contains less than $C_1b/2^{2k}$ vertices, for some constant $C_1\ge 1$.  We claim that this implies a bound of the form $f_k(G_i)\le Cb^2/2^{4k}$.  Indeed, suppose for contradiction that 
\[
f_k(G_i)\, >\, \frac{C_1^2 b^2}{2^{4k}}\, =\, \frac{C_1^2 n^2(b/n)^2}{2^{4k}}.
\]
It would then follow from Lemma~\ref{lem:multi} that $F_{k}(G_i)$ contains $r':=n2^{2k}/C_1 b$ vertices whose neighbourhoods cover at least $C_1b/2^{2k}$ vertices.  This is a contradiction, as $r'\le 2^{2k}$.

%
%
%
%
%
%

Now that we have explained how the required bound follows from the lemmas, we complete the section with a proof of Lemma~\ref{lem:starmatch3}.

The proof is similar to those given for Lemmas~\ref{lem:starmatch} and~\ref{lem:starmatch2}.  The event considered is sort of an amalgam of the star and matching cases.  It will be useful to use the following notation.  Given two sequences of sets $\mathbf{A}=(A_1,\dots, A_r)$ and $\mathbf{B}=(B_1,\dots ,B_r)$, we define
\[
|\mathbf{A}|\, :=\, \sum_{j=1}^{r}|A_j| \qquad \text{and} \qquad \mathbf{A}\cdot \mathbf{B}\, :=\, \sum_{j=1}^{r}|A_j||B_j|\, .
\]
We recall, from Section~\ref{sec:Aux}, that for two sets of vertices $U,W$, we let $e(U,W)$ count the number of edges with multiplicity.  We extend this definition to sequences of vertex subsets. We define $\mathbf{A}\cap\mathbf{B}:=(A_1\cap B_1,\dots ,A_r\cap B_r)$ and $e(\mathbf{A},\mathbf{B}):=\sum_{j=1}^{r}e(A_j,B_j)$.  For example, in the random graph $G\sim G(n,p)$, we have
\[
\Ex{e(\mathbf{A},\mathbf{B})}\, =\, p\mathbf{A}\cdot\mathbf{B}\, -\, p|\mathbf{A}\cap\mathbf{B}|\, =\,  p\mathbf{A}\cdot\mathbf{B}\, +\, O(p|\mathbf{A}|)\, .
\]

\begin{proof}[Proof of Lemma~\ref{lem:starmatch3}] 
We include $C$ in our argument and fix its value later.  We continue to use $r:=2^{2k}$.  As before we express the event in question as a union.  This time we take a union over
\begin{itemize}
\item a sequence of vertices $\mathbf{u}=(u_1,\dots ,u_r)$, and sequences of
\item vertex subsets $\mathbf{W}=(W_1,\dots,W_r)$ such that $|\mathbf{W}|=Cb/2^{2k+1}$, and
\item vertex subsets $\mathbf{\Gamma}=(\Gamma_1,\dots ,\Gamma_r)$ with $\big||\Gamma_j|-pn\big|\le 2^{k-4}n^{1/2}$ for all $j=1,\dots, r$.
\end{itemize}
For each choice of the above, we set $E(\mathbf{u},\mathbf{W},\mathbf{\Gamma})$ to be the event that $\Gamma_j$ is the neighbourhood of $u_j$ in $G\sim G(n,p)$ for all $j=1,\dots, r$, and that
\[
\left| e(\mathbf{W},\mathbf{\Gamma})-p^2n|\mathbf{W}|\right|\, \ge\, 2^{k}tn^{1/2}|\mathbf{W}|\, .
\]
If the event of the lemma fails, i.e., if there are $r=2^{2k}$ stars in $F_k(G)$ which cover at least $Cb/2^{2k}$ vertices, then $E(\mathbf{u},\mathbf{W},\mathbf{\Gamma})$ occurs for some trio $\mathbf{u},\mathbf{W},\mathbf{\Gamma}$.  Furthermore, the sets in $\mathbf{W}$ may be taken to be disjoint and not intersect $\mathbf{u}$.  Let us justify this assertion.

First note that, in this case, at least $Cb/2^{2k+1}$ of these pairs have deviations of the same sign.  Now, simply take $\mathbf{u}$ to be the centres of the stars, $\mathbf{\Gamma}$ to be their neighbourhoods in $G$, and define $\mathbf{W}=(W_1,\dots , W_r)$ by taking the sets $W_j$ to be disjoint and with $W_j$ chosen among the neighbours of $u_j$ in $F_k(G)$ with the favoured sign.  

We now bound the probability of the event $E(\mathbf{u},\mathbf{W},\mathbf{\Gamma})$.  For a fixed choice of $\mathbf{W}$ and $\mathbf{\Gamma}$ let $\mu=\Ex{e(\mathbf{W},\mathbf{\Gamma})}=p \mathbf{W}\cdot \mathbf{\Gamma}+O(p|\mathbf{W}|)$.  By the triangle inequality, and the bounds on the $|\Gamma_j|$, we have 
\[
\big|\mu\, -\, p^2n|\mathbf{W}|\big|\, \le\, 2^{k-3}pn^{1/2}|\mathbf{W}|\, \le\, 2^{k-3}tn^{1/2}|\mathbf{W}|\, .
\]
And so, the event $E(\mathbf{u},\mathbf{W},\mathbf{\Gamma})$ implies that  
\[
\big|e(\mathbf{W},\mathbf{\Gamma})-\mu\big|\, \ge\, 2^{k-1}tn^{1/2}|\mathbf{W}|\, .
\]
We now bound the probability of $E(\mathbf{u},\mathbf{W},\mathbf{\Gamma})$ using~\eqr{h5}, we may of course use that $\mu\le 2p^2n|\mathbf{W}|\le 2t^2n|\mathbf{W}|$, and that for $k\in K_1$, we have $2^k tn^{1/2}\le 2^{10}t^2n$.  So that
\begin{align*}
\pr{E(\mathbf{u},\mathbf{W},\mathbf{\Gamma})|N(\mathbf{u})=\mathbf{\Gamma}}\, &\le\, \exp\left(\frac{-2^{2k-2}t^2n|\mathbf{W}|^2}{4t^2n|\mathbf{W}|\, +\, 2^{k}tn^{1/2}|\mathbf{W}|}\right)\\
&\le\, \exp\big(-2^{2k-13}|\mathbf{W}|\big)\, . \phantom{\Big|}
\end{align*}
The union bound argument is similar to those used before, although we need to be a little more careful this time.  There are at most $n^{r}\le \exp((\log{n})^2)$ ways to choose $\mathbf{u}$, at most $\binom{n}{|\mathbf{W}|}\le \exp\big(|\mathbf{W}|\log(en/|\mathbf{W}|)\big)$ ways to choose the elements of $\mathbf{W}$ and $r^{|\mathbf{W}|}$ ways to assign these elements to the sets $W_1,\dots ,W_r$.  The total number of choices of the pair $\mathbf{u},\mathbf{W}$ is therefore at most
\[
\exp\left((\log{n})^2\, +\, |\mathbf{W}|\log\left(\frac{enr}{|\mathbf{W}|}\right)\right)\, \le\, \exp\left((\log{n})^2\, +\, 4k|\mathbf{W}|\right)\, .
\]
So that
\begin{align*}
\pr{\bigcup_{\mathbf{u},\mathbf{W},\mathbf{\Gamma}}E(\mathbf{u},\mathbf{W},\mathbf{\Gamma})}\, &\le\, \sum_{\mathbf{u},\mathbf{W},\mathbf{\Gamma}}\phantom{\Big|}\pr{N(\mathbf{u})=\mathbf{\Gamma}}\pr{E(\mathbf{u},\mathbf{W},\mathbf{\Gamma})|N(\mathbf{u})=\mathbf{\Gamma}}\phantom{\Big|}\\
&\le \, \exp\big(-2^{2k-13}|\mathbf{W}|\big) \sum_{u,W}\sum_{\mathbf{\Gamma}}\pr{N(\mathbf{u})=\mathbf{\Gamma}}\phantom{\Big|}\\
&\le \, \exp\big((\log{n})^2\, +\, 4k|\mathbf{W}| \, -\, 2^{2k-13}|\mathbf{W}|\big)\phantom{\Big|}\\
&\le\, \exp\big(-2^{2k-14}|\mathbf{W}|\big)\phantom{\Big|}\\
&\le\, \exp\big(-Cb/2^{14}\big)\, . \phantom{\Big|}
\end{align*}
This proves the required bound provided $C\ge 2^{14}$.
\end{proof}

\subsection{The increments $X_{\triangle}(G_i)$}\label{sec:Xtriangle}

We now deduce the following bound on increments.

\begin{lem}\label{lem:varstri} There is an absolute constant $C$ such that the following holds.  Let $Cn^{-1/2}(\log{n})^{1/2}\le t\le 1/2$.  For all $n\le b\le t^2n^2$ and $i\le m$, we have
\[
\pr{\Ex{X_{\triangle}(G_i)^2|G_{i-1}}\, \ge \, Cbt^2}\, \le\, \exp(-b)\, \phantom{\Big|}
\]
and, for $b>t^2n^2$ we have
\[
\pr{\Ex{X_{\triangle}(G_i)^2|G_{i-1}}\, \ge \, \frac{Cb^2}{n^2}}\, \le\, \exp(-b)\, .\phantom{\Big|}
\]
\end{lem}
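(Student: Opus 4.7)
The plan is to mirror the structure of the proof of Lemma~\ref{lem:varsch} (the cherry case), but using the codegree estimates from Corollary~\ref{cor:codegs} in place of the degree estimates from Proposition~\ref{prop:sumsquare}. The key observation is that when the $i$th edge $e_i = uw$ is added, the number of isomorphic copies of triangles created equals $6 d_{uw}(G_{i-1})$, so that $A_{\triangle}(G_i) = 6 d_{e_i}(G_{i-1})$. Consequently
\[
\Ex{X_{\triangle}(G_i)^2 \,\middle|\, G_{i-1}}\, =\, \Var\big(A_{\triangle}(G_i) \,\big|\, G_{i-1}\big)\, =\, 36\,\Var\big(d_{e_i}(G_{i-1})\,\big|\,G_{i-1}\big),
\]
where the randomness on the right is only in the uniform choice of $e_i$ from $K_n\setminus G_{i-1}$.

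Next, using the standard fact that variance is bounded above by the second moment about any centring, I would centre at the unconditional mean codegree $\mu := (i-1)(i-2)(n-2)/\big(N(N-1)\big)$, obtaining
\[
\Ex{X_{\triangle}(G_i)^2 \,\middle|\, G_{i-1}}\, \le\, \frac{36}{N-i+1}\sum_{uw\notin G_{i-1}} \big(d_{uw}(G_{i-1})-\mu\big)^2\, \le\, \frac{144}{n^2}\sum_{uw}D_{uw}(G_{i-1})^2,
\]
where in the last step I extend the sum to all pairs $uw$ (which only increases it), use $N-i+1\ge N/2$ (recall $m\le N/2$ and $i\le m$), and recognise the summand as the squared codegree deviation.

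With this reduction in hand, the lemma follows directly from Corollary~\ref{cor:codegs}. Applying that corollary (which gives $\sum_{uw}D_{uw}(G_{i-1})^2 \le C'\max\{bt^2n^2, b^2\}$ except with probability $\exp(-b)$, valid for $b\ge n$), I split into the two advertised regimes. When $n\le b\le t^2n^2$, the maximum is $bt^2n^2$ and the bound becomes $144C'bt^2$; when $b>t^2n^2$, the maximum is $b^2$ and we obtain $144C'b^2/n^2$. Choosing the absolute constant $C$ in the statement of the lemma to be, say, $144C'$ concludes the argument.

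There is no real obstacle here: all the genuinely difficult work has been done in Section~\ref{sec:codegs} to establish Corollary~\ref{cor:codegs}. The only minor points to verify are the identity $A_\triangle(G_i) = 6 d_{e_i}(G_{i-1})$ (a direct counting check) and the passage from a sum over $uw\notin G_{i-1}$ to a sum over all pairs, which is harmless because codegree deviations for pairs inside $G_{i-1}$ are nonnegative contributions. The proof is therefore essentially a one-line deduction from the codegree machinery, exactly parallel to how Lemma~\ref{lem:varsch} is deduced from Proposition~\ref{prop:sumsquare} in the cherry setting.
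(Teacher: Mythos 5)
Your proof is correct and follows essentially the same route as the paper: reduce the conditional second moment of $X_{\triangle}(G_i)$ to the conditional variance of the codegree of the random next edge, bound that variance by the second moment about the mean codegree $\mu$ (so the summand becomes $D_{uw}(G_{i-1})^2$), and then invoke Corollary~\ref{cor:codegs}. The only cosmetic difference is that the paper quotes the ready-made martingale-increment identity $X_{\triangle}(G_i)=6D_{uw}(G_{i-1})-\Ex{6D_{uw}(G_{i-1})\mid G_{i-1}}$ from~\cite{GGS}, whereas you rederive the same thing from $A_{\triangle}(G_i)=6\,d_{e_i}(G_{i-1})$; the subsequent calculations coincide.
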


\begin{proof}
The proof relies on relating $X_{\triangle}(G_i)$ to codegree deviations.  From the proof of Lemma 5.4 of~\cite{GGS} we have
\eq{Xtricodeg}
X_{\triangle}(G_i)\,  =\, 6 D_{uw}(G_{i-1})\, -\, \Ex{6 D_{e_i}(G_{i-1})\, \mid\, G_{i-1}}\, 
\eqe
where $e_i=uw$ is pair chosen as the $i$th edge.  As $X_{\triangle}(G_i)$ has conditional mean $0$ given $G_{i-1}$, we have 
\begin{align*}
\Ex{X_{\triangle}(G_i)^2\mid G_{i-1}}\, &=\, \Var(X_{\triangle}(G_i)\mid G_{i-1})\\ 
&=\, 36\Var(D_{uw}(G_{i-1})\mid G_{i-1})\, \le\, 36\Ex{D_{uw}(G_{i-1})^2\mid G_{i-1}}\, .
\end{align*}
And so
\begin{align*}
&\Ex{X_{\triangle}(G_i)^2|G_{i-1}}\,  \le\, 36\frac{1}{N-i+1}\,\sum_{uw\not\in G_{i-1}} D_{uw}(G_{i-1})^2\\
& \le\, \frac{144}{n^2}\, \sum_{u} D_{u}(G_{i-1})^2\, .
\end{align*}
The two inequalities now follow immediately from Corollary~\ref{cor:codegs}.
\end{proof}

\section{Triangle counts in $G(n,m)$ -- upper bounds on deviations}\label{sec:UBtri}

In this section we prove upper bounds on the probability of triangle count deviations in $G(n,m)$.  That is, we prove the upper bound part of Theorem~\ref{thm:mainm}.  We state below (Proposition~\ref{prop:mainm}) a result which implies this part of Theorem~\ref{thm:mainm}.  

In order to state Proposition~\ref{prop:mainm} we begin by recalling the functions $\NORMAL(b,t)$, $\STAR(b,t)$, $\HUB(b,t)$, $\CLIQUE(b,t)$ and $M(b,t)$ defined in the introduction.  We recall that
\begin{align*}
\NORMAL(b,t)\, &:=\, b^{1/2}t^{3/2}n^{3/2}\, ,\phantom{\bigg|}\\
\STAR(b,t)\, & := \, \frac{b^2t}{\ell^2}\, 1_{b\le n\ell}\, , \phantom{\bigg|}\\
\HUB(b,t)\, &:=\, \frac{btn}{\ell}\, 1_{b\ge n\ell}\, , \phantom{\bigg|}\\
\CLIQUE(b,t)\, & :=\, \frac{b^{3/2}}{\ell^{3/2}}\quad \text{and}\phantom{\bigg|}\\
M(b,t)\, &:=\, \max\{\NORMAL(b,t),\STAR(b,t),\HUB(b,t),\CLIQUE(b,t)\}\, .\phantom{\bigg|}
\end{align*}

We shall present some lower bounds on $M(b,t)$ which will be useful later on in this section. First, we claim that
\eq{Mbt3/2n}
M(b,t)\, \ge \, bt^{3/2}n \,.
\eqe
for all $b \ge 1$. This is easily verified, as we have $\NORMAL(b,t) \ge bt^{3/2}n$ for $b \le n$, $\STAR(b,t) \ge bt^{3/2}n$ for $n < b \le n\ell$ and $\HUB(b,t) \ge bt^{3/2}n$ for $n\ell < b \le tn^2\ell$. In particular, \eqr{Mbt3/2n} implies that
\eq{Mbt2n}
M(b,t)\, \ge \, bt^2n\,.
\eqe

We also claim that
\eq{tkM}
M(b,t)\, \ge \, t\kappa(b,t) \,
\eqe
for all $b \ge 32tn$, where $\kappa(b,t)$ is as defined in Section~\ref{sec:degs}. To verify this, note that $\NORMAL(b,t) = b^{1/2}t^{3/2}n^{3/2} \ge t^2n^2 = t\kappa(b,t)$ whenever $32tn \le b < t^{1/2}n\ell$. Also, $\STAR(b,t) = t\kappa(b,t)$ whenever $t^{1/2}n\ell \le b < n\ell$ and $\HUB(b,t) = t\kappa(b,t)$ if $n\ell < b \le tn^2\ell$.

We recall from Section~\ref{sec:Mart} the martingale representation for the triangle count deviation, $D_{\triangle}(G_m)$:
\[
D_{\triangle}(G_m)\, =\, \sum_{i=1}^{m} \left[ 3 \, \frac{(N-m)_{2}(m-i)}{(N-i)_3}\, X_{\owedge}(G_i)\, +\, \frac{(N-m)_3}{(N-i)_3}\, X_{\triangle}(G_i)\right]\, .
\]

We may now state the upper bound result which we prove in this section.

\begin{prop}\label{prop:mainm}
There exists an absolute constant $C$ such that the following holds.
For all $C n^{-1/2}(\log{n})^{1/2} \le t\le 1/2$ and $3\log{n}\le b\le tn^2\ell$ we have
\[
\pr{D_{\triangle}(G_m)\, \ge \, CM(b,t)}\, \le\, \exp(-b)\, .
\]
\end{prop}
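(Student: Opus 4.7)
The plan is to use the martingale representation of Section~\ref{sec:Mart} to decompose
\[
D_{\triangle}(G_m) = A(G_m) + B(G_m),
\]
where $A(G_m) := \sum_{i=1}^{m} 3\,(N-m)_2(m-i)/(N-i)_3\, X_{\owedge}(G_i)$ is the cherry contribution and $B(G_m) := \sum_{i=1}^{m}(N-m)_3/(N-i)_3\, X_{\triangle}(G_i)$ is the pure triangle contribution. It then suffices to show that each part is at most $O(M(b,t))$ except with probability $\exp(-b)$, and conclude via a union bound.

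For the cherry contribution $A$, I would run the divide-and-conquer argument of Section~\ref{sec:cherries} essentially verbatim. The coefficients here carry an extra factor $(m-i)/(N-i) \le t$ compared with those used for cherries, so truncation should be done at level $128 t^2 n$ and the quadratic variation, by Lemma~\ref{lem:varsch}, is bounded by $O(t^3 n \,\kappa(b,t))$. The key observation is that $t\, M_{\owedge}(b,t) \le M(b,t)$ across all regimes, since $t\NORMAL_{\owedge} = t^{1/2}\NORMAL \le \NORMAL$, $t\STAR_{\owedge} = \STAR$ (in the range $b \le n\ell$), and $t\HUB_{\owedge} = \HUB$ (in the range $b \ge n\ell$). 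Using~\eqr{Mbt2n} and~\eqr{tkM} to absorb the $R\alpha$ and $\beta$ terms, Freedman's inequality gives probability at most $\exp(-b)$ that the truncated part exceeds $O(M(b,t))$, while the untruncated part is bounded deterministically by $O(t \,\kappa^{+}(b,t)) \le O(M(b,t))$ via Proposition~\ref{prop:sumsquare} and the bound $\kappa^{+} \le 9 M_{\owedge}$ from~\eqr{M3}.

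For the triangle contribution $B$, the same divide-and-conquer strategy applies but is considerably more delicate. I would introduce a truncation threshold $R = R(b,t)$ and split $B = B' + B^*$ accordingly. Freedman's inequality applied to $B'$, with the variance bound from Lemma~\ref{lem:varstri} giving $V(m) \le C\max\{bt^3n^2,\, t b^2\}$ with probability $1 - \exp(-b)$, yields Freedman exponent of order $-b$ provided $R \le c M(b,t)/b$; this pins down the admissible choice of $R$ in each regime. For the large-increment part $B^*$, the identity $X_{\triangle}(G_i) = 6 D_{e_i}(G_{i-1}) - \Ex{6 D_{e_i}(G_{i-1})\,\middle|\,G_{i-1}}$ (where $e_i$ is the $i$-th edge) shows that $X_{\triangle}(G_i) > R$ forces $d_{e_i}(G_{i-1})$ to be of order at least $R$ once $R \gg t^2 n$. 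A double-counting argument in the spirit of Section~\ref{sec:boundingNstarcherries} then bounds
\[
B^* \le O(1)\sum_{e \in E(G_m)} d_e(G_m)\, 1_{d_e(G_m) \ge \Omega(R)} \le \frac{O(1)}{R}\, \sum_{e \in E(G_m)} d_e(G_m)^2\, 1_{d_e(G_m) \ge \Omega(R)},
\]
which should be controlled using Corollary~\ref{cor:codegs} in combination with the finer dyadic $h_k$-estimates of Proposition~\ref{prop:codegs}.

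The principal obstacle is calibrating $R$ in each of the four regimes so that both constraints -- Freedman demanding $R$ small, and the codegree-based bound on $B^*$ demanding $R$ large enough to make the dyadic codegree sums convergent -- simultaneously yield $O(M(b,t))$. I expect the CLIQUE regime to be the most subtle, since there $B^*$ is the dominant piece: a clique precisely produces pairs with extreme codegrees, so the correct estimate requires the $h_k$-bound of Proposition~\ref{prop:codegs} rather than the cruder Corollary~\ref{cor:codegs}. In the remaining regimes the truncated Freedman bound $B'$ carries the weight, but one still needs to check in each case that the dyadic codegree contributions to $B^*$ are of lower order, which involves tracking $F_k$ and $H_k$ separately and summing the geometric series they produce.
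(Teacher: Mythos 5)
Your decomposition $D_{\triangle} = A + B$ (cherry contribution plus pure-triangle contribution) differs superficially from the paper's, which truncates first and keeps both truncated increments inside a single supermartingale $D'_{\triangle}$, but this is a harmless reorganisation.  The cherry part is handled correctly: the observations $tM_{\owedge}(b,t)\le M(b,t)$ and $t\kappa^{+}(b,t)\le 9M(b,t)$ are right, the fixed truncation at $128t^2n\le 128 M(b,t)/b$ is admissible since $M(b,t)\ge bt^2n$, and the Freedman exponent calibration goes through.  The Freedman step for $B'$ with truncation threshold $R\asymp M(b,t)/b$ also works once the per-step variance estimate of Lemma~\ref{lem:varstri} is turned into a quadratic-variation bound by a union bound over $i\le m$.

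The gap is in the treatment of $B^{*}$.  You bound
\[
B^{*}\,\le\,\frac{O(1)}{R}\sum_{e\in E(G_m)} d_e(G_m)^2\,1_{d_e(G_m)\ge\Omega(R)}
\]
and propose to control the right side with the $F_k$- and $H_k$-estimates of Proposition~\ref{prop:codegs}.  But those estimates bound the number of \emph{pairs} with a prescribed codegree, not the number of \emph{edges of $G_m$} with that codegree, and replacing the sum over edges by the sum over all pairs is too lossy.  Concretely, with $R\asymp K_{\triangle}$ and $2^{k_0}t^2n\asymp K_{\triangle}$, the $h_k$-bound gives
\[
\sum_{uw\in H_{k_0}(G_m)}D_{uw}(G_m)^2\,\lesssim\, h_{k_0}(G_m)\cdot K_{\triangle}^2\,\lesssim\,\frac{b^2}{\ell^2}\,,
\]
whereas the target is $O(R\,M(b,t))=O(M(b,t)^2/b)$, which in the clique regime is $\asymp b^2/\ell^3$ --- a factor of $\ell$ short.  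The paper recovers this factor with the coupling/union-bound argument inside the proof of Lemma~\ref{lem:bad}: it shows that, although $h_{k_0}(G_m)$ pairs have codegree $\asymp K_{\triangle}$, only $O(b/\ell)$ of them are ever selected as edges during the process (the ``$a^{*}(G_m)\le Cb/\ell$'' step), via a per-step selection probability bound combined with a union bound over sequences of hitting times.  Without this step the clique regime does not close.  There is also a secondary omission: $H_k(G)$ excludes pairs with a high-degree endpoint, so you additionally need the good/bad dichotomy of Lemmas~\ref{lem:good} and~\ref{lem:bad} to dispatch, via $\kappa^{+}(b,t)$, those high-codegree edges whose codegree is ``explained'' by a large vertex degree before the $H_k$-estimates apply.
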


We use a divide and conquer martingale approach to prove Proposition~\ref{prop:mainm}.  The curious reader may wonder why we cannot simply use a direct martingale approach in which we use Freedman's inequality to bound the probability of such a deviation.  The problem is that during the process there may occasionally be increments which are very large.  If we let these increments occur within the martingale argument then we get weaker bounds.  For this reason we consider large increments separately.

We now introduce truncated versions $X'_{\owedge}(G_i)$ and $X'_{\triangle}(G_i)$ of the increments $X_{\owedge}(G_i)$ and $X_{\triangle}(G_i)$.  The value at which we truncate these increments will be
\[
K_{\owedge}\, =\, K_{\owedge}(b,t)\, :=\, 2^8 \, \frac{M(b,t)}{bt}
\]
and
\[
K_{\triangle}\, =\, K_{\triangle}(b,t)\, :=\,  2^{16} \, \frac{M(b,t)}{b}
\]
respectively.  We set
\[
X'_{\owedge}(G_i)\, =\, X_{\owedge}(G_i)1_{X_{\owedge}(G_i)\le K_{\owedge}}\qquad \text{and} \qquad X'_{\triangle}(G_i)\, =\, X_{\triangle}(G_i)1_{X_{\triangle}(G_i)\le K_{\triangle}}\, .\phantom{\Big|}
\]
We will also consider the random variables
\[
Z_{\owedge}(G_i)\, =\, X_{\owedge}(G_i)1_{X_{\owedge}(G_i)> K_{\owedge}}\qquad \text{and} \qquad Z_{\triangle}(G_i)\, =\, X_{\triangle}(G_i)1_{X_{\triangle}(G_i)> K_{\triangle}}\, .\phantom{\Big|}
\]
Note that $X_{\owedge}(G_i)=X'_{\owedge}(G_i)+Z_{\owedge}(G_i)$ and $X_{\triangle}(G_i)=X'_{\triangle}(G_i)+Z_{\triangle}(G_i)$.

We may now express the deviation $D_{\triangle}(G_m)$ as $D_{\triangle}(G_m)= D'_{\triangle}(G_m) +N^{*}_{\triangle}(G_m)$ where
\eq{D'def}
D'_{\triangle}(G_m)\, =\, \sum_{i=1}^{m} \left[ 3 \, \frac{(N-m)_{2}(m-i)}{(N-i)_3}\, X'_{\owedge}(G_i)\, +\, \frac{(N-m)_3}{(N-i)_3}\, X'_{\triangle}(G_i)\right]\phantom{\Big|}
\eqe
and
\[
N^{*}_{\triangle}(G_m)\, =\, \sum_{i=1}^{m} \left[ 3 \, \frac{(N-m)_{2}(m-i)}{(N-i)_3}\,Z_{\owedge}(G_i)\, +\, \frac{(N-m)_3}{(N-i)_3}\, Z_{\triangle}(G_i)\right]\, .\phantom{\Big|}
\]
Since the $Z$ random variables are non-negative and the coefficients are at most $3t$ and $1$ respectively, we have
\eq{Nstarle}
N^{*}_{\triangle}(G_m)\, \le\, \sum_{i=1}^{m} \left[ 3 t \,  Z_{\owedge}(G_i)\, +\, Z_{\triangle}(G_i)\right]\, .
\eqe

We have expressed $D_{\triangle}(G_m)$ as a sum of $D'_{\triangle}(G_m)$, which is a supermartingale (as the truncation may only reduce the value), and $N^{*}_{\triangle}(G_m)$ which counts the contribution of ``large'' increments.  In order to complete the proof of Proposition~\ref{prop:mainm} it clearly suffices to prove that there exists absolute constants $C_1$ and $C_2$ such that for all $\max\{C_1,C_2\} n^{- 1/2}(\log{n})^{1/2}\le t\le 1/2$ and $3\log{n}\le b\le tn^2\ell$ we have
\eq{D'}
\pr{D'_{\triangle}(G_m)\, \ge \, C_1 M(b,t)}\, \le\, \exp(-2b) 
\eqe
and
\eq{Nstar}
\pr{N^{*}_{\triangle}(G_m)\, \ge \, C_2 M(b,t)}\, \le\, \exp(-2b) \, .
\eqe
We prove these two statements in the following subsections.  

\subsection{Controlling $D'_{\triangle}(G_m)$}

In this subsection we prove~\eqr{D'}.  Since~\eqr{D'def} expresses $D'_{\triangle}(G_m)$ as the final value of supermartingale with increments
\[
\X'_i\, :=\, 3 \, \frac{(N-m)_{2}(m-i)}{(N-i)_3}\, X'_{\owedge}(G_i)\, +\, \frac{(N-m)_3}{(N-i)_3}\, X'_{\triangle}(G_i)\, ,\phantom{\Bigg|}
\]
we may use Freedman's inequality to bound the probability that $D'_{\triangle}(G_m)\ge C_1M(b,t)$.  All we need is some control of the increments
 $\X'_i$.  Since the coefficients are at most $3t$ and $1$ respectively, we have 
 \eq{Xprimed}
 |\X'_i|\, \le\,  3t|X'_{\owedge}(G_i)|\, +\, |X'_{\triangle}(G_i)|
 \eqe
deterministically.  As the truncation limits these absolute values, we have that $|\X'_i|\le 2^{17}M(b,t)/b$ deterministically.  
 
On the other hand, using~\eqr{Xprimed} and the fact that $(x+y)^2\le 2x^2+2y^2$, we have
\eq{X2}
|\X'_i|^2\, \le\, 18t^2\, |X'_{\owedge}(G_i)|^2\, +\, 2|X'_{\triangle}(G_i)|^2\, .\phantom{\Big|}
\eqe
This bound, together with the following lemma, will allow to control the quadratic variation of the process.  It will be proved using~\eqr{X2} and Lemmas~\ref{lem:varsch} and~\ref{lem:varstri}, which bound the conditional second moment of the increments $X_{\owedge}(G_i)$ and $X_{\triangle}(G_i)$ respectively.

\begin{lem}\label{lem:vars} There is an absolute constant $C$ such that the following holds.  Let $Cn^{-1/2}(\log{n})^{1/2}\le t\le 1/2$.  For $n\le b\le t^2n^2$ and $i\le m$, we have
\[
\pr{\Ex{(\X'_i)^2|G_{i-1}}\, \ge \, Cbt^2}\, \le\, \exp(-b)\,
\]
and, for $b>t^2n^2$ we have
\[
\pr{\Ex{(\X'_i)^2|G_{i-1}}\, \ge \, \frac{Cb^2}{n^2}}\, \le\, \exp(-b)\, .
\]
Furthermore, for all $b\ge n$,
\[
\pr{\sum_{i=1}^{m}\Ex{(\X'_i)^2|G_{i-1}}\, \ge \, C(bt^3n^2+b^2t)}\, \le\, \exp(-b)\,.
\]
\end{lem}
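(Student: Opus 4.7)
The plan is to control $\Ex{(\X'_i)^2\mid G_{i-1}}$ by feeding the already-established variance bounds for $X_\owedge$ and $X_\triangle$ through the pointwise inequality~\eqref{eq:X2}. Since $|X'_F(G_i)|\le |X_F(G_i)|$ deterministically, taking conditional expectations gives
\[
\Ex{(\X'_i)^2 \mid G_{i-1}}\, \le\, 18t^2\, \Ex{X_\owedge(G_i)^2 \mid G_{i-1}}\, +\, 2\, \Ex{X_\triangle(G_i)^2 \mid G_{i-1}},
\]
so the per-step statements reduce to estimates on the two conditional second moments on the right.

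For the triangle contribution, Lemma~\ref{lem:varstri} delivers precisely the two piecewise bounds, $Cbt^2$ when $n\le b\le t^2n^2$ and $Cb^2/n^2$ when $b>t^2n^2$, each except with probability at most $\exp(-b)$. For the cherry contribution, I apply Lemma~\ref{lem:varsch} with parameter $b^\ast:=\max\{b,32tn\}$ (to meet its hypothesis); the failure probability $\exp(-b^\ast)\le \exp(-b)$ is still acceptable, and the conclusion is $\Ex{X_\owedge^2\mid G_{i-1}}\le C\kappa(b^\ast,t)/n$. A short case analysis across the three ranges in the definition of $\kappa$, using $\ell\ge \log 2$ and $t\le 1/2$, will verify that $\kappa(b^\ast,t)/n \le C'b$ whenever $b\ge n$, so that $18t^2\,\Ex{X_\owedge^2\mid G_{i-1}}\le C''bt^2$. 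This is absorbed into the target $Cbt^2$ in the first regime, and into $Cb^2/n^2$ in the second regime (where $b>t^2n^2$ forces $bt^2\le b^2/n^2$).

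The summed statement then follows by summing the per-step bound over $i\le m$. The key observation is that Lemmas~\ref{lem:varsch} and~\ref{lem:varstri} rest on Proposition~\ref{prop:sumsquare} and Corollary~\ref{cor:codegs} respectively, both of which deliver the required bound uniformly over all $i\le m$ on a single event of probability at least $1-\exp(-b)$. Consequently, on a good event of probability at least $1-2\exp(-b)$, the per-step bound $\Ex{(\X'_i)^2\mid G_{i-1}}\le C(bt^2+b^2/n^2)$ holds simultaneously for every $i\le m$, and summing (using $m\le tn^2/2$) gives
\[
\sum_{i=1}^{m}\Ex{(\X'_i)^2\mid G_{i-1}}\,\le\, Cm\bigl(bt^2+b^2/n^2\bigr)\,\le\, C'\bigl(bt^3n^2+b^2t\bigr),
\]
as required.

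The main obstacle is the casework verifying $\kappa(b^\ast,t)/n \le C'b$ across the three pieces of $\kappa$ with attention to the boundary situations $b<32tn$ and $32tn\ge t^{1/2}n\ell$; this is routine but a little finicky, particularly when $t$ is bounded away from $0$ and $\ell$ is only bounded below by $\log 2$.
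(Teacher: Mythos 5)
Your proposal is correct and follows essentially the same route as the paper: bound $(\X'_i)^2$ via~\eqref{eq:X2}, plug in Lemma~\ref{lem:varsch} for the cherry part and Lemma~\ref{lem:varstri} for the triangle part, verify that the cherry contribution $t^2\kappa(\cdot,t)/n$ is dominated by $bt^2$ when $b\ge n$, and sum over $m\le tn^2$ steps for the final statement. Your version is in fact slightly more careful than the printed proof on two small points: you make the hypothesis of Lemma~\ref{lem:varsch} explicit by substituting $b^{\ast}=\max\{b,32tn\}$ (which matters only when $t$ is bounded away from $0$), and you state the auxiliary inequality in the form $\kappa(b^{\ast},t)/n\le C'b$, which is the correct normalization --- the paper's displayed claim $t^2\kappa(b,t)\le\max\{bt^2,b^2/n^2\}$ has a missing factor of $n$ and should read $t^2\kappa(b,t)/n\le\max\{bt^2,b^2/n^2\}$, exactly what your argument establishes. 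Your observation that the per-step bounds hold simultaneously over all $i\le m$ because the underlying inputs (Proposition~\ref{prop:sumsquare}, Corollary~\ref{cor:codegs}) are uniform in $i$ is correct and is what makes the naive ``multiply by $m$'' step in the furthermore valid without incurring a factor-$m$ loss in the probability.
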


\begin{proof} We first remark that the bounds of Lemmas~\ref{lem:varsch} and~\ref{lem:varstri} also hold for $X'_{\owedge}(G_i)$ and $X'_{\triangle}(G_i)$ since a truncation may only reduce variance.

The first two bounds now follow immediately from~\eqr{X2}, Lemmas~\ref{lem:varsch} and~\ref{lem:varstri}, and the fact that
\[
t^2\kappa(b,t)\, \le\, \max\{bt^2, b^2/n^2\}
\]
throughout the range $b\ge n$.

The furthermore statement follows easily by a union bound.  Note that the bound is obtained as the sum of the other two, multiplied by the number of steps $m\le tn^2$.
\end{proof}

We are now ready to prove~\eqr{D'}.  We shall apply Freedman's inequality to the supermartingale~\eqr{D'def} with final value $D'_{\triangle}(G_m)$.  We have already established that the increments $\X'_i$ satisfy $|\X'_i|\le 2^{17}M(b,t)/b$ deterministically. There are now two cases depending on the value of $b$. 

For $3\log{n}\le b\le n$ we set $\beta=8C't^3n^3$, where $C'\ge 1$ is the constant of Lemma~\ref{lem:vars}, so that $\pr{\sum_{i=1}^m \Ex{(\X'_i)^2|G_{i-1}}\, \ge\, \beta}\, \le \, \exp(-2n)\, \le\, \exp(-2b)$. Note also that we may choose a constant $C_1 \ge 16$ such that $\beta\le C_1\NORMAL(b,t)^2/b\le C_1 M(b,t)^2/b$.

It follows from Freedman's inequality (Lemma~\ref{lem:F}) that
\begin{align*}
&\pr{D'_{\triangle}(G_m)\, \ge\, C_1 M(b,t)}\,  \le\, \exp\left(\frac{-C_1^2M(b,t)^2}{2\beta\, +\, 2C_1 M(b,t)(2M(b,t)/b)}\right)\, +\, \exp(-2b)\phantom{Bigg|}\\
&\qquad\qquad \qquad\,\,\, \le \, \exp\left(\frac{-C_1^2 M(b,t)^2}{2C_1 M(b,t)^2/b\, +\, 2C_1 M(b,t)(2M(b,t)/b)}\right)\, +\, \exp(-2b)\phantom{Bigg|}\\
&\qquad\qquad \qquad\,\,\,\le\,  \exp\left(\frac{-C_1 b}{6}\right)\, +\, \exp(-2b)\phantom{Bigg|}\\
&\qquad\qquad \qquad\,\,\, \le\, 2\exp(-2b)\phantom{bigg|}\\
&\qquad\qquad \qquad\,\,\,\le \, \exp(-b)\, .\phantom{bigg|} 
\end{align*}

The argument is similar in the case $b>n$.  We now set $\beta=4C'(bt^3n^2+b^2t)$, so that $\pr{\sum_{i=1}^m \Ex{(\X'_i)^2|G_{i-1}}\, \ge\, \beta}\, \le \, \exp(-2b)$.  Note also that we may choose a constant $C_1 \ge 16$ such that $\beta\le C_1 M(b,t)^2/b$ for all $b\ge n$.  The required bound now follows from Freedman's inequality by exactly the same sequence of inequalities given above.  This completes our proof of~\eqr{D'}.

\subsection{Controlling $N^{*}_{\triangle}(G_m)$}

In this subsection we complete the proof of Propostion~\ref{prop:mainm} by proving the bound~\eqr{Nstar}.  That is, we show that for some constant $C_2$ the inequality
\[
\pr{N^{*}_{\triangle}(G_m)\, \ge \, C_2 M(b,t)}\, \le\, \exp(-2b) \, 
\]
holds for all $C n^{-1/2}(\log{n})^{1/2} \le t\le 1/2$ and $3\log{n}\le b\le tn^2\ell$.  As  we saw above,~\eqr{Nstarle}, we have
\[
N^{*}_{\triangle}(G_m)\, \le\, 3\sum_{i=1}^{m}  t \,  Z_{\owedge}(G_i)\, +\, \sum_{i=1}^{m}   Z_{\triangle}(G_i)\, .
\]
And so, by~\eqr{triun}, i.e., triangle inequality and union bound, it suffices to show that there exist absolute constants $C_3$ and $C_4$ such that
\eq{Zwedge}
\pr{\sum_{i=1}^m t \, Z_{\owedge}(G_i)\, \ge\, C_3 \, M_{\triangle}(b,t)}\, \le\, \exp(-3b)
\eqe
and
\eq{Ztri}
\pr{\sum_{i=1}^m Z_{\triangle}(G_i)\, \ge\, C_4 \, M_{\triangle}(b,t)}\, \le\, 2\exp(-3b)\, .
\eqe
We prove~\eqr{Zwedge} and~\eqr{Ztri} in the following subsections.

\subsubsection{Controlling $\sum_{i=1}^m t \, Z_{\owedge}(G_i)$}\label{sec:Zwedge}

In this subsection we prove~\eqr{Zwedge}.  The proof is similar to the argument of Section~\ref{sec:boundingNstarcherries}.  Again the main objective is to relate the summation to the sum of squares of degree deviations, we may then deduce~\eqr{Zwedge} from Proposition~ \ref{prop:sumsquare}.  First, for $i \le m$, we have that
\[
X_{\owedge}(G_i)\, \le\, A_{\owedge}(G_i)\, =\, 2 \sum_{v \in e_i} d_v(G_{i-1}) \, \le \, 2\sum_{v \in e_i} d_v(G_{m})\, \le \, 4\max_{v\in e_i}d_v(G_i)\, .
\]
Thus, the event $X_{\owedge}(G_i) > K_{\owedge}$ may only occur if $d_v(G_{m}) > K_{\owedge}/4$ for one of the two vertices $v\in e$.  It follows that
\eq{ZP2expression}
Z_{\owedge}(G_i) \, = \, X_{\owedge}(G_i)1_{X_{\owedge}(G_i) > K_{\owedge}}\, \le\, 4 \sum_{v \in e_i} d_v(G_{m})1_{d_v(G_{m}) > K_{\owedge}/4} \,.
\eqe
By summing this expression over $i \le m$, we obtain the upper bound
\begin{align}
	\sum_{i=1}^m Z_{\owedge}(G_i)\, &\le \, 4\sum_{v \in V} d_v(G_{m})1_{d_v(G_{m}) > K_{\owedge}/4} \sum_{e \in E(G_m)} 1_{v \in e}\nonumber \\
	&= 4\sum_{v \in V} d_v(G_{m})^2 1_{d_v(G_{m}) > K_{\owedge}/4}\, .\label{eq:Zsumle}
\end{align}

Recall, from~\eqr{Mbt2n}, that by definition $K_{\owedge} = 2^8M(b,t)/(bt) \ge 2^8tn$.  In particular, $K_{\owedge}/4\ge 64tn$, and so the above summation above is $0$ if $\Delta(G_m) \le 64tn$.

Note that, by applying Lemma~\ref{lem:largedegs} with ``$b$'' being $96tn$ we have that
\eq{D64}
\pr{\Delta(G_m)>64tn}\, \le\, \pr{\Delta(G_m)>\frac{96tn}{\log(96/e)}}\, \le\, \exp(-96tn)\, .
\eqe
In particular, if $b\le 32tn$ then there is probability at most $\exp(-3b)$ that $\sum_{i=1}^m Z_{\owedge}(G_i)$ is non-zero, and~\eqr{Zwedge} follows in this case.

Now assume $b\ge 32tn$.  Note that $d_v(G_m) \le 2D_v(G_m)$ for all vertices $v$ such that $d_v(G_m) \ge 32tn$, and so, by~ \eqr{Zsumle}, we have
\[
\sum_{i=1}^m t Z_{\owedge}(G_i)\, \le\, 16t \sum_{v \in V} D_v(G_m)^2\,
\]
deterministically.  We now use Proposition \ref{prop:sumsquare} to bound the probability that this sum is large.  We recall,~\eqr{tkM} , that $t \kappa(b,t)\le  M(b,t)$.  And so, by Proposition \ref{prop:sumsquare}, there exists a constant $C_3$ such that
\begin{align*}
\pr{\sum_{i=1}^m \ tZ_{\owedge}(G_i)\,\ge C_3 \, M(b,t) }\, &\le\,\pr{\sum_{i=1}^m \ tZ_{\owedge}(G_i)\,\ge C_3 \, t\kappa(b,t) }\\
&\le \, \exp(-3b)\, .
\end{align*}
This completes our proof of~\eqr{Zwedge}.

\subsubsection{Controlling $\sum_{i=1}^m Z_{\triangle}(G_i)$}\label{sec:ZK3}

In this section we prove~\eqr{Ztri}.  We recall that $Z_{\triangle}(G_i)$ is defined as
\[
Z_{\triangle}(G_i)\, =\, X_{\triangle}(G_i)1_{X_{\triangle}(G_i)> K_{\triangle}}
\]
where $K_{\triangle}=2^{16}M(b,t)/ b$.  And that our objective is to prove that for some constant $C_4$, we have
\[
\pr{\sum_{i=1}^m Z_{\triangle}(G_i)\, >\, C_4 M_{\triangle}(b,t)}\, \le\, 2\exp(-3b) \,
\]
for all $C n^{-1/2}(\log{n})^{1/2} \le t\le 1/2$ and $3\log{n}\le b\le tn^2\ell$.

For each $i \le m$ we may control $X_{\triangle}(G_i)$ in terms of the codegree of the edge chosen at step $i$.  Specifically,
\[
X_{\triangle}(G_i)\, \le \, A_{\triangle}(G_i)\, = \, 6 d_{e_i}(G_{i-1}) \,.
\]
And so, for the event $X_{\triangle}(G_i) > K_{\triangle}$ to occur, it is necessary that $d_{e_i}(G_{i-1}) > K_{\triangle}/6$. It follows that
\eq{ZK3expression}
\sum_{i=1}^m Z_{\triangle}(G_i)\, \le \, 6 \sum_{i=1}^m d_{e_i}(G_{i-1}) 1_{\{d_{e_i}(G_{i-1}) > K_{\triangle}/6\}} \,. 
\eqe

We consider two different types of contribution to the sum, depending on whether the codegree is ``explained'' by degree deviations, or not.  For each edge $e_i = uv$ added during the process, which has $d_{e_i}(G_{i-1}) > K_{\triangle}/6$, we classify $e_i$ as \emph{good} if there exists a time $i-1 \le j \le m$ such that
\[
d_{e_i}(G_{j})\, \le\, 64t(d_u(G_{j}) + d_v(G_{j}))\, .
\]
Otherwise $e_i$ is \emph{bad}.


As
\eq{badgood}
\sum_{i=1}^m Z_{\triangle}(G_i)\, \le \, 6 \sum_{i=1}^m d_{e_i}(G_{i-1})1_{e_i \text{ is good}} + 6 \sum_{i=1}^md_{e_i}(G_{i-1})1_{e_i \text{ is bad}}\, ,
\eqe
we have reduced our objective to proving the following two lemmas, modulo~\eqr{triun}, i.e., the usual union bound and triangle inequality.

\begin{lem}\label{lem:good}
	There exists an absolute constant $C$ such that the following holds. Suppose that $Cn^{-1/2}(\log{n})^{1/2}\le t\le 1/2$ and $3\log{n}\le b\le tn^2\ell$. Then, except with probability at most $\exp(-3b)$,
	\[
	\sum_{i=1}^m d_{e_i}(G_{i-1}) 1_{\{e_i \text{ is good}\}}\, \le\ CM(b,t)\,.
	\]
\end{lem}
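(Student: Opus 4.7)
The plan is to reduce the left-hand sum to a sum of squares of degrees at time $m$, and then invoke Proposition~\ref{prop:sumsquare}. The reduction exploits precisely the definition of a good edge: at some time $j \ge i-1$ the codegree is controlled by the endpoints' degrees, and both codegrees and degrees are monotone non-decreasing under edge additions.

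First I would perform the reduction. For a good edge $e_i = u_iv_i$, choose the witnessing $j$ and use monotonicity to obtain
\[
d_{e_i}(G_{i-1}) \le d_{e_i}(G_j) \le 64t\big(d_{u_i}(G_j)+d_{v_i}(G_j)\big) \le 128t\max\big(d_{u_i}(G_m),d_{v_i}(G_m)\big).
\]
Writing $v_i^*$ for the max-degree endpoint at time $m$, and combining this with $d_{e_i}(G_{i-1}) > K_\triangle/6$ together with $M(b,t)\ge bt^2n$ from~\eqr{Mbt2n}, we find that $d_{v_i^*}(G_m) > K_\triangle/(768t) \ge 32tn$. Since each vertex $v$ can be the choice of $v_i^*$ for at most $d_v(G_m)$ good edges (as $v$ is incident to at most $d_v(G_m)$ edges in total), a double count yields
\[
\sum_{i \text{ good}} d_{e_i}(G_{i-1}) \le 128t \sum_{v : d_v(G_m) \ge 32tn} d_v(G_m)^2.
\]

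Next I would invoke Proposition~\ref{prop:sumsquare} with parameter $b' := \max(3b, 32tn)$, bounding the inner sum by $C\kappa^+(b',t)$ with failure probability at most $\exp(-b') \le \exp(-3b)$. The final step is to establish $t\kappa^+(b',t) \le C'M(b,t)$. The heart of this is the cherry inequality~\eqr{M3}, which gives $\kappa^+(b,t) \le 9M_{\owedge}(b,t)$ for $b \ge 32tn$; coupled with the termwise comparison $M(b,t) \ge tM_{\owedge}(b,t)$ (verified term-by-term: $\NORMAL(b,t) \ge t\NORMAL_{\owedge}(b,t)$ holds because $t^{3/2} \ge t^2$ for $t \le 1$, while $\STAR(b,t) = t\STAR_{\owedge}(b,t)$ and $\HUB(b,t) = t\HUB_{\owedge}(b,t)$ by definition), this yields $t\kappa^+(b,t) \le 9M(b,t)$ for $b \ge 32tn$. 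Hence when $b \ge 32tn$ we take $b' = 3b$ and use that $\kappa^+(3b,t)$ exceeds $\kappa^+(b,t)$ by at most a constant factor.

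The residual regime $b < 32tn$ splits into two cases: when $b \le tn$, a direct computation shows $K_\triangle/(768t) > n-1$, so no vertex can be heavy and the sum vanishes; when $tn < b < 32tn$, one uses $b' = 32tn$, and combines $t\kappa^+(32tn,t) \le 168 t^3 n^2$ with $M(b,t) \ge \NORMAL(b,t) \ge t^2 n^2$ (using $b \ge tn$) and $t \le 1/2$ to close the gap. The main obstacle is precisely this bookkeeping at the boundary $b \approx 32tn$, where $\kappa$ and $\kappa^+$ diverge and where the proposition's threshold kicks in; the central chain of inequalities (monotonicity, double count, Proposition~\ref{prop:sumsquare}, cherry-to-triangle conversion) is otherwise clean.
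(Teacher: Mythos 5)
Your proof is correct and takes essentially the same approach as the paper's: reduce the good-edge sum to a sum of squares of large degrees at time $m$ via monotonicity and double-counting, invoke Proposition~\ref{prop:sumsquare}, and compare the resulting $\kappa$-type quantity to $M(b,t)$. The only differences are cosmetic routing: the paper passes through $d_v\le 2D_v$ and the first conclusion of Proposition~\ref{prop:sumsquare} (bound by $\kappa$, then~\eqr{tkM}), while you use the second conclusion (bound by $\kappa^+$) and then the chain~\eqr{M3} plus $M\ge tM_{\owedge}$; and the paper disposes of the range $b<32tn$ via $\pr{\Delta(G_m)>64tn}\le\exp(-96tn)\le\exp(-3b)$ (cf.~\eqr{D64}), whereas you split it by hand. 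One small slip in your case analysis: for $tn<b<32tn$ you write $b'=32tn$, but $\exp(-32tn)\le\exp(-3b)$ only when $b\le 32tn/3$; for $32tn/3<b<32tn$ you should keep $b'=3b$, which is exactly what your stated formula $b'=\max(3b,32tn)$ already gives, so the gap is only in the prose, not in the argument.
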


\begin{lem}\label{lem:bad}
	There exists an absolute constant $C$ such that the following holds. Suppose that $Cn^{-1/2}(\log{n})^{1/2}\le t\le 1/2$ and $3\log{n}\le b\le tn^2\ell$. Then, except with probability at most $\exp(-3b)$,
	\[
	\sum_{i=1}^m d_{e_i}(G_{i-1}) 1_{\{e_i \text{ is bad}\}}\, \le\ CM(b,t)\, .
	\]
\end{lem}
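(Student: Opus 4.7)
My plan is to reduce the sum to a deterministic estimate on bad pairs in the final graph $G_m$, then apply the codegree bounds of Proposition~\ref{prop:codegs}.

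First I would condition on the event that the conclusions of Proposition~\ref{prop:codegs}, Proposition~\ref{prop:sumsquare} and Lemma~\ref{lem:largedegs} hold simultaneously with ``$b$'' replaced by $3b$; by a union bound this event has probability at least $1-\exp(-3b)$. Since codegrees are non-decreasing along the edge-addition process, $d_{e_i}(G_{i-1})\le d_{e_i}(G_m)$, so I may dominate each summand by its value at time $m$.

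The key structural observation is that if $e_i=uv$ is bad, then the bad condition applied at $j=m$ gives $\max\{d_u(G_m),d_v(G_m)\}<d_{uv}(G_m)/(64t)$, and combined with $d_{uv}(G_m)>K_{\triangle}/6=2^{15}M(b,t)/(3b)$, this places $uv$ into $H_k(G_m)$ for the unique $k$ satisfying $d_{uv}(G_m)\in[2^k t^2 n,\,2^{k+1}t^2 n)$; moreover $k\ge k_0:=\lceil\log_2(K_{\triangle}/(12\, t^2 n))\rceil$. Partitioning the bad edges by this level and applying Proposition~\ref{prop:codegs} term by term,
\[
\sum_{i\,:\,e_i\text{ bad}} d_{e_i}(G_{i-1})\,\le\,\sum_{k\ge k_0}2^{k+1}\,t^2 n\cdot|H_k(G_m)|\,\le\,\sum_{k\ge k_0}\frac{2Cb^2}{k^2\,2^k\,t^2 n}.
\]
This series is geometric in $2^k$, hence of order $b^2/(k_0^2\,2^{k_0}\,t^2 n)$, and substituting $2^{k_0}\gtrsim M(b,t)/(bt^2 n)$ turns the bound into something of order $b^3/(k_0^2 M(b,t))$.

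The main obstacle is showing this last quantity is at most $CM(b,t)$ across all four regimes. In the normal, star and hub regimes this follows from a short case analysis using the lower bounds~\eqref{eq:Mbt3/2n}, \eqref{eq:Mbt2n}, and \eqref{eq:tkM}; the delicate case is the clique-dominated regime, where $M(b,t)=\CLIQUE(b,t)=b^{3/2}/\ell^{3/2}$ and $k_0=\Theta(\ell)$ leave only a borderline margin. To close this gap I would use the refined decomposition $|H_k|\le|H_{k,0}|+\sum_{j\ge k/2}|H_{k,j}|$ from Section~\ref{sec:h} and estimate the two pieces separately: the $H_{k,0}$ piece corresponds to clique-type structures and should be matched against $\CLIQUE(b,t)$, while the $H_{k,j}$ pieces for $j\ge k/2$ correspond to hub-type structures with one high-degree endpoint and should be matched against $\HUB(b,t)$ (or $\STAR(b,t)$ in the appropriate range of $b$). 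In each sub-case the relevant structural lower bound on $M(b,t)$ provides the factor of $\ell^{1/2}$ (respectively $\ell$) needed to upgrade the crude $b^3/(k_0^2 M)$ estimate into the required $CM(b,t)$ bound.
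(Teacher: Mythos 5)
Your reduction to a deterministic count of bad pairs in $G_m$ fails by a factor of order $\ell$ in the clique-dominated regime, and the proposed refinement does not close that gap. With $k_0$ defined so that $2^{k_0}t^2 n\approx K_\triangle/6$ and $k_0=\Theta(\ell)$ (recall $k_0\ge\ell/2$ and $k_0\le 2\log_2(1/t)$), your chain gives
\[
\sum_{k\ge k_0}\frac{2Cb^2}{k^2 2^k t^2 n}\;=\;\Theta\!\left(\frac{b^3}{k_0^2\,M(b,t)}\right),
\]
and in the clique regime, where $M(b,t)=\CLIQUE(b,t)=b^{3/2}/\ell^{3/2}$, this is $\Theta\big(b^{3/2}\ell^{3/2}/k_0^2\big)=\Theta\big(b^{3/2}\ell^{-1/2}\big)$, which exceeds $M(b,t)$ by a factor $\Theta(\ell)$. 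The refinement you suggest replaces $h_k$ by $h_{k,0}+\sum_j h_{k,j}$; but the final bound on $h_k$ established at the end of Section~\ref{sec:h} is the bound on $h_{k,0}$ up to a constant ($2^{13}$), so the offending contribution is precisely the $H_{k,0}$ (clique-type) term, and ``matching it against $\CLIQUE(b,t)$'' still leaves the same factor of $\ell$. No sharper decomposition of $H_k$ can save the argument, because the $h_{k,0}$ estimate is what you are already using and it is essentially tight.

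What is missing is a probabilistic argument about the process itself, not a sharper deterministic bound on $G_m$. At the borderline codegree level $k\approx k_0$ there may genuinely be $\Theta(b^4/\ell^2 M^2)$ bad pairs in $G_m$ --- roughly $\ell$ more than your budget permits --- but only a small random fraction of those pairs are ever chosen as an edge $e_i$. The paper's proof splits the sum at $K_\triangle\ell$: for codegrees above $K_\triangle\ell$ your deterministic argument works (there $2^{k_1}$ carries the extra factor of $\ell$), while for codegrees in $[K_\triangle/6,K_\triangle\ell)$ one lets $a^*(G_m)$ count the steps $i$ with $e_i\in H^*(G_{i-1}):=\bigcup_{k=k_0}^{k_1}H_k(G_{i-1})$, shows $|H^*(G_{i-1})|=O(b^4/\ell^2 M^2)$ on the good event, observes that each step hits $H^*$ with conditional probability $O(|H^*|/N)$, and takes a union bound over $\binom{m}{j}$ choices of $j=Cb/\ell$ step indices to obtain $\pr{a^*(G_m)>Cb/\ell}\le\exp(-4b)$ (using $m/N=t$ and $M^2\ge b^3/\ell^3$, the per-sequence probability is $(O(t\ell^2/C))^{j}$). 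Each such step contributes at most $K_\triangle\ell$, so the total is $O(K_\triangle b)=O(M(b,t))$. Without some such ``few borderline pairs are actually selected'' argument your proof cannot reach the stated bound; the deterministic route stalls exactly at the factor of $\ell$ you flagged as borderline.
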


We begin with Lemma~\ref{lem:good}.

\begin{proof}[Proof of Lemma~\ref{lem:good}]
As degrees may only increase during the process, for each good edge $e_i$, we have $K_{\triangle}/6 \le d_{e_i}(G_{i-1}) \le 64t(d_u(G_{m}) + d_v(G_{m}))$.   

In particular if $e_i$ is good then $d_v(G_m)\ge K_{\triangle}/2^{10}t$ for one of the two vertices $v\in e_i$. It follows that
	\eq{etov}
	d_{e_i}(G_{i-1}) 1_{\{e_i \text{ is good}\}}\, \le\, 128t \sum_{v \in e_i}d_v(G_m)1_{d_v(G_m) > K_{\triangle}/2^{10}t} \,.
	\eqe
	And so
	\begin{align*}
		\sum_{i=1}^m  d_{e_i}(G_{i-1}) 1_{\{e_i \text{ is good}\}} &\le \, 128t \sum_{v \in V} d_v(G_m) 1_{d_v(G_{m}) > K_{\triangle}/2^{10}t} \sum_{e \in E(G_m)}1_{v \in e}  \\
		&= 128t \sum_{v \in V} d_v(G_m)^2 1_{d_v(G_{m}) > K_{\triangle}/2^{10}t}.
	\end{align*}
	
	We also recall, from~\eqr{Mbt2n}, that $K_{\triangle}/2^{10}t = 64M(b,t)/bt \ge 64tn$. In particular, the last summation above is automatically $0$ if $\Delta(G_m) \le 64tn$.  As we saw above, in~\eqr{D64}, we have $\pr{\Delta(G_m)>64tn}\le \exp(-96tn)$.  In particular, if $b < 32tn$, then there is most probability $\exp(-3b)$ that $\sum_{i=1}^m  d_{e_i}(G_{i-1}) 1_{\{e_i \text{ is good}\}} $ is non-zero.
	
We may now assume that $b \ge 32tn$.  As $d_v(G_m) \le 2D_v(G_m)$ for all $v$ such that $d_v(G_m) \ge 64tn$, it follows that
	\[
	128t \sum_{v \in V} d_v(G_m)^2 1_{d_v(G_{m}) > K_{\triangle}/2^9t}\, \le\, 256t \sum_{v \in V} D_v(G_m)^2\, .
	\]
	
We shall now use Proposition \ref{prop:sumsquare} to bound the probability that this sum is large.  We recall~\eqr{tkM}, which states that $ t \kappa(b,t) \le CM(b,t)$.  Now, taking $C$ to be $2^{10}$ times the constant of Proposition \ref{prop:sumsquare}, we have
	\begin{align*}
	\pr{\sum_{i=1}^m  d_{e_i}(G_{i-1}) 1_{\{e_i \text{ is good}\}}\, >\, CM(b,t)}\, &\le \, \pr{\sum_{i=1}^m  d_{e_i}(G_{i-1}) 1_{\{e_i \text{ is good}\}}\, >\,  C t \kappa(b,t)}\phantom{\Bigg|}\\
	& \le\, \pr{256t \sum_{v \in V} D_v(G_m)^2\, >\, Ct\kappa(b,t)}\phantom{\Bigg|}\\
	&\le\,  \pr{\sum_{v \in V} D_v(G_m)^2\, >\, \frac{C}{256}\kappa(b,t)}\phantom{\Bigg|}\\
	&\le\, \exp(-3b)\, .\phantom{\bigg|}
	\end{align*}
\end{proof}

We shall now bound the sum of codegrees of bad edges.  We do so by showing that if $e_i$ is bad then it belongs to one of the graphs $H_k(G_m)$ of pairs with large codegree, defined in Section~\ref{sec:codegs}.  This already gives us an upper bound on the number of such edges, using the bounds proved in Section~\ref{sec:codegs}.  This is sufficient for a large range of possible codegrees.

For codegrees just a little larger than $K_{\triangle}$, up to $K_{\triangle}\ell$ in fact, a more careful analysis is required.  We use the bounds mentioned, but we also need to consider the probability that a large number of such edges are selected during the process.  This is achieved via an argument which corresponds to a coupling with a process with independent increments, which is then binomially distributed.

\begin{proof}[Proof of Lemma~\ref{lem:bad}]
Recall that we want to find an absolute constant $C$ so that for all ${Cn^{-1/2}(\log n)^{1/2}\le t\le 1/2}$ and all $3 \log n \le b \le tn^2\ell$ we have
\eq{bad}
\pr{\sum_{i=1}^m d_{e_i}(G_{i-1}) 1_{\{e_i \text{ is bad}\}}\, >\, CM(b,t)}\, \le\, \exp(-3b)\, .
\eqe

As we mentioned in the discussion before the proof, we shall consider separately contributions from codegrees between $K_{\triangle}/6$ and $K_{\triangle}\ell$, and codegrees larger than $K_{\triangle}\ell$.  We shall reduce the proof to the statement that, for some constant $C>0$, the following two claims hold:
\eq{kbig}
\pr{\sum_{i=1}^m d_{e_i}(G_{i-1}) 1_{\{e_i \text{ is bad}, d_{e_i}(G_{i-1})>K_{\triangle}\ell\}}\, >\, CM(b,t)}\, \le\, \exp(-4b)
\eqe
and
\eq{ksmall}
\pr{\sum_{i=1}^m d_{e_i}(G_{i-1}) 1_{\{e_i \text{ is bad}, d_{e_i}(G_{i-1})\in [K_{\triangle}/6,K_{\triangle}\ell)\}}\, >\, CM(b,t)}\, \le\, 2\exp(-4b) \, .
\eqe
Indeed, if both~\eqr{kbig} and~\eqr{ksmall} hold then, except with probability at most $3\exp(-4b)\le \exp(-3b)$, we clearly have 
\[
\sum_{i=1}^m d_{e_i}(G_{i-1}) 1_{\{e_i \text{ is bad}\}}\, \le\, 2CM(b,t)\, .
\]

Proving~\eqr{kbig} is relatively straightforward.  We shall bound the number pairs of large codegree using results from Section~\ref{sec:codegs}.  These bounds are sufficiently strong that even if every edge with codegree this large is selected the contribution is still at most $CM(b,t)$.  

Recall from Section \ref{sec:codegs} that we defined the sets 
\[
H_{k}(G)\, :=\, \{uw\, :\, d_{uw}(G)\in [2^{k}t^2n,2^{k+1}t^2n)\, ,\, d_u(G),d_w(G)\le 2^{k-5}tn\}
\]
for $k\in K_2:=\{10,\dots, \lfloor 2\log_2(1/t)\rfloor\}$.  And we set $h_{k}(G)=|H_k(G)|$. 

By~\eqr{Mbt2n} and the definition of $K_{\triangle}$ as $2^{16}M(b,t)/b$ we have $K_{\triangle}/6\, \ge\, 2^{10}t^2n$.  Let $k_0\ge 10$ be the maximum $k$ such that $2^{k}t^2n\le K_{\triangle}/6$, and let $k_1$ be the maximum $k$ such that $2^{k}t^2n\le K_{\triangle}\ell$.  It is also helpful to note that, as $K_{\triangle}=2^{16}M(b,t)/b\ge 2^{16}t^{3/2}n$ by~\eqr{Mbt3/2n}, we have
\eq{kell}
k_1\, \ge\, k_0\, \ge \, k_0-12\, \ge \, \log_2(t^{-1/2})\, \ge \, \ell/2\, .
\eqe

\textbf{Claim:} If $e_i$ is bad and $d_{e_i}(G_{i-1})\, \ge\, K_{\triangle}\ell$ then $e_i\in \bigcup_{k=k_1}^{2\log_{2}(1/t)}H_{k}(G_m)$.

\textbf{Proof of Claim:} First, as codegree is non-decreasing through the process
\[
d_{e_i}(G_m)\, \ge\, d_{e_i}(G_{i-1})\, \ge\, K_{\triangle}\ell \, \ge\, 2^{k_1}t^2n\, .
\]
And so $d_{e_i}(G_{m})$ is in the range of one of the sets $H_{k}(G)$ with $k\ge k_1$.

All that remains is to prove that the second condition in the definition of $H_{k}(G)$ is satisfied.  For this it suffices that $d_{e_i}(G_{m})\, >\, 64t(d_u(G_{m}) + d_v(G_{m}))$, and this follows immediately from the definition of ``bad''.  This completes the proof of the claim.

By Proposition~\ref{prop:codegs}, except with probability at most $\exp(-4b)$, we have, for some constant $C_1$, that
\[
h_k(G_i)\, \le\, \frac{C_1 b^2}{k^2 2^{2k}t^4n^2}\phantom{\Bigg|} \qquad\qquad \text{for all } k\in K_2\,\text{and } i\le m\, .
\]
Let us assume these bounds hold and prove the required bound on the sum in~\eqr{kbig}.  As $d_{e_i}(G_{i-1})\le d_{e_i}(G_{m})\le 2^{k+1}t^2n$ for edges $e_i\in H_k(G_{m})$ we have, except with probability at most $\exp(-4b)$,
\begin{align}
\sum_{i=1}^m d_{e_i}(G_{i-1}) 1_{\{e_i \text{ is bad}, d_{e_i}(G_{i-1})>K_{\triangle}\ell\}} \, &\le\, \sum_{k=k_1}^{2\log_2(1/t)} 2^{k+1}t^2n  \frac{C_1 b^2}{k^2 2^{2k}t^4n^2}\phantom{\Bigg|}\nonumber \\ 
&\le\, \sum_{k=k_1}^{2\log_2(1/t)}   \frac{2C_1 b^2}{k^2 2^{k}t^2n}\phantom{\Bigg|}\nonumber\\
&\le\, \frac{4C_1 b^2}{k_1^2 2^{k_1}t^2n}\nonumber\\
&\le\, \frac{8C_1 b ^2}{k_1^2 K_{\triangle}\ell}\nonumber\\
&\le\, \frac{2^{21}C_1 b^3}{\ell^3 M(b,t)}\label{eq:k1was}\\
&\le\, 2^{21} C_1 M(b,t)\, .\phantom{\Big|} \nonumber
\end{align}
Note that the penultimate inequality used the definition of $K_{\triangle}$ and the inequality $k_1\ge \ell/2$ observed above.  The final inequality used that $M(b,t)^2\ge \CLIQUE(b,t)^2 =b^3/\ell^3$.  This completes the proof of~\eqr{kbig}.

We now consider~\eqr{ksmall}.  This time a more subtle argument is needed.  We will still use bounds on the cardinalities $h_k(G_{i-1})$, but we must also consider how many of the edges of large codegree are actually selected during the process.

Let $H^{*}(G_i):=\bigcup_{k=k_0}^{k_1}H_{k}(G_i)$ and note that, if $e_i$ is bad and $d_{e_i}(G_{i-1})\in [K_{\triangle}/6,K_{\triangle}\ell)$ then $e_i\in H^*(G_{i-1})$.  Let $a^*(G_m)$ be the number of edges $e_i$ selected in the process such that $e_i\in H^{*}(G_{i-1})$.  As 
\begin{align*}
\sum_{i=1}^m d_{e_i}(G_{i-1}) 1_{\{e_i \text{ is bad}, d_{e_i}(G_{i-1})\in [K_{\triangle}/6,K_{\triangle}\ell)\}}\, &\le\, K_{\triangle}\ell a^{*}(G_m)\\
& \le \, \frac{2^{16}M(b,t)\ell a^{*}(G_m)}{b}\, ,
\end{align*}
it suffices to prove, for some constant $C$ that
\[
\pr{a^{*}(G_m)\, >\, Cb/\ell}\, \le\, 2\exp(-4b)\, .
\]

Let $E$ be the event that
\[
h_k(G_i)\, \le\, \frac{C_1 b^2}{k^2 2^{2k}t^4n^2}\phantom{\Bigg|} \qquad\qquad \text{for all } k_0\le k\le k_1 \,\text{and } i\le m\, .
\]
By Proposition~\ref{prop:codegs}, $\pr{E^c}\le \exp(-4b)$.  We now prove that, if $C$ is sufficiently large, then $\pr{a^{*}(G_m)\, >\, Cb/\ell\, \text{and}\, E}\le \exp(-4b)$.  This will complete the proof of the lemma.

On the event $E$, at each step we have
\begin{align}
|H^*(G_{i-1})|\, & \le\, \sum_{k=k_0}^{k_1} \frac{C_1 b^2}{k^2 2^{2k}t^4n^2} \nonumber \\
& \le\, \frac{2C_1 b^2}{k_{0}^2 2^{2k_0}t^4n^2}\nonumber \\
& \le\, \frac{C_2 b^2}{\ell^2 K_{\triangle}^2}\nonumber \\
&\le\, \frac{C_2 b^4}{\ell^2 M(b,t)^2}\, , \label{eq:Hstar}
\end{align}
for some constant $C_2$, where we used the inequality $k_0\ge \ell/2$ from~\eqr{kell}.  And so, the event ``$a^{*}(G_m)\, >\, Cb/\ell\, \text{and}\, E$'' implies that 
\begin{enumerate}
\item[(i)] at each step there are at most $C_2b^4/\ell^2M(b,t)^2$ pairs in $H^{*}(G_{i-1})$, and
\item[(ii)] in at least $Cb/\ell$ steps we select $e_i\in H^{*}(G_{i-1})$.
\end{enumerate}

By (i), we have that, the conditional probability (given $G_{i-1}$) that $e_i$ is selected in $H^{*}(G_{i-1})$ is always at most 
\[
\frac{C_2 b^4}{\ell^2M(b,t)^2 (N-i+1)}\, \le\, \frac{2C_2 b^4}{\ell^2M(b,t)^2 N}\, .
\]
Therefore, given any sequence $i_1<i_2<\dots <i_j$, with $j=Cb/\ell$, the probability that $e_i$ is selected in $H^{*}(G_{i-1})$ at all these steps is at most
\[
\left(\frac{2C_2 b^4}{\ell^2M(b,t)^2 N}\right)^j\, .
\] 
By a union bound, over at most $\binom{m}{j}\le (em/j)^j$ choices of the sequence, it follows that
\[
\pr{a^{*}(G_m)\, >\, Cb/\ell\, \text{and}\, E}\, \le\, \left(\frac{2emC_2 b^4}{j \ell^2M(b,t)^2 N}\right)^j\, .
\]
We now use that $m/N=t$, that $M(b,t)^2\ge \CLIQUE(b,t)^2=b^3/\ell^3$, and the value of $j=Cb/\ell$ to deduce that
\[
\pr{a^{*}(G_m)\, >\, Cb/\ell\, \text{and}\, E}\, \le\, \left(\frac{2eC_2 t \ell^2}{C}\right)^{Cb/\ell}\, .
\]
If $C$ is taken sufficiently large then the expression in brackets is at most $t^{1/2}=\exp(-\ell/2)$, and so 
\[
\pr{a^{*}(G_m)\, >\, Cb/\ell\, \text{and}\, E}\, \le\, \exp(-Cb/2)\, \le \, \exp(-4b)\, .
\]
\end{proof}

\section{Triangle counts in $G(n,m)$ -- lower bounds on deviations}\label{sec:LBtri}

We now prove the lower bound part of Theorem~\ref{thm:mainm}.  That is, in each of the four regimes, we prove that a deviation of the corresponding order (for example, at least $cN(b,t)$ in the normal regime) has probability at least $\exp(-b)$.  It is perhaps surprising that this is most challenging in the normal regime.

In the normal regime (Section~\ref{sec:normalLB}) we prove this result using the converse Freedman inequality (Lemma~\ref{lem:CF}).  In the other three regimes (star, hub and clique) we provide an explicit structure (graph) which is present in $G(n,m)$ with probability at least $\exp(-b)$ and is directly responsible for the triangle count deviation.  The details are given in Section~\ref{sec:otherLBs}.

\subsection{The normal regime}\label{sec:normalLB}

In this subsection we use the converse Freedman inequality (Lemma~\ref{lem:CF}) to prove that, for some absolute constant $c>0$, we have
\eq{lbN}
\pr{D_{\triangle}(G_m)\, \ge\, cb^{1/2}t^{3/2}n^{3/2}}\, \ge \, \exp(-b)
\eqe
for all pairs $(b,t)$ in the normal regime.  This regime is displayed in green in Figure~\ref{fig:tb}.  If we fix the value of $t\ge Cn^{-1/2}(\log{n})^{1/2}$ then this range corresponds to values of $b$ such that
\[
3\log{n}\, \le\, b\, \le\, 
\min\{t^{1/3}n\ell^{4/3}\, ,\, t^{3/2}n^{3/2}\ell^{3/2}\}\, .
\]
We also remark that in this regime, the truncation values are
\[
K_{\triangle}\, =\, 2^{16}b^{-1/2}t^{3/2}n^{3/2}\quad\text{and} K_{\owedge}\, =\, 2^{8}b^{-1/2}t^{1/2}n^{3/2}\, .
\]

In the previous section we considered a process with truncated increments, with final value $D'_{\triangle}(G_m)$, introduced in~\eqr{D'def}.  Let us recall that 
\[
D'_{\triangle}(G_m)\, =\, \sum_{i=1}^{m} \left[ 3 \, \frac{(N-m)_{2}(m-i)}{(N-i)_3}\, X'_{\owedge}(G_i)\, +\, \frac{(N-m)_3}{(N-i)_3}\, X'_{\triangle}(G_i)\right]\phantom{\Big|}
\]
uses the truncated random variables $X'_{\owedge}(G_i)\, =\, X_{\owedge}(G_i)1_{X_{\owedge}(G_i)\le K_{\owedge}}$ and $X'_{\triangle}(G_i)\, =\, X_{\triangle}(G_i)1_{X_{\triangle}(G_i)\le K_{\triangle}}$ where $K_{\owedge}= 2^8 M(b,t)/bt$ and $K_{\triangle}= 2^{16} M(b,t)/b$.

Unfortunately, the truncations cause the process to be a supermartingale rather than a martingale and we cannot apply the converse Freedman inequality, Lemma~\ref{lem:CF}.  This leaves us with a difficult decision, as we still want to have a bound on the size of increments.

We shall therefore``repair" the martingale property.  Let us define $D''_{\triangle}(G_m)=\sum_{i=1}^{m}\X''_i$ as the sum of the rebalanced increments
\[
\X''_i\, :=\, 3\frac{(N-m)_{2}(m-i)}{(N-i)_3}\, X''_{\owedge}(G_i)\, +\, \frac{(N-m)_3}{(N-i)_3}\, X''_{\triangle}(G_i)\, ,
\]
where $X''_{\owedge}(G_i)$ and $X''_{\triangle}(G_i)$, are defined by
\[
X''_{\owedge}(G_i)\, =\, X'_{\owedge}(G_i)\, +\, \Ex{X_{\owedge}(G_i)1_{X_{\owedge}(G_i)> K_{\owedge}}|G_{i-1}}
\]
and
\[ 
 X''_{\triangle}(G_i)\, =\, X'_{\triangle}(G_i)\, +\, \Ex{X_{\triangle}(G_i)1_{X_{\triangle}(G_i)> K_{\triangle}}|G_{i-1}} \, .
\]
Note that these random variables have been rebalanced so that $\Ex{\X''_i|G_{i-1}}=0$.  That is, $D''_{\triangle}(G_m)=\sum_{i=1}^{m}\X''_i$ is a martingale representation of $D''_{\triangle}(G_m)$.  Let us now state a lower bound for $D''_{\triangle}(G_m)$.

\begin{prop}\label{prop:D''} There is an absolute constant $c>0$ such that the following holds.  Let $n^{-1/2}(\log{n})^{1/2}\le t\le 1/2$, and suppose that $3\log{n}\, \le\, b\, \le\, \min\{t^{1/3}n\ell^{4/3}\, ,\, t^{3/2}n^{3/2}\ell^{3/2}\}$. 
Then
\[
\pr{D''_{\triangle}(G_m)\, \ge\, cb^{1/2}t^{3/2}n^{3/2}}\, \ge \, \exp(-b)\, .
\]
\end{prop}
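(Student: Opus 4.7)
The natural strategy is to apply the converse Freedman inequality (Lemma~\ref{lem:CF}) to the martingale $D''_{\triangle}(G_i) = \sum_{j \le i} \X''_j$, and then use the forward Freedman inequality (Lemma~\ref{lem:F}) to convert the resulting first-passage bound into a bound on the final value.

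I would begin by establishing that the rebalanced increments $\X''_j$ satisfy a uniform bound $|\X''_j| \le R$ with $R = O(b^{-1/2} t^{3/2} n^{3/2})$ (the scale of $K_{\triangle}$), together with a total conditional second-moment bound $V(m) := \sum_j \Ex{(\X''_j)^2 \mid \F_{j-1}} \le \beta := C_1 t^3 n^3$ holding except on an event of probability at most $\exp(-b)/4$, for some absolute constant $C_1$. The truncated parts $X'_{\owedge}, X'_{\triangle}$ are bounded by $K_{\owedge}, K_{\triangle}$ by construction; the rebalancing corrections are $\F_{j-1}$-measurable and, using the tail-moment inequality $\Ex{X \mathbf{1}_{X>K}\mid \F} \le \Ex{X^2 \mid \F}/K$ together with Lemmas~\ref{lem:varsch} and~\ref{lem:varstri}, they are dominated by $K$ with overwhelming probability; on the rare bad event I would simply stop the martingale and absorb the loss. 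The conditional-variance bound exploits that the rebalancing does not alter conditional variance (it is $\F_{j-1}$-measurable), so Lemma~\ref{lem:vars} applies directly; in the normal regime, where $b \le t^{1/3} n \ell^{4/3}$ and $b \le t^{3/2} n^{3/2} \ell^{3/2}$, both $b t^3 n^2$ and $b^2 t$ are of order $t^3 n^3$ up to logarithmic factors that only affect constants.

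Next, I would set $\alpha = c b^{1/2} t^{3/2} n^{3/2}$ for a small absolute constant $c$ to be fixed, and apply Lemma~\ref{lem:CF} to the inflated target $(1+k)\alpha$ with, say, $k = \tfrac12$ and a small absolute constant $\delta$. The hypothesis $\beta/\alpha \ge 9 R \delta^{-2}$ reduces to a constant inequality of the form $\delta^{-2} \lesssim 1/c$, and $\alpha^2/\beta \ge 16 \delta^{-2} \log(64 \delta^{-2})$ reduces to $c^2 b$ exceeding an absolute constant, which is ensured by $b \ge 3 \log n$ for $n$ larger than an absolute constant. The conclusion is that with probability at least $4 \exp(-b)$ the martingale first exceeds level $(1+k)\alpha$ at some time $i_*$ with $V(i_*) \le \beta$. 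To upgrade this to $D''_{\triangle}(G_m) \ge \alpha$, I would apply Lemma~\ref{lem:F} to the negative of the remainder martingale $\{D''_{\triangle}(G_i) - D''_{\triangle}(G_{i_*})\}_{i \ge i_*}$, whose quadratic variation is at most $\beta$ and whose increments are bounded by $R$: this bounds the probability that the remainder drops by more than $k\alpha$ by $\exp(-k^2 \alpha^2/(2\beta + 2Rk\alpha)) = \exp(-\Omega(k^2 c^2 b))$, which is at most $\tfrac12$ whenever $b$ exceeds an absolute constant. Combining the three pieces yields $\pr{D''_{\triangle}(G_m) \ge \alpha} \ge \exp(-b)$.

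The principal obstacle is the ``reach versus end'' issue: Lemma~\ref{lem:CF} controls only the first-passage probability, not the value of the martingale at time $m$. Resolving this by applying the converse at the inflated level $(1+k)\alpha$ and then using the forward Freedman inequality to control the drop of size $k\alpha$ requires a delicate balance between the constants $c$, $k$, $\delta$, and $C_1$: the converse direction favours small $c$ (so that $\alpha^2/\beta$ stays below $b$), while the forward direction favours large $c$ (so that $k^2 c^2 b$ exceeds an absolute constant). A secondary technical point is that the uniform bound $R$ must accommodate the rebalancing correction, which is not \emph{deterministically} small; this is handled by the tail-moment inequality together with the stopping-time argument mentioned above, at the cost of a negligible additive error.
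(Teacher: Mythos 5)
Your high-level plan — converse Freedman at an inflated level followed by forward Freedman to control the subsequent drop — is exactly the paper's, and your route to the increment bound $|\X''_i|\le R$ via the tail-moment inequality $\Ex{X1_{X>K}\mid\F}\le\Ex{X^2\mid\F}/K$ together with Lemmas~\ref{lem:varsch}--\ref{lem:varstri} is a clean and arguably simpler alternative to the paper's good/bad-edge analysis in Lemma~\ref{lem:DD''}; the observation that the rebalancing correction is $\F_{i-1}$-measurable, so a predictable stopping absorbs the rare bad event, is precisely what the paper encodes through $E^{+}_{i-1}$ and the modified increments $X^*_i$.

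The genuine gap is the absence of a \emph{lower} bound on the quadratic variation. You establish only $V(m)\le\beta=C_1t^3n^3$, but the converse Freedman inequality is non-trivial only when the predictable quadratic variation actually reaches the level $\beta$: read literally, Lemma~\ref{lem:CF} would apply to the identically-zero martingale (take $R=0$) and assert $\pr{T_\alpha\le\beta}>0$, which is false, so there is a hidden hypothesis. Concretely, nothing in your proposal rules out the degenerate scenario in which all codegrees in $G_{i-1}$ coincide, forcing $X''_{\triangle}(G_i)\equiv 0$ and making $D''_{\triangle}$ essentially constant, so that the level $\alpha$ is never reached. The paper supplies the missing anticoncentration input in Lemma~\ref{lem:LBvar}, which shows that for $m/2\le i\le m$ the conditional second moment $\Ex{(\X''_i)^2\mid G_{i-1}}$ is at least $c_1t^2n$ except on an event of probability $\exp(-\Omega(n))$, whence $V(m)=\Omega(t^3n^3)$ with the necessary probability; the $\beta$ fed into Lemma~\ref{lem:CF} is then taken to be (a constant times) this \emph{lower} bound, not your upper bound. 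Without verifying that the codegree fluctuations genuinely generate variance of the right order, the converse Freedman step does not go through.
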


We also prove the following lemma which shows that $D''_{\triangle}(G_m)$ is likely to be very close to $D_{\triangle}(G_m)$.

\begin{lem}\label{lem:DD''}  There is an absolute constant $c_0>0$ such that for all $c<c_0$ the following holds.  Let $n^{-1/2}(\log{n})^{1/2}\le t\le 1/2$, and suppose that $3\log{n}\, \le\, b\, \le\,c^{3}\min\{t^{1/3}n\ell^{4/3}\, ,\, t^{3/2}n^{3/2}\ell^{3/2}\}$.  
Then
\[
\pr{D''_{\triangle}(G_m)-D_{\triangle}(G_m)\, \ge\, \frac{cb^{1/2}t^{3/2}n^{3/2}}{2}}\, \le \, \exp(-2b)\, ,
\]
for all sufficiently large $n$.
\end{lem}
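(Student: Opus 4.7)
I would begin by writing $D''_{\triangle}(G_m)-D_{\triangle}(G_m)$ in a usable form. Since $X_{\owedge}(G_i) = X'_{\owedge}(G_i) + Z_{\owedge}(G_i)$, with $Z_{\owedge}(G_i) := X_{\owedge}(G_i)1_{X_{\owedge}(G_i)>K_{\owedge}}$, the definition of $X''_{\owedge}(G_i)$ gives
\[
X''_{\owedge}(G_i) - X_{\owedge}(G_i) \,=\, \Ex{Z_{\owedge}(G_i)\mid G_{i-1}}\, -\, Z_{\owedge}(G_i),
\]
and likewise for triangles. Summing with the appropriate martingale coefficients yields
\[
D''_{\triangle}(G_m) - D_{\triangle}(G_m)\, =\, E^*_{\triangle}(G_m) - N^*_{\triangle}(G_m)\, ,
\]
where $E^*_{\triangle}(G_m)$ denotes the same sum as $N^*_{\triangle}(G_m)$ but with each $Z$ replaced by $\Ex{Z\mid G_{i-1}}$. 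Since $Z_{\owedge},Z_{\triangle}\ge 0$, we have $N^*_{\triangle}(G_m)\ge 0$, so $D''_{\triangle}(G_m)-D_{\triangle}(G_m)\le E^*_{\triangle}(G_m)$. It therefore suffices to show that $\pr{E^*_{\triangle}(G_m)\ge cb^{1/2}t^{3/2}n^{3/2}/2}\le\exp(-2b)$.

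Next, I would control $\Ex{Z_{\owedge}(G_i)\mid G_{i-1}}$ and $\Ex{Z_{\triangle}(G_i)\mid G_{i-1}}$ in terms of degrees and codegrees of $G_{i-1}$. Since $X_{\owedge}(G_i)\le A_{\owedge}(G_i)=2(d_u(G_{i-1})+d_w(G_{i-1}))$ with $e_i=uw$, the event $\{X_{\owedge}(G_i)>K_{\owedge}\}$ forces $\max\{d_u,d_w\}>K_{\owedge}/4$; averaging over the uniform choice of $e_i$ among the non-edges of $G_{i-1}$ gives
\[
\Ex{Z_{\owedge}(G_i)\mid G_{i-1}}\, \le\, \frac{16}{n}\sum_{v\, :\, d_v(G_{i-1})\, >\, K_{\owedge}/4} d_v(G_{i-1}),
\]
and analogously $\Ex{Z_{\triangle}(G_i)\mid G_{i-1}}$ is bounded by a constant multiple of $n^{-2}\sum_{uw\, :\, d_{uw}(G_{i-1})> K_{\triangle}/6} d_{uw}(G_{i-1})$, using $X_{\triangle}(G_i)\le 6d_{e_i}(G_{i-1})$.

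The third step combines these with results from Sections~\ref{sec:degs} and~\ref{sec:codegs}. Using the trivial inequality $d_v\le 4d_v^2/K_{\owedge}$ (respectively $d_{uw}\le 6d_{uw}^2/K_{\triangle}$) on the relevant sums, exploiting the monotonicity of degrees and codegrees to pass from $G_{i-1}$ to $G_m$, and summing over $i$, I obtain
\[
3t\sum_{i=1}^m \Ex{Z_{\owedge}(G_i)\mid G_{i-1}}\, \le\, C_1\, \frac{t^2n\, \kappa^+(b,t)}{K_{\owedge}}
\quad\text{and}\quad
\sum_{i=1}^m \Ex{Z_{\triangle}(G_i)\mid G_{i-1}}\, \le\, C_1\, \frac{t\max\{bt^2n^2,b^2\}}{K_{\triangle}},
\]
via Proposition~\ref{prop:sumsquare} and Corollary~\ref{cor:codegs} applied at level $2b$ (failing with total probability at most $\exp(-2b)$). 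Substituting the normal-regime values $M(b,t)=b^{1/2}t^{3/2}n^{3/2}$, $K_{\owedge}=2^8M(b,t)/(bt)$ and $K_{\triangle}=2^{16}M(b,t)/b$, these bounds simplify to constant multiples of $b^{5/2}t^{3/2}/(n^{1/2}\ell_b^2)$ and $b^{3/2}t^{3/2}n^{1/2}$ respectively, and the hypothesis $b\le c^3\min\{t^{1/3}n\ell^{4/3},\,t^{3/2}n^{3/2}\ell^{3/2}\}$ is exactly what is needed to make each at most $cb^{1/2}t^{3/2}n^{3/2}/4$, giving $E^*_{\triangle}(G_m)\le cb^{1/2}t^{3/2}n^{3/2}/2$.

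The main obstacle is calibrating the constants: verifying that the $c^3$ prefactor in the hypothesis translates through the square roots in $K_{\owedge}$ and $K_{\triangle}$, and through the explicit forms of $\kappa^+(b,t)$ and $\max\{bt^2n^2,b^2\}$ in the relevant ranges, into exactly a factor $c$ in the final bound. A secondary difficulty is the edge range $b<n$ (and $b<32tn$), where Corollary~\ref{cor:codegs} (respectively Proposition~\ref{prop:sumsquare}) does not directly apply; there one instead uses Lemma~\ref{lem:largedegs} to conclude that $\Delta(G_m)\le\min\{K_{\owedge}/4,\,K_{\triangle}/6\}$ with probability at least $1-\exp(-2b)$, which forces $E^*_{\triangle}(G_m)$ to vanish identically on the good event.
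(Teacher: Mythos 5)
Your decomposition $D''_{\triangle}(G_m)-D_{\triangle}(G_m)\le E^*_{\triangle}(G_m)=\sum_i\big[3\text{coeff}_1\Ex{Z_{\owedge}(G_i)\mid G_{i-1}}+\text{coeff}_2\Ex{Z_{\triangle}(G_i)\mid G_{i-1}}\big]$ is exactly the paper's starting point (their inequality~\eqref{eq:sums}), and your treatment of the cherry part is sound: $\Ex{Z_{\owedge}(G_i)\mid G_{i-1}}\le(16/n)\sum_{v}d_v\mathbbm{1}_{d_v>K_{\owedge}/4}$, then Proposition~\ref{prop:sumsquare} (or for very small $b$ the maximum-degree fallback via Lemma~\ref{lem:largedegs}) gives the required factor of $c$. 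However, the triangle part has a genuine gap. You invoke Corollary~\ref{cor:codegs} ``at level $2b$'', but that corollary requires $b\ge n$, and in the hypotheses of this lemma one always has $b\le c^3\min\{t^{1/3}n\ell^{4/3},\,t^{3/2}n^{3/2}\ell^{3/2}\}\le 2c^3 n<n$ for $c<c_0$ (since $\max_t t^{1/3}\ell^{4/3}=(4/e)^{4/3}<2$). So Corollary~\ref{cor:codegs} is \emph{never} applicable at level $2b$ here. Applying it at level $n$ instead gives $\sum_{uw}D_{uw}(G_m)^2\lesssim t^2n^3$, which feeds through $d_{uw}\le 6d_{uw}^2/K_{\triangle}$, $d_{uw}\le 2D_{uw}$, and $m\le tn^2$ to produce a bound of order $t^3n^3/K_{\triangle}=b^{1/2}t^{3/2}n^{3/2}=M(b,t)$ with an absolute constant in front and no factor of $c$; this fails for small $c$.

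Your fallback for the ``edge range'' does not rescue this. You claim $\Delta(G_m)\le K_{\triangle}/6$ holds with probability $1-\exp(-2b)$, which would force every $Z_{\triangle}(G_i)$ to vanish. But $K_{\triangle}/6\asymp b^{-1/2}t^{3/2}n^{3/2}$, whereas Lemma~\ref{lem:largedegs} at level $2b$ only gives $\Delta(G_m)\le 4b/\ell_{2b}$; comparing, one needs $b\lesssim\ell_{2b}^{2/3}tn$, which fails once $b$ exceeds an absolute constant multiple of $tn$. For $t$ near $n^{-1/2}(\log n)^{1/2}$ the normal regime contains $b$ well above this threshold (and still well below $n/2$), so there is a whole range $b\in[\Theta(tn),n/2]$ where neither Corollary~\ref{cor:codegs} nor the maximum-degree fallback applies. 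The missing idea is that one must bound the \emph{restricted} sum $\sum_{uw:\,d_{uw}>K_{\triangle}/6}d_{uw}^2$ directly, not via the unrestricted $\sum_{uw}D_{uw}^2$: the unrestricted sum is dominated by typical codegree fluctuations of order $tn^{1/2}$, which are irrelevant to the truncation. The paper does this by splitting the large-codegree pairs into ``good'' ones (where the codegree is explained by a large degree, handled via Proposition~\ref{prop:sumsquare} as in your cherry argument) and ``bad'' ones (handled via the pointwise counts $h_k(G_i)$ of Proposition~\ref{prop:codegs}, whose $h_k$ statement only needs $b\ge 3\log n$). You need some version of this refinement to close the gap.
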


It is clear that the lower bound for the normal regime,~\eqr{lbN}, follows from Proposition~\ref{prop:D''} and Lemma~\ref{lem:DD''}.  We now turn to the proofs of these two results.  We begin with Lemma~\ref{lem:DD''}.

We continue to use the notation $k_0$ for the maximum integer $k$ such that $2^kt^2n\le K_{\triangle}/6$.  It may also be checked that $2^{k_0-8} tn\le K_{\owedge}$.  

\begin{proof} First we recall that $D'_{\triangle}(G_m)\le D_{\triangle}(G_m)$ deterministically (as only positive increments are truncated), and that 
\begin{align}
&D''_{\triangle}(G_m) \, -\, D'_{\triangle}(G_m) \, \nonumber \le\\
&\qquad \quad  \sum_{i=1}^{m}\,  3t \Ex{X_{\owedge}(G_i)1_{X_{\owedge}(G_i)> K_{\owedge}}\mid G_{i-1}}\, +\, \Ex{X_{\triangle}(G_i)1_{X_{\triangle}(G_i)> K_{\triangle}}\mid G_{i-1}} \, .\label{eq:sums}
\end{align}
The proof will be based on proving that, if $C$ is a sufficiently large constant, then each of the events
\eq{wedgepp}
\exists i\le m \quad \Ex{X_{\owedge}(G_i)1_{X_{\owedge}(G_i)> K_{\owedge}}|G_{i-1}}\, >\, \frac{Cb}{n}
\eqe
and 
\eq{tripp}
\exists i\le m\quad \Ex{X_{\triangle}(G_i)1_{X_{\triangle}(G_i)> K_{\triangle}}|G_{i-1}}\, >\,  \frac{Cbt}{n}\, +\, \frac{Cb^{5/2}}{ t^{3/2}n^{7/2}\ell^2}\eqe
have probability at most $2\exp(-3b)$.  Let us show that this is sufficient.  Suppose that the events of~\eqr{wedgepp} and~\eqr{tripp} do not occur.  Then, since the summations are over $m\le tn^2$ terms, the right hand side of~\eqr{sums} is at most
\[
3Cbt^2n\, +\, Cbt^2n\, +\, \frac{Cb^{5/2}t^{1/2}}{n^{3/2}\ell^2}\, .
\]
We now use that $bt\le c^3 t^{4/3}n\ell^{4/3}\le c^3n$, and $b^2\le c^6t^3n^3\ell^3\le c^6tn^3\ell^2$, and we see that this is at most
\[
\big(4Cc^{3/2}\, +\, Cc^6 \big)b^{1/2}t^{3/2}n^{3/2}\, \le\, cb^{1/2}t^{3/2}n^{3/2}/2
\]
provided $c$ is sufficiently small, as required.


We begin with~\eqr{wedgepp}.  We remark that the expected value is $0$ here for all $i\le m$ is $\Delta(G_m)\le 64tn$, and this happens except with probability at most $\exp(-96tn)$, see~\eqr{D64}. Suppose now that $b\ge 32tn$. The argument is in some ways similar to that of Section~\ref{sec:Zwedge}.  Again we use that $X_{\owedge}(G_i)\, \le\, A_{\owedge}(G_i)\, =\, 4 \max_{v \in e_i} d_v(G_{i-1})$, and that the condition $X_{\owedge}(G_i)>K_{\owedge}$ implies that this maximum is at least $K_{\owedge}/4> 2^{k_0-8}tn$.  We now just use that each vertex has probability at most $4/n$ to be selected as part of the next edge, and so
\begin{align*}
\Ex{X_{\owedge}(G_i)1_{X_{\owedge}(G_i)> K_{\owedge}}\, \mid\, G_{i-1}}\, & \le\, \frac{16}{n}\sum_{u}d_u(G_{i-1})1_{d_u(G_{i-1})>2^{k_0-8}tn}\\
&\le\, \frac{16}{n}\sum_{j=k_0-8}^{\log_{2}(1/t)} 2^{j+1}tn\, |V_j|\, .
\end{align*}
Now, by Lemma~\ref{lem:largedegs}, we have, except with probability at most $\exp(-3b)$ that $|V_j|\, \le\, b/tnj2^{j-8}$ for all $j\ge 5$.  In this case we have
\begin{align}
\Ex{X_{\owedge}(G_i)1_{X_{\owedge}(G_i)> K_{\owedge}}\, \mid\, G_{i-1}}\, & \le\, \frac{2^{13}}{n}\sum_{j=k_0-8}^{\log_{2}(1/t)}\frac{b}{j}\nonumber\\ 
& \le\, \frac{2^{15}b}{n}\phantom{\Big|}\label{eq:215bn}
\end{align}
for all $i\le m$, where the last inequality used from~\eqr{kell} that $k_0-8\ge \ell/2$, and that there are at most $2\ell$ terms in the summation.  This completes the bound on the probability of~\eqr{wedgepp}

Now we turn to~\eqr{tripp}.  As in Section~\ref{sec:ZK3}, we use that $X_{\triangle}(G_i)\, \le \, 6 d_{e_i}(G_{i-1})$, and split the terms depending on whether each edge is good or bad.  That is, we shall use that
\begin{align*}
&\Ex{X_{\triangle}(G_i)1_{X_{\triangle}(G_i)> K_{\triangle}}|G_{i-1}}\\ &\qquad \le\, \Ex{X_{\triangle}(G_i)1_{e_i\, \text{good}}1_{X_{\triangle}(G_i)> K_{\triangle}}|G_{i-1}}\, +\, \Ex{X_{\triangle}(G_i)1_{e_i\, \text{bad}}1_{X_{\triangle}(G_i)> K_{\triangle}}|G_{i-1}}\, .
\end{align*}
For good edges, the argument is essentially identical to that given above.  We first recall, from the proof of Lemma~\ref{lem:good}, see~\eqr{etov}, that for good $e_i$ we have $d_{e_i}(G_{i-1}) \le 128t \sum_{v \in e_i}d_v(G_m)1_{d_v(G_m) > K_{\triangle}/2^{10}t}$.  Using that each vertex has probability at most $4/n$ to be included in the next edge we obtain
\begin{align*}
 \Ex{X_{\triangle}(G_i)1_{e_i \text{ good}}1_{X_{\triangle}(G_i)> K_{\triangle}}|G_{i-1}}\, &\le\, \frac{2^{9}t}{n}\sum_{v}d_v(G_m)1_{d_v(G_m) > K_{\triangle}/2^{10}t} \\
& \le\,  \frac{2^{9}t}{n}\sum_{j\ge k_0-8}2^{j+1}tn|V_j|\, .
\end{align*}
We may now argue exactly as above in~\eqr{215bn}, to obtain
\[
 \Ex{X_{\triangle}(G_i)1_{e_i \text{ good}}1_{X_{\triangle}(G_i)> K_{\triangle}}|G_{i-1}}\, \le\,\frac{2^{20}bt}{n}\, .
 \]

For bad edges, we recall that $X_{\triangle}(G_i)\le 6d_{e_i}(G_{i-1})$.  We use an argument similar to~\eqr{k1was}, but with $k_0-3$ in place of $k_1$. We obtain that, except with probability at most $\exp(-3b)$, we have
\begin{align*}
\sum_{e\in E(K_n)\setminus E(G_{i-1})} d_{e}(G_{i-1}) 1_{\{e \text{ is bad}, d_{e}(G_{i-1})>K_{\triangle}/6\}}
 \, &\le\, \sum_{k=k_0-3}^{2\log_2(1/t)} 2^{k+1}t^2n  \frac{C_1 b^2}{k^2 2^{2k}t^4n^2}\phantom{\Bigg|} \\ 
&\le\, \sum_{k=k_0-3}^{2\log_2(1/t)}   \frac{2C_1 b^2}{k^2 2^{k}t^2n}\phantom{\Bigg|}\\
&\le\, \frac{4C_1 b^2}{(k_0-3)^2 2^{k_0-3}t^2n}\\
&\le\, \frac{32C_1 b ^2}{(k_0-3)^2 K_{\triangle}}\\
&\le\, \frac{2^{23}C_1 b^3}{\ell^2 M(b,t)}\\ 
&=\, \frac{2^{23}C_1 b^{3/2}}{t^{3/2}n^{3/2}\ell^2}\, ,
\end{align*}
where we used that $k_0-3\ge \ell/2$, see~\eqr{kell}, and that $M(b,t)=b^{1/2}t^{3/2}n^{3/2}$ in this regime.  As each pair is included as the next edge with probability at most $4/n^2$ we obtain
\[
\Ex{X_{\triangle}(G_i)1_{\{e_i \text{ is bad}\}}1_{X_{\triangle}(G_i)> K_{\triangle}}|G_{i-1}}\, \le\,\frac{C b^{3/2}}{t^{3/2}n^{7/2}\ell^2}\, , 
\]
for the constant $C=2^{23}C_1$.  This completes the bound on the probability of~\eqr{tripp}, and so completes the proof.\end{proof}

We now turn to the proof of Proposition~\ref{prop:D''}.  As discussed above, the idea is to use Freedman's lower bound inequality.  In order to do so we require some lower bounds on the variance of the process.

In Lemma~\ref{lem:vars}, with $b=n$, we saw that except with probability at most $e^{-n}$ we have an upper bound of $Ct^2n$ on the variance of the $i$th increment.  The following statement gives a lower bound of the same order.  This is not surprising: as codegrees typically vary by order $tn^{1/2}$, one would expect the increments to typically be of this order.

\begin{restatable}{lem}{LBvar}\label{lem:LBvar}
There exists an absolute constant $c_1>0$ such that the following holds.  Let $n^{-1/2}/c_1 \le t\le 1/2$.  For all $m/2\le i\le m$, we have
\[
\pr{\Ex{(\X''_i)^2|G_{i-1}}\, \le \, c_1 t^2n}\, \le\, \exp(-c_1 n)\,
\]
for all sufficiently large $n$.
\end{restatable}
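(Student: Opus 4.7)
The plan is to bound $\Ex{(\X''_i)^2 \mid G_{i-1}} = \Var(\X''_i \mid G_{i-1})$ from below using concentration of codegree statistics in $G_{i-1}$. The first step is to note that $X''_F = X'_F + \Ex{X_F\, 1_{X_F > K_F} \mid G_{i-1}}$ for both $F \in \{\owedge, \triangle\}$, and since the added correction is a constant given $G_{i-1}$, we have $\Var(\X''_i \mid G_{i-1}) = \Var(\alpha_i X'_{\owedge}(G_i) + \beta_i X'_{\triangle}(G_i) \mid G_{i-1})$, where $\alpha_i, \beta_i$ denote the coefficients appearing in the martingale representation. For $m/2 \le i \le m$ and $t \le 1/2$ one easily verifies that $\alpha_i = O(t)$ and $\beta_i \ge c_0$ for an absolute constant $c_0 > 0$.

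Next I would apply the inequality $\Var(aX+bY) \ge \tfrac{1}{2}b^2 \Var(Y) - a^2 \Var(X)$ (a consequence of AM--GM applied to the covariance term) to obtain
\[
\Var(\X''_i \mid G_{i-1})\, \ge\, \tfrac{c_0^2}{2}\, \Var(X'_{\triangle}(G_i) \mid G_{i-1})\, -\, \alpha_i^2\, \Var(X'_{\owedge}(G_i) \mid G_{i-1}).
\]
The subtracted term is at most $\alpha_i^2 \Var(X_{\owedge}(G_i) \mid G_{i-1})$ (truncation can only decrease variance), and by Lemma~\ref{lem:varsch} applied with $b = n$ this is $O(t^3 n)$ except on an event of probability at most $e^{-n}$. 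So it suffices to show $\Var(X'_{\triangle}(G_i) \mid G_{i-1}) \ge C' t^2 n$ for a sufficiently large absolute constant $C'$, with the required probability.

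For the variance of $X'_{\triangle}$, I would first show that truncation has a negligible effect on the variance: the pairs with codegree exceeding $K_{\triangle}/6$ are rare in typical $G_{i-1}$, by Proposition~\ref{prop:codegs}, so the conditional probability that the random edge $e_i$ triggers truncation is small, and a standard perturbation estimate then gives $\Var(X'_{\triangle}(G_i) \mid G_{i-1}) \ge \Var(X_{\triangle}(G_i) \mid G_{i-1}) - o(t^2 n)$. Since $X_{\triangle}(G_i) = 6(d_{uw}(G_{i-1}) - \bar d)$ for $uw = e_i$, we have $\Var(X_{\triangle}(G_i) \mid G_{i-1}) = 36\, \Var_{uw}\!\left(d_{uw}(G_{i-1})\right)$, where the variance is taken over $uw$ chosen uniformly from the non-edges of $G_{i-1}$. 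Expanding as $\overline{d_{uw}^2} - \overline{d_{uw}}^2$, each of these two moments can be written as a combination of explicit graph functionals (involving cherry counts, triangle counts, and $4$-vertex walk configurations), whose expectations under the $G(n,i-1)$ distribution yield $\Ex{\Var_{uw}(d_{uw})} = (1+o(1))\, s^2 (1-s^2) n = \Theta(t^2 n)$.

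The main obstacle is promoting this expected value into a high-probability lower bound at the $e^{-c_1 n}$ scale. For each of the graph functionals appearing in the expansion I would apply the Lipschitz-type deviation inequality Proposition~\ref{cor:F}, choosing the weight function $\psi$ so as to reflect the local sensitivity of that functional under a single edge swap. The assumption $t \ge n^{-1/2}/c_1$ ensures that the resulting $\|\psi\|^2$ and $\psi_{\ourmax}$ are small enough for Proposition~\ref{cor:F} to deliver concentration at the required $e^{-c_1 n}$ scale. A union bound over these finitely many functionals then gives $\Var_{uw}(d_{uw}) \ge c\, t^2 n$ with probability at least $1 - e^{-c_1 n}$, which combined with the reduction of the preceding paragraphs completes the proof.
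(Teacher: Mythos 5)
Your initial reduction is correct and closely parallels the paper's: both observe that the truncation-correction terms are $G_{i-1}$-measurable so do not affect the conditional variance, that the coefficient of $X''_{\owedge}$ is $O(t)$ and that of $X''_{\triangle}$ is $\Omega(1)$ for $m/2\le i\le m$, and both reduce the problem to controlling $\Ex{X''_{\owedge}(G_i)^2\mid G_{i-1}}$ from above and $\Ex{X''_{\triangle}(G_i)^2\mid G_{i-1}}$ from below. You also correctly identify, via $X_{\triangle}(G_i)=6D_{uw}(G_{i-1})-\Ex{6D_{uw}(G_{i-1})\mid G_{i-1}}$, that the latter is (up to a constant) the variance of the codegree $d_{uw}(G_{i-1})$ over a uniformly random non-edge $uw$; the paper makes essentially the same observation (phrased as $\Omega(n^{-4})\sum_{e',e^*}(D_{e'}-D_{e^*})^2$, using the iid-pair identity). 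A small arithmetic slip: Lemma~\ref{lem:varsch} with $b=n$ gives $\Ex{X_{\owedge}(G_i)^2\mid G_{i-1}}\le C\kappa(n,t)/n = Cn/\ell^2$, so the subtracted term is $O(t^2n/\ell^2)$, not $O(t^3n)$; this is still $o(t^2n)$, so the reduction survives.

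The genuine gap lies in the final concentration step, which is where the real difficulty of the lemma sits and where your route diverges from the paper's. You propose to expand $\Var_{uw}(d_{uw})$ into graph functionals (moments of the codegree sequence) and concentrate each one via Proposition~\ref{cor:F}. This cannot deliver a bound at scale $e^{-c_1 n}$, for two reasons. First, there is massive cancellation: both $\overline{d_{uw}^2}$ and $\overline{d_{uw}}^{\,2}$ are of order $t^4 n^2$ while their difference is only $\Theta(t^2 n)$, so you would need each moment concentrated to within a factor $\Theta(1/(t^2 n))$ of its mean, which is far beyond what a McDiarmid-type bound can give for $t$ constant. Second, even if one treats $\Var_{uw}(d_{uw})$ directly as a single function, its worst-case one-edge-swap sensitivity is $\Theta(1)$: swapping $e=xy$ for $e'=x'y'$ can change $O(d_x+d_y+d_{x'}+d_{y'})$ codegrees by $\pm 1$, each $d_{uw}^2$ by up to $O(n)$, and $|E_0|\approx n^2$, so $\psi(e)=\Theta(1)$ deterministically. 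Plugging this into Proposition~\ref{cor:F} with target deviation $a=\Theta(t^2 n)$ gives $\|\psi\|^2 = O(n^2)$, hence a bound of order $\exp(-a^2/(24t\|\psi\|^2))=\exp(-\Theta(t^3))$, which is $\Theta(1)$ for $t$ constant and far from $e^{-c_1 n}$. The $\psi$-Lipschitz inequality simply has the wrong shape for this lower-bound problem: it is designed to show a quantity is not too large, not that it is bounded away from zero when the worst-case sensitivity is comparable to the target.

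The paper's proof sidesteps this entirely. It transfers to $G(n,p)$ with $p=s=(i-1)/N$ (absorbing the resulting polynomial factor), fixes a vertex $u$ and conditions on $\Gamma(u)=S$; then the codegrees $d_{uw}=|\Gamma(w)\cap S|$ for $w\notin S$ are \emph{independent} $\mathrm{Bin}(|S|,p)$ random variables. For $|S|\in[0.9pn,1.1pn]$, each $d_{uw}$ lands in the window $[p|S|+pn^{1/2},\,p|S|+2pn^{1/2}]$ with probability $\Omega(1)$, so by Chernoff, a positive fraction of the $w$'s do so except with probability $e^{-\Omega(n)}$. A union over the $n$ vertices $u$ and the degree-concentration estimate from Proposition~\ref{prop:sumsquare} then yield $\Omega(n^2)$ non-edges with codegree above the median by $\Omega(sn^{1/2})$, and a symmetric argument gives $\Omega(n^2)$ below, producing $\Omega(n^4)$ pairs $(e',e^*)$ with $|D_{e'}-D_{e^*}|\ge 2sn^{1/2}$. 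This is what gives the $e^{-\Omega(n)}$ bound; it relies on vertex-by-vertex independence in $G(n,p)$ rather than on a global Lipschitz inequality, and this ingredient is not present in your proposal.
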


As the proof of the lemma is long but relatively elementary it appear in Appendix~\ref{sec:LBvar}.  We now complete the proof of Propositon~\ref{prop:D''}.

\begin{proof}[Proof of Proposition~\ref{prop:D''}]  Let us fix the choice of $t$ in the range $n^{1/2}(\log{n})^{1/2}\le t\le 1/2$, and fix $3\log{n}\, \le\, b\, \le\, \min\{t^{1/3}n\ell^{4/3}\, ,\, t^{3/2}n^{3/2}\ell^{3/2}\}$.
In fact, we may assume $b$ is at most a small constant times $\min\{t^{1/3}n\ell^{4/3}\, ,\, t^{3/2}n^{3/2}\ell^{3/2}\}$, otherwise simply use the lower bound given by that value of $b$ and adjust the constant.  In particular, we assume $b\le c^3\min\{t^{1/3}n\ell^{4/3}\, ,\, t^{3/2}n^{3/2}\ell^{3/2}\}\le c^2 n$, where $c\le 2^{-36}$ is the minimum of constants of Lemmas~\ref{lem:DD''} and~\ref{lem:LBvar}.   We shall prove a lower bound on the probability
\[
\pr{D''_{\triangle}(G_m)\, \ge\, cb^{1/2}t^{3/2}n^{3/2}}
\]
using the converse Freedman inequality (Lemma~\ref{lem:CF}).  Let us set $\alpha'=cb^{1/2}t^{3/2}n^{3/2}$ throughout the proof.

There are two small issues.  One is that the Lemma~\ref{lem:CF} gives a lower bound on the probability that the martingale exceed a certain value, $\alpha'$ say, before a certain time $m$, rather than \emph{at} step $m$.  This is easy to deal with by instead proving there is a good probability we reach $2\alpha'$, and then use our upper bound result to control the probability the process falls thereafter.  The same issue was also considered in~\cite{GGS}.

The other issue is that we cannot bound $\|\X''_i\|_{\infty}$ as well as we would like, as is required to use Lemma~\ref{lem:CF}.  So instead we consider a variation of the process which is very likely to be equal, but may behave differently on a rare event.  Let $E_i$ be the event that one of the events~\eqr{wedgepp} or~\eqr{tripp} holds for a particular value of $i$.  Let $E^+_{i-1}$ be the event that there exists a pair $e\in E(K_n)\setminus E(G_{i-1})$ such that if $e_i=e$ then $E_i$ will occur.  And let $E^{+}=\bigcup_{i=1}^{m}E^{+}_{i-1}$.  As each pair $e\in E(K_n)\setminus E(G_{i-1})$ has probability at least $n^{-2}$ we have that 
\eq{Eplus}
\pr{E^{+}}\, \le\, \sum_{i=1}^{m}\pr{E^{+}_{i-1}}\, \le\, n^2\sum_{i=1}^{m}\pr{E_{i}}\, \le\, 4n^2\exp(-3b)\, ,
\eqe
where the final inequality relies on the bounds of the probability of the events~\eqr{wedgepp} and~\eqr{tripp} given in the proof of Lemma~\ref{lem:DD''}.  Let us define $X^*_i$ by
\[
X^*_i\, :=\, \X''_i 1_{(E^{+}_{i-1})^c}\, .
\]
As the event is $G_{i-1}$-measurable we have that $X^*_i$ still behaves as a martingale increment in the sense that $\Ex{X^*_i\mid G_{i-1}}=0$.  Let $D^{*}_{\triangle}(G_m)=\sum_{i=1}^{m}X^*_i$.  We observe that
\[
\pr{D''_{\triangle}(G_m)\neq D^{*}_{\triangle}(G_m)}\,\le \, 4n^2\exp(-3b)\, \le\, \exp(-2b)\, .
\]
Also, $X^*_i$ is only non-zero if $E^{+}_{i-1}$ does not occur in which case the expected values in~\eqr{wedgepp} and~\eqr{tripp} are at most $C_1b/n$ and $C_1bt/n\, +\, C_1b^{5/2}/t^{3/2}n^{7/2}\ell^2$ respectively, for some constant $C_1$.  On this event we have 
\[
|\X''_i-\X'_i|\, \le\, \frac{C_2bt}{n}\, +\, \frac{C_2 b^{5/2}}{t^{3/2}n^{7/2}\ell^2}\, \le\,  \frac{C_3\NORMAL(b,t)}{b}
\]
for constants $C_2,C_3$, where the last bound follows as $b^3\le tn^3\ell^4\le t^3n^5\ell^2\le tn^5$.  As 
\[
|\X'_i|\le 3t|X'_{\owedge}(G_i)|+|X'_{\triangle}(G_i)|\, \le\, \frac{C_4 M(b,t)}{b} \, =\, \frac{C_4\NORMAL(b,t)}{b}
\]
in this regime, we therefore have an absolute bound
\begin{align*}
|X^*_i|\, &\le\, |\X''_i| 1_{(E^{+}_{i-1})^c}\\
&\le\, |\X'_i|\, +\, |\X''_i-\X'_i|1_{(E^{+}_{i-1})^c}\\
&\le\, \frac{C\NORMAL(b,t)}{b}\, ,
\end{align*}
for some constant $C$.



We also have, by Lemma~\ref{lem:LBvar} that, except with probability at most $n^2\exp(-cn)$, all the increments satisfy
\[
\Ex{(\X''_i)^2\, |\, G_{i-1}}\, \ge\, c t^2n\, .
\]
It follows that, except with probability at most $n^2\exp(-c n)\le \exp(-2b)$, we have
\[
\sum_{i=1}^{m}\Ex{(\X''_i)^2\, |\, G_{i-1}}\, \ge\, c t^3n^3/10\, .
\]
As there is probability at most $\exp(-2b)$ that $X^*_i\neq \X''_i$ occurs for any $i$, we have that
\[
V(m)\, :=\,\sum_{i=1}^{m}\Ex{(X^{*}_i)^2\, |\, G_{i-1}}\, \ge\, c t^3n^3/10\, ,
\]
except with probability at most $\exp(-b)$.

%

We shall apply Lemma~\ref{lem:CF} with $\alpha=2\alpha'=2cb^{1/2}t^{3/2}n^{3/2}$, $\beta=c t^3n^3/10$ and $R=2\NORMAL(b,t)/b=b^{-1/2}t^{3/2}n^{3/2}$.  We may take $\delta=1$.  We obtain
\begin{align*}
\pr{\sum_{i=1}^{m'}X^*_i\ge \alpha\, \text{for some }m'\le m}\, & \ge\, \frac{1}{2} \exp\left(\frac{-3\alpha^2}{\beta}\right)\, -\, \exp(-b)\\ 
& \ge \, \exp(-120cb)\, -\, \exp(-b)\\
& \ge\, \exp(-b/2)\, -\, \exp(-b)\\
&\ge\, 4\exp(-b)\, .
\end{align*}
Let $\tau$ be the hitting time associated with the event $\sum_{i=1}^{m'}X^{*}_i\ge \alpha$, i.e., the least $m'$ such that this occurs.  We now prove that 
\[
\pr{D^{*}_{\triangle} (G_m)\ge \alpha'\, \mid\, \tau\le m}\, \ge\, \frac{1}{2}\, .
\]
By standard arguments, it suffices to prove the conditional probability is at least $1/2$ for any fixed value $\tau=m'<m$.  Now, on this event, it is actually very likely that $D^{*}_{\triangle}(G_m) =\sum_{i=1}^{m} X^*_i\ge \alpha'=\alpha/2$.  The conditional probability that this fails is at most
\[
\pr{\left|\sum_{i=m'+1}^{m} X^{*}_i\right|\, \ge\, \alpha'\, \mid\, \tau=m'}
\]
which may easily seen to be at most $\exp(-cb)\le 1/2$ by arguing as in Section~\ref{sec:UBtri}.  This shows that $D^{*}_{\triangle} (G_m)\ge \alpha$ with probability at least $2\exp(-b)$.  Finally, as they differ with probability at most $\exp(-2b)\le \exp(-b)$ we have that $D''_{\triangle}(G_m)\ge \alpha$ with probability at least $\exp(-b)$.
\end{proof}

\subsection{Lower bounds in the remaining regimes}\label{sec:otherLBs}

We recall that we must prove in these regimes that, for some constant $c>0$, we have
\[
N_{\triangle}(G_m)\, >\, \Ex{N_{\triangle}(G_m)}\, +\, cM(b,t)
\]
with probability at most $\exp(-b)$.  In these regimes, we always have $b\ge 6\log{n}$.

In each of the other three regimes we give a graph $G_{*}$ containing at most $b/10\ell$ edges and such that
\eq{condex}
\Ex{N_{\triangle}(G_{m-b/10\ell}\cup G_*)}\, >\, \Ex{N_{\triangle}(G_m)}\, +\, 2cM(b,t)\, .
\eqe
Note that this implies that
\[
\Ex{N_{\triangle}(G_{m})|G_m\supseteq G_*}\, >\, \Ex{N_{\triangle}(G_m)}\, +\, 2cM(b,t)\, ,
\]
as one may couple $G_m$ (conditioned on $G_m\supseteq G_*$) to contain $G_{m-b/10\ell}\cup G_*$.
We shall see that this is sufficient to prove the lower bound.  First note that 
\[
\pr{G_*\subseteq G_m}\, \ge\,
\frac{(m)_{b/10\ell}}{(N)_{b/10\ell}}\, \ge \, (t/2)^{b/10\ell}\, \ge\, \exp(-b/2)\, .
\]
Also 
\begin{align*}
&\Ex{N_{\triangle}(G_{m})\, \mid\phantom{\big|} G_m\supseteq G_*}\le \\ & \, \Ex{N_{\triangle}(G_m)}\, +\, cM(b,t)\, +\, n^3\pr{N_{\triangle}(G_m)\, >\, \Ex{N_{\triangle}(G_m)} + cM(b,t)\, \bigm\vert\phantom{\Big|} G_m\supseteq G_*}\, ,
\end{align*}
from which it follows that
\[
\pr{N_{\triangle}(G_m)\, >\, \Ex{N_{\triangle}(G_m)}\, +\, cM(b,t)\, |\, G_m\supseteq G_*}\, \ge\, n^{-3}
\]
and so 
\[
\pr{N_{\triangle}(G_m)\, >\, \Ex{N_{\triangle}(G_m)}\, +\, cM(b,t)}\, \ge\, n^{-3}\exp(-b/2)\, \ge\, \exp(-b)\, ,
\]
as required.

We now present the three structures (subgraphs) which give the required lower bounds in the three regimes.

\textbf{The star regime:} In the star regime we may take $G_*$ to be a star of degree $b/10\ell$.  

Recall that $\STAR(b,t)\, := \, b^2t\ell^{-2} 1_{b\le n\ell}$.   The expected number of triangles in $G_{m-b/10\ell}\cup G_*$ involving the vertex at the centre of the star $G_*$ is of the order $b^2t\ell^{-2}$.  Since a typical edge is in order $t^2n$ triangles, $\Ex{N_{\triangle}(G_m)}-\Ex{N_{\triangle}(G_{m-b/10\ell})}$ As the expected number of  star of degree $b/10\ell$ will

\textbf{The hub regime:} In the hub regime we may take $G_*$ to be a hub which consists of $b/10n\ell$ vertices of full degree, i.e., of degree $n-1$.

\textbf{The clique regime:} In the clique regime we take $G_*$ to be a clique with $b^{1/2}/4\ell^{1/2}$ vertices.

\section{Lower bounds for cherries}\label{sec:cherrieslower}

We now observe the corresponding lower bounds for cherries, i.e., the lower bound part of Theorem~\ref{thm:cherries}.  As we mentioned in the introduction, we do so at the level of a sketch proof.

As we noted in Section~\ref{sec:cherries}, for each fixed value of $t\ge 2n^{-1}\log{n}$ the normal regime corresponds to $3\log{n}\le b\le t^{2/3}n\ell^{4/3}$, the star regime corresponds to $t^{2/3}n\ell^{4/3}<b\le n\ell$ and the hub regime corresponds to $b>n\ell$.

The star and hub regimes are straightforward.  In the star case, suppose that $t^{2/3}n\ell^{4/3}<b\le n\ell$, we must prove that a deviation at least $cb^2/\ell^2$ has probability at least $\exp(-b)$.   There is probability at least $t^{(1+o(1))b/2\ell}\ge \exp(-b)$ that the first vertex of the graph has degree at least  $b/2\ell$.  In this case, this vertex alone is in $\Omega(b^2/\ell^2)$ cherries.  This argument may be made rigorous by conditioning on the corresponding event as we did for triangles in the previous section.

In the hub regime, suppose $b\ge n\ell$, we must prove that a deviation at least $cbn/\ell$ has probability at least $\exp(-b)$.  There is probability at least $t^{(1+o(1))bn/2n\ell}\ge \exp(-b)$ that there are $b/2n\ell$ vertices which have degree $n-1$.  These vertices alone are in $\Omega(bn/\ell)$ cherries.  Again, this argument may be made rigorous using conditioning.

In the normal regime, suppose $3\log{n}\le b\le t^{2/3}n\ell^{4/3}$, we must prove that a deviation of size at least $cb^{1/2}tn^{3/2}$ has probability at least $\exp(-b)$.  By analogy with the previous section one may define $D''_{\owedge}(G_m)$ as the sum of the ``repaired'' martingale increments $X''_{\owedge}(G_i)$.  One may easily prove that there is probability at most $\exp(-2b)$ that $|D_{\owedge}(G_m)-D''_{\owedge}(G_m)|=\Omega(b^{1/2}tn^{3/2})$, and so it suffices to prove the bound for $D''_{\owedge}(G_m)$.

By an argument similar to that in the previous section one may show that it is very unlikely, with probability at most $\exp(-\Omega(n))$, that all degrees are concentrated close to the average, and do not vary on the scale $t^{1/2}n^{1/2}$.  This allows us to show that, with very high probability, $\Ex{X_{\owedge}(G_i)^2\mid G_{i-1}}\ge c'tn$ throughout the process, and likewise for the repaired process, $\Ex{X''_{\owedge}(G_i)^2\mid G_{i-1}}\ge c'tn$.  The required lower bound now follows\footnote{technically it may be necessary to introduce an $X^{*}$ process as we did in the proof of Propositon~\ref{prop:D''}} from the converse Freedman inequality (Lemma~\ref{lem:CF}) using $R=2^{10}tn$ and $\beta=c't^2n^3/2$, just as in the proof of Proposition~\ref{prop:D''}. 

\section{Triangles in $G(n,p)$}\label{sec:pworld}

Here we deduce our results in $G(n,p)$, Theorem~\ref{thm:mainp} and Theorem~\ref{thm:localp}.

We obtain results in $G(n,p)$ from our $G(n,m)$ results, as has been done in~\cite{GGS} and other articles.  In particular, we use Theorem~\ref{thm:mainm}, which control deviation probabilities of $N_{\triangle}(G_m)$, in combination with the expression\footnote{Note that~\eqr{pfromm} may be easily proved by simply considering conditioning on the number of edges in $G(n,p)$.}
\eq{pfromm}
\pr{N_{\triangle}(G_p)\, >\, \delta_n p^3(n)_{3}}\, =\, \sum_{m=0}^{N}b_N(m)\, \pr{N_{\triangle}(G_m)\, >\, (1+\delta_n)p^3 (n)_{3}}\, ,
\eqe
where $G_p\sim G(n,p)$ and $b_{N}(m):=\pr{Bin(N,p)=m}$.  

It is therefore useful to have the following estimates for $b_N(k)$ and 
\[
B_N(k) := \pr{\mathrm{Bin}(N,p) \ge k} \, .
\]
The results, stated in terms of $x_N = \frac{k-pN}{\sqrt{pqN}}$, are valid for $p\in (0,1)$ a constant or $p=p_N$ a function.  This version of the result was stated in~\cite{GGS}, it follows easily from Theorem 2 of Bahadur~\cite{Bah}.  It includes the expression
\[
E(x,N)\, :=\, \sum_{i=1}^{\infty} \frac{(p^{i+1} + (-1)^i q^{i+1})x^{i+2}}{(i+1)(i+2) p^{i/2} q^{i/2} N^{i/2}}\, ,
\]
where $q$ denotes $1-p$, as it shall in the statements which follow.  


\begin{theorem} \label{thm:bah}
Suppose that $(x_N)$ is a sequence such that $1\ll x_N\ll \sqrt{pqN}$. Then
\[
b_N(\fl{pN+x_N\sqrt{pqN}})\,  =\, (1 + o(1))\frac{1}{\sqrt{2 \pi pqN}} \exp \left(-\frac{x_N^2}{2} - E(x_N,N) \right)
\]
and
\[
B_N(pN + x_N \sqrt{pqN})\, =\, (1 + o(1)) \frac{1}{x_N \sqrt{2\pi}} \exp \left(- \frac{x_N^2}{2} - E(x_N,N) \right)\, .
\]
\end{theorem}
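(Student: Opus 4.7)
The plan is to obtain the two displayed asymptotics by directly invoking Bahadur's Theorem~2 from~\cite{Bah}, which gives
\[
b_N(k)\, =\, \frac{1+o(1)}{\sqrt{2\pi \cdot k(N-k)/N}}\,\exp\bigl(-NI(k/N)\bigr)
\]
and an analogous tail expression for $B_N(k)$, uniformly over the moderate-deviation range $1 \ll (k-pN)/\sqrt{pqN} \ll \sqrt{pqN}$, where $I(s) := s\log(s/p)+(1-s)\log((1-s)/q)$ is the binomial rate function. Under the hypothesis $x_N \ll \sqrt{pqN}$ one has $k/N = p(1+o(1))$ (indeed $k/(pN)\to 1$, since $x_N\sqrt{q/(pN)} = o(1)$), so $k(N-k)/N = pqN\,(1+o(1))$ and the local prefactor simplifies to $(2\pi pqN)^{-1/2}(1+o(1))$. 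The standard exponential-tilting proof of the tail asymptotic (valid in the same range and recorded in Bahadur's Theorem~2) produces a prefactor of the form $(\lambda^*\sqrt{2\pi \cdot k(N-k)/N})^{-1}$, where the optimal tilt satisfies $\lambda^* = I'(k/N) = (1+o(1))\,x_N/\sqrt{pqN}$; substituting gives $(x_N\sqrt{2\pi})^{-1}(1+o(1))$, as claimed. The integer rounding $\lfloor\cdot\rfloor$ changes $k$ by at most one, a perturbation absorbed into the $(1+o(1))$ factor. What remains is to identify $NI(k/N) = x_N^2/2 + E(x_N,N)$.

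To establish this identity, I would expand $I(p+y)$ term by term using the elementary series $(1+u)\log(1+u) = u + \sum_{k\ge 2}(-1)^k u^k/[k(k-1)]$ applied with $u = y/p$ and $u = -y/q$. The linear contributions cancel and one obtains
\[
I(p+y)\, =\, \sum_{k\ge 2}\frac{\bigl((-1)^k q^{k-1} + p^{k-1}\bigr)\,y^k}{k(k-1)\,(pq)^{k-1}}\,.
\]
Substituting $y = x_N\sqrt{pq/N}$ and multiplying through by $N$, the $k=2$ summand reduces to $(p+q)x_N^2/2 = x_N^2/2$, while re-indexing $i := k-2 \ge 1$ turns the remaining summands into precisely the series defining $E(x_N,N)$. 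Under $x_N \ll \sqrt{pqN}$, consecutive summands decay geometrically with ratio $O(x_N/\sqrt{pqN}) = o(1)$, so the series converges absolutely and the identity is rigorous.

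For the furthermore statement it suffices to bound the tail $E(x_N,N) - E(x_N,N,J)$. The $i$-th summand has absolute value at most $(p^{i+1} + q^{i+1})\,x_N^{i+2}/[(i+1)(i+2)(pqN)^{i/2}]$, and since the geometric ratio is $o(1)$ the tail from $i \ge J+1$ is dominated (up to a constant) by the first omitted term, which is of order $x_N^{J+3}/(pqN)^{(J+1)/2}$. The hypothesis $x_N \ll (pqN)^{1/2 - 1/(J+3)}$ is exactly what forces this to be $o(1)$, so replacing $E(x_N,N)$ by $E(x_N,N,J)$ in the exponent alters the asymptotic only by a further $(1+o(1))$ factor. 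The main obstacle to anticipate is ensuring that the prefactor computation goes through uniformly in the sparse regime $p = p_N \to 0$; this is pinned down by the single observation that $x_N \ll \sqrt{pqN}$ implies $k/(pN) \to 1$, which is enough to legitimate the replacements $k(N-k)/N \mapsto pqN$ and $\lambda^* \mapsto x_N/\sqrt{pqN}$ in Bahadur's formulas throughout the stated range.
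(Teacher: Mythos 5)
Your proposal is correct and follows the same route as the paper, which simply states that the theorem ``follows easily from Theorem 2 of Bahadur'' and gives no further argument; you supply the routine Taylor expansion of the binomial rate function $I(s)=s\log(s/p)+(1-s)\log((1-s)/q)$ around $s=p$, together with the $x_N\ll\sqrt{pqN}$ simplifications of Bahadur's local and tail prefactors, which is exactly what the citation implicitly delegates. One small caveat worth flagging: the paper's displayed line introducing the correction term reads ``$pqN\,E(x,N)=\sum_{i\ge 1}\dots$'', yet your expansion correctly identifies $N I(k/N)-x_N^2/2$ with the right-hand series itself (this is also the dimensionally correct Cram\'er-type correction), so the extra $pqN$ factor there appears to be a misprint that your reading silently repairs.
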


In this section we prove Theorem~\ref{thm:mainp} and Theorem~\ref{thm:localp}.  We begin with Theorem~\ref{thm:mainp}, which gives the asymptotic value of $\rdn$ for a certain range of the parameters $\delta,p$.  We first state a more precise version of this result.  Let us define
\[
\tilde{M}(\delta,p)\, :=\, C M(\delta^2 pn^2,2p)
\]
where $M$ is as defined in the introduction, see~\eqr{Mdef}, and $C$ is the constant of Theorem~\ref{thm:mainm}.  We also define 
\[
x_*\, :=\, \left[(1+\delta_n)^{1/3}\, -\, 1\right]\, \sqrt{\frac{pN}{q}}\, .
\]

\begin{prop}\label{prop:mainp}
Let $n^{-1/2}(\log{n})^{1/2}\ll p\ll 1$ and let $\delta_n$ be a sequence satisfying 
\[
p^{-1/2}n^{-1}\, \ll\, \delta_n\, \ll \, p^{3/4}(\log{n})^{3/4}\, ,\, n^{-1/3}(\log{n})^{2/3}\, +\, p\log(1/p)\, .
\]
Then
\[
\rdn\, =\,  \frac{x_*^2}{2}\, +\, E(x_*,N)\, +\, \log{x_{*}}\, +\, O\left(\frac{\delta_n \tilde{M}(\delta_n,p)}{p^2 n}\right)\, +\, O(1)\, .
\]
\end{prop}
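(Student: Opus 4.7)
The plan is to reduce the $G(n,p)$ problem to the $G(n,m)$ problem via the identity
\[
\pr{N_\triangle(G_p) > (1+\delta_n) p^{3} (n)_3}\, =\, \sum_{m=0}^{N} b_N(m)\, \pr{N_\triangle(G_m) > (1+\delta_n) p^{3} (n)_3}
\]
and to localise the sum around the critical value $m^{*}$ defined by $L_\triangle(m^{*}) \approx (1+\delta_n) p^{3} (n)_3$. A short computation gives $m^{*} = (1+\delta_n)^{1/3} pN = pN + x_{*}\sqrt{pqN}$, with $x_{*}$ as in the statement. The proof will combine the sharp binomial asymptotics of Theorem~\ref{thm:bah} with the moderate-deviation estimates for $N_\triangle(G_m)$ from Theorem~\ref{thm:mainm} and Theorem~\ref{thm:mainlower}.

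For the lower bound on the tail probability, the key point is that for every $m > m^{*}$ one has $L_\triangle(m) > (1+\delta_n) p^{3} (n)_3$, so Theorem~\ref{thm:mainlower}, applied at the scale of a few standard deviations of $N_\triangle(G_m)$, gives $\pr{N_\triangle(G_m) > (1+\delta_n) p^{3} (n)_3} \ge 1/3$. The tail probability is therefore at least $\tfrac{1}{3} B_N(m^{*})$, and Theorem~\ref{thm:bah} identifies $-\log B_N(m^{*})$ as $x_{*}^{2}/2 + E(x_{*},N) + \log x_{*} + O(1)$.

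For the upper bound, I would split the sum at $m^{*}$. The contribution from $m > m^{*}$ is at most $B_N(m^{*})$. For $m \le m^{*}$, the required positive deviation of $N_\triangle(G_m)$ is $a_m := L_\triangle(m^{*}) - L_\triangle(m) \approx 6 p^{2} n (m^{*}-m)$, so by Theorem~\ref{thm:mainm} the corresponding probability is at most $\exp(-b_m)$, where $b_m$ is the least $b$ with $CM(b, t_m) \ge a_m$. The heart of the proof is to show
\[
\sum_{m \le m^{*}} b_N(m)\, \exp(-b_m)\, \le\, B_N(m^{*})\, \expb{O\bigl(\delta_n \tilde{M}(\delta_n,p)/(p^{2}n)\bigr)}\, .
\]
Intuitively, the $G(n,m)$ moderate-deviation estimate shifts the effective binomial threshold from $m^{*}$ downwards by $O(\tilde{M}(\delta_n,p)/(p^{2}n))$; in the rescaled $x$-coordinate this is a shift of $O(\tilde{M}/(p^{2}n\sqrt{pqN}))$, and multiplying by the derivative $x_{*}$ of $x_{*}^{2}/2$ reproduces precisely the claimed additive error in $-\log$.

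The main obstacle is the displayed estimate. I would execute it by parametrising $m = m^{*} - k$ and, in the dominant range $k = O(\tilde{M}(\delta_n,p)/(p^{2}n))$, replacing $b_N(m^{*}-k)$ by $b_N(m^{*})\, \exp(x_{*}k/\sqrt{pqN})$ (via Theorem~\ref{thm:bah}) and $b_m$ by its normal-regime value $\asymp p k^{2}/n$ (via $M(b,t) \asymp \NORMAL(b,t)$, which holds throughout the relevant range under our hypotheses on $\delta_n$ and $p$). The resulting sum is a Gaussian-type integral in $k$ whose peak produces an exponent of order $\delta_n^{2} n/p$, which by the hypothesis $p \gg n^{-1/2}(\log n)^{1/2}$ is absorbed into $O(\delta_n \tilde{M}/(p^{2}n))$. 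Outside the dominant range one checks that $b_m$ grows fast enough (eventually entering the $\STAR$ or $\CLIQUE$ regime of $M$) to kill the corresponding binomial weights, with only an $O(1)$ residue; the hypotheses $\delta_n \ll p^{3/4}(\log n)^{3/4}$ and $\delta_n \ll n^{-1/3}(\log n)^{2/3} + p\log(1/p)$ are precisely what is needed here. Combining the upper and lower bounds then yields the claimed asymptotic.
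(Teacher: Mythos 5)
Your overall framework — reducing to $G(n,m)$ via the conditioning identity, localising the sum near $m_*$, and invoking Theorem~\ref{thm:bah} together with Theorem~\ref{thm:mainm} — matches the paper, but your execution is genuinely different, and noticeably more laborious, in the upper bound. The paper does not split at $m^*$ and then sum over all $m\le m^*$; instead it shifts the split point to $m_-:=m_*-2p^{-2}n^{-1}\tilde{M}(\delta_n,p)$. The contribution from $m>m_-$ is bounded trivially by $B_N(m_-)$, and the whole contribution from $m\le m_-$ is handled in one stroke by monotonicity: $\pr{N_\triangle(G_m)>(1+\delta_n)p^3(n)_3}\le\pr{N_\triangle(G_{m_-})>(1+\delta_n)p^3(n)_3}\le\exp(-\delta_n^2pN)$ by a single application of Theorem~\ref{thm:mainm}, and this is negligible. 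The entire $O(\delta_n\tilde{M}/(p^2n))$ error then falls out of comparing $B_N(m_-)$ to $B_N(m_*)$ via Theorem~\ref{thm:bah}. Your approach of splitting at $m^*$ and running a Laplace/saddle-point analysis of $\sum_{m\le m^*}b_N(m)\exp(-b_m)$ can in principle reach the same conclusion (the Gaussian peak is indeed $\Theta(\delta_n^2 n/p)=O(\delta_n\tilde M/(p^2n))$), but it requires substantially more work, and the paper's shifted-threshold device avoids it entirely.

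Two points in your outline need repair. First, in the lower bound you claim Theorem~\ref{thm:mainlower} gives $\pr{N_\triangle(G_m)>(1+\delta_n)p^3(n)_3}\ge 1/3$ for all $m>m^*$; but Theorem~\ref{thm:mainlower} only applies to deviations $a\ge Ct^{3/2}n^{3/2}\sqrt{\log n}$, which fails for $m$ within a few standard deviations of $m^*$. The paper instead evaluates at $m_*$ itself and gets $\tfrac12+o(1)$ from the central limit theorem; if you insist on a cut-off strictly above $m^*$, you must push it up by $\Theta(p^{-1/2}n^{1/2}\sqrt{\log n})$ and then verify that the resulting change in $B_N$ is absorbed into the error — true, but not free. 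Second, your assertion that $M(b,t)\asymp\NORMAL(b,t)$ ``throughout the relevant range'' does not follow from the hypotheses: they guarantee only that $\tilde{M}(\delta_n,p)\ll\delta_n p^3n^3$, not that $\NORMAL$ is the maximiser, and indeed for suitable $\gamma\in(1/3,1/2)$ and $\delta_n=n^{-\theta}$ inside the allowed window one can have $\CLIQUE(\delta_n^2pn^2,p)\gg\NORMAL(\delta_n^2pn^2,p)$. This means that for $k$ in the upper part of your ``dominant range'' the exponent $b_m$ is determined by $\CLIQUE$, grows only like $k^{2/3}$, and the Gaussian-integral heuristic breaks down; you would have to check explicitly that the quadratic term $-k^2/(2pqN)$ from $b_N(m^*-k)$ still controls the sum there. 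This is precisely the kind of delicate tail bookkeeping that the paper's choice of $m_-$ sidesteps.
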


It is easily observed that Proposition~\ref{prop:mainp} implies Theorem~\ref{thm:mainp}.  First observe that $x_*=(1+o(1))\delta_n p^{1/2}n/3\sqrt{2}$, that $x_*\ll \sqrt{pqN}$ from which is follows that $E(x_*,N)\ll x_*^2$.  Now we will check that
\eq{tMle}
\tilde{M}(\delta_n,p)\,\ll\, \delta_n p^3n^3\, 
\eqe
in the given range of $\delta_n$.  To see this, we check for the four functions used to define $M(b,t)$.  Note that $\NORMAL(\delta^2 pn^2,2p)\ll \delta p^3n^3$ for all $\delta$, and that $\CLIQUE(\delta^2 pn^2,2p)\ll \delta p^3n^3$ provided $\delta\ll p^{3/4}(\log{n})^{3/4}$.  For the star and hub regimes the corresponding upper bounds are $\delta\ll n^{-1/3}(\log{n})^{2/3}$ and $\delta\ll p\log(1/p)$.  However, we note that the border between these regimes occurs when $\delta^2 pn^2=n\ell$, that is $\delta=p^{-1/2}n^{-1/2}\ell^{1/2}$, and that it is the maximum of the these two functions which is required as the upper bound in the corresponding regime.  For this reason the bound $\delta\ll n^{-1/3}(\log{n})^{2/3}\, +\, p\log(1/p)$ suffices.

From the point of view of its application, the intuition behind~\eqr{tMle} is that the contribution to the deviation made by $G(n,m)$ is small.  In fact, the quantity $\tilde{M}(\delta_n,p)$ behaves as an upper bound on this $G(n,m)$ contribution.  Indeed, it follows from Theorem~\ref{thm:mainm} that
\eq{fromm-}
\pr{N_{\triangle}(G_m)\, >\, \Ex{N_{\triangle}(G_m)}\, +\, \tilde{M}(\delta_n,p)}\, \le\, \exp(-\delta^2 p n^2)\, 
\eqe
for all $m\le 2pN$.  Since $\delta_n^2pn^2\ge x_*^2$, in this regime, this probability is negligible.

We now prove Proposition~\ref{prop:mainp}.  This will require us to prove an upper and lower bound on the probability $\pr{N_{\triangle}(G_p)\ge (1+\delta_n)p^3(n)_{3}}$.  We define 
\[
m_*\, :=\, pN(1+\delta_n)^{1/3}\, 
\]
which is (approximately) the value of $m$ at which $\Ex{N_{\triangle}(G_m)}=(1+\delta_n)p^3(n)_{3}$.  In other words, if $G_p$ has at least this many edges then no $G(n,m)$ deviation is required in order to achieve the stated deviation.  It will also be useful to consider
\[
m_-\, :=\, m_*\, -\, 2p^{-2}n^{-1}\tilde{M}(\delta_n,p)\, .
\]
Let $x_-:= (m_--pN)/\sqrt{pqN}$ be the corresponding value of $x$.  Note that $x_-=x_{*}-\Theta(\tilde{M}(\delta_n,p)/p^{5/2}n^2)$, and that $x_*=\Theta(\delta_n p^{1/2}n)$.

\begin{proof}[Proof of Proposition~\ref{prop:mainp}] The lower bound on $\pr{N_{\triangle}(G_p)\ge (1+\delta_n)p^3(n)_{3}}$ is relatively straightforward.  Using~\eqr{pfromm} and monotonicity we have that
\[
\pr{N_{\triangle}(G_p)\,\ge \,(1+\delta_n)p^3(n)_{3}}\, \ge \, B_N(m_*)\pr{N_{\triangle}(G_{m_*})\ge (1+\delta_n)p^3(n)_{3}}\, .
\]
We also note that the expected number of triangles in $G(n,m_*)$ is $(1+\delta_n)p^3(n)_{3}+O(p^2n)$ and so the difference from $(1+\delta_n)p^3(n)_{3}$ is much less than $t^{3/2}n^{3/2}$, the order of the standard deviation of this random variable in $G(n,m)$.  It follows that the latter probability above is $\frac{1}{2}+o(1)=\exp(O(1))$, by the central limit theorem.  It follows that
\[
\pr{N_{\triangle}(G_p)\,\ge \,(1+\delta_n)p^3(n)_{3}}\, \ge \, \exp\left(\frac{-x_*^2}{2}\, -\, E(x_*,N)\, -\, \log{x_{*}}\,  +\, O(1)\right)
\]
which gives the required bound, which is the upper bound on the rate $\rdn$.

We now prove an upper bound on $\pr{N_{\triangle}(G_p)\,\ge \,(1+\delta_n)p^3(n)_{3}}$.  Again we use~\eqr{pfromm}.  This time we simply consider the contribution above $m_-$ and below $m_-$ respectively.  The contribution from above $m_-$ is at most
\begin{align*}
B_N(m_-)\, & \le\, \exp \left(\frac{-x_-^2}{2} \, -\, E(x_-,N)\, -\, \log{x_-} \, +\, O(1) \right)\\
&\le\, \exp\left(\frac{-x_{*}^2}{2}\, +\, O\left(\frac{x_{*}\tilde{M}(\delta_n,p)}{p^{5/2} n^2}\right) -\, E(x_*,N)\, -\, \log{x_*}\, +\, O(1)\right)\\
&\le\, \exp\left(\frac{-x_{*}^2}{2}\, +\, O\left(\frac{\delta_n \tilde{M}(\delta_n,p)}{p^2 n}\right) -\, E(x_*,N)\, -\, \log{x_*}\, +\, O(1)\right)
\end{align*}
as required.

Now for $m\le m_-$.  We claim that 
\eq{mdev}
\pr{N_{\triangle}(G_{m_-})\ge (1+\delta_n)p^3(n)_{3}}\, \le\, \exp(-\delta_n^2 pN)\, .
\eqe
Since this probability is much less than the required bound, and holds for $m\le m_-$ by monotonicity, this will complete the proof.

To prove~\eqr{mdev} we observe that $N_{\triangle}(G_{m_-})\ge (1+\delta_n)p^3(n)_{3}$ corresponds to a large positive deviation for the triangle count in $G(n,m)$.  Indeed
\begin{align*}
\Ex{N_{\triangle}(G_{m_-})}\, & \le \, \left(\frac{m_-}{N}\right)^3(n)_{3}\\
& =\, \left(\frac{m_*\, -\, 2p^{-2}n^{-1}\tilde{M}(\delta_n,p)}{N}\right)^3 (n)_{3}\\
& \le\, \left(\frac{m_*}{N}\right)^3 (n)_{3}\, -\, \tilde{M}(\delta_n,p)\\
& =\, (1+\delta_n)p^3(n)_{3} \, -\, \tilde{M}(\delta_n,p)\, .
\end{align*}
So we see that having $(1+\delta_n)p^3(n)_{3}$ triangles in $G_{m_-}$ would represent a large positive deviation, at least $\tilde{M}(\delta_n,p)$.  As we observed above,~\eqr{fromm-}, this has probability at most $\exp(-\delta_n^2 pN)$.  This completes the proof of~\eqr{mdev} and the proof of the proposition.
\end{proof}

We now prove our result for the localised regime, Theorem~\ref{thm:localp}.  We recall that this theorem states that for the regime
\[
p^{3/4}(\log{n})^{3/4}\, ,\, n^{-1/3}(\log{n})^{2/3}\, +\, p\log(1/p)\, \le\, \delta_n\, \le \, 1\,
\]
we have
\eq{rdnis}
\rdn\, =\, \Theta(1) \min\{\delta_n^{2/3}p^2n^2\log{n}, \delta_n^{1/2}pn^{3/2}\log{n}\, +\, \delta_n p^2n^2 \log(1/p)\}\,  .
\eqe
In this regime the easiest way to achieve the required deviation is by $G_p$ containing a certain structure (a star, a hub or a clique).  These examples will give us the lower bound.

For the upper bound we shall show that one cannot get far simply by having extra edges without any structure.  Let $m_0=pN+\delta pn^2/10$.  It will be useful to note that 
\eq{binm0}
B_N(m_0)\, \le\, \exp\left(\frac{-\delta_n^2 pn^2}{200}\right)\, .
\eqe
We are somewhat informal with some of the (standard) lower bounds.

\begin{proof}[Proof of Theorem~\ref{thm:localp}] 
The proof will be similar in each of the three parts, corresponding to the most likely cause of extra triangles: hub, star or clique.  In the Hub region,
\[
p\log(1/p)\, , \, \frac{1}{p^2 n} \le\, \delta_n \, \le\, 1
\]
the minimum in~\eqr{rdnis} is $\Theta(\delta_n p^2n^2 \log(1/p))$.  We note that in this region (as $\delta_n\ge p\log(1/p)$) we have from~\eqr{binm0} that $\pr{e(G_p)>m_0}\, \le\, \exp(-\Omega(\delta_n p^2n^2 \log(1/p)))$.

It is now straightforward to give the upper and lower bound on the probability $\pr{N_{\triangle}(G_p)\ge (1+\delta_n)p^3(n)_{3}}$ in this region.  The lower bound follows by asking that some $\Theta(\delta_n p^2n)$ vertices have degree $n-1$.  This has probability $\exp(-O(\delta_n p^2n^2 \log(1/p)))$ and produces an extra $\delta_n p^3n^3$ triangles, as required. 
For the upper bound we note that
\begin{align*}
&\pr{N_{\triangle}(G_p)\ge (1+\delta_n)p^3(n)_{3}}\\ \phantom{\Big|}
&\qquad \le\, \pr{e(G_p)>m_0}\, +\, \pr{N_{\triangle}(G_p)\ge (1+\delta_n)p^3(n)_{3}\mid e(G_p)\le m_0}\phantom{\Big|}\\
&\qquad \le\, \exp(-\Omega(\delta_n p^2n^2 \log(1/p)))\, +\, \pr{N_{\triangle}(G_{m_0})\ge (1+\delta_n)p^3(n)_{3}}\, .\phantom{\Big|}
\end{align*}
It therefore suffices to prove the same bound for the second probability.  It is easily checked that $\Ex{N_{\triangle}(G_{m_0})}\le (1+9\delta_n/10)p^3(n)_{3}$, so that the event $N_{\triangle}(G_{m_0})\ge (1+\delta_n)p^3(n)_{3}$ represents a positive deviation $\Theta(\delta_n p^3n^3)$ in $N_{\triangle}(G_{m_0})$.  This has probability at most $\exp(-\Omega(\delta_n p^2n^2 \log(1/p)))$ by Theorem~\ref{thm:mainm} (in this region $\HUB(b,t)$ is the maximiser), as required.

The proofs in the other two regions are very similar.  In the star region, 
\[
\frac{1}{p^6 n^3}\, , \, n^{-1/3}(\log{n})^{2/3}\, \le\,  \delta_n\, \le\, \frac{1}{p^2 n}
\]
the minimum in~\eqr{rdnis} is $\Theta(\delta_n^{1/2}pn^{3/2}\log{n})$, and, similarly, in this region (as $\delta_n\ge n^{-1/3}(\log{n})^{2/3}$), we have from~\eqr{binm0} that $\pr{e(G_p)>m_0}\, \le\, \exp(-\Omega(\delta_n^{1/2}pn^{3/2}\log{n}))$.

The lower bound follows by asking that some vertex have degree at least $\delta_n^{1/2}pn^{3/2}$.  This has probability $\exp(-O(\delta_n^{1/2} p n^{3/2} \log{n}))$\footnote{Note that $\log{n}$ and $\log(1/p)$ are equivalent (of the same order) in this regime} and produces an extra $\delta_n p^3n^3$ triangles, as required.  The upper bound is exactly as in the hub case except that the maximiser in Theorem~\ref{thm:mainm} is now $\STAR(b,t)$, which gives the bound $\exp(-\Omega(\delta_n^{1/2}pn^{3/2}\log{n}))$.

In the clique region,
\[
p^{3/4}(\log{n})^{3/4} \, \le \delta_n\, \le\, \frac{1}{p^6 n^3}\, 
\]
the minimum in~\eqr{rdnis} is $\Theta(\delta_n^{2/3}p^2n^2\log{n})$.  As $\delta_n\ge p^{3/4}(\log{n})^{3/4}$ we have from~\eqr{binm0} that $\pr{e(G_p)>m_0}\, \le\, \exp(-\Omega(\delta_n^{2/3}p^2n^2\log{n}))$.  The lower bound comes from planting a clique on $2\delta_n^{1/3}pn$ vertices which creates at least $\delta_n p^3n^3$ extra triangles and has probability $\exp(-O(\delta_n^{2/3}p^2n^2\log{n}))$.  The upper bound is as above except that now $\CLIQUE(b,t)$ is the maximiser in Theorem~\ref{thm:mainm}, we obtain the bound $\exp(-\Omega(\delta_n^{2/3}p^2n^2\log{n}))$, as required.
\end{proof}

\section{Concluding Remarks}\label{sec:ConcRem}

One natural type of question to consider is whether it is possible to understand the tail probabilities discussed in this article even more precisely.  For example, can we find the asymptotic value of the log of deviation probabilities in general in the $G(n,m)$ and $G(n,p)$ models?

In both models, this has been achieved in certain regions.  For example, in the dense case, such results for both models are given for the normal regime in~\cite{GGS}.  These results also cover a part of the normal regime in slightly sparser random graphs.  And, as we discussed in the introduction, asymptotically tight bounds for the log probability associated with large deviations were also found for the triangle count in $G(n,p)$.  It would be of interest to know whether these results may be extended to cover the all localised regimes (star, hub and clique).

One may also ask about the typical structure of random graphs which exhibit certain deviations.  It seems likely that, with a little more work, weak structural versions of our results on triangles could be proved.  For example, given a pair $t,a$ which corresponds to the clique regime, one may ask whether $G_m$ conditioned on having $a$ more triangles than expected is very likely to contain a somewhat dense subgraph on $\Theta(a^{1/3})$ vertices.  A stronger structural result stating that in this case $G_m$ contains a clique (or at least a very dense subgraph) on $(1+o(1))a^{1/3}$ vertices would be very interesting.  See Theorem 1.8 of~\cite{HMS} for related results for large deviations in $G(n,p)$.

It is also natural to ask whether the results generalise to other graphs.  These results were obtained for relatively dense random graphs in~\cite{GGS}, but it would be interesting to know how far such results extend.

Let us also mention an elementary question, which seems to be open in general, and which is related to Lemma~\ref{lem:multi}.  We write $\mathcal{G}_{n,m}$ for the set of all graphs with $m$ edges on vertex set $[n]=\{1,\dots ,n\}$.  Given a set $S$ of vertices of a graph $G$, let us set $N(S):=\bigcup_{v\in S}N(v)$, the union of the neighbourhoods of the vertices $v\in S$.

\begin{question} Let $2\le k\le n\le m$.  What is the value of $\min_{G\in \mathcal{G}_{n,m}}\max_{S\subseteq [n]:|S|=k}|N(S)|$?
\end{question}

In particular, one may consider cases where $k$ is a constant, where $n$ is large and where $m\approx p\binom{n}{2}$, for some $p\in (0,1)$.  Two natural candidates for the extremal graph which ought to be considered are:
\begin{enumerate}
\item[(i)] the complete graph on $\approx p^{1/2}n$ vertices,
\item[(ii)] a random (or quasirandom) graph with density $p$.
\end{enumerate}

In case (i) it is trivial to optimise $|N(S)|$, as for any $S$ containing a vertex of the complete graph we have $|N(S)|\approx p^{1/2}n$.  While in case (ii) the size of $|N(S)|$ is roughly the same for all subsets $S$ with $|S|=k$, indeed $|N(S)|\approx (1-(1-p)^k)n$.  So we may ask in particular:

\begin{question} Let $k\ge 2$ and let $p\in (0,1)$.  In the limit as $n\to \infty$, is it true that 
\[
\min\left\{\max_{S\subseteq [n]:|S|=k}|N(S)|\, :\, G\in \mathcal{G}_{n,p\binom{n}{2}}\right\}\, =\, (1+o(1))\min\{p^{1/2}n, (1-(1-p)^k)n\}\, ?
\]
\end{question}

\bibliographystyle{plain}

\bibliography{thesparsebibliography}



\appendix

\section{Proof of Proposition~\ref{prop:integral}}\label{Ap:A}

In this section, $\|x\|$ denotes the euclidean norm.

\begin{definition}
A function $F: \RR^d \rightarrow \RR$ is \emph{radial} if the value of $F(x)$ depends only on $\norm{x}$. If $F$ is radial, let $F_{rad}: \RR_0^{+} \rightarrow \RR$ be the function such that $F_{rad}(r) = F(x)$ for all $x$ with $\norm{x}=r$.  
\end{definition}

Given a subset $\LL \ssq \ZZ^d$ we say that $\LL$ has the \emph{non-zero} property if all coordinates $x_i$ of all $x\in \LL$ are non-zero.



\begin{prop}
\label{prop:analy-tool}
Let $r \in \RR$ and let $F: \RR^d \rightarrow \RR^+$ be a continuous, integrable, radial function for which $F_{rad}$ is non-increasing.   Also, let $\LL$ be a subset of $\ZZ^d$ with the non-zero property. Then, 
\[
  \sum_{x \in \LL: \norm{x}\ge r} {F(x)} \leq \int_{A(r-d^{1/2})} F(u) du
\]
where $A(r) := \RR^d \setminus B(0,r)$.
\end{prop}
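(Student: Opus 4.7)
\medskip

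\noindent\textbf{Proof plan for Proposition~\ref{prop:analy-tool}.}

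The plan is to replace each term $F(x)$ in the sum by the integral of $F$ over a well-chosen unit cube $Q_x$ adjacent to $x$, and then exploit disjointness of the cubes. For each $x = (x_1,\dots,x_d) \in \LL$, define
\[
Q_x \, := \, \prod_{i=1}^{d} I_i(x), \quad \text{where } I_i(x) := \begin{cases} [x_i-1, x_i] & \text{if } x_i > 0,\\ [x_i, x_i+1] & \text{if } x_i < 0. \end{cases}
\]
The non-zero property of $\LL$ is exactly what makes this definition sensible: each $Q_x$ lies in the closed orthant of $x$, i.e.\ every $u \in Q_x$ has $\operatorname{sgn}(u_i) \in \{0, \operatorname{sgn}(x_i)\}$ and $|u_i| \le |x_i|$. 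In particular $\|u\| \le \|x\|$ for all $u \in Q_x$, and since the cube has diameter $\sqrt{d}$, also $\|u\| \ge \|x\| - \sqrt{d}$.

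The next step is to check that the cubes $\{Q_x : x \in \LL\}$ have pairwise disjoint interiors. If $x,y \in \LL$ lie in the same open orthant, then the distinct integer corners $x \ne y$ give cubes which meet only on a coordinate-aligned face of measure zero (standard). If instead $x_i > 0$ and $y_i < 0$ for some $i$, then $Q_x \subseteq \{u_i \ge 0\}$ while $Q_y \subseteq \{u_i \le 0\}$, so their interiors are disjoint. The non-zero property is again crucial here, since it prevents any $x \in \LL$ from lying on a coordinate hyperplane, which would otherwise force cubes in adjacent orthants to share more than a boundary.

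Now I use the radial monotonicity hypothesis. Since $F_{\mathrm{rad}}$ is non-increasing and $\|u\| \le \|x\|$ for $u \in Q_x$, we have $F(u) \ge F(x)$ throughout $Q_x$, so
\[
F(x) \, = \, \int_{Q_x} F(x)\, du \, \le \, \int_{Q_x} F(u)\, du.
\]
Summing over $x \in \LL$ with $\|x\| \ge r$, and using the disjointness of interiors together with the bound $\|u\| \ge \|x\| - \sqrt{d} \ge r - \sqrt{d}$ for $u \in Q_x$, we conclude
\[
\sum_{x \in \LL : \|x\| \ge r} F(x) \, \le \, \int_{\bigcup_{x} Q_x} F(u)\, du \, \le \, \int_{A(r - \sqrt{d})} F(u)\, du,
\]
as required.

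The only genuine subtlety is the orthant-matching construction of the cubes: any assignment $x \mapsto Q_x$ that fails to respect the sign pattern of $x$ would lose either the monotonicity step (if $x$ is not the corner of $Q_x$ farthest from the origin) or the disjointness step (if neighbouring lattice points in different orthants get overlapping cubes). This is precisely why the hypothesis on $\LL$ is stated the way it is.
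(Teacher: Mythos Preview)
Your proof is correct and follows essentially the same approach as the paper: you both associate to each $x\in\LL$ the unit cube obtained by moving each coordinate one step towards the origin (which is well-defined precisely because of the non-zero property), use that $x$ is the point of maximal norm in this cube to get $F(x)\le\int_{Q_x}F(u)\,du$, and conclude via disjointness and the diameter bound $\sqrt d$. Your write-up is in fact a bit more careful than the paper's about verifying disjointness across orthants.
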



\begin{proof}
We lose no generality in assuming $\norm{x}\ge r$ for all $x\in \LL$.  Now, for each $x\in \LL$, let $x_-$ be the point obtained by reducing the absolute value of each coordinate of $x$ by $1$ (i.e., reducing positive coordinates by $1$ and increasing negative coordinates by $1$).  Note that this definition depends on the non-zero property.  

Let $C_x$ be the open cube defined by $x$ and $x_-$, in the sense that $y\in C_x$ if each coordinate of $y$ lies between the corresponding coordinates in $x$ and $x_-$.  It is clear that distinct $x\in \LL$ have disjoint cubes $C_x$, and all cubes volume $1$.  And so, as $x$ is the point of largest norm in $C_x$ and $F_{rad}$ is non-increasing, we have that
\[ 
\sum_{x \in \LL} F(x)\, =\, \sum_{x \in \LL} \int F(u) \mathbbm{1}_{C_x}(u) du\, =\, \int  F(u) \mathbbm{1}_{\bigcup_{x\in \LL}C_x}(u) du\, .
\]
Since the diameter of a unit cube is $d^{1/2}$ and $\norm{x}\ge r$ for all $x\in \LL$, every point in the union has norm at least $r-d^{1/2}$.  The required result follows by monotonicity.
\end{proof}

What can we say if a family $\LL\ssq \ZZ^d$ does not have the non-zero property?  Well, in that case one can partition the family depending on the set $S\ssq [d]$ of non-zero coordinates and apply Proposition~\ref{prop:analy-tool}.  We obtain the following corollary.

\begin{cor}
\label{cor:analy-tool0}
Let $r \in \RR$ and let $F: \RR^d \rightarrow \RR^+$ be a continuous, integrable, radial function for which $F_{rad}$ is non-increasing.   Then 
\[
  \sum_{x \in \ZZ^d: \norm{x}\ge r} {F(x)} \, \leq\, 2^d\max_{S\ssq [d]} \int_{A_S(r-d^{1/2})} F_S(u) du
\]
where $A_S(r) := \RR^S \setminus B(0,r)$ and $F_S$ is the restriction of $F$ to $\RR^S$.
\end{cor}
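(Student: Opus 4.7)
The plan is to reduce Corollary~\ref{cor:analy-tool0} to Proposition~\ref{prop:analy-tool} by partitioning the lattice $\ZZ^d$ according to which coordinates vanish. For each $S\ssq [d]$, the set of lattice points whose support is exactly $S$ may be identified with the subset of $\ZZ^S$ all of whose coordinates are non-zero, which has the non-zero property in dimension $|S|$; this lets us apply Proposition~\ref{prop:analy-tool} separately on each piece.

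First I would introduce $\ZZ^d_S := \{x \in \ZZ^d : x_i \neq 0 \Leftrightarrow i \in S\}$, so that $\ZZ^d = \bigsqcup_{S\ssq [d]} \ZZ^d_S$, and observe that the projection $\pi_S : \ZZ^d_S \to \ZZ^S$ onto the $S$-coordinates is a bijection onto the non-zero-property subset of $\ZZ^S$. Because the discarded coordinates are zero, $\|x\| = \|\pi_S(x)\|$ for $x \in \ZZ^d_S$, and because $F$ is radial the restriction satisfies $F(x) = F_S(\pi_S(x))$. Hence the sum decomposes as
\[
\sum_{x \in \ZZ^d:\,\|x\|\ge r} F(x)\, =\, \sum_{S\ssq [d]} \sum_{y \in \LL_S} F_S(y),
\]
where $\LL_S := \pi_S\big(\{x \in \ZZ^d_S : \|x\| \ge r\}\big) \ssq \ZZ^S$ has the non-zero property.

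Next, for each non-empty $S$ the restriction $F_S$ inherits the hypotheses of Proposition~\ref{prop:analy-tool}: it is continuous, integrable and radial on $\RR^S$, and its radial profile coincides with $F_{rad}$ and is therefore non-increasing. Applying Proposition~\ref{prop:analy-tool} in dimension $|S|$ yields $\sum_{y \in \LL_S} F_S(y) \le \int_{A_S(r - |S|^{1/2})} F_S(u)\, du$. Since $|S| \le d$, we have $r - |S|^{1/2} \ge r - d^{1/2}$, so $A_S(r - |S|^{1/2}) \ssq A_S(r - d^{1/2})$, and by non-negativity of $F_S$ the integral is bounded by $\int_{A_S(r - d^{1/2})} F_S(u)\, du$.

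Finally, summing these bounds over all $2^d$ subsets $S$ and bounding each of them by the maximum delivers the required inequality. The only small nuisance is the degenerate case $S = \emptyset$: its contribution is at most $F(0)$, which vanishes whenever $r > 0$ (and, when $r \le 0$, is dominated by the integral obtained for any singleton $S$ using monotonicity of $F_{rad}$), so it causes no difficulty. In particular, in the regime of interest $r \ge d^{1/2}$ used in Proposition~\ref{prop:integral}, the origin is excluded from the sum outright, and this boundary case does not need to be treated.
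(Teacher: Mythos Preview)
Your argument is correct and is exactly the approach the paper indicates: partition $\ZZ^d$ according to the set $S$ of non-zero coordinates and apply Proposition~\ref{prop:analy-tool} on each $\ZZ^S$-slice, then sum the resulting $2^d$ bounds and replace each by the maximum. One small remark: your handling of the case $S=\emptyset$ when $r\le 0$ is not quite watertight (the claim that $F(0)$ is dominated by a one-dimensional integral can fail for sharply peaked $F$), but as you note this is irrelevant in the regime $r\ge d^{1/2}$ where the corollary is actually used, and the paper itself does not treat this degenerate case.
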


We shall apply this result for the function $F(x)=\exp(-\beta \norm{x}^2)$, for $\beta\ge 1$.  Note that it is not difficult to bound $\int_{A(r)}\exp(-\beta \norm{u}^2) du$.  Indeed, we shall use that for $u\in \RR^d$ with $\norm{u}\ge r$ we have $\norm{u}^2\ge r^2/2\, +\, (u_1^2\, +\, \dots \, +\, u_d^2)/2$.  And so
\begin{align*}
\int_{A(r)}\exp(-\beta \norm{u}^2) \, &\le \,e^{-\beta r^2/2}\int_{A(r)}\exp(-\beta(u_1^2+\dots +u_d^2)/2)\\
&
\le \,e^{-\beta r^2/2}\int_{\RR^d}\exp(-(u_1^2+\dots +u_d^2)/2) du\\
& \le \,e^{-\beta r^2/2}\prod_{i=1}^{d}\int \exp(-u_i^2) du_i\\
& =\,  (2\pi)^{d/2} e^{-\beta r^2/2}\, .
\end{align*}
Note that in the last two lines we used Fubini and the well known identity $\int e^{-y^2/2} dy=\sqrt{2\pi}$.

It is clear that the same calculation in an $s$-dimensional subspace would give the upper bound $(2\pi)^{s/2}e^{-\beta r^2/2}$.  The required bound,
\[
  \sum_{x \in \ZZ^d: \norm{x}\ge r} \expb{-\beta \norm{x}^2} \, \leq\, (8\pi)^{d/2} e^{-\beta (r-d^{1/2})^2/2}
\]  
now follows from these estimates and Corollary~\ref{cor:analy-tool0}.  This completes the proof of Proposition~\ref{prop:integral}.

\section{The degree sequence -- proofs from Section~\ref{sec:degs}}\label{sec:apdegs}\label{Ap:B}

We shall prove Lemma~\ref{lem:largedegs} and Proposition~\ref{prop:sumsquare} stated in Section~\ref{sec:degs}.  We begin with Lemma~\ref{lem:largedegs}, which we restate here.

\largedegs*

\begin{proof} Fix $t\ge 2n^{-1} \log{n}$ and $b \geq 4tn$.  Note that $b\ge 8\log{n}$.
(i) Let $a=2b/\ell_b$.  There are $n$ vertices and for each vertex there is probability at most
\[
\binom{n}{a}t^a\, \le\, \exp\left(-a\log(a/etn)\right)\, \le\, \exp\left(\frac{-2b}{\ell_b}(\ell_b-\log{\ell_b})\right)\, \le\, \exp(-3b/2)
\]
that its degree is at least $a$. Since this is at most $\exp(-b)/2n$ for each vertex, item (i) fails with probability at most $\exp(-b)/2$. 

(ii) We shall prove, for each $j\ge 5$ that $\pr{|V_j|\ge b/tnj2^{j-6}}\, \le\, \exp(-2b)$.  The proof is then complete, by a union bound.  Fix $j\ge 4$.  By increasing $b$ if necessary (which corresponds to a slightly stronger bound) we may assume that $a:=b/tnj2^{j-6}$ is an integer.

If the event $|V_j|\ge a$ occurs then there exists a set $A$ of $a$ vertices whose degrees sum to at least $2^jatn$.  Let $G[A]$ denote the graph induced by a set $A$, and $G[A,B]$ the bipartite induced graph between sets $A$ and $B$.  It follows that either
\begin{enumerate}
\item[(i)] $e(G_m[A])\, \ge\, 2^{j-2}atn$, or
\item[(ii)] $e(G_m[A,V\setminus A])\, \ge\, 2^{j-1}atn$
\end{enumerate} 
In either case this random variable, $X$, has hypergeometric distribution and so we may use the bound~\eqr{h4} given in Lemma~\ref{lem:Chern}.  We note that in both cases $\mu\le atn$, and so setting $\nu=atn$ we have
\[
\pr{X\ge 2^{j-2}atn}\, \le\, \exp(-(j-2)2^{j-4}atn)\, \le\, \exp(-2b)\, ,
\]
as required.
\end{proof}

We now turn to Proposition~\ref{prop:sumsquare}, which we restate.  We recall the definitions
\[
\kappa(b,t)\, :=\, \begin{cases} tn^2\qquad & 1 \le b<t^{1/2}n\ell\\
 b^2/\ell^2 &t^{1/2}n\ell \le b<n\ell\\
bn/\ell &n\ell \le b\le tn^2\ell
\end{cases}
\]
and
\[
\kappa^{+}(b,t)\, :=\, \begin{cases} b^2/\ell_b^2 \qquad & 1\le b< n\ell\\
bn/\ell &n\ell \le b\le tn^2\ell
\end{cases}
\]

\sumsquare*


\begin{proof} Fix $t\ge 2n^{-1} \log{n}$, and $i\le m$.  We prove that the bounds fail with probability at most $\exp(-2b)$.  The proposition then follows by a union bound.  We begin with the second statement, about the restricted sum $\sum_{u\, :\, d_u(G_m) \ge 32tn}d_{u}(G_m)^2$.  We shall show that the result follows from the bounds on $V_j$ stated in Lemma~\ref{lem:largedegs}.  Let us define $J=J(b)$ to be maximal such that $2^jtn \leq 2b/\ell_b$, if $b\le n\ell$, and $J=\log_{2}(1/t)$, if $b> n\ell$.  By Lemma~\ref{lem:largedegs}, except with probability at most $\exp(-2b)$, we have 
\begin{enumerate}
\item[(i)] $|V_j|\le b/tnj2^{j-7}$ for all $j\ge 5$, and 
\item[(ii)] $V_j=\emptyset$ for all $j> J$.
\end{enumerate}

For the range $32tn\le b\le n\ell$:  Assuming (i) and (ii), we have
\begin{align*}
    \sum_{u:d_u(G_m) \geq 32 tn} d_u^2(G_m) &\leq \sum_{j=5}^J 2^{2j+2}t^2 n^2 |V_j| \\ &\leq 2^9 \sum_{j=5}^J \dfrac{2^jbtn}{j} \\ &\leq \dfrac{2^{J+11}btn}{J} \\ &\leq\, \dfrac{2^{12}b^2}{\ell_b^2}\, .
\end{align*}
Where we used for the last inequality that $2^j \geq b/tn\ell_b$, and so, in particular,
\[
J\, \geq\, \log \left( \dfrac{b}{etn \ell_b} \right) \, =\, \ell_b - \log(\ell_b)\, \geq\, \dfrac{\ell_b}{2}.
\]
This completes the proof of the bound on $\sum_{u\, :\, d_u(G_m) \ge 32tn}d_{u}(G_m)^2$ in this range provided $C$ is chosen to be at least $2^{12}$.

For the range $b\ge n\ell$: In this case $J=\log_2(1/t)$, and so $\lfloor J\rfloor\ge \ell$ and $2^{\lfloor J\rfloor} \le t^{-1}$.  Assuming (i) and (ii), we may argue as above and obtain
\begin{align*}
\sum_{u:d_{u}(G_m)\ge 32tn}d_u(G_m)^2 \, &\le \, \sum_{j=5}^{\lfloor J\rfloor} 2^{2j+2}t^2 n^2 |V_j| \\ 
&\le\, \dfrac{2^{\lfloor J\rfloor +11}btn}{\lfloor J\rfloor}\\
&\le\,  \frac{2^{11} bn}{\ell}\, .
\end{align*}
This proves the required bound in this range provided $C\ge 2^{11}$.

We now prove the first bound on the unrestricted sum $\sum_{u}D_{u}(G_i)^2$.  Note that $\kappa(b,t)$ does not depend on $b$ in the first range, so the result in this range follows from the beginning of the next range (as increasing $b$ corresponds to a stronger result).  Also, we may continue to assume the bounds proved in the first part of the proof.  In particular, if $d_u(G_i) \geq 32 tn$ then $d_u(G_m) \geq 32 tn$ and we have $D_u(G_i)^2\le d_u(G_m)^2$.


In the range $t^{1/2}n\ell \le b<n\ell$ we have from the above bound that 
\[
\sum_{u:d_u(G_i) \geq 32 tn} D_u(G_i)^2\, \le\, \dfrac{2^{12}b^2}{\ell_b^2}
\]
and by Lemma~\ref{lem:sd2}, there is a constant $C_1$ such that except with probability $\exp(-2b)$ we have $\sum_{u:d_u(G_i) \leq 32tn} D_u(G_i)^2$ is at most $Ctn^2$ if $b\le n$, and at most $Cbtn$ if $n\le b\le n\ell$.  As $\max\{tn^2,btn,b^2/\ell^2\} =b^2/\ell^2=\kappa(b,t)$, we have 
\[
\sum_{u} D_u(G_i)^2\, \le\, (2^{12}+C_1)\kappa(b,t)\, ,
\]
which gives the result in this range.

In the range $n\ell\le b<tn^2\ell$ we similarly bound the two contributions by $2^{11} bn/\ell$ and $C_1btn$ respectively.  This is at most $C\kappa(b,t)$, provided we choose $C\ge 2^{11}+C_1$.
\end{proof}

%
%
%
%
%
%
%

\section{Variance lower bound}\label{sec:LBvar}

We shall prove Lemma~\ref{lem:LBvar} which we restate for convenience.  We also recall the definition
\[
\X''_i\, :=\, 3\frac{(N-m)_{2}(m-i)}{(N-i)_3}\, X''_{\owedge}(G_i)\, +\, \frac{(N-m)_3}{(N-i)_3}\, X''_{\triangle}(G_i)\, ,
\]

\LBvar*

We remark that if $t$ is a constant, i.e., in the case of dense random graphs, then this follows easily from the variance bounds given in Section 7 of~\cite{GGS}.  Since the coefficient of $X''_{\owedge}(G_i)$ is $O(t)$ and the coefficient of $X''_{\triangle}(G_i)$ is $\Omega(1)$, it is clearly suffices to prove that, except with probability $\exp(-\Omega(n))$ we have
\eq{owlb}
\Ex{ X''_{\owedge}(G_i)^2\, \mid \, G_{i-1}}\, =\, O\left(\frac{n}{\ell^2}\right)
\eqe
and
\eq{trilb}
\Ex{ X''_{\triangle}(G_i)^2\, \mid \, G_{i-1}}\, =\, \Omega(t^2n)\, .
\eqe
Note that~\eqr{owlb} follows from Lemma~\ref{lem:varsch} (as the truncation can only reduce variance), so we need only prove that~\eqr{trilb} holds, except with probability $\exp(-\Omega(n))$.

The variance in the next value of $X_{\triangle}(G_i)$ comes from the different codegrees present in the graph (the variance would be zero if all codegrees $d_{uw}(G_{i-1})$ of non-edges $uw$ were equal).  Recall the notation $D_{uw}(G_m)$ for the codegree deviation of the pair $uw$ relative to its expected value in $G_m\sim G(n,m)$.  By~\eqr{Xtricodeg} we have
\eq{xtg}
X_{\triangle}(G_i)\,  =\, 6 D_{e_i}(G_{i-1})\, -\, \Ex{6 D_{e_i}(G_{i-1})\, \mid\, G_{i-1}}\, 
\eqe
where $e_i$ denotes the $i$th edge of the process.  In order to turn this into a comparison of codegrees we recall the elementary fact that for $X,Y$ iid random variables with mean $0$ we have $\Ex{(X-Y)^2}=2\Ex{X^2}$.  Similarly, if $X,Y$ are iid random variables with $\Ex{X|G_{i-1}}=0$, then $\Ex{(X-Y)^2|G_{i-1}}=2\Ex{X^2|G_{i-1}}$.  In the case of $X_{\triangle}(G_i)$ the randomness comes from the selection of the edge $e_i\in E(K_n)\setminus E(G_{i-1})$.
It follows that
\begin{align*}
\Ex{ X_{\triangle}(G_i)^2\, \mid \, G_{i-1}}\, & =\, \frac{1}{2(N-i+1)^2}\sum_{e',e^*\not\in E(G_{i-1})} \big(X_{\triangle}(G_{i-1}\cup \{e'\})-X_{\triangle}(G_{i-1}\cup\{e^*\})\big)^2\\
& =\, \frac{18}{(N-i+1)^2}\sum_{e',e^*\not\in E(G_{i-1})} \big(D_{e'}(G_{i-1})\, -\,  D_{e^{*}}(G_{i-1}\big)^2\\
& =\, \Omega(n^{-4})\sum_{e',e^*\not\in E(G_{i-1})} \big(D_{e'}(G_{i-1})\, -\,  D_{e^{*}}(G_{i-1}\big)^2\, .
\end{align*}
This does not yet consider the truncation, however, the truncation only affects deviations larger than $3t^2n$, and so, by the same argument,
\[
\Ex{ X''_{\triangle}(G_i)^2\, \mid \, G_{i-1}}\, = \, \Omega(n^{-4})\sum_{e',e^*\in E_0(G_{i-1})} \big(D_{e'}(G_{i-1})\, -\,  D_{e^{*}}(G_{i-1}\big)^2
\]
where $E_0(G)=\{uw\not\in E(G_{i-1}):D_{uw}(G_{i-1})\le 3t^2n\}$.

It is therefore sufficient to prove that, except with probability $\exp(-\Omega(n))$, there are at least $\Omega(n^2)$ pairs $uw\in E(K_n)\setminus E(G_{i-1})$ with 
\eq{small}
d_{uw}(G_{i-1})\,\le\, sd_{med}(G)\, -\, sn^{1/2}
\eqe
and $\Omega(n^2)$ pairs $uw\in E(K_n)\setminus E(G_{i-1})$ with 
\eq{large}
sd_{med}(G)\, +\, sn^{1/2}\, \le\, d_{uw}(G_{i-1})\,\le\, 1.1s^2n\, +\, 2sn^{1/2}\, 
\eqe
where $d_{med}(G)$ is the median degree.  Note this gives $\Omega(n^4)$ pairs of pairs $uw,u'w' \in E_0(G_{i-1})$ with $|D_{e'}(G_{i-1})\, -\,  D_{e^{*}}(G_{i-1})| \ge 2sn^{1/2}$, which is clearly sufficient (since $t/2 \le s \le t$).

We will now give a proof that~\eqr{large} holds for $\Omega(n^2)$ pairs $uw\in E(K_n)\setminus E(G_{i-1})$, except with probability $\exp(-\Omega(n))$. It will be clear that, up to minor variations, the same proof also gives~\eqr{small}.  

We begin with a lemma about the codegrees $d_{uw}(G)$ from the perspective of a single vertex $u$.  We work with $G\sim G(n,p)$ and the observe that it transfers to our setting.  For a vertex $u\in V(G)$ let $F_u$ be the event that
\begin{enumerate}
\item[(i)] $d(u)\in [0.9pn,1.1pn]$
\item[(ii)] $|\{w\in V(G)\setminus N(u)\, :\, d_{uw}(G)\in [pd(u)+pn^{1/2},pd(u)+2pn^{1/2}]\}|\, <\, n/100$
\end{enumerate}

\begin{lem} There exists a constant $c_1>0$ such that the following holds.  Let $n^{-1/2}/c_1 \le t\le 1/2$, let $p\in (t/2,t)$ and let $G\sim G(n,p)$.  Then, for each vertex $u\in V(G)$ we have
\[
\pr{F_u}\, \le\, \exp(-c_1 n)\, .
\]
\end{lem}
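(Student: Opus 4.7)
My plan is to fix $u\in V(G)$ and work conditional on the neighbourhood $\Gamma(u)$. Since $F_u$ is defined to include condition (i), it suffices to prove the bound after conditioning on $\Gamma(u)=S$ for an arbitrary $S\subseteq V\setminus\{u\}$ satisfying $|S|\in[0.9pn,1.1pn]$. The crucial structural observation is that, conditional on $\Gamma(u)=S$, the codegrees $d_{uw}=|S\cap\Gamma(w)|$ indexed by $w\in V\setminus(S\cup\{u\})$ are mutually independent $\mathrm{Bin}(|S|,p)$ random variables, because for different $w$ they depend on disjoint sets of edges (the edges between $w$ and $S$, one fresh set per $w$).

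Next I would verify that, for each such $w$, the event $I_w:=\{d_{uw}\in[p|S|+pn^{1/2},\,p|S|+2pn^{1/2}]\}$ has probability bounded below by an absolute constant. Write $\mu:=p|S|$ and $\sigma:=\sqrt{p(1-p)|S|}$; then the interval equals $[\mu+\alpha\sigma,\mu+2\alpha\sigma]$, where $\alpha=pn^{1/2}/\sigma\in[0.95,\,1.5]$ (using $|S|\in[0.9pn,1.1pn]$ and $p\leq 1/2$). By the Berry--Esseen theorem applied to the Binomial,
\[
\bigl|\pr{I_w=1}-\bigl(\Phi(2\alpha)-\Phi(\alpha)\bigr)\bigr|\,\leq\, C_0/\sigma
\]
for some absolute constant $C_0$. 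Now $\Phi(2\alpha)-\Phi(\alpha)$ is bounded below by some absolute $c_3>0$ uniformly in $\alpha\in[0.95,1.5]$, and $\sigma^2\geq 0.9p^2(1-p)n=\Omega(1/c_1^2)$, so taking $c_1$ small enough forces $C_0/\sigma\leq c_3/2$, yielding $\pr{I_w=1}\geq c_2:=c_3/2$.

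To finish, the count $\sum_{w\in V\setminus(S\cup\{u\})}I_w$ is a sum of $N_0\geq 0.45n-1$ independent Bernoullis each with success probability at least $c_2$, so its mean is at least $c_2 N_0 \geq c_2 n/3$. Condition (ii) asserts this sum is below $n/100$, which (since we can arrange $c_2 N_0 \gg n/100$) is a linear-in-$n$ shortfall below the mean. The Chernoff bound (Lemma~\ref{lem:Chern}) then gives probability at most $\exp(-c_4 n)$ for some absolute $c_4>0$. Taking $c_1$ small enough both for the Berry--Esseen step and so that $c_1\leq c_4$ yields $\pr{F_u}\leq\exp(-c_1 n)$, as required.

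The main obstacle is the Berry--Esseen verification in the sparsest regime $p\sim 1/\sqrt{n}$, where $\mu=p|S|$ is only a small constant: the error term $C_0/\sigma$ must be driven strictly below the Gaussian mass $\Phi(2\alpha)-\Phi(\alpha)$ in the target interval, and this is exactly why the hypothesis must include ``$c_1$ sufficiently small'', which forces $\sigma=\Omega(1/c_1)$ to be large enough for the comparison to succeed uniformly across the whole regime.
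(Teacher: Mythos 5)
Your proof is correct and follows essentially the same approach as the paper: condition on $\Gamma(u)=S$, exploit independence of the edge sets between $S$ and distinct $w$ to make the indicators $I_w$ independent $\mathrm{Bin}(|S|,p)$ events, lower-bound each $\pr{I_w}$ by an absolute constant via a normal approximation, and finish with a Chernoff bound. Your use of Berry--Esseen is a slight (and welcome) tightening of the paper's informal appeal to the central limit theorem; the paper implicitly relies on $c_1$ being small enough that $\sigma=\Omega(1/c_1)$ is large, and you make this quantitative, which is exactly what is needed to get a uniform lower bound on $\pr{I_w}$ across the entire sparse range $p\sim n^{-1/2}$.
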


\begin{proof} If the event $F_u$ occurs then we must have that $d(u)\in [0.9pn,1.1pn]$.  And so if we consider conditioning on $N(u)=S$, for some set of vertices $S$, then we have
\begin{align*}
\pr{F_u}\, =\, &\sum_{0.9pn\le |S|\le 1.1pn}\pr{N(u)=S} \cdot\phantom{\bigg|} \\ &\pr{|\{w\in V(G)\setminus S\, :\, |N(w)\cap S|\in [p|S|+pn^{1/2},p|S|+2pn^{1/2}]\}|\, <\, n/100}\phantom{\bigg|}\, ,
\end{align*}
where we have already used that edges in $G(n,p)$ are independent, so that the edges between $w$ and $S$ are independent of the fact that $S$ is the neighbourhood of $u$.  Since the sum of the probabilities $\pr{N(u)=S}$ is at most $1$, and all $S$ of the same cardinality are equivalent, it suffices to show, for all integers $r$ in the range $0.9pn\le r\le 1.1pn$ that the event
\[
\Big|\{w\in V(G)\setminus \{1,\dots r\}: \big|N(w)\cap \{1,\dots ,r\}\big|\in [pr+pn^{1/2},pr+2pn^{1/2}]\}\Big|\, <\, n/100\\ 
\]
has probability at most $\exp(-\Omega(n))$.
For each fixed vertex $w\in V(G)\setminus \{1,\dots r\}$ the random variable $B_{w,r}:=|N(w)\cap \{1,\dots ,r\}|$ has distribution $Bin(r,p)$.  The central limit theorem states that $(B_{w,r}-rp)/(rp(1-p))^{1/2}$ converges to a standard normal as $r\to \infty$ (and so in particular as $n\to \infty$).  And, setting $\sigma=(rp(1-p))^{1/2}$ we see that $pn^{1/2}=\lambda \sigma$ where $\lambda=(pn/r(1-p))^{1/2}$ and one may check that $\lambda\in [0.6,1.1]$.  It follows that the probability that $B_{w,r}\in [pr+pn^{1/2},pr+2pn^{1/2}]$ converges to $\Phi(2\lambda)-\Phi(\lambda)$ for some $\lambda\in [0.6,1.1]$, and this value is always at least $1/10$.

As this event is independent as we vary over the values of $w\in  V(G)\setminus \{1,\dots r\}$ we see that $|\{w\in V(G)\setminus \{1,\dots r\}\, :\, |N(w)\cap \{1,\dots ,r\}|\in [pr+pn^{1/2},pr+2pn^{1/2}]\}|$ stochastically dominates a random variable with distribution $Bin(n-r,1/10)$, and so stochastically dominates $Y\sim Bin(n/3,1/10)$.  The required bound now follows, as the Chernoff's inequality gives us that $\pr{Y< n/100}\le\exp(-\Omega(n))$.
\end{proof}

The lemma will clearly be very useful in proving~\eqr{large} holds for $\Omega(n^2)$ pairs $uw\in E(K_n)\setminus E(G_{i-1})$.  We note immediately that up to a polinomial factor, which is easily absorbed in the $\exp(-\Omega(n))$ term, the same results apply for $G_{i-1}$, and we may also apply a union bound over vertices $u$.  So that, except with probability at most $\exp(-\Omega(n))$, we have 
\eq{somegoodw}
|\{w\in V(G_{i-1})\setminus N(u)\, :\, d_{uw}(G)\in [sd(u)+sn^{1/2},sd(u)+2sn^{1/2}]\}|\, <\, n/100
\eqe
for all vertices $u\in G_{i-1}$ with degree in the interval $[0.9sn,1.1sn]$.

By Proposition~\ref{prop:sumsquare} there is probability at most $\exp(-\Omega(s^2n^2\ell))\le \exp(-\Omega(n))$ that linearly many vertices of $G_i$ have degree outside the range $[0.9sn,1.1sn]$.  We may therefore assume that there are at least $n/10$ vertices $u$ with $0.9sn\le d_{med}(G)\le d_u(G_{i-1})\le 1.1sn$ and for such vertices there are at least $n/100$ vertices $w$, given by~\eqr{somegoodw}, for which 
\[
sd_{med}(G)\, +\, sn^{1/2}\, \le\, d_{uw}(G_{i-1})\,\le\, 1.1s^2n\, +\, 2sn^{1/2}\, .
\]
It follows immediately that~\eqr{large} holds for $\Omega(n^2)$ pairs $uw\in E(K_n)\setminus E(G_{i-1})$.

As remarked above, a very similar argument shows that~\eqr{small} holds for $\Omega(n^2)$ pairs $uw\in E(K_n)\setminus E(G_{i-1})$.  And this completes the proof of Lemma~\ref{lem:LBvar}.

\let\thefootnote\relax\footnotetext{This study was financed in part by the Coordenação de Aperfeiçoamento de Pessoal de Nível Superior – Brasil (CAPES) – Finance Code 001}

\end{document}